\numberwithin{equation}{section}
\newtheorem{theorem}{Theorem}[section]
\newtheorem{proposition}[theorem]{Proposition}
\newtheorem{lemma}[theorem]{Lemma}
\newtheorem{remark}[theorem]{Remark}
\newtheorem{problem}[theorem]{Problem}
\newcommand{\cali}[1]{\mathscr{#1}}
\newcommand{\supp}{{\rm Supp}}
\newcommand{\dist}{\mathop{\mathrm{dist}}\nolimits}
\newcommand{\vol}{\mathop{\mathrm{vol}}}
\newcommand{\ddc}{dd^c}
\newcommand{\dc}{d^c}
\newcommand{\PSH}{{\rm PSH}}
\newcommand{\id}{{\rm id}}
\newcommand{\Cc}{\cali{C}}
\newcommand{\capK}{\text{cap}}
\newcommand{\B}{\mathbb{B}}
\newcommand{\C}{\mathbb{C}}
\newcommand{\N}{\mathbb{N}}
\newcommand{\R}{\mathbb{R}}
\title{\bf  Quantitative stability for the complex Monge-Amp\`ere equations}
\providecommand{\keywords}[1]{\textbf{\textit{Keywords:}} #1}
\providecommand{\subject}[1]{\textbf{\textit{Mathematics Subject Classification 2010:}} #1}
\author{Hoang-Son Do and Duc-Viet Vu}
\newcommand{\Addresses}{{
		\bigskip
		\footnotesize
		\textsc{Duc-Viet Vu, University of Cologne, Division of Mathematics, Department of Mathematics and Computer Science, Weyertal 86-90, 50931, K\"oln,  Germany}
		\noindent
		\par\nopagebreak
		\noindent
		\textit{E-mail address}: \texttt{vuviet@math.uni-koeln.de}	
		
	\bigskip
\footnotesize
\textsc{Hoang-Son Do, Vietnam Academy of Science and Technology, Institute of Mathematics, 18 Hoang Quoc Viet road, Cau Giay, Hanoi, Vietnam}
\noindent
\par\nopagebreak
\noindent
\textit{E-mail address}: \texttt{dhson@math.ac.vn}	}}
\date{\today}
\begin{document}
\maketitle
\begin{abstract} We prove several quantitative stability estimates for solutions of complex Monge-Amp\`ere equations when \emph{both} the cohomology class and the  prescribed singularity vary. In a broad sense, our results  fit well into the study of degeneration of families of K\"ahler-Einstein metrics. The key mechanism in our method is the pluripotential theory in the space of potentials of finite lower energy.
\end{abstract}
\noindent
\keywords {Monge-Amp\`ere equation}, {convex weights}, {lower energy}, {non-pluripolar products}.
\\

\noindent
\subject{32U15}, {32Q15}.

\tableofcontents

\section{Introduction}

Let $(X,\omega)$ be a compact K\"ahler manifold of dimension $n$ and let $\alpha$ be a big cohomology $(1,1)$-class in $X$. Let $\theta$ be a closed smooth real $(1,1)$-form in $\alpha$ and let $\mu$ be a non-pluripolar finite measure on $X$. Consider the complex Monge-Amp\`ere equation 
\begin{align}\label{eq-MA}
\theta_u^n = \mu,
\end{align}
where $u$ is a $\theta$-psh function, and $\theta_u:= \ddc u+\theta$, and the left-hand side of (\ref{eq-MA}) denotes the non-pluripolar self-product of $\theta_u$ (see \cite{BT_fine_87,BEGZ,GZ-weighted,Viet-generalized-nonpluri}). By monotonicity of non-pluripolar products (see \cite{Lu-Darvas-DiNezza-mono,Viet-generalized-nonpluri,WittNystrom-mono}), if (\ref{eq-MA}) has a solution, then it is necessary that $\mu(X) \le \vol(\alpha)$, where $\vol(\alpha)$ denotes the volume of the  big class $\alpha$. When $\mu(X)= \vol(\alpha)$, the equation (\ref{eq-MA}) admits a unique solution by \cite{BEGZ,Cegrell,Dinew-uniqueness,Kolodziej_Acta,Yau1978}, and this solution is of minimal singularity in $\alpha$ if $\mu$ is sufficiently regular (for example, $\mu$ has a $L^p$ ($p>1$) density with respect to a smooth volume form on $X$). 

One expects that the regularity of solutions agrees well with that of the measure $\mu$. This expectation is true at least for the following two classes of extreme regularities. The first one is the class of measures which are H\"older continuous as a linear functional on the space $\PSH_0(X, \omega)$ of $\omega$-psh functions $u$ with $\int_X u \omega^n =0$ endowed with $L^1$-metric (we call such measures \emph{H\"older continuous ones}). The second one is  the class of measures of finite lower energy (\emph{i.e,} non-pluripolar measures). These two classes are important because they are two regularities governing the range of measures where (\ref{eq-MA}) is solvable (within the framework of the theory of non-pluripolar products of currents).  We refer to \cite{DemaillyHiep_etal,DKC_Holder-Sobolev,DinhVietanhMongeampere,Vu_Do-MA,KC-holder2018,Kolodziej08holder,Lu-To-Phung,NgocCuong-Holder-calc.var,NgocCuong-Holder2020,Tosatti-Weinkove,Vu_MA,Vu_MA_holder} and references therein for more informations in the setting where $\mu(X)= \vol(\alpha)$. 

 Consider now the case where the mass of $\mu$ is not necessarily equal to $\vol(\alpha)$, \emph{i.e,} where $\mu(X) \le \vol(\alpha)$. In this case one can still solve (\ref{eq-MA}) by putting it in the context of prescribed singularity. We need several notions. Denote by $\PSH(X,\theta)$ the set of $\theta$-psh functions.  Let $u_1,u_2 \in \PSH(X,\theta)$. Recall that $u_1$ is more singular than $u_2$ if $u_1 \le u_2+ O(1)$, and $u_1$ is of the same singularity type as $u_2$ if $u_1-u_2$ is bounded.

 Let $\phi \in \PSH(X, \theta)$ such that $\phi \le 0$ and $\int_X \theta_\phi^n >0$. Denote by $\PSH(X,\theta, \phi)$ the set of $\theta$-psh functions $u$ with $u \le \phi$. Note that it is slightly different from the usual definition of $\PSH(X,\theta,\phi)$ in which  $u$ is only required to be more singular than $\phi$. This difference is not essential.  We say that $\phi$ is a \emph{model $\theta$-psh function} (see \cite{Lu-Darvas-DiNezza-mono,Ross-WittNystrom}) if $\phi= P_\theta[\phi]$ and $\int_X \theta_\phi^n >0$, where 
$$P_\theta[\phi]:= \big(\sup \{ \psi \in \PSH(X,\theta): \psi \le 0, \, \psi \le \phi+ O(1)\}\big)^*.$$
The function $P_\theta[\phi]$ is called a roof-top envelope in \cite{Lu-Darvas-DiNezza-mono}.  By \cite{Lu-Darvas-DiNezza-mono}, the function $P_\theta[u]$ is a model one for every $u \in \PSH(X,\theta)$ with $\int_X \theta_u^n >0$, and for every $u \in \PSH(X,\theta,\phi)$ with $\int_X \theta_u^n= \int_X \theta_\phi^n$ we have $P_\theta[u]= P_\theta[\phi]$.  

Let $\phi$ be now a model $\theta$-psh function.  Let $\mu$ be a non-pluripolar measure with $\mu(X)= \int_X \theta_\phi^n$. We want to solve the equation 
 \begin{align}\label{eq-MAphi}
(\ddc u+ \theta)^n  = \mu,
\end{align}
for $u \in \PSH(X,\theta, \phi)$ and $\sup_X(u-\phi)=0$. We note that since $\phi$ is a model, if $u \in \PSH(X, \theta)$ such that $u \le 0$ and  $u \le \phi+ O(1)$, then $u \le \phi$, and $\sup_X(u-\phi)=\sup_X u$. Thus the normalization condition $\sup_X(u-\phi)=0$ can be rewritten as $\sup_X u=0$.   

The hypothesis that $\phi$ is model is a minimal requirement so that (\ref{eq-MAphi}) is solvable in a meaningful way; see \cite{Lu-Darvas-DiNezza-mono} for an explanation about the nature of this assumption.  Let $\mathcal{E}(X, \theta,\phi)$ be the set of $u \in \PSH(X,\theta,\phi)$ such that $\int_X \theta_u^n =\int_X \theta_\phi^n$. By \cite{Lu-Darvas-DiNezza-logconcave} (or \cite{Lu-Darvas-DiNezza-mono,Vu_Do-MA}), the equation (\ref{eq-MAphi}) admits a unique solution in $\mathcal{E}(X, \theta,\phi)$, and if $\mu$ has $L^p$ density then the solution is of the same singularity type as $\phi$. Furthermore a characterization of the class of measures $\mu$ where  (\ref{eq-MAphi}) admits a solution of finite pluricomplex energy  was given in  \cite{Vu_Do-MA}. 

A $\theta$-singularity type (in $\alpha$) is an equivalence class of $\theta$-psh functions of the same singularity type. The space of  $\theta$-singularity types is denoted by $\mathcal{S}(\theta)$(or $\mathcal{S}(\alpha)$ when $\theta$ is clear from the context).  A natural pseudo-metric $d_{\mathcal{S}(\theta)}$ in $\mathcal{S}(\theta)$ was introduced in \cite{Darvas-Lu-DiNezza-singularity-metric}. We refer to  Section \ref{sec-variedtype} for a recap of this pseudodistance.    A model $\theta$-singularity type is by definition the class of a model $\theta$-psh function. By \cite[Theorem 1.3]{Lu-Darvas-DiNezza-mono}, every model $\theta$-singularity type contains a unique model $\theta$-psh function. Hence there is a 1-1 correspondence between model $\theta$-singularity types and model $\theta$-psh functions.  For $u \in \PSH(X,\theta)$, we denote by $[u]_\theta$ (or simply $[u]$ when $\theta$ is clear) the $\theta$-singularity type of $u$. To ease the notation we will denote by $d_{\mathcal{S}(\theta)}(u,v)$ the distance $d_{\mathcal{S}(\theta)}\big([u]_\theta, [v]_\theta\big)$.

In Proposition \ref{pro-equivdistance} (in Section \ref{sec-variedtype}), we push further the study of metrics on the space of singularity types by observing that if we embed $\mathcal{S}(\theta)$ into a bigger space $\mathcal{S}(\theta')$ for $\theta' \ge \theta$ (notice that $\theta'$ is not necessarily in the cohomology class of $\theta$), then the pseudodistance $d_{\mathcal{S}(\theta)}$ is actually comparable with that induced by $d_{\mathcal{S}(\theta')}$. This allows us to compare singularity types in different cohomology classes without changing the nature of the distance $d_{\mathcal{S}(\theta)}$. By this we will sometimes ignore $\theta$ and  only write $d_\mathcal{S}$.  In view of the resolution of (\ref{eq-MAphi}), we are  led to the following natural stability question. We fix a $\Cc^0$-norm on the space of smooth $(1,1)$-forms on $X$. 

\begin{problem} \label{pro-stability} Let $\theta_1,\theta_2$ be closed smooth real $(1,1)$-forms on $X$.  Let $\phi_j$ be model $\theta_j$-psh functions and $\mu_j$ be a non-pluripolar measure of mass equal to $\int_X \theta_{\phi_j}^n$ for $j=1,2$. Let  $u_j$ be  the solution of  (\ref{eq-MAphi}) for $\mu_j,\phi_j$ for $j=1,2$. Compare $u_1$ with $u_2$ in terms of $d_{\mathcal{S}}(\phi_1, \phi_2)$, $\|\theta_1- \theta_2\|_{\Cc^0}$, and  a suitable distance between $\mu_1,\mu_2$? 
\end{problem}

Here by $d_{\mathcal{S}}(\phi_1, \phi_2)$, we mean $d_{\mathcal{S}(A \omega)}(\phi_1, \phi_2)$, where $A$ is a big constant so that $\theta_j \le A \omega$ for $j=1,2$. As discussed above, the condition that $d_{\mathcal{S}(A \omega)}(\phi_1, \phi_2)$ converges to $0$ is independent of the choice of $A$. To get motivated about the above problem, let's consider the following simple situation. Let $(\alpha_j)_j$ be  a sequence of cohomology K\"ahler $(1,1)$-classes converging to a big class $\alpha_\infty$ as $j \to \infty$. We know that there exists a unique closed positive $(1,1)$-current $T_j \in \alpha_j$ such that $T_j^n = \vol(\alpha_j) \omega^n/ \int_X \omega^n$. One thus asks further: what can we say about the convergence of the sequence $(T_j)_j$? Even when $\alpha_\infty$ is also K\"ahler, it seems that known methods are not sufficient to deal with such a question.

We will develop in this paper  a quite satisfactory method to treat the above stability problem. The emphasis of our approach is the quantitative point of view. As it will be clear later, even when one is only interested in obtaining qualitative stability as in the above simplified situation (with varied cohomology classes), it is still essential in proofs to obtain beforehand quantitative stability estimates. To be more precise, \emph{one of the main protagonists in our work is a quantitative stability for solutions to (\ref{eq-MAphi}) of finite lower energy in the setting where the cohomology class and the prescribed singularity are fixed, \emph{i.e,} where $\theta$ and $\phi$ are fixed.} To our best knowledge, no such estimate was established in the literature. Consider in the simplest setting when $\theta= \omega$ and $\phi=0$, and let $\mu_j= (\ddc u_j+\omega)^n$ be a non-pluripolar measure of mass equal to $\int_X \omega^n$  and $\sup_X u_j=0$ for $j=1,2$. If $\mu_1= \mu_2$, then it is well-known that $u_1= u_2$ by \cite{Dinew-uniqueness}. However there has been no available result comparing $u_1,u_2$ when $\mu_1,\mu_2$ are close to each other.  This is due to the fact that arguments in  \cite{Dinew-uniqueness} (and in other known proofs of this uniqueness property, see  (\cite{BEGZ,Lu-Darvas-DiNezza-mono,Dinew-uniqueness,Lu-To-Phung}) are non-quantitative.

In another aspect, the stability of solutions when the cohomology class varies is closely related to the question of degenerations of special K\"ahler metrics on manifolds, or more generally, families of K\"ahler-Einstein metrics. There is  rich literature on this topic; see for example   \cite{DDG-family,Hein-Tosatti,Hein-Tosatti-higherorder,Tosatti-collapsing,Tosatti-noncollapse,Tosatti-Weinkove-Yang}. We would like to stress that although in some typical model of degenerations of Ricci flat K\"ahler metric an optimal local $\Cc^\infty$  convergence of potentials (\emph{i.e}, solutions) on some Zariski open subset of the ambient manifold  was obtained in \cite{Hein-Tosatti,Hein-Tosatti-higherorder,Tosatti-collapsing,Tosatti-noncollapse}, it seems that the global convergence of potentials (solutions) has not been  well-studied. Our work fits well into this research direction. 

The first stability result for varied prescribed singularities, which is not quantitative, was given in  \cite[Theorem 1.4]{Darvas-Lu-DiNezza-singularity-metric}. Previously there were several stability results in the fixed prescribed singularity setting in the literature: some are quantitative and some are not. We refer to \cite{BBGZ-variational,Blocki_stability,Dinew_Zhang_stability,Kolodziej05,KC_hermitian,Guedj-Zeriahi-big-stability,Lu-To-Phung,Vu_MA_holder} and references therein for more details. Key technical tools to obtain quantitative stability has been so far  (variants of) Ko{\l}odziej's capacity method (\cite{Kolodziej05}) and an integration by parts arguments originally in \cite{Blocki_stability}.  All of these cited results require the measures in the right-hand side of the Monge-Amp\`ere equations to be sufficiently regular (to be more precise, measures must be at least the Monge-Amp\`ere of  $\theta$-psh functions in $\mathcal{E}^1(X, \theta)$). 

Finally, we underline that our interest in the stability of solutions also comes from complex dynamics because equilibrium measures associated to holomorphic dynamical systems are, in many important cases, natural Monge-Amp\`ere measures; see  \cite{DNT_equi,DS_book}. Stability of solutions of (\ref{eq-MA}) is hence relevant to the bifurcation theory of these holomorphic dynamical systems (see \cite{Berteloot_cours}).  We  refer  \cite{Tosatti-survey-dynamics} for a recent application of Monge-Amp\`ere equations to dynamical systems and vice versa. 

We  would like to inform that in a forthcoming paper the second-named author will apply the method in this paper to attack the problem of degeneration of conic K\"ahler-Einstein metrics proposed in \cite{Biquard-Guenancia}.

\subsection{Statement of main results}

In order to motivate readers about our first main result, let us consider the following simple setting. Let $\omega$ be a K\"ahler form on $X$. Let $u,v$ bounded negative $\omega$-psh functions. For measures $\mu,\nu$ on $X$, we denote by $\|\mu-\nu\|$ the mass norm of $\mu-\nu$.  It was proved by  B{\l}ocki \cite{Blocki_stability} that 
\begin{align}\label{ine-gradientchibangt}
\int_X d(u-v) \wedge \dc (u-v) \wedge \omega^{n-1} \le C_{M,n}\bigg(\int_X (u-v)(\omega_v^n- \omega_u^n) \bigg)^{2^{1-n}},
\end{align}
where $M$ is an upper bound for $\|u\|_{L^\infty}+ \|v\|_{L^\infty}$, and $C_{M,n}$ is a constant depending only on $n,M$. This combined with Poincar\'e's inequality gives
\begin{align}\label{blocki-stab-ine}
\|u-v\|_{L^1} \le C_{\omega,M,n}\|\omega_u^n- \omega_v^n\|^{2^{-n}},
\end{align}
where $C_{\omega,M,n}$ is a constant depending on $\omega,M,n$. One can indeed make $C_{\omega,M,n}$ independent of $\omega$ (but dependent on $\int_X \omega^n$) but the exponent in the right-hand side of (\ref{blocki-stab-ine}) must be changed a bit. We will ignore, for the moment, this detail. The stability estimate (\ref{blocki-stab-ine}) is the starting point of our journey in this paper. Observe that the constant $C_{\omega,M,n}$ in the  right-hand side of (\ref{blocki-stab-ine}) depends \emph{both} on $u$ and $v$, or more generally, on the regularity of \emph{both} $\omega_u^n$ and $\omega_v^n$ (because for example if the latter measures have $L^p$ density for some $p>1$, then  $u,v$ are bounded by constants depending only on the $L^p$-norm of the densities). An instance of our first main result, Theorem \ref{main1} below, is that \emph{an estimate similar to (\ref{blocki-stab-ine}) still holds if only the information on the regularity of $\omega_u^n$ is available.} Concretely a direct consequence of Theorem \ref{main1} (applied to $\tilde{\chi}(t)=t$) in the setting in consideration is that  there exist an explicit constant $0<\gamma <1$ such that if $\omega_u^n= g \omega^n$ for some $g \in L^p$, then there is a constant $C_{g,\omega,n}$ depending on $\omega$ (and $n$), and an upper bound of $\|g\|_{L^p}$ so that
\begin{align}\label{ine-uvomegacunglop}
\|u-v\|_{L^1} \le C_{g,\omega,n}\|\omega_u^n- \omega_v^n\|^\gamma,
\end{align} 
for every bounded $\omega$-psh function $v$ (notice $C_{g,\omega,n}$ is independent of $v$). The key ingredient of the proof of the last inequality is the following generalization of (\ref{ine-gradientchibangt}): for $u \le v$ and for every bounded or not $\cali{C}^1$ convex functions $\chi, \tilde{\chi}$ with $\chi(0)=\tilde{\chi}(0)=0$ and $\tilde{\chi} \le \chi$, there holds
\begin{align} \label{ine-chikahlerexam} 
\int_X \chi'(u-v) d(u-v) \wedge \dc (u-v) \wedge \omega^{n-1} \le C_{N} f_{ \chi,\tilde{\chi}} \bigg(\int_X -\chi (u-v)(\omega_v^n- \omega_u^n) \bigg)
\end{align}
where $C_N$ is an explicit constant depending on $\tilde{\chi}$-energies of $u,v$, and  $f_{\chi,\tilde{\chi}}$ is an explicit continuous function depending on $\chi, \tilde{\chi}$ and satisfying $f_{\chi,\tilde{\chi}}(0)=0$; see Proposition \ref{pro-mainstabilitylowenergyconvex} in Section 3 or the next subsection for details. In order to prove (\ref{ine-uvomegacunglop}), we apply (\ref{ine-chikahlerexam}) essentially to $\chi(t):= \max\{t,-1\}$, and $\tilde{\chi}(t)= \max\{t,-A\}$ for some suitable big constant $A>0$. 

 The proof of (\ref{ine-chikahlerexam}) reveals the core of our method and is not a direct generalization of the arguments in the proof of (\ref{ine-gradientchibangt}) in \cite{Blocki_stability} which treats the case where $\chi(t)= \tilde{\chi}(t)=t$. We will explain more in the next subsection \ref{subsec-key-component} about this point. With the above preparation, we are now in position to state our first main result in full generality. To this end, we need some more notations.

For every Borel set $E$ in $X$,  recall that the capacity of $E$ is given by 
 $$\capK(E)=\capK_\omega(E):= \sup_{\{w \in \PSH(X,\omega): 0 \le w  \le 1\}} \int_{E}\omega_w^n.$$
 We usually remove the subscript $\omega$ from $\capK_\omega$ if $\omega$ is clear from the context. There are generalizations of capacity in big cohomology classes, many of them are comparable; see  Theorem \ref{the comparisoncap} below and \cite{Lu-comparison-capacity}. Recall that a sequence of Borel functions $(u_j)_j$ is said to \emph{converge to  a  Borel function $u$ in capacity} if  for every constant $\epsilon>0$, we have that $\capK(\{|u_j-u| \ge \epsilon\})$  converges to $0$ as $j \to \infty$.  The convergence in capacity is of great importance in pluripotential theory in part because it implies the convergence of Monge-Amp\`ere operators under reasonable circumstances. To study quantitatively the convergence in capacity, it is convenient to introduce the following distance function on $\PSH(X,\omega)$: 
$$d_{\capK}(u, v):=\sup_{w \in \PSH(X,\omega): 0\le w \le 1}\int_X |u-v|^{1/2} \omega_w^n$$
for every $u, v \in \PSH(X,\omega)$ (note that $d_{\capK}(u, v)<\infty$ thanks to the Chern-Levine-Nirenberg inequality). The number $\frac{1}{2}$ in the definition of $d_\capK$ can be replaced by any constant in $(0,1)$.  One can see that for $u_j,u \in \PSH(X,\omega)$ for $j \in \N$,   $d_{\capK}(u_j, u) \to 0$ if and only if $|u_j-u| \to 0$ in capacity.
 

For $\theta$-psh functions $u, v$, we put
$$d_\theta(u,v):= 2\int_X\theta_{\max\{u, v\}}^n-\int_X \theta_u^n - \int_X \theta_v^n .$$
The function $d_\theta$ is comparable to $d_{\mathcal{S}(\theta)}$ (see Proposition \ref{pro-equivdistance}). For quantitative estimates, it is more convenient to use $d_\theta$ than $d_{\mathcal{S}(\theta)}$. It is perhaps worth noting that our method to prove the stability results below also implies that $d_\capK$ is bounded from above by a power of $d_\theta$ for model $\theta$-potentials (see Proposition \ref{cormodel} for details).    

Let $\mathcal{W}^-$ be the set of convex increasing functions $\chi: \R_{\le 0} \to\R_{\le 0}$ so that $\chi(0)=0$ and $\chi(-\infty)= -\infty$. It follows from \cite[Proposition 3.2]{BEGZ} that for every non-positive $\theta$-psh function $u$, there exists
 $\chi\in\mathcal{W}^-$ and $C>0$ such that
  $$-\int_X\psi\,\theta_u^n\leq C,$$
  for every $\psi\in\PSH(X, \omega)$ with $\sup_X\psi=0$.

\begin{theorem}\label{main1}
	Let $\theta$ be a closed smooth real $(1,1)$-form such that $\theta\leq A\omega$ for a given constant $A\geq 1$. Let $u\in\PSH(X, \theta)$ such that  $\sup_Xu=0$
	and $\int_X\theta_{u}^n:=\delta>0$. 
	Let	$B\geq A$ and $\tilde{\chi} \in \mathcal{W}^-$ with $\tilde{\chi}(-1)=-1$ such that
	$$\int_X-\tilde{\chi}(\psi)\theta_u^n\leq B\delta,$$
	for every $\psi\in\PSH(X, (A+1)\omega)$ with $\sup_X\psi=0$. Let $h(s):=(-\tilde{\chi}(-s))^{1/2}$ for $s \le 0$. 
	Then, for every  constant $0<\gamma<1$,
	there exists a constant $C>0$ depending only on $n, X, \omega$ and $\gamma$  such that 
	\begin{equation}\label{eq0main1}
		d_{\capK}(u, v)^2\leq C (A\, B)^2 
		\left(h^{\circ(n)}\left(\dfrac{\delta}{\|\theta_{u}^n-\eta_{v}^n\|+A^n\|\theta-\eta\|_{\Cc^0}
			+d_{(A+1)\omega}(u, v)}\right)\right)^{-\gamma},
	\end{equation}
	for every closed smooth real $(1, 1)$-form $\eta\leq A\omega$ and for each $v\in\PSH(X, \eta)$ with $\sup_Xv=0$.
\end{theorem}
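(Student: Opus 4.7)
The plan is to bound $d_\capK(u,v)^2$ by iterating a single ``comparison plus finite lower energy'' step $n$ times, each iteration absorbing exactly one factor of $h$; the composition $h^{\circ(n)}$ in (\ref{eq0main1}) then emerges as the byproduct of these $n$ successive reductions, one per factor in the ambient $n$-fold non-pluripolar product.

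The first move is a change of reference class to $(A+1)\omega$. Since $\theta,\eta\leq A\omega$, both $u$ and $v$ lie in $\PSH(X,(A+1)\omega)$, so $\max\{u,v\}\in\PSH(X,(A+1)\omega)$ and all mixed non-pluripolar products appearing below are well-defined; moreover $\omega_w^n\leq ((A+1)\omega)_w^n$ pointwise whenever $0\leq w\leq 1$ is $\omega$-psh, so it suffices to bound $\int_X |u-v|^{1/2}((A+1)\omega)_w^n$ uniformly in such $w$. This single-class setup is essential: the hypothesis on $\tilde\chi$ is stated exactly in the class $(A+1)\omega$, and the quantity $d_{(A+1)\omega}(u,v)$ controls the masses of $((A+1)\omega)_{\max\{u,v\}}^n$, $((A+1)\omega)_u^n$ and $((A+1)\omega)_v^n$ through its defining formula.

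The technical core is a one-step estimate for the mixed integrals
\[
I_k\;:=\;\int_X |u-v|\,((A+1)\omega)_w^k\wedge((A+1)\omega)_{\max\{u,v\}}^{n-k},\qquad 0\leq k\leq n,
\]
roughly of the shape $I_k\lesssim \varepsilon+AB\cdot h\bigl(\delta/(E+\varepsilon^{-1}I_{k-1})\bigr)^{-1}$ for a free parameter $\varepsilon>0$, where $E:=\|\theta_u^n-\eta_v^n\|+A^n\|\theta-\eta\|_{\Cc^0}+d_{(A+1)\omega}(u,v)$. I would prove it in three substeps: (i) an integration by parts converts one factor $((A+1)\omega)_w$ in $I_k$ into $dd^c(u-v)$ acting on a smaller mixed product; (ii) writing $dd^c(u-v)=\theta_u-\eta_v-(\theta-\eta)$ and expanding $\theta_u^n-\eta_v^n$ as a telescoping sum in the factors produces errors controlled by $E$; (iii) the remaining ``cross'' integrals are bounded through the finite-lower-energy hypothesis $\int -\tilde\chi(\psi)\,\theta_u^n\leq B\delta$ combined with Cauchy--Schwarz, which is precisely where $h=(-\tilde\chi(-\cdot))^{1/2}$ (rather than $-\tilde\chi$ itself) intervenes.

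Iterating this one-step estimate from $k=n$ down to the base case $I_0\leq d_{(A+1)\omega}(u,v)\leq E$, and optimizing $\varepsilon$ at each step, produces a bound for $I_n$ of the form $(AB)^2/h^{\circ(n)}(\delta/E)$. Passing from $\int |u-v|$ to $\int |u-v|^{1/2}=d_\capK(u,v)$ costs a power and introduces the exponent $\gamma<1$ via Hölder's inequality, after which (\ref{eq0main1}) follows upon squaring. The main obstacle I anticipate is substep (ii): in the classical fixed-class setting only $\|\theta_u^n-\eta_v^n\|$ appears in the comparison, but here the telescoping expansion mixes currents built from $u$ and $v$ in \emph{different} classes, so the change-of-class error must remain linear in $\|\theta-\eta\|_{\Cc^0}$ uniformly through all $n$ iterations while the final constant retains dependence only on $n,X,\omega$ and $\gamma$. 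Ensuring that all three discrepancy terms are controlled simultaneously and without accumulating loss at each iteration step will be the most delicate part of the argument.
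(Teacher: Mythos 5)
Your overall engine---iterating an integration-by-parts step $n$ times, each step costing one application of $h$, so that $h^{\circ(n)}$ appears---is indeed the mechanism behind the paper's Lemma \ref{le-secondhieuu1u2}, Proposition \ref{pro-mainstabilitylowenergyconvex} and Theorem \ref{th-lowerenergy}. But as written your proposal has genuine gaps. First, your base case is false: $I_0=\int_X|u-v|\,((A+1)\omega)_{\max\{u,v\}}^n$ is an integral of $|u-v|$ against a positive measure and is in no way dominated by $d_{(A+1)\omega}(u,v)$, which is only a difference of total masses (take $u\neq v$ smooth, so $d_{(A+1)\omega}(u,v)=0$ while $I_0>0$). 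The correct base datum is a quantity like $I_\chi(u,v)$ for the \emph{bounded} convex weight $\chi(t)=\max\{t,-1\}$, i.e. $\min\{|u-v|,1\}$ tested against $\theta_u^n-\eta_v^n$; running the iteration with the unbounded weight $|t|$ would require energy bounds on both $u$ and $v$ that the theorem does not assume (the hypothesis controls only $\int_X-\tilde\chi(\psi)\,\theta_u^n$, and nothing at all is assumed on $v$). This asymmetry also breaks your substep (iii): the cross terms produced by integration by parts involve mixed products built from the currents of $v$, $\max\{u,v\}$ and $w$, which cannot be bounded by the single hypothesis on $\theta_u^n$ via Cauchy--Schwarz. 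The paper needs several further devices exactly at this point: the envelope $\phi=P_\theta[\max\{u_1,u_2\}]$ and the truncations $\max\{u_j,\phi-k\}$ to equalize singularity types with errors tracked by $d_\theta$ and $B/\tilde\chi(-k)$ (Lemma \ref{cor-echikichik}, Theorem \ref{the-dcapfixcohomology}); the truncated weight $\tilde\chi_M=\max\{\tilde\chi,-M\}$ to manufacture an energy bound for $v$ out of nothing (proof of Theorem \ref{cor-massnorm-fixcohomology}); and auxiliary Monge--Amp\`ere solutions with $L^\infty$ estimates (Theorem \ref{the P[u]-C<u}, Lemma \ref{lem vol estimate}, Proposition \ref{pro-dominatedcapacitybigomega}) used as test measures $\theta_\psi^n$ to convert the weighted estimate into $d_{\capK}$. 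None of these appear in your sketch, and without them the one-step inequality you posit is unsupported.

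Second, on the varied-cohomology part, which you yourself flag as the delicate point: the paper does not push $\theta\neq\eta$ through the $n$-step telescoping at all. It first disposes of the case $\|\theta-\eta\|_{\Cc^0}\gtrsim\delta$ trivially (since $d_{\capK}(u,v)^2\lesssim A$ by Chern--Levine--Nirenberg), then sets $\tilde\theta=\theta+C_1\|\theta-\eta\|_{\Cc^0}\,\omega\le(A+1)\omega$ so that both $u$ and $v$ are $\tilde\theta$-psh, solves an auxiliary equation $\tilde\theta_{\tilde u}^n=c\,\theta_u^n$ with $\tilde u\in\mathcal{E}(X,\tilde\theta,P_{\tilde\theta}[u])$ (so that $d_{\tilde\theta}(\tilde u,u)=0$), and applies the fixed-class statement (Theorem \ref{cor-massnorm-fixcohomology}) twice, to the pairs $(\tilde u,u)$ and $(\tilde u,v)$, with the class change absorbed into mass errors of size $A^n\|\theta-\eta\|_{\Cc^0}$. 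If you want to salvage your plan, adopt this reduction rather than trying to keep the class error linear through all $n$ iterations, and rebuild the iteration around the bounded weight and the truncation/envelope scaffolding described above.
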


Here, we denote by $\|\mu-\mu'\|$ the mass norm of $\mu- \mu'$. 
The condition that $\tilde{\chi}(-1)=-1$ is merely a normalization one. For an arbitrary $\tilde{\chi} \in \mathcal{W}^-$, we can consider $\tilde{\chi}/|\tilde{\chi}(-1)|$ which satisfies the last requirement.   Theorem \ref{main1} says that under a very weak assumption on the regularity of the Monge-Amp\`ere of $u$, one can bound from above the distance $d_\capK$ of $u$ with \emph{any other quasi-psh function} $v$. 

We now turn our attention to the class of H\"older continuous measures whose definition is recalled below. 
Let $\PSH_0(X,\omega)$ be the set of $\omega$-psh functions $u$ with $\int_X u \omega^n =0$. We endow $\PSH_0(X,\omega)$ with the $L^1(\omega^n)$ distance.  Let $\mu$ be a measure on $X$ such that quasi-psh functions are $\mu$-integrable. We say that \emph{$\mu$ is H\"older continuous with H\"older constant $A$ and H\"older exponent $\gamma$ if it is so as a functional on $\PSH_0(X,\omega)$,} in other words, for every $u_1, u_2 \in \PSH_0(X,\omega)$, we have
\begin{align}\label{ine-holdermeasuredinhnghia}
	\int_X |u_1- u_2| d \mu \le A \|u_1- u_2\|_{L^1(\omega^n)}^{\gamma}.
\end{align}
This notion was introduced in \cite{DinhVietanhMongeampere}. By expressing every $\omega$-psh function $u$ as $u= u- \int_X u \omega^n + \int_X u \omega^n$, we deduce from (\ref{ine-holdermeasuredinhnghia})   that
\begin{align}\label{ine-holdermeasuredinhnghia2}
	\int_X |u_1- u_2| d \mu \le \big(A+\mu(X) \big)\max\{\|u_1- u_2\|_{L^1(\omega^n)}^{\gamma}, \|u_1- u_2\|_{L^1(\omega^n)}\}
\end{align}
for every $\omega$-psh function $u_1, u_2$. Clearly the last inequality also implies that $\mu$ is H\"older with H\"older exponent $\gamma$ and with H\"older constant $\lambda\big(A+\mu(X)\big)$, for some constant $\lambda$ depending only on $(X,\omega)$.   Recall that a measure is  H\"older continuous if and only if it can be written as $(\ddc u+\omega)^n$ for some H\"older continuous $\omega$-psh function $u$ on $X$; see  \cite{DemaillyHiep_etal,DinhVietanhMongeampere} and also \cite{Kolodziej08holder}. We refer to these papers and \cite{Hiep_holder,Kolodziej-Nguyen-continuous,NgocCuong-Holder2020,Vu_MA} for examples of H\"older continuous measures. Most basic examples are measures with $L^p$ density or smooth volume forms of (immersed) generic (real) Cauchy-Riemann submanifolds on $X$.  

Recall that the set of Radon measures on $X$ endowed with the weak topology is a metric space with the distance $\dist_{-\delta}$ for $\delta \in [0,\infty)$ defined as follows: for measures  $\mu,\mu',$
\begin{align}\label{def-KWdistance}
	\dist_{-\delta}(\mu,\mu'):= \sup_{\|v\|_{\Cc^{\delta}} \le 1} \big| \langle \mu-\mu', v \rangle \big|,
\end{align}
where $v$ is a smooth real-valued function on $X$ (see \cite[Theorem 6.9]{Villani}). Note that $\dist_{-\delta}$ induces the same weak topology when $\delta>0$. When $\delta=0$, it is the mass norm of $\mu_1- \mu_2$. We also have the following interpolation inequality: for $0 \le \beta_0 < \beta_1 < \beta_2$,
\begin{align} \label{ine-distdelta}
	\dist_{-\beta_1} \le \dist_{-\beta_0}^{\frac{\beta_2- \beta_1}{\beta_2- \beta_0}} \dist_{-\beta_2}^{\frac{\beta_1- \beta_0}{ \beta_2- \beta_0}}.
\end{align}
We refer to \cite{Lunardi-book-interpolation,Triebel} for a proof (see also \cite{Vu_MA}). This kind of estimate is very important in complex dynamics since the appearance of \cite{DS_acta} where a more general version of  (\ref{ine-distdelta}) for currents  was introduced.


Our next main result is as follows:
\begin{theorem}\label{main3}
	Let $\theta_1, \theta_2$ be closed smooth real $(1,1)$-forms and $A$ be positive constant at least $1$ such that $\theta_j \le A \omega$ for $j=1,2$. Let $0<\delta \le 1$ and $M \ge 1$ be   constants and
	$u_j\in\PSH(X, \theta_j)$ ($j=1, 2$) such that  
	$$\sup_Xu_j=0, \quad \int_X\theta_{u_j}^n\geq \delta,$$
	and  $\mu_j:= (\theta_j+dd^cu_j)^n$ ($j=1, 2$) are H\"older continuous measures on $X$ with H\"older exponent $\beta$ and with H\"older constant $M\delta$.
	Then, for every $0<\gamma<1$,
	there exists a constant $C>0$ depending only on $n, X, \omega, A, M, \beta$ and $\gamma$  such that 
	$$\big(d_{\capK}(u_1, u_2)\big)^2\leq C
	\left(\dfrac{\tau^{\gamma\beta/(\beta+1)}+\|\theta_1-\theta_2\|_{\Cc^0}
		+d_{(A+1)\omega}(u_1, u_2)}{\delta}\right)^{2^{-n}\gamma},$$
	where  $\tau:=\dist_{-1}(\mu_1,\mu_2)$.
\end{theorem}

By interpolation inequality  (\ref{ine-distdelta}), an analogous inequality also holds for $\dist_{-\beta}$ in place of  $\dist_{-1}$ for any constant $\beta>0$.  Our last main result is a generalization of Cegrell-Ko{\l}odziej-Xing stability theorem (\cite{Cegrell-Kolodziej,Xing-stability}) which treated the case where $\theta = \omega$ and $\phi=0$ (and only for the class of potentials of full Monge-Amp\`ere mass). We also underline that the original result in  \cite{Cegrell-Kolodziej,Xing-stability} is non-quantitative and Theorem \ref{main2} already strengthens their results in their setting.  

\begin{theorem}\label{main2}
	Let $\theta_1, \theta_2$ be closed smooth real $(1,1)$-forms and $A$ be positive constant at least $1$ such that $\theta_j \le A \omega$ for $j=1,2$. Let $0<\delta \le 1$ and
	$u_j\in\PSH(X, \theta_j)$ ($j=1, 2$) such that  $\sup_Xu_j=0$
	and $\int_X\theta_{u_j}^n\geq \delta$. Assume that there exists a
Radon measure $\mu$ on $X$ such that $\mu$ vanishes on pluripolar sets and $(\theta_j+dd^cu_j)^n\leq\mu$ for
$j=1, 2$.
	Then, there exists a continuous increasing function $f_{\mu}:\R_{\ge 0}\rightarrow\R_{\ge 0}$ depending only on $n, X, \omega, A, \delta$ and $\mu$  such that $f(0)=0$ and
	$$d_{\capK}(u_1, u_2)^2\leq f_{\mu}\left(\dist_{-1}(\mu_1, \mu_2)+\|\theta_1-\theta_2\|_{\Cc^0}
		+d_{(A+1)\omega}(u_1, u_2)\right),$$
		where $\mu_j:=(\theta_j+dd^cu_j)^n$ for $j=1, 2$.
\end{theorem}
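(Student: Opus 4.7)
The strategy is to reduce Theorem \ref{main2} to Theorem \ref{main1} via a contradiction argument exploiting the fixed dominating measure $\mu$. Since $\mu$ vanishes on pluripolar sets, by \cite[Proposition 3.2]{BEGZ} there exist a weight $\tilde\chi\in\mathcal{W}^-$ with $\tilde\chi(-1)=-1$ and a constant $B_\mu>0$ depending only on $\mu, X, \omega, A$ such that $\int_X -\tilde\chi(\psi)\,d\mu\le B_\mu$ for every $\psi\in\PSH(X,(A+1)\omega)$ with $\sup_X\psi=0$. Since $\mu_j\le\mu$ the same bound transfers to $\mu_j$, so Theorem \ref{main1} applies and bounds $d_{\capK}(u_1,u_2)^2$ by a function of the mass norm $\|\mu_1-\mu_2\|$ together with $\|\theta_1-\theta_2\|_{\Cc^0}$ and $d_{(A+1)\omega}(u_1,u_2)$. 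The essential obstruction is that we only have access to the weaker quantity $\dist_{-1}(\mu_1,\mu_2)$: the mass norm cannot be bounded by any modulus of $\dist_{-1}$ even under $\mu_j\le\mu$, as oscillating densities against a smooth reference measure show. Hence a purely quantitative reduction is unavailable, and we instead construct $f_\mu$ as the continuous upper envelope of the supremum of $d_{\capK}(u_1,u_2)^2$ taken over all admissible configurations with right-hand sides bounded by $t$; it suffices to show this supremum tends to $0$ as $t\to 0^+$.

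Suppose the conclusion fails. Then there exist $\varepsilon_0>0$ and sequences $(\theta_j^{(k)},u_j^{(k)})_k$ ($j=1,2$) satisfying all hypotheses with
$$r_k:=\dist_{-1}(\mu_1^{(k)},\mu_2^{(k)})+\|\theta_1^{(k)}-\theta_2^{(k)}\|_{\Cc^0}+d_{(A+1)\omega}(u_1^{(k)},u_2^{(k)})\to 0,$$
yet $d_{\capK}(u_1^{(k)},u_2^{(k)})^2\ge\varepsilon_0$. Passing to subsequences via compactness of $\Cc^0$-bounded smooth forms, normalized $(A+1)\omega$-psh functions, and Radon measures dominated by $\mu$, we obtain a common $\Cc^0$-limit $\theta^\infty$ of $\theta_j^{(k)}$, $L^1(\omega^n)$-limits $u_j^{(k)}\to u_j^\infty$, and a common weak limit $\mu_j^{(k)}\to\mu^\infty$.

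The two limits $u_1^\infty$ and $u_2^\infty$ must coincide. The uniform weighted energy bound for $\mu_j^{(k)}$ combined with the continuity of non-pluripolar products along dominated sequences (cf.\ \cite{BEGZ,Lu-Darvas-DiNezza-mono,Vu_Do-MA}) gives $(\theta^\infty+\ddc u_j^\infty)^n=\mu^\infty$ for $j=1,2$. The assumption $d_{(A+1)\omega}(u_1^{(k)},u_2^{(k)})\to 0$, together with the equivalence between $d_{(A+1)\omega}$ and $d_{\mathcal S}$ from Proposition \ref{pro-equivdistance} and the lower semicontinuity of the latter, forces $u_1^\infty$ and $u_2^\infty$ to share a singularity type in $\PSH(X,\theta^\infty)$. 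Combined with $\sup_X u_j^\infty=0$ and Dinew's uniqueness theorem in the prescribed-singularity form (\cite{Dinew-uniqueness,Lu-Darvas-DiNezza-mono,Vu_Do-MA}), this yields $u_1^\infty=u_2^\infty=:u^\infty$.

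To derive the contradiction we must upgrade the $L^1$-convergence $u_j^{(k)}\to u^\infty$ to $d_{\capK}$-convergence. This is a Cegrell--Ko{\l}odziej--Xing-type statement generalized to varying cohomology classes and varying prescribed singularities: given $u_j^{(k)}\to u^\infty$ in $L^1$, $\theta_j^{(k)}\to\theta^\infty$ in $\Cc^0$, and $(\theta_j^{(k)}+\ddc u_j^{(k)})^n\le\mu$ with $\mu$ non-pluripolar, one has $u_j^{(k)}\to u^\infty$ in capacity. The plan is to adapt the comparison-principle argument of \cite{Cegrell-Kolodziej,Xing-stability} to level sets $\{u_j^{(k)}<u^\infty-\varepsilon\}$, using: (i)~the non-pluripolarity of $\mu$ to convert Monge-Amp\`ere bounds on these level sets into capacity bounds; (ii)~the convergence of singularity types (the total masses $\int_X(\theta^{(k)}_{j}+\ddc u_j^{(k)})^n$ depend only on the singularity class of $u_j^{(k)}$ and the cohomology class of $\theta_j^{(k)}$, and hence converge to $\int_X\mu^\infty$, forcing $d_{(A+1)\omega}(u_j^{(k)},u^\infty)\to 0$); (iii)~Theorem \ref{main1} applied to the pair $(u_j^{(k)},u^\infty)$ on suitable exhausting subsets where mass control is available. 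Once $d_{\capK}(u_j^{(k)},u^\infty)\to 0$ is established for $j=1,2$, the triangle inequality $d_{\capK}(u_1^{(k)},u_2^{(k)})\le d_{\capK}(u_1^{(k)},u^\infty)+d_{\capK}(u_2^{(k)},u^\infty)$ contradicts $d_{\capK}(u_1^{(k)},u_2^{(k)})^2\ge\varepsilon_0$. The hardest ingredient is this CKX-type upgrade, which requires the sharp non-pluripolar comparison estimates underlying Theorem \ref{main1} together with careful bookkeeping of simultaneous variation in cohomology class, prescribed singularity, and measure.
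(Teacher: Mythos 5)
Your plan hinges on the claim that ``a purely quantitative reduction is unavailable'' and that one must therefore build $f_\mu$ by a compactness--contradiction argument over all admissible configurations. That claim is exactly where you miss the paper's key mechanism: the paper proves the theorem with an \emph{explicit} $f_\mu$. From $\mu$ alone (Lemma \ref{le-mychanboiHcap}, via Lemma \ref{lem key for new1}) one extracts a concave modulus $H$ with $H(0)=0$ such that $\int_X\min\{|\psi_1-\psi_2|,1\}\,d\mu\le H(\|\psi_1-\psi_2\|_{L^1})$ for normalized $\omega$-psh $\psi_1,\psi_2$; since $\mu_j\le\mu$ this hypothesis transfers to the solutions. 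Then the bridge from $\dist_{-1}$ to the energy-type quantity is quantitative: in Theorem \ref{the MAtocap} one tests $\mu_1-\mu_2$ against convolution smoothings of the truncations $\max\{u_j,-k\}$, whose $\Cc^1$ norms are $O(k/\epsilon)$ (Lemma \ref{lem estimate int p-pe}), getting $|I_\chi(u_1,u_2)|\le \frac{Ck}{\epsilon}\dist_{-1}(\mu_1,\mu_2)+C\,H\big(\epsilon^{1/2}+(A/k)^{1/2}\big)$ for $\chi(t)=\max\{t,-1\}$, and optimizing $k,\epsilon$ yields $|I_\chi|\lesssim A\big(\tau^{1/2}+H(\tau^{1/8})\big)$ with $\tau=\dist_{-1}(\mu_1,\mu_2)$. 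Feeding this into the fixed-cohomology estimate (Theorem \ref{the-dcapfixcohomology}) and the varied-cohomology bridge (Theorem \ref{main2.varied}, using the auxiliary solution with the same singularity as $u_1$ and the BEGZ weight $\tilde\chi$ for $\mu$) gives the stated bound with an explicit $f_\mu$; the only compactness used concerns $\mu$ alone, never the unknown data $(\theta_j,u_j)$.

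Beyond missing this route, your contradiction scheme has concrete gaps. First, there is no compactness for the forms: $\theta_j^{(k)}\le A\omega$ gives no $\Cc^0$ bound, and even a $\Cc^0$ bound would not give $\Cc^0$-precompactness (no equicontinuity); a weak limit would only be a current, outside the framework in which the limit equation $(\theta^\infty+\ddc u_j^\infty)^n=\mu^\infty$ makes sense. Second, your identification of the limit equation invokes ``continuity of non-pluripolar products along dominated sequences'' under mere $L^1$ convergence; the paper's own Lemma \ref{le-hoituMAchanmeasure} requires convergence \emph{in capacity} as a hypothesis, and upgrading $L^1$ to capacity convergence under the domination by $\mu$ is precisely the Cegrell--Ko{\l}odziej--Xing content of the theorem you are proving, so the argument is circular; your step (iii), applying Theorem \ref{main1} to $(u_j^{(k)},u^\infty)$ ``on suitable exhausting subsets,'' runs straight back into the mass-norm obstruction you yourself identified, since weak convergence $\mu_j^{(k)}\to\mu^\infty$ gives no control of $\|\theta_{u_j^{(k)}}^n-\theta_{u^\infty}^n\|$. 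Third, your claim in (ii) that convergence of total masses forces $d_{(A+1)\omega}(u_j^{(k)},u^\infty)\to 0$ is unjustified: non-pluripolar masses, and hence $d_\theta(\cdot,u^\infty)$, are not continuous under $L^1$ convergence, and $d_{(A+1)\omega}(u_1^{(k)},u_2^{(k)})\to 0$ says nothing about the distance of either sequence to the limit. As it stands the proposal defers the hardest step to a sketch that does not close, whereas the theorem follows by assembling the quantitative estimates already established in the paper.
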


Theorem \ref{main2} implies particularly that for every model $\theta$-psh function $\phi$,  the convergence in capacity or in $L^1$ and the weak convergence of Monge-Amp\`ere measures are equivalent in the class of potentials in $\mathcal{E}(X, \theta, \phi)$ whose Monge-Amp\`ere measures are bounded from above by a fixed non-pluripolar measure. This is more or less the original motivation of Cegrell-Ko{\l}odziej in \cite{Cegrell-Kolodziej}. 

Finally we note that as an application of Theorem \ref{main2} or \ref{main3}, one can recover  a main result in \cite{Darvas-Lu-DiNezza-singularity-metric} that the pseudometric space of singularity types of volume bounded from below by a fixed positive constant is complete, we refer to Remark \ref{re-suyratinhday} in the end of the paper and Subsection \ref{subsec-appli-singu} for details.

\subsection{Key components in our method} \label{subsec-key-component}

As mentioned above the core of the method developed in this  paper is a solution to the quantitative stability for measures of lower energy in the setting where the cohomology class and the prescribed singularity are fixed. We underline  that in what follows  by convex weights we mean also bounded convex functions, although such functions were not usually considered as weights.  This point of view is the key allowing us to treat the general setting when both the cohomology class and the prescribed singularity of solution vary. 

Let $\widetilde{\mathcal{W}}^-$ be the set of convex, non-decreasing functions $\chi: \R_{\le 0} \to \R_{ \le 0 }$ such that $\chi(0)=0$ and $\chi \not = 0$. Note that $\chi$ can be bounded. 
Obviously $\mathcal{W}^-$ is contained in $\widetilde{\mathcal{W}}^-$. It is crucial in our method that we consider also $\chi \in \widetilde{\mathcal{W}}^-$ which is bounded.       Let $M \ge 1$ be a constant and  $\mathcal{W}^+_M$ the usual space of  increasing concave functions $\chi: \R_{\le 0} \to \R_{ \le 0}$ such that $\chi(0)=0$, $\chi \not \equiv 0$, and $|t \chi'(t)| \le M|\chi(t)|$ for every $t \le 0$.

Let $\varrho:= \int_X \theta_\phi^n$. For  $\chi \in \widetilde{\mathcal{W}}^- \cup  \mathcal{W}^+_M$ and   $u \in \PSH(X,\theta,\phi)$, let 
 $$E^0_{\chi, \theta, \phi}(u):= - \varrho^{-1} \int_X \chi(u- \phi) \theta_u^n$$
 which is called \emph{the (normalized) $\chi$-energy} of $u$ (with respect to $\theta, \phi$).   We denote
$$\mathcal{E}_\chi(X, \theta,\phi):=\big\{u \in \mathcal{E} (X, \theta, \phi): E_{\chi, \theta, \phi}(u)<\infty  \big\},$$ where $\mathcal{E}(X, \theta,\phi)$ is the space of $\theta$-psh functions $u \le \phi$ with $\int_X \theta_u^n = \int_X \theta_\phi^n$. Certainly if $\chi$ is bounded, then $\mathcal{E}_\chi(X, \theta,\phi)= \mathcal{E}(X, \theta,\phi)$. We would like to point out however that our method is not about the finiteness of $E^0_{\chi, \theta, \phi}(u)$ but estimating the size of that quantity. Thus whether $\chi$ is bounded or not does not make much difference for our later arguments.  
  Put
$$\quad I^0_\chi(u,v):=  \varrho^{-1}\int_{\{u<v\}} \chi(u-v) (\theta_v^n - \theta_u^n)+\varrho^{-1}\int_{\{u>v\}} \chi(v-u) (\theta_u^n - \theta_v^n)$$
 for $u,v \in \mathcal{E}_{\chi}(X, \theta, \phi)$. The factor $\varrho^{-1}$ in the defining formulae for $E^0_{\chi,\theta, \phi}(u)$ and  $I^0_\chi(u,v)$ plays the role of a normalizing constant. In geometric applications it is important to treat the case where $\varrho \to 0$, \emph{i.e,} to obtain estimates uniformly  as $\varrho \to 0$ (here we allow $\theta$ or its cohomology class to vary).   
 
Clearly if $\theta_u^n= \theta_v^n$, then $I^0_\chi(u,v)=0$. We will see later that each term in the sum defining $I^0_\chi(u,v)$ is nonnegative. 
We recall that there is a natural (quasi-)metric on the space $\mathcal{E}_\chi(X,\theta, \phi)$ constructed in \cite{Darvas_book,Darvas-lower-energy, Gupta}, and see \cite{DDL-L1metric,Lu-DiNezza-Lpmetric,Trusiani-energy,Xia-energy} as well.  The functional $I^0_\chi(u,v)$ has an intimate relation with these quasi-metrics. We refer to the end of Section \ref{sec-fixedtype} for details on this connection.  Here is the first key ingredient in our proof of main results.

\begin{theorem} \label{th-lowerenergy-phanintro}  Let $\theta$ be a closed smooth real $(1,1)$-form  and $\phi$ be a  negative $\theta$-psh function such that $\varrho:= \int_X \theta_\phi^n>0$.  Let $\chi, \tilde{\chi} \in \widetilde{\mathcal{W}}^-\cup\mathcal{W}^+_M$ ($M\geq 1$) such that $\tilde{\chi} \le \chi$. Let $B \ge 1$ be a constant and let $u_j, \psi_j \in \mathcal{E}(X, \theta, \phi)$ satisfy $u_1 \le u_2$ and 
$$E^0_{\tilde{\chi}, \theta, \phi}(u_j)+E^0_{\tilde{\chi}, \theta, \phi}(\psi_j) \le B,$$
for $j=1,2$. Then there exists a constant $C_n>0$ depending only on $n$ and $M$,
 and a continuous increasing function $f: \R_{\ge 0} \to \R_{ \ge 0}$ depending only on $\chi, \tilde{\chi}$ such that $f(0)=0$ and 
\begin{align*}
\int_X  -\chi (u_1-u_2)  (\theta_{\psi_1}^n- \theta_{\psi_2}^n) \le 
C_n\varrho B^2 f\big(I^0_\chi(u_1,u_2)\big).
\end{align*}
\end{theorem}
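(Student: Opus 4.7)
I plan to use a telescoping identity together with iterated integration by parts, balancing the $\psi$-side by the $\tilde\chi$-energy bound $B$ and the $u$-side by the quantity $I^0_\chi(u_1,u_2)$. Writing
$$\theta_{\psi_1}^n - \theta_{\psi_2}^n \;=\; dd^c(\psi_1-\psi_2)\wedge \sum_{k=0}^{n-1}\theta_{\psi_1}^k\wedge \theta_{\psi_2}^{n-1-k},$$
the LHS becomes a sum of $n$ integrals of the form $\int -\chi(u_1-u_2)\,dd^c(\psi_1-\psi_2)\wedge S_k$. After truncating all potentials by $\max(\,\cdot\,,-k)$ and smoothing $\chi$ to put ourselves in the bounded-smooth setting, I invoke the symmetric integration-by-parts formula for non-pluripolar products (Xia; Darvas--DiNezza--Lu) to move $dd^c$ onto $-\chi(u_1-u_2)$. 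The chain rule then gives $dd^c(-\chi(u_1-u_2))=-\chi''(u_1-u_2)\,d(u_1-u_2)\wedge d^c(u_1-u_2)+\chi'(u_1-u_2)(\theta_{u_2}-\theta_{u_1})$, which splits each summand into a ``gradient'' piece and a ``Monge--Amp\`ere-difference'' piece.

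The MA-difference piece is telescoped one step further (trading one $\theta_{\psi_j}$ for one $\theta_{u_j}$), and after $n$ iterations it lands in integrals of $-\chi(u_1-u_2)$ against $\theta_{u_2}^n-\theta_{u_1}^n$, which is exactly $\varrho\, I^0_\chi(u_1,u_2)$. The gradient piece is controlled via Cauchy--Schwarz applied to the mixed $d(\psi_1-\psi_2)\wedge d^c(u_1-u_2)$ form, after which the positivity-preserving identity $\int d(u_1-u_2)\wedge d^c(u_1-u_2)\wedge S \le -\int(u_1-u_2)\,dd^c(u_1-u_2)\wedge S$ reduces the $u$-side again to an $I^0_\chi$-type expression, while the $\psi$-side is absorbed by a Young-type step using the hypothesis $\tilde\chi\le\chi$ and the energy bound $E^0_{\tilde\chi,\theta,\phi}(\psi_j)\le B$. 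The counting of these splittings yields the factors $C_n$ and $B^2$; the modulus $f$, depending only on the pair $(\chi,\tilde\chi)$, emerges from this Young step and satisfies $f(0)=0$ by absolute continuity of $\chi$ with respect to $\tilde\chi$ at $0$.

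The main obstacle I anticipate is making the approximation rigorous: $u_j,\psi_j$ are unbounded quasi-psh with prescribed singularity $\phi$ and $\chi$ may be merely convex (not $C^2$), so both the IBP and the limits $k\to\infty$, $\chi\to\chi_\epsilon$ must be justified in the non-pluripolar framework, ensuring that truncations preserve both the $\tilde\chi$-energy bound and the monotone convergence of $I^0_\chi$. A second subtlety is the uniform treatment of the two weight classes: for $\chi\in\widetilde{\mathcal{W}}^-$ one has $\chi',\chi''\ge 0$, whereas for $\chi\in\mathcal{W}^+_M$ the signs reverse and one must rely on the structural inequality $|t\chi'(t)|\le M|\chi(t)|$ to recover the analogous estimates---this is where the dependence of $C_n$ on $M$ enters. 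Finally, ensuring that $f$ depends only on $(\chi,\tilde\chi)$ and not on $n$, $B$, $\varrho$, or the cohomology data requires careful bookkeeping so that every geometric factor is absorbed into the prefactor $C_n\varrho B^2$.
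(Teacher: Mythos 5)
Your overall scaffolding (telescoping the difference $\theta_{\psi_1}^n-\theta_{\psi_2}^n$, truncating to the bounded case, smoothing $\chi$, then integrating by parts) matches the paper's, but the two steps you rely on to close the estimate are precisely where the classical $\chi(t)=t$ argument breaks down, and your sketch supplies no substitute. After integration by parts you invoke Cauchy--Schwarz on the mixed form $\chi'(u_1-u_2)\,d(u_1-u_2)\wedge d^c(\psi_1-\psi_2)$, which inevitably produces the factor $\int_X \chi'(u_1-u_2)\, d(\psi_1-\psi_2)\wedge d^c(\psi_1-\psi_2)\wedge S$. For $\chi=\mathrm{id}$ this is a standard energy of the $\psi_j$'s; here it is not controlled at all: the only information on $\psi_1,\psi_2$ is the bound on $E^0_{\tilde\chi,\theta,\phi}(\psi_j)$ for a weight $\tilde\chi$ that may be very weak (even bounded), and such lower-energy bounds do not dominate gradient integrals $\int d\psi\wedge d^c\psi\wedge S$ (that would essentially require $\mathcal{E}^1$-type energy). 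No ``Young-type'' step using $\tilde\chi\le\chi$ can absorb it, because the problematic quantity involves the gradient of $\psi_1-\psi_2$, not values of $\chi$ or $\tilde\chi$ at $\psi_j-\phi$. This is exactly the obstruction the paper isolates in the introduction (terms like $\chi'(u_1-u_2)\,d(u_1-u_3)\wedge d^c(u_1-u_3)$ are harmless only when $\chi'\equiv 1$), and overcoming it is the main new content of the proof.

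The second gap is your claim that the weighted gradient $\int\chi'(u_1-u_2)\,d(u_1-u_2)\wedge d^c(u_1-u_2)\wedge S$ ``reduces again to an $I^0_\chi$-type expression.'' By parts this equals $\int -\chi(u_1-u_2)\,(\theta_{u_1}-\theta_{u_2})\wedge S$, which is $\varrho\, I^0_\chi(u_1,u_2)$ only when $S$ is built from $\theta_{u_1},\theta_{u_2}$ alone; in your scheme $S$ still carries $\theta_{\psi_j}$ factors, and exchanging those for $\theta_{u_j}$ factors is the actual work. The paper does this with a ``monotonicity'' device (Lemma \ref{le-secondhieuu1u2}): it splits $X$ along $\{u_1-u_2<\epsilon(u_1+u_3-2\phi)\}$, replaces $u_2$ by an auxiliary maximum so that the weight can be compared with $\chi'(\epsilon(u_1+u_3-2\phi))$, controls the error regions through the $\chi(\epsilon\,\cdot)$-energy estimate of Lemma \ref{le-uocluongchiepsilon} (which is where the factors $Q_0(\epsilon)$, $(1-\tilde\chi(-1))$ and $B$ enter), and optimizes in $\epsilon$; then Proposition \ref{pro-mainstabilitylowenergyconvex} runs an inverse induction exchanging one background factor at a time, losing one composition with $Q$ per step. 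That is why the modulus is $f=Q^{\circ n}$ rather than a single absorption constant, and why $f(0)=0$ hinges on the behaviour of $Q_0(\epsilon)=\sup_{t\le -1}\chi(\epsilon t)/\tilde\chi(t)$, not on any ``absolute continuity of $\chi$ with respect to $\tilde\chi$ at $0$.'' Without these two ingredients your outline does not yield the theorem.
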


The following result is the second key which is a consequence of the first one. 

\begin{theorem} \label{th-lowerenergy-kocochuanhoa-phanintro}
	 Let $\theta$ be a closed smooth real $(1,1)$-form, and let $A \ge 1$ be a constant such that $\theta \le A \omega$.   Let $\phi$ be a  model $\theta$-psh function.  Let 
	 $\chi, \tilde{\chi} \in \widetilde{\mathcal{W}}^-\cup\mathcal{W}^+_M$ ($M\geq 1$)
	 such that $\tilde{\chi} \le \chi$. Let $B \ge 1$ be a constant and $u_1, u_2, \psi \in \mathcal{E}(X, \theta, \phi)$  with $\sup_X u_1= \sup_X u_2$ satisfy
	$$  
	E^0_{\tilde{\chi}, \theta, \phi}(u_1)+
	E^0_{\tilde{\chi}, \theta, \phi}(u_2)+E^0_{\tilde{\chi}, \theta, \phi}(\psi) \le B.$$
Then, for every constant $m>0$ and $0<\gamma<1$, there exist  a constant  $C>0$ depending on $n, M, X, \omega, m$ and $\gamma$, and  a function $f$ as in Theorem \ref{th-lowerenergy-phanintro}  such that 
\begin{align*}
\int_X  -\chi\big(-|u_1- u_2|\big) \theta_\psi^n \le -\varrho \chi\left(-\lambda^m\right)
+C\varrho B_{\gamma, m}^2 \lambda^\gamma,
\end{align*}
where $\lambda:= f \big(I^0_\chi(u_1,u_2)\big)$ and 
$B_{\gamma, m}=A^{(1-\gamma)/(2m)}(B-\tilde{\chi}(-A))(1-\tilde{\chi}(-1))$.
\end{theorem}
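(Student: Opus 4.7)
The strategy is to reduce the one-sided hypothesis of Theorem~\ref{th-lowerenergy-phanintro} to the two-sided quantity $-\chi(-|u_1-u_2|)$ by symmetrizing via $w:=\max(u_1,u_2)$, and then to reduce the resulting single-measure integral to the reference $\theta_\phi^n$, which I split into a near-diagonal piece (handled pointwise) and a tail piece (handled by Chebyshev and H\"older, exploiting the $\tilde\chi$-energy bounds). First, $w\in\mathcal{E}(X,\theta,\phi)$ with $E^0_{\tilde\chi,\theta,\phi}(w)\lesssim B$, so Theorem~\ref{th-lowerenergy-phanintro} applies to each pair $(u_i,w)$ for $i=1,2$ with any choice of $\psi_1,\psi_2$ of bounded $\tilde\chi$-energy. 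The locality of the non-pluripolar Monge-Amp\`ere operator ($\theta_w^n=\theta_{u_2}^n$ on $\{u_1<u_2\}$ and $\theta_w^n=\theta_{u_1}^n$ on $\{u_1>u_2\}$) yields the additivity $I^0_\chi(u_1,w)+I^0_\chi(u_2,w)=I^0_\chi(u_1,u_2)$ and the pointwise identity $-\chi(u_1-w)-\chi(u_2-w)=-\chi(-|u_1-u_2|)$ on $X$. Summing the two instances of Theorem~\ref{th-lowerenergy-phanintro} gives, for all admissible $\psi_1,\psi_2$,
\begin{equation*}
\int_X -\chi(-|u_1-u_2|)(\theta_{\psi_1}^n-\theta_{\psi_2}^n) \le C\varrho B^2\, f\bigl(I^0_\chi(u_1,u_2)\bigr).
\end{equation*}
Taking $\psi_1=\psi$ and $\psi_2=\phi$, and noting $E^0_{\tilde\chi,\theta,\phi}(\phi)=0$, I obtain
\begin{equation*}
\int_X -\chi(-|u_1-u_2|)\,\theta_\psi^n \le \int_X -\chi(-|u_1-u_2|)\,\theta_\phi^n + C\varrho B^2\, f\bigl(I^0_\chi(u_1,u_2)\bigr).
\end{equation*}

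Set $\lambda:=f(I^0_\chi(u_1,u_2))$ and split the remaining integral over $X$ according to whether $|u_1-u_2|$ exceeds $\lambda^m$. The near-diagonal piece contributes at most $-\varrho\,\chi(-\lambda^m)$ by monotonicity of $-\chi(-\cdot)$ and $\theta_\phi^n(X)=\varrho$. For the tail, I use $|u_1-u_2|=(w-u_1)+(w-u_2)$, so $\{|u_1-u_2|>\lambda^m\}\subset \{w-u_1>\lambda^m/2\}\cup\{w-u_2>\lambda^m/2\}$, and the $\tilde\chi$-energy bounds on the $u_i$'s yield a Chebyshev-type estimate on the $\theta_\phi^n$-measure of each piece. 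Combined with a H\"older inequality (with a small exponent) separating the growth of $-\chi(-|u_1-u_2|)$ from the measure of the tail, and the rescaling $u_i\mapsto u_i/A$ (well-defined since $u_i$ is $A\omega$-psh with $\sup u_i=0$), this produces
\begin{equation*}
\int_{\{|u_1-u_2|>\lambda^m\}}-\chi(-|u_1-u_2|)\,\theta_\phi^n \le C\varrho\, B_{\gamma,m}^2\, \lambda^\gamma
\end{equation*}
for every $0<\gamma<1$. The factor $A^{(1-\gamma)/(2m)}$ inside $B_{\gamma,m}$ arises from balancing the H\"older exponent against the rescaling, while $(B-\tilde\chi(-A))(1-\tilde\chi(-1))$ is what the $\tilde\chi$-energy bound transforms into under $u_i\mapsto u_i/A$.

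Combining the two pieces, and absorbing the initial term $C\varrho B^2\lambda$ into $C\varrho B_{\gamma,m}^2\lambda^\gamma$ (legitimate for $\lambda\le 1$, which can always be arranged by enlarging $f$), yields the stated inequality. The principal obstacle is the tail estimate: extracting the full exponent $\gamma$ close to $1$ together with the precise form of $B_{\gamma,m}$ requires a careful interpolation between the Chebyshev bound on $\theta_\phi^n(\{w-u_i>\lambda^m/2\})$, which alone gives only a polynomial loss in $1/\lambda^m$, and the pointwise growth of $-\chi(-|u_1-u_2|)$ controlled through $\tilde\chi(u_i-\phi)$ after the $u_i\mapsto u_i/A$ rescaling. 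The assumption $\sup_X u_1 = \sup_X u_2$ is used precisely here, to keep both rescaled potentials simultaneously normalized so that their $\tilde\chi$-energies remain comparable under the translation built into the rescaling.
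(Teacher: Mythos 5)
Your first two steps are fine: symmetrizing through $w=\max(u_1,u_2)$, using plurifine locality to get $-\chi(u_1-w)-\chi(u_2-w)=-\chi(-|u_1-u_2|)$ and $I^0_\chi(u_1,w)+I^0_\chi(u_2,w)=I^0_\chi(u_1,u_2)$, and then applying Theorem \ref{th-lowerenergy-phanintro} with $\psi_1=\psi$, $\psi_2=\phi$ is legitimate and reduces everything to bounding $\int_X-\chi(-|u_1-u_2|)\,\theta_\phi^n$. The gap is in your tail estimate. You claim that the $\tilde\chi$-energy bounds give a Chebyshev-type smallness for $\theta_\phi^n\big(\{w-u_i>\lambda^m/2\}\big)$. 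They do not: since $w\le\phi$, the only Chebyshev bound available is $\theta_\phi^n\big(\{\phi-u_i>s\}\big)\le C B\varrho/\big(-\tilde\chi(-s)\big)$, and with $s=\lambda^m/2$ small this is \emph{larger} than the trivial bound $\varrho$, so it decays in no way as $\lambda\to0$; H\"older cannot repair this, because any H\"older splitting still needs the measure of the tail to be small. Nothing in the hypotheses (energy bounds plus smallness of $I^0_\chi(u_1,u_2)$) controls the $\theta_\phi^n$-measure of $\{|u_1-u_2|>\lambda^m\}$ directly — such a control is exactly (a special case $\psi=\phi$ of) the statement you are trying to prove, so the step is circular. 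Likewise, the rescaling $u_i\mapsto u_i/A$ and the normalization $\sup_Xu_1=\sup_Xu_2$ do not play the role you assign to them; they cannot create the missing $\lambda^\gamma$ decay.

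The paper's mechanism, which is the idea your argument is missing, avoids ever integrating $-\chi(-|u_1-u_2|)$ against $\theta_\phi^n$ (or $\theta_\psi^n$) over the bad set. Assuming WLOG $u_1\le u_2$, one works on $W_t:=\{u_1>\sup_Xu_1-t\}$, where automatically $u_1-u_2\ge -t$ thanks to $\sup_Xu_1=\sup_Xu_2$; Lemma \ref{lem vol estimate} gives the crucial lower bound $\vol_\omega(W_t)\gtrsim (t/A)^{2n}$. One then solves $\theta_\varphi^n=\frac{\varrho}{b_t}\mathbf{1}_{W_t}\omega^n$ in $\mathcal{E}(X,\theta,\phi)$ and uses the uniform $L^\infty$ estimate of Theorem \ref{the P[u]-C<u} to get $\varphi\ge\phi-CA\log(Ae/t)$, hence $E^0_{\tilde\chi,\theta,\phi}(\varphi)\lesssim\big(\log(Ae/t)\big)^M\big(-\tilde\chi(-A)\big)$. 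Applying Theorem \ref{th-lowerenergy} with $\psi_1=\psi$, $\psi_2=\varphi$ then yields $\int_X-\chi(u_1-u_2)\theta_\psi^n\le-\varrho\chi(-t)+C\varrho\big(\log(Ae/t)\big)^{2M}(B-\tilde\chi(-A))^2(1-\tilde\chi(-1))^2\lambda$, since $\int_X-\chi(u_1-u_2)\theta_\varphi^n\le-\varrho\chi(-t)$ by construction. Choosing $t=\lambda^m$ and using $\big(\log(Ae/\lambda^m)\big)^{2M}\lambda\le CA^{(1-\gamma)/m}\lambda^{\gamma}$ produces exactly the constant $B_{\gamma,m}$; that is where the factors $A^{(1-\gamma)/(2m)}$ and $B-\tilde\chi(-A)$ actually come from. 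To complete your proof you would need to replace your Chebyshev/H\"older tail step by this construction (or an equivalent source of $\lambda$-smallness against a measure unrelated to $u_1,u_2$).
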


The condition $\sup_X u_1= \sup_X u_2$ is simply a normalization one. By Theorem \ref{th-lowerenergy-kocochuanhoa-phanintro}, one sees in particular that if $I^0_\chi(u_1,u_2) \to 0$, then $|u_1-u_2| \to 0$ in $L^p$ for every $p>0$. 
The function $f$ can be made explicitly; see Theorems \ref{th-lowerenergy} and \ref{th-lowerenergy-kocochuanhoa} below for more elaborated versions of these above results. 

We note that \emph{the single theorem \ref{th-lowerenergy-phanintro} contains the following three important results in pluripotential theory: uniqueness of solutions of complex Monge-Amp\`ere equations, domination principle, and comparison of capacities}. We obtain indeed quantitative (hence stronger) versions of these results for which we refer to Subsection  \ref{subsec-appli}. Readers also find there a quantitative version of the fact that the convergence in Darvas's metric in $\mathcal{E}_\chi(X, \theta, \phi)$ implies the convergence in capacity. Notice that such an estimate seems to be not reachable by using the usual plurisubharmonic envelope method.

The main novelty of Theorem \ref{th-lowerenergy-phanintro} is that it deals with \emph{arbitrary} weights. Similar statements was already known for $\chi(t)=t$ (see \cite{Blocki_stability,Guedj-Zeriahi-big-stability}). However the proof there only work \emph{exclusively} for this case. One should notice that the weight $\chi(t)=t$ is very special: it is linear and lies in the middle between higher energy weights and lower energy weights. As to the proof of Theorem \ref{th-lowerenergy-phanintro},  going up to the space of higher energy weights or going down to the space of lower energy weights are equally difficult.   We will explain this point in more details in the paragraph after Theorem \ref{the-mainstabilitylowenergyconvexintro} below.

The key in the proof of Theorem \ref{th-lowerenergy-phanintro} is Proposition \ref{pro-mainstabilitylowenergyconvex} in Section 3 a simplified version of which we state here for readers' convenience.

\begin{theorem} \label{the-mainstabilitylowenergyconvexintro} Let $\chi, \tilde{\chi} \in\widetilde{\mathcal{W}}^-\cup\mathcal{W}^{+}_M$	such that $\tilde{\chi}\leq\chi$ and $\chi\in \Cc^1(\R)$.  Let $u_1, u_2, u_3 \in \mathcal{E}(X, \theta, \phi)$ such that $u_1 \le u_2$ and
	$u_j-\phi$ is bounded ($j=1, 2, 3$), where $\phi$ is a negative $\theta$-psh function satisfying 
	$\varrho:=\vol(\theta_\phi)>0$.  Then there exist a constant $C_n>0$ depending only on $n$ and $M$,
	 and a function $f$ as in Theorem \ref{th-lowerenergy-phanintro}  such that 
	\begin{align*} 
		\int_X  \chi'(u_1-u_2) d(u_1- u_2) \wedge \dc (u_1- u_2) \wedge \theta_{u_3}^{n-1}  \le C_n\varrho B^2 f\big(I^0_\chi(u_1,u_2)\big),		
	\end{align*}
	where $B:=\sum_{j=1}^3 \max\{E^0_{\tilde{\chi},\theta, \phi}(u_j),1 \}$.
\end{theorem}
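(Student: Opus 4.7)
My plan is to first convert the left-hand side into a difference of Monge-Amp\`ere masses via an integration by parts, and then iteratively replace the auxiliary current $\theta_{u_3}^{n-1}$ by products of $\theta_{u_1}$ and $\theta_{u_2}$, controlling each replacement by a Cauchy-Schwarz inequality for positive currents. Set $w := u_1 - u_2 \le 0$; this is bounded because $u_j - \phi$ is bounded for each $j$. Since $\chi(0)=0$ and $\chi$ is non-decreasing with $\chi \in \Cc^1$, we have $\chi'(w) \ge 0$ and $\chi(w) \le 0$. Using the identity $d\bigl(\chi(w)\, d^c w\bigr) = \chi'(w)\, dw \wedge d^c w + \chi(w)\, dd^c w$, integrating against the closed positive current $\theta_{u_3}^{n-1}$ on $X$ (noting $dd^c w = \theta_{u_1} - \theta_{u_2}$), and invoking Stokes yields
\begin{equation*}
\int_X \chi'(w)\, dw \wedge d^c w \wedge \theta_{u_3}^{n-1} \;=\; \int_X (-\chi(w))\,(\theta_{u_2} - \theta_{u_1}) \wedge \theta_{u_3}^{n-1}.
\end{equation*}
The integration by parts is legitimate because $w$ is bounded and each $\theta_{u_j}$ can be approximated by Monge-Amp\`ere masses of bounded $\theta$-psh potentials on the ample locus.

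The inductive core of the proof reduces $\theta_{u_3}^{n-1}$ in the right-hand side above to a combination of products of $\theta_{u_1}$ and $\theta_{u_2}$. Writing $\theta_{u_3} = \theta_{u_j} + dd^c(u_3 - u_j)$ for $j \in \{1,2\}$, I replace each factor of $\theta_{u_3}$ by some $\theta_{u_j}$ at the cost of a correction $dd^c(u_3 - u_j)$. A further integration by parts converts the correction into $\int \chi'(w)\, d(u_3 - u_j) \wedge d^c w \wedge S$, where $S$ is a closed positive current of bidimension $(1,1)$ built from an evolving product of Monge-Amp\`ere factors. Applying Cauchy-Schwarz to the positive semidefinite form $(\varphi,\psi) \mapsto \int \chi'(w)\, d\varphi \wedge d^c\psi \wedge S$ followed by Young's inequality $ab \le \varepsilon a^2 + b^2/(4\varepsilon)$ absorbs part of $\int \chi'(w)\, dw \wedge d^c w \wedge S$ back into the left-hand side, leaving a residual Dirichlet quantity $\int \chi'(w)\, d(u_3 - u_j) \wedge d^c(u_3 - u_j) \wedge S$ to control. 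Using the bound $|s\chi'(s)| \le M'|\chi(s)|$ for $s \le 0$ (valid with $M'=M$ when $\chi \in \mathcal{W}^+_M$ by hypothesis, and with $M'=1$ when $\chi \in \widetilde{\mathcal{W}}^-$ by convexity together with $\chi(0)=0$), and the pointwise inequality $\tilde\chi \le \chi$, this residual is dominated by mixed Monge-Amp\`ere energies of the form $\int (-\tilde\chi(u_j-\phi))\,\theta_{u_{k_1}} \wedge \cdots \wedge \theta_{u_{k_{n-1}}}$, which are uniformly bounded by $C_n\varrho B$ via the assumption $E^0_{\tilde\chi, \theta, \phi}(u_j) \le B$ (after standard comparisons between mixed and top-degree energies, cf.\ \cite{BEGZ, Lu-Darvas-DiNezza-mono}).

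Iterating $n-1$ times removes every factor of $\theta_{u_3}$, and after collecting terms the original integral is bounded by a finite sum of expressions of the form $C_n\,\varrho\, B^{2}\bigl(\varrho\, I^0_\chi(u_1, u_2)\bigr)^{1/2^k}$ for $0 \le k \le n-1$, the dyadic exponent $1/2^k$ recording the number of Cauchy-Schwarz applications on the corresponding induction branch. Packaging these finitely many dyadic moduli into a single continuous increasing function $f:\R_{\ge 0}\to\R_{\ge 0}$ with $f(0)=0$, depending only on $\chi$ and $\tilde\chi$ through the constants in the bound $|s\chi'(s)| \le M'|\chi(s)|$, yields the claimed inequality.

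The main obstacle is executing this iterative Cauchy-Schwarz scheme for a general weight $\chi$ rather than the linear weight $\chi(t) = t$ handled in \cite{Blocki_stability, Guedj-Zeriahi-big-stability}; in the linear case the Dirichlet identity $\int dw \wedge d^c w \wedge T = \int -w\, dd^c w \wedge T$ is essentially self-dual, whereas here each Cauchy-Schwarz step couples $\chi'$ with factors of $d(u_3 - u_j)$, and re-expressing the outcome as a $(-\tilde\chi)$-weighted Monge-Amp\`ere mass requires passing between $\chi'$ and $\chi$ with uniformly bounded constants. This is precisely where the regularity hypothesis $\chi \in \Cc^1$ and the quantitative growth conditions defining $\widetilde{\mathcal{W}}^-$ and $\mathcal{W}^+_M$ intervene; the assumption $\tilde\chi \le \chi$ then lets the resulting constants be absorbed into the a priori bound $B$.
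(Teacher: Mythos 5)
There is a genuine gap, and it sits exactly at the point your last paragraph gestures at without resolving. After Cauchy--Schwarz you are left with the residual term $\int_X \chi'(u_1-u_2)\, d(u_3-u_j)\wedge d^c(u_3-u_j)\wedge S$, and you claim it is dominated by mixed energies $\int_X(-\tilde\chi(u_j-\phi))\,\theta_{u_{k_1}}\wedge\cdots\wedge\theta_{u_{k_{n-1}}}$ using $|s\chi'(s)|\le M'|\chi(s)|$ and $\tilde\chi\le\chi$. That inequality relates $\chi'(s)$ to $\chi(s)/s$ \emph{at the same point} $s$; here the weight is evaluated at $u_1-u_2$ while the Dirichlet form involves $u_3-u_j$, so you can neither pull $\chi'(u_1-u_2)$ out (it is unbounded near $0$ for $\chi\in\mathcal{W}^+_M$, and bounding it by $\chi'(0)$ in the convex case leaves an unweighted Dirichlet energy of $u_3-u_j$ that is not controlled by $B$ and $\varrho$ --- the boundedness of $u_j-\phi$ is only a qualitative hypothesis, later removed by truncation, and no constant may depend on it), nor integrate by parts, since the weight's argument and the differentials involve different functions. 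This is precisely the obstruction the linear-weight proofs of \cite{Blocki_stability,Guedj-Zeriahi-big-stability} do not face, and it is the actual content of the theorem: the paper resolves it in Lemma \ref{le-secondhieuu1u2} by comparing $u_1-u_2$ with $\epsilon(u_1+u_3-2\phi)$ on a decomposition $U(\epsilon)\cup V(\epsilon)\cup\Gamma(\epsilon)$, introducing the interpolated potentials $\tilde u_1,\tilde u_2$, exploiting the monotonicity of $\chi'$ (increasing in the convex case, decreasing in the concave case) to trade $\chi'(u_1-u_2)$ for $\chi'(\epsilon(u_1+u_3-2\phi))$, converting the result into $-\chi(\epsilon(\cdot))$-weighted masses via Lemma \ref{le-uocluongchiepsilon}, and finally optimizing in $\epsilon\approx J/(2\varrho)$. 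Your proposal contains no substitute for this mechanism, so the induction cannot close. A further structural problem: the quantity you propose to ``absorb by Young's inequality'' lives against a current $S$ that changes at each step, so it is not the left-hand side and cannot be absorbed; the paper instead runs an inverse double induction on the exponents $(k,l)$ in $L_{k,l}=\int_X\chi'(\varphi)d\varphi\wedge d^c\varphi\wedge\theta_{u_1}^k\wedge\theta_{u_2}^l\wedge\theta_{u_3}^{n-k-l-1}$, feeding each level's bound into the next through Lemma \ref{le-secondhieuu1u2} (Proposition \ref{pro-mainstabilitylowenergyconvex}).

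Relatedly, your predicted shape of the conclusion is wrong for lower-energy weights: the modulus cannot be a finite sum of dyadic powers $\bigl(I^0_\chi\bigr)^{1/2^k}$ with $f$ depending on $\chi,\tilde\chi$ only ``through constants''. The correct modulus is the $n$-fold (here $(n-1)$-fold) iterate of the function $Q$ built from $Q_0(t)=\sup_{s\le-1}\chi(ts)/\tilde\chi(s)$, which encodes the full asymptotic ratio $\chi/\tilde\chi$ at $-\infty$; only in the concave case does it reduce to $t^{1/2}$ and produce your dyadic exponents. (Also, the argument of the modulus should be the normalized $I^0_\chi(u_1,u_2)$, not $\varrho\, I^0_\chi(u_1,u_2)$.) These discrepancies are symptoms of the same missing ingredient: without the $\epsilon$-interpolation argument there is no way to quantify how the weighted Cauchy--Schwarz step degrades the estimate for a general weight.
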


As far as we know, all of previous works related to Theorem \ref{the-mainstabilitylowenergyconvexintro} only concern with $\chi(t)=t$.  In this case, Theorem \ref{the-mainstabilitylowenergyconvexintro} was known with an explicit $f$ and without $\tilde{\chi}$ if   $\phi$ is of minimal singularity in the cohomology class of $\theta$, by \cite{Blocki_stability,Guedj-Zeriahi-big-stability}. 

The key ingredients in previous versions of  Theorem \ref{the-mainstabilitylowenergyconvexintro} for $\chi(t)=t$ are integration by parts arguments. Direct generalization of such reasoning immediately break down if $\chi  \not= \id$:  in  a more precise but technical level,  the integration by parts arguments give terms like $\chi'(u_1- u_2)d (u_1-u_3) \wedge \dc (u_1- u_3)$, such quantity is easy to bound if $\chi = \id$ (hence $\chi' \equiv 1$), but it is no longer the case if $\chi \not = \id$.  

In order to prove Theorem \ref{the-mainstabilitylowenergyconvexintro}, we still use this strategy but need to use a so-called ``monotonicity argument" from \cite{Vu_Do-MA,Viet-generalized-nonpluri,Viet-convexity-weightedclass} to deal with  general $\chi$. In a nutshell it is about using intensively the pluri-locality of Monge-Amp\`ere operators together with the monotonicity of pluricomplex energy which allow one to bound from above ``Monge-Amp\`ere quantities'' of bad potentials by that of nicer potentials.  This method is a flexible tool to deal with ``low regularity'', and was a key in the proof of the convexity of the class of potentials of finite $\chi$-energy  in \cite{Viet-convexity-weightedclass}, as well as, giving a characterization of the class of Monge-Amp\`ere measures with potentials of finite $\chi$-energy in \cite{Vu_Do-MA}. Moreover in order to deduce Theorem  \ref{th-lowerenergy-kocochuanhoa-phanintro} from Theorem \ref{th-lowerenergy-phanintro}, we use, among other things, an idea from \cite{Guedj-Zeriahi-big-stability} together with a very simple but crucial \emph{lower bound} of the sublevel sets of $\omega$-psh functions; see Lemma \ref{lem vol estimate} below. Such an estimate is of independent interest.   
 \\

\noindent
\textbf{Organization of the paper.} In Section \ref{sec-preli}, we recall the crucial integration by parts formula from \cite{Viet-convexity-weightedclass}, auxiliary facts about weights are also collected there. Theorems \ref{th-lowerenergy-phanintro}, \ref{th-lowerenergy-kocochuanhoa-phanintro}, and \ref{the-mainstabilitylowenergyconvexintro} are proved  in Section \ref{sec-fixedtype}.  We prove Theorems \ref{the-hoitucapacitymassnorm},  \ref{main1}, \ref{main3} and \ref{main2} in Subsection \ref{subsec-stability-estimates-varied}. Proposition \ref{cor-DDLcomplete} is proved at the end of the paper.\\

\noindent
\textbf{Acknowledgments.} We would like to thank Tam\'as Darvas, Vincent Guedj, Henri Guenancia,  Prakhar Gupta,  Hoang Chinh Lu, Tat Dat T\^o, Valentino Tosatti, and Ahmed Zeriahi for fruitful discussions. The research of D.-V. Vu is partly funded by the Deutsche Forschungsgemeinschaft (DFG, German Research Foundation)-Projektnummer 500055552. 
  \\

\section{Preliminaries} \label{sec-preli}

\subsection{Integration by parts} \label{subsec-intebyparts}

In this subsection, we recall the integration by parts formula obtained in \cite[Theorem 2.6]{Viet-convexity-weightedclass}. This formula will play a key role in our proof of main results later.

Let $X$ be a compact K\"ahler manifold.  Let $T_1, \ldots, T_m$ be closed positive $(1,1)$-currents on $X$. Let $T$ be  a closed positive current of bi-degree $(p,p)$ on $X$. The \emph{$T$-relative non-pluripolar product} $\langle \wedge_{j=1}^m T_j \dot{\wedge} T\rangle$ is defined  in a way similar to that of  the usual non-pluripolar product (see \cite{Viet-generalized-nonpluri}). The product $ \langle  \wedge_{j=1}^m T_j \dot{\wedge} T\rangle $ is a closed positive current of bi-degree $(m+p,m+p)$; and the wedge product $ \langle  \wedge_{j=1}^m T_j \dot{\wedge} T\rangle $ as an operator on currents  is  symmetric with respect to $T_1, \ldots, T_m$ and is homogeneous. In latter applications, we will only use the case where $T$ is the non-pluripolar product of some closed positive $(1,1)$-currents, say, $T= \langle T_{m+1} \wedge \cdots \wedge T_{m+l} \rangle$, where $T_j$ is $(1,1)$-currents for $m+1 \le j \le m+l$. In this case, $\langle T_1 \wedge \cdots \wedge T_m \dot{\wedge}T \rangle $ is simply equal to $\langle \wedge_{j=1}^{m+l} T_j \rangle$. We usually remove the bracket $\langle \quad \rangle$ in the non-pluripolar product to ease the notation. 

Recall that a \emph{dsh} function on $X$ is the difference of two quasi-plurisubharmonic (quasi-psh for short) functions on $X$ (see \cite{DS_tm}). These functions are well-defined outside pluripolar sets. Let $v$ be a dsh function on $X$.  Let $T$ be a closed positive current on $X$. We say that $v$ is \emph{$T$-admissible} if  there exist  quasi-psh functions $\varphi_1, \varphi_2$ such that $v= \varphi_1- \varphi_2$  and $T$ has no mass on $\{\varphi_j=-\infty\}$ 
for $j=1,2$. In particular, if $T$ has no mass on pluripolar sets, then every dsh function is $T$-admissible.  

Assume now that $v$ is $T$-admissible.    Let $\varphi_{1}, \varphi_{2}$ be quasi-psh functions such that $v= \varphi_{1}- \varphi_{2}$ and $T$ has no mass on $\{\varphi_{j}=-\infty\}$ for $j=1,2$. Let 
$$\varphi_{j,k}:= \max\{\varphi_{j}, -k \}$$
for every $j=1,2$ and $k \in \N$. Put $v_k:= \varphi_{1,k}- \varphi_{2,k}$. Put
$$Q_k:= d v_k \wedge \dc v_k \wedge T=\ddc v_k^2  \wedge T - v_k\ddc v_k \wedge T.$$
By the plurifine locality with respect to $T$ (\cite[Theorem 2.9]{Viet-generalized-nonpluri}) applied to the right-hand side of the last equality, we have 
\begin{align}\label{eq-localplurifineddc}
\bold{1}_{\cap_{j=1}^2 \{\varphi_{j}> -k\}} Q_k =\bold{1}_{\cap_{j=1}^2\{\varphi_{j}> -k\}} Q_{s}
\end{align}
for every $s\ge k$. We say that $\langle d v \wedge \dc v \dot{\wedge} T \rangle$ is  \emph{well-defined} if the mass of $\bold{1}_{\cap_{j=1}^2 \{\varphi_{j}> -k\}} Q_k$ is uniformly bounded on $k$. In this case, using  (\ref{eq-localplurifineddc}) implies that there exists a positive current $Q$ on $X$ such that for every bounded Borel form $\Phi$ with compact support on $X$ such that 
$$\langle Q,\Phi \rangle  = \lim_{k\to \infty} \langle \bold{1}_{\cap_{j=1}^2 \{\varphi_{j}> -k\}} Q_k, \Phi\rangle,$$
and we define $\langle d v \wedge \dc v \dot{\wedge} T \rangle$ to be the current $Q$.  This agrees with the classical definition if $v$ is the difference of two  bounded quasi-psh functions. One can check  that this definition is independent of the choice of $\varphi_1, \varphi_2$.   By \cite[Lemma 2.5]{Viet-convexity-weightedclass}, if $v$ is bounded, then  $\langle d v \wedge \dc v \dot{\wedge} T \rangle$ is well-defined.

Let $w$ be another $T$-admissible dsh function.  If $T$ is of bi-degree $(n-1,n-1)$, we can also define the current $\langle  dv \wedge \dc w \dot{\wedge} T\rangle$ by a similar procedure as above. More precisely, we say $\langle dv \wedge \dc w \dot{\wedge} T \rangle$ is \emph{well-defined} if  $\langle dv \wedge \dc v \dot{\wedge} T \rangle$, $\langle dw \wedge \dc w \dot{\wedge} T \rangle$, and $\langle d(v+w) \wedge \dc (v+w) \dot{\wedge} T \rangle$ are well-defined. In this case, as in the classical case of bounded potentials, the defining formula for $\langle dv \wedge \dc w \dot{\wedge} T \rangle$ is obvious: 
$$2 \langle dv \wedge \dc w \dot{\wedge} T \rangle= \langle d(v+w) \wedge \dc (v+w) \dot{\wedge} T \rangle- \langle d v \wedge \dc v \dot{\wedge} T \rangle- \langle d w \wedge \dc w \dot{\wedge} T \rangle.$$
As above, if $v,w$ are bounded $T$-admissible, then $\langle dv \wedge \dc w \dot{\wedge} T \rangle$ is well-defined and given by the above formula. The following Cauchy-Schwarz inequality is clear from definition. 

\begin{lemma}\label{le-CSine} Assume that $\langle dv \wedge \dc w \dot{\wedge} T \rangle$ is well-defined. Then for every positive Borel function $\chi$,  we have
$$\int_X \chi \langle dv \wedge \dc w \dot{\wedge} T \rangle  \le \bigg(\int_X \chi \langle dv \wedge \dc v \dot{\wedge} T \rangle \bigg)^{1/2}\bigg(\int_X \chi \langle dw \wedge \dc w \dot{\wedge} T \rangle \bigg)^{1/2}.$$
\end{lemma}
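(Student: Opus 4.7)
The plan is to reduce to the case of bounded $T$-admissible dsh potentials, where the inequality is the classical Cauchy--Schwarz inequality for a positive symmetric bilinear form, and then to pass to the general case by the same truncation procedure that defines $\langle dv \wedge \dc w \dot{\wedge} T \rangle$.

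For bounded $T$-admissible dsh functions $v, w$, the relative non-pluripolar product coincides with the classical wedge product, and for every real $\lambda$ the current
$$\langle d(\lambda v + w) \wedge \dc(\lambda v + w) \dot{\wedge} T\rangle = \lambda^2 \langle dv \wedge \dc v \dot{\wedge} T\rangle + 2\lambda \langle dv \wedge \dc w \dot{\wedge} T\rangle + \langle dw \wedge \dc w \dot{\wedge} T\rangle$$
is positive, hence (being of bi-degree $(n,n)$) a positive measure. Integrating against the positive Borel function $\chi$ produces a nonnegative quadratic polynomial in $\lambda$; the nonnegativity of its discriminant is exactly the desired inequality.

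For the general case, I would write $v = \varphi_1 - \varphi_2$ and $w = \tilde\varphi_1 - \tilde\varphi_2$ with $T$-admissible quasi-psh components, set $v_k := \max\{\varphi_1, -k\} - \max\{\varphi_2, -k\}$ and define $w_k$ analogously, and introduce the increasing Borel sets $E_k := \bigcap_{j=1}^2 \{\varphi_j > -k,\ \tilde\varphi_j > -k\}$. By the plurifine locality relation (\ref{eq-localplurifineddc}) applied to $\lambda v_k + w_k$ for $\lambda \in \{-1, 0, 1\}$, combined with the polarization identity defining the mixed product, the restrictions to $E_k$ of $\langle dv_k \wedge \dc v_k \wedge T\rangle$, $\langle dw_k \wedge \dc w_k \wedge T\rangle$, and $\langle dv_k \wedge \dc w_k \wedge T\rangle$ agree respectively with the corresponding limit currents $\langle dv \wedge \dc v \dot{\wedge} T\rangle$, $\langle dw \wedge \dc w \dot{\wedge} T\rangle$, and $\langle dv \wedge \dc w \dot{\wedge} T\rangle$. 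Applying the bounded inequality to $v_k, w_k$ with weight $\mathbf{1}_{E_k}\chi$ and letting $k \to \infty$, monotone convergence on each of the three (positive) diagonal integrals passes the inequality to the limit, using that $X \setminus \bigcup_k E_k \subset \bigcup_j \bigl(\{\varphi_j = -\infty\} \cup \{\tilde\varphi_j = -\infty\}\bigr)$ carries no mass for any of the three currents by $T$-admissibility.

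The main technical point is that the off-diagonal current $\langle dv \wedge \dc w \dot{\wedge} T\rangle$ is only a signed measure, so monotone convergence cannot be applied to it directly; one instead decomposes it via polarization into a combination of three positive currents, for each of which the truncation limit is controlled by (\ref{eq-localplurifineddc}) together with the well-definedness hypothesis. Everything beyond this point is routine bookkeeping.
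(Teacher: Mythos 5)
Your proof is correct and is essentially the argument the paper intends: the paper gives no written proof beyond saying the inequality is ``clear from definition'', which is exactly your reduction --- classical Cauchy--Schwarz (nonnegativity of the quadratic form in $\lambda$) for bounded potentials on the plurifine-open truncation sets, followed by the limit $k\to\infty$ via plurifine locality and the polarization identity defining the mixed term. No gap; the details you defer (independence of the representation, absence of mass of the limit currents outside the union of the truncation sets, and the trivial infinite-right-hand-side cases) are indeed routine.
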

We put
$$\langle \ddc v \dot{\wedge} T \rangle:= \langle \ddc \varphi_1 \dot{\wedge} T\rangle- \langle \ddc \varphi_2 \dot{\wedge} T\rangle$$
which is independent of the choice of $\varphi_1,\varphi_2$.  The following integration by parts formula is crucial for us later.

\begin{theorem} \label{th-integrabypart}  (\cite[Theorem 2.6]{Viet-convexity-weightedclass} or \cite[Theorem 3.1]{Vu_Do-MA}) Let  $T$ be a closed positive current of bi-degree $(n-1,n-1)$ on $X$. Let $v,w$ be bounded $T$-admissible dsh functions on $X$. If $\chi: \R \to \R$ is a $\cali{C}^3$ function then we have  
\begin{multline}\label{eq-intebypartschi}
\int_X \chi(w) \langle  \ddc v \dot{\wedge} T\rangle=\int_X v \chi''(w) \langle dw \wedge \dc w \dot{\wedge} T\rangle+\int_X v \chi'(w) \langle \ddc w \dot{\wedge} T\rangle\\
= - \int_X \chi'(w) \langle d w \wedge  \dc v \dot{\wedge} T\rangle.
\end{multline}
\end{theorem}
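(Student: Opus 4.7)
The plan is to establish both equalities simultaneously by reducing from bounded $T$-admissible dsh functions to the smooth setting through two layers of approximation. At the smooth level the identities are classical; the intermediate step handles bounded quasi-psh potentials via Demailly regularization; the final step handles unbounded potentials via cutoffs and the plurifine locality \eqref{eq-localplurifineddc}.

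\textbf{Smooth data.} For smooth $v, w$ and a smooth closed $(n-1,n-1)$-form $T$, applying Stokes' theorem to $d\bigl(\chi(w)\,\dc v \wedge T\bigr)$ (using $dT=0$ and $\partial X = \emptyset$) yields
\begin{equation*}
\int_X \chi(w)\,\ddc v \wedge T \;=\; -\int_X \chi'(w)\,dw \wedge \dc v \wedge T,
\end{equation*}
which is the equality of the outer expressions. Applying Stokes twice also gives $\int_X \chi(w)\,\ddc v \wedge T = \int_X v\,\ddc(\chi(w)) \wedge T$ (only the $(1,1)$-part of $dv \wedge \dc \chi(w)$ survives against a bi-degree $(n-1,n-1)$ current, and that part is symmetric), and the Leibniz identity $\ddc(\chi(w)) = \chi'(w)\,\ddc w + \chi''(w)\,dw \wedge \dc w$ produces the middle expression.

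\textbf{Bounded quasi-psh case.} Using $T$-admissibility, write $v = \varphi_1 - \varphi_2$ and $w = \psi_1 - \psi_2$ with $\varphi_j, \psi_j$ quasi-psh. When the $\varphi_j, \psi_j$ are \emph{bounded}, approximate by smooth decreasing quasi-psh $\varphi_{j,\epsilon} \searrow \varphi_j$, $\psi_{j,\epsilon} \searrow \psi_j$ via Demailly's regularization. Apply the smooth identity to $v_\epsilon := \varphi_{1,\epsilon} - \varphi_{2,\epsilon}$ and $w_\epsilon := \psi_{1,\epsilon} - \psi_{2,\epsilon}$ and let $\epsilon \to 0$. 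The Bedford--Taylor continuity of $T$-relative non-pluripolar products along uniformly bounded decreasing quasi-psh sequences, combined with the uniform continuity of $\chi, \chi', \chi''$ on the common range, propagates through each term.

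\textbf{General bounded dsh case and main obstacle.} When the $\varphi_j, \psi_j$ are unbounded (as is generic even though $v, w$ are bounded), truncate by $\varphi_{j,k} := \max\{\varphi_j, -k\}$, $\psi_{j,k} := \max\{\psi_j, -k\}$, and form the bounded $v_k := \varphi_{1,k} - \varphi_{2,k}$, $w_k := \psi_{1,k} - \psi_{2,k}$. The previous step gives the identity for each $(v_k, w_k)$. The principal technical obstacle is passing $k \to \infty$: one must simultaneously control the unsigned cross term $\int_X \chi'(w_k)\,\langle dw_k \wedge \dc v_k \dot{\wedge} T\rangle$ and the fact that $v_k, w_k$ are not uniformly bounded outside the plurifine good set $G_k := \bigcap_j \{\varphi_j, \psi_j > -k\}$. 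Both are resolved by combining the plurifine locality \eqref{eq-localplurifineddc} — which forces every current that appears (including mixed gradient products) to coincide on $G_k$ with its limit counterpart, where $v_k = v$ and $w_k = w$ are bounded — with the Cauchy--Schwarz inequality (Lemma \ref{le-CSine}), which dominates every mixed gradient integral by the diagonal ones whose masses are well-defined by the $T$-admissibility of $v, w$. Since $T$ charges neither $\{\varphi_j = -\infty\}$ nor $\{\psi_j = -\infty\}$, the complement $X \setminus G_k$ carries vanishing $T$-relative mass, and dominated convergence on $G_k$ then yields the identity in the limit.
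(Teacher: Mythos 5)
The paper does not actually prove Theorem \ref{th-integrabypart}: it imports it from \cite[Theorem 2.6]{Viet-convexity-weightedclass} (see also \cite[Theorem 3.1]{Vu_Do-MA}), so there is no internal proof to match; judged on its own, your proposal has a genuine gap at the decisive step, the passage $k\to\infty$ after truncation. Your mechanism for discarding the contribution of $X\setminus G_k$ --- ``$T$ charges neither $\{\varphi_j=-\infty\}$ nor $\{\psi_j=-\infty\}$, hence $X\setminus G_k$ carries vanishing $T$-relative mass, and dominated convergence on $G_k$ concludes'' --- controls the wrong object. The truncated identity is an identity of integrals against the currents $\ddc\varphi_{j,k}\wedge T$, $d\psi_{j,k}\wedge\dc\psi_{j,k}\wedge T$ and their mixed combinations, and these are not dominated by $T$: their mass on $X\setminus G_k$ need not tend to $0$ even though $T(X\setminus G_k)\to 0$. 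Already for $n=1$, $T=1$ and $\varphi_1=\log|z|$ near a point, the mass of $\ddc\max\{\varphi_1,-k\}$ on $\{\varphi_1\le-k\}$ equals the full Lelong mass for every $k$; and taking $\varphi_2=\varphi_1+\rho$ with $\rho$ smooth, positive near the pole, the bounded dsh function $w=\varphi_1-\varphi_2$ satisfies $\int_{\{\varphi_1\le-k<\varphi_2\}}dw_k\wedge\dc w_k\wedge T\ge c>0$ for all $k$, since $w_k=-k-\varphi_2$ there. Plurifine locality identifies the truncated currents with their non-pluripolar counterparts only \emph{on} $G_k$ and says nothing outside, while Lemma \ref{le-CSine} merely converts mixed boundary-layer terms into diagonal ones of the same kind, which, as the example shows, do not vanish. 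The identity survives only because these boundary-layer contributions cancel among the various terms of the truncated identity, and organizing that cancellation (via energy estimates and monotone limits of mixed terms, as in the cited references) is precisely the technical content of the theorem; your argument does not supply it.

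Two smaller points. First, the obstacle you name --- that $v_k,w_k$ are not uniformly bounded off $G_k$ --- is not real: $t\mapsto\max\{t,-k\}$ is $1$-Lipschitz, so $\sup_X|v_k|\le\sup_X|v|$ and likewise for $w_k$; the genuine obstacle is the boundary-layer mass described above. Second, in the bounded-potential step the limit $\epsilon\to0$ in terms such as $\int_X v_\epsilon\,\chi''(w_\epsilon)\,dw_\epsilon\wedge\dc w_\epsilon\wedge T$, where the weight and the current vary simultaneously, needs the standard convergence-in-capacity lemmas (quasi-continuity, uniform bounds, monotonicity of the potentials) to be invoked explicitly; ``Bedford--Taylor continuity'' alone, and Lemma \ref{le-CSine} stated for nonnegative weights, do not cover weights like $\chi'(w_\epsilon)$ without a further (easy, but missing) decomposition.
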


Since the case where $T$ is a non-pluripolar product of $(1,1)$-currents plays an important role in the study of the complex Monge-Amp\`ere equation, we present below an equivalent natural way to define the current $\langle d \varphi \wedge \dc \varphi \dot{\wedge} T \rangle$ in this setting. It is just for the purpose of clarification. 

\begin{lemma}\label{le-bangnhauTnonpluripolar} Let $u_1,\ldots, u_m$ be  negative psh functions on an open subset $U$ in $\C^n$ such that $T:=\langle \ddc u_1 \wedge \cdots \wedge \ddc u_m \rangle$ is well-defined. Let $v$ be the difference of two bounded psh functions on $U$.  For $k \in \N$, put $u_{j,k}:= \max\{u_j, -k\}$ and 
$$T_k:= \ddc u_{1,k} \wedge \cdots \wedge \ddc u_{m,k}.$$
 Then we have 
$$d v \wedge \dc v \wedge T= d v \wedge \dc v \wedge T_k$$
on $\cap_{j=1}^m \{u_j >-k\}$. 
\end{lemma}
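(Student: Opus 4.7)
The plan is to reduce the lemma to the plurifine locality of the generalized non-pluripolar product from \cite[Theorem 2.9]{Viet-generalized-nonpluri}. Since $v$ is bounded, both $dv\wedge\dc v\wedge T$ (defined via the generalized non-pluripolar framework recalled in Subsection \ref{subsec-intebyparts}) and $dv\wedge\dc v\wedge T_k$ (in the classical Bedford--Taylor sense, since all $u_{j,k}$ and the two psh summands of $v$ are bounded) are well-defined currents on $U$, so the claimed equality makes sense.

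First I would write $v=\varphi_1-\varphi_2$ with $\varphi_1,\varphi_2$ bounded psh on $U$, expand $dv\wedge\dc v$ bilinearly, and apply the polarization identity
\[
d\varphi_1\wedge\dc\varphi_2+d\varphi_2\wedge\dc\varphi_1 = d(\varphi_1+\varphi_2)\wedge\dc(\varphi_1+\varphi_2) - d\varphi_1\wedge\dc\varphi_1 - d\varphi_2\wedge\dc\varphi_2
\]
to express both $dv\wedge\dc v\wedge T$ and $dv\wedge\dc v\wedge T_k$ as the \emph{same} $\Z$-linear combination of terms $d\psi\wedge\dc\psi\wedge T$, resp.\ $d\psi\wedge\dc\psi\wedge T_k$, where $\psi$ ranges over the three bounded psh functions $\varphi_1,\varphi_2,\varphi_1+\varphi_2$. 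It thus suffices to show that
\[
d\psi\wedge\dc\psi\wedge T = d\psi\wedge\dc\psi\wedge T_k\quad\text{on }E:=\bigcap_{j=1}^{m}\{u_j>-k\}
\]
for every bounded psh $\psi$ on $U$.

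To handle each such $\psi$, the key device is the identity $d\psi\wedge\dc\psi=\tfrac12\ddc\psi^2-\psi\ddc\psi$ combined with the observation that $\psi^2$ is itself a difference of two bounded psh functions: picking $M>0$ with $-M\le\psi\le 0$, a direct computation shows that $(M+\psi)^2$ is bounded psh (because $\ddc(M+\psi)^2=2(M+\psi)\ddc\psi+2\,d\psi\wedge\dc\psi\ge 0$), while $M^2+2M\psi$ is plainly bounded psh, and $\psi^2=(M+\psi)^2-(M^2+2M\psi)$. Consequently $d\psi\wedge\dc\psi\wedge T$ (resp.\ $d\psi\wedge\dc\psi\wedge T_k$) is a $\Z$-linear combination of terms of the shape (bounded Borel function) times a non-pluripolar product of the currents $\ddc u_j$ (resp.\ $\ddc u_{j,k}$) together with $\ddc$'s of bounded psh functions. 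Since $u_j=u_{j,k}$ on the plurifine open set $E$, each such non-pluripolar product coincides with its $T_k$-counterpart on $E$ by \cite[Theorem 2.9]{Viet-generalized-nonpluri}, and multiplication by a bounded Borel function preserves the identity; assembling the pieces yields the claim.

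The only real obstacle is that $d\psi\wedge\dc\psi$ is not itself the $\ddc$ of a psh function, so the plurifine locality of the non-pluripolar product cannot be invoked directly on $d\psi\wedge\dc\psi\wedge T$. Writing $\psi^2$ as an explicit difference of bounded psh functions, which is possible precisely because $\psi$ is bounded, sidesteps this and reduces everything to the $\ddc$-setting where \cite[Theorem 2.9]{Viet-generalized-nonpluri} applies term by term.
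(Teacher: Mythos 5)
Your argument is correct, but it takes a genuinely different route from the paper's. The paper's proof multiplies by the cutoff $\psi_k:=k^{-1}\max\{u_1+\cdots+u_m,-k\}+1$, observes that $\psi_k T=\psi_k T_k$, and then regularizes $v$ and uses the continuity of Monge--Amp\`ere operators along decreasing sequences of bounded potentials to get $\psi_k\, dv\wedge\dc v\wedge T=\psi_k\, dv\wedge\dc v\wedge T_k$; this gives the equality on $\cap_{j=1}^m\{u_j>-k/(2m)\}$ (where $\psi_k\ge 1/2$), and the set claimed in the lemma is then recovered by plurifine locality among the truncated products $T_k$ and relabeling the index. You instead reduce, via polarization and the identity $d\psi\wedge\dc\psi=\tfrac12\ddc\psi^2-\psi\,\ddc\psi$ combined with the decomposition $\psi^2=(M+\psi)^2-(M^2+2M\psi)$ (indeed $(M+\psi)^2$ is psh because $t\mapsto(M+t)^2$ is convex increasing on $[-M,0]$), to currents of the form $\langle\ddc w\wedge\ddc u_1\wedge\cdots\wedge\ddc u_m\rangle$ with $w$ bounded psh, possibly multiplied by the bounded quasi-continuous factor $\psi$, and then apply the plurifine locality of non-pluripolar products from \cite[Theorem 2.9]{Viet-generalized-nonpluri} directly on $\cap_{j=1}^m\{u_j>-k\}$, where $u_j=u_{j,k}$. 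This avoids any regularization or convergence argument and produces the equality on the full set in one step; the small price is the bilinear expansion of $dv\wedge\dc v\wedge T$ in the psh components of $v$, which you should state explicitly — it is routine here, being consistent with the paper's polarized definition of $\langle dv\wedge\dc w\dot{\wedge} T\rangle$ and the linearity of $f\mapsto\ddc(fT)$ on bounded quasi-continuous $f$, precisely because $v$ is bounded. Both proofs are valid: the paper's is shorter, while yours stays entirely within the $\ddc$-calculus of non-pluripolar products and makes more explicit where the boundedness of $v$ enters.
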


\proof Put 
$$\psi_k:= k^{-1}\max \{u_1+ \cdots+u_m, -k\}+1.$$
Observe $\psi_k T_k= \psi_k T$. Now regularizing $v$ and using the continuity of Monge-Amp\`ere operators of bounded potentials, we obtain
$$\psi_k d v \wedge \dc v \wedge T= \psi_k d v \wedge \dc v \wedge T_k.$$
Hence 
\begin{align*}
d v \wedge \dc v \wedge T =  d v \wedge \dc v \wedge T_k
\end{align*}
on $U:=\cap_{j=1}^m \{u_j >-k/(2m)\}$ (for $\psi_k \ge 1/2$ on $U$). Note that $ d v \wedge \dc v \wedge T_k=  d v \wedge \dc v \wedge T_{k/(2m)}$ on $U$ by the plurifine locality. Thus the desired assertion follows.    This finishes the proof. 
\endproof

Let $T_1,\ldots, T_m$ be closed positive $(1,1)$-currents on $X$.  Let $n:= \dim X$.  \emph{Consider now} 
$$T:= \langle T_1 \wedge \cdots \wedge T_m \rangle.$$
Note that $T$ has no mass on pluripolar sets. Let $\varphi_1,\varphi_2$ be negative quasi-psh function on $X$.  Let $\varphi_{j,k}$ ($j=1,2$) be as before and $v:=\varphi_1- \varphi_2$.  In the moment, we work locally. Let $U$ be an open  small enough local chart (biholomorphic to a polydisk in $\C^n$) in $X$ such that  $T_j= \ddc u_j$ for $j=1,\ldots, m$, where $u_j$ is negative psh functions on $U$. Put $u_{j,k}:= \max\{u_j, -k\}$ for $k \in \N$, and 
$$T_k:= \ddc u_{1,k} \wedge \cdots \wedge \ddc u_{m,k}, \quad Q'_k:= d v_k \wedge \dc v_k \wedge T_k.$$
Put $A_k:= \cap_{j=1}^2 \{\varphi_{j}> -k\} \cap \cap_{j=1}^m \{u_j >-k\}$. By plurifine properties of Monge-Amp\`ere operators, we have 
$$\bold{1}_{A_k}Q'_k= \bold{1}_{A_{k}} Q'_{s}$$
for every $s \ge k$. One can check that the condition that  $(\bold{1}_{A_k} Q'_k)_k$ is of mass bounded uniformly (on compact subsets in $U$) in $k$ is independent of the choice of potentials. 

\begin{proposition}\label{pro-haidinhnghiaddctuongduong}  The current $\bold{1}_{A_k} Q'_k$ is of mass bounded uniformly in $k$ on compact subsets in $U$ for every $U$ (small enough biholomorphic to a polydisk in $\C^n$) if and only if the current $\langle d v \wedge \dc v \dot{\wedge} T \rangle$ is well-defined. In this case we have 
\begin{align}\label{eq-haidinhnghiatuongduong}
\langle d v \wedge \dc v \dot{\wedge} T \rangle= \lim_{k \to \infty} \bold{1}_{A_k} Q'_k.
\end{align}
\end{proposition}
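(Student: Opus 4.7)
The plan is to use Lemma \ref{le-bangnhauTnonpluripolar} to identify the two candidate currents on the plurifine-good set common to both constructions, and then transport monotone mass bounds and weak limits between $\mathbf{1}_{A_k}Q'_k$ and $\mathbf{1}_{B_k}Q_k$ using plurifine locality together with the fact that the currents involved do not charge pluripolar sets. I abbreviate $B_k:=\cap_{j=1,2}\{\varphi_j>-k\}$ and $C_k:=\cap_{j=1}^m\{u_j>-k\}$, so $A_k=B_k\cap C_k$.

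The starting point is to apply Lemma \ref{le-bangnhauTnonpluripolar} to the bounded dsh function $v_k$, which yields $dv_k\wedge\dc v_k\wedge T=dv_k\wedge\dc v_k\wedge T_k$ on $C_k$. Combined with $A_k\subset C_k$ this gives the pointwise identification $\mathbf{1}_{A_k}Q'_k=\mathbf{1}_{A_k}Q_k$. Using this together with the plurifine locality \eqref{eq-localplurifineddc} and the inclusions $A_k\subset A_{k+1}$, $B_k\subset B_{k+1}$, it then follows that both sequences $(\mathbf{1}_{B_k}Q_k)_k$ and $(\mathbf{1}_{A_k}Q'_k)_k$ are monotone increasing as positive Radon currents.

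The crucial technical input is that each $Q_k=dv_k\wedge\dc v_k\wedge T$ does not charge pluripolar sets. I would prove this by writing
\[Q_k=\tfrac{1}{2}\,\ddc v_k^2\wedge T-v_k\,\ddc v_k\wedge T.\]
Since $v_k$ and $v_k^2$ are bounded and $T$ is a non-pluripolar product of closed positive $(1,1)$-currents, the symmetry of the $T$-relative non-pluripolar product from \cite{Viet-generalized-nonpluri} identifies $\ddc v_k\wedge T$ and $\ddc v_k^2\wedge T$ with non-pluripolar products, and such products do not charge pluripolar sets; multiplication by the bounded Borel function $v_k$ preserves this property. Monotone convergence then passes this non-charging to the candidate current $Q:=\lim_k\mathbf{1}_{B_k}Q_k$ (when well-defined).

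With these ingredients, both the equivalence of the two boundedness conditions and the limit formula \eqref{eq-haidinhnghiatuongduong} follow by a short comparison. The inclusion $A_k\subset B_k$ gives $\|\mathbf{1}_{A_k}Q'_k\|_K\le\|\mathbf{1}_{B_k}Q_k\|_K$ in one direction. For the converse I fix $l$ and let $k\to\infty$: plurifine locality \eqref{eq-localplurifineddc} together with the first step yield $\mathbf{1}_{B_l\cap C_k}Q_l=\mathbf{1}_{B_l\cap C_k}Q'_k\le\mathbf{1}_{A_k}Q'_k$ since $B_l\cap C_k\subset A_k$ for $k\ge l$, whence $\|\mathbf{1}_{B_l\cap C_k}Q_l\|_K\le\sup_k\|\mathbf{1}_{A_k}Q'_k\|_K$; the non-charging of $Q_l$ on the pluripolar set $\cup_j\{u_j=-\infty\}$ upgrades this to the same bound for $\|\mathbf{1}_{B_l}Q_l\|_K$. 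Testing the same chain of identifications against arbitrary bounded Borel test forms gives the equality \eqref{eq-haidinhnghiatuongduong} of the two monotone limits. The step I expect to be the main obstacle is Step 3: the non-charging property is essentially built into the non-pluripolar formalism, but one must carefully use the symmetry of the $T$-relative product and keep track of bounded versus unbounded factors so as to reduce to standard non-pluripolar products from the outset.
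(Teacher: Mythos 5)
Your argument is correct, and it rests on the same three pillars as the paper's own proof --- Lemma \ref{le-bangnhauTnonpluripolar}, plurifine locality, and the fact that the bounded-potential currents involved put no mass on pluripolar sets --- but the bookkeeping is genuinely different and in places simpler. The paper never writes the identity $\mathbf{1}_{A_k}Q'_k=\mathbf{1}_{A_k}Q_k$; instead it introduces the quasi-psh cutoffs $\psi_k:=k^{-1}\max\{\varphi_1+\varphi_2+u_1+\cdots+u_m,-k\}+1$ and $\tilde{\psi}_k:=k^{-1}\max\{\varphi_1+\varphi_2,-k\}+1$, works with the weighted currents $\psi_kQ_k$ and $\tilde{\psi}_k\psi_sR$, and passes to an iterated limit (first $s\to\infty$, then $k\to\infty$), using $\psi_k\ge 1-s/k$ on $A_{s/(m+2)}$ to translate between weighted and truncated masses. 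You replace all of this by the direct identification $\mathbf{1}_{A_k}Q'_k=\mathbf{1}_{A_k}Q_k$ coming from Lemma \ref{le-bangnhauTnonpluripolar}, which makes the implication ``well-defined $\Rightarrow$ bounded'' the one-line comparison $\mathbf{1}_{A_k}Q'_k\le\mathbf{1}_{B_k}Q_k$, and you settle the converse and the equality \eqref{eq-haidinhnghiatuongduong} by a two-sided monotone comparison: $\mathbf{1}_{B_l}Q_l\le\lim_k\mathbf{1}_{A_k}Q'_k$ (letting $k\to\infty$ over $C_k$ and using that $Q_l$ does not charge the pluripolar complement) and $\mathbf{1}_{A_k}Q'_k\le\lim_l\mathbf{1}_{B_l}Q_l$, whence the two limits coincide. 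What the paper's weights buy is that one never manipulates indicator functions of the sets built from the local potentials $u_j$; what your route buys is brevity and a transparent monotonicity structure. Two points to tighten: (i) your justification of the non-charging of $Q_k$ is loosely worded --- ``symmetry'' of the relative product is not the relevant property, and before invoking the non-pluripolar formalism for $\ddc(v_k^2)\wedge T$ you must observe that $v_k^2$ is again a bounded dsh function (e.g.\ $v_k^2=2a^2+2b^2-(a+b)^2$ with $a,b$ bounded nonnegative quasi-psh); the cleaner statement is simply that for bounded $v_k$ one has $dv_k\wedge\dc v_k\wedge T=\langle dv_k\wedge\dc v_k\dot{\wedge}T\rangle$, which has no mass on pluripolar sets since $T$ has none --- exactly the fact the paper itself invokes in its converse step; (ii) the definition of well-definedness asks for a mass bound on all of $X$, so your bounds on compact subsets of each chart should be assembled via a finite cover of the compact manifold, a trivial but omitted step.
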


\proof   By writing a smooth form of bi-degree $(n-m-1,n-m-1)$ as the difference of two smooth positive forms, we can assume without loss of generality that $T$ is of bi-degree $(n-1,n-1)$ (hence $m=n-1$).  Assume that $\langle d v \wedge \dc v \dot{\wedge} T \rangle$ is well-defined. We will check that $\bold{1}_{A_k} Q'_k$ is of mass bounded uniformly in $k$ on compact subsets in $U$. Let $\chi$ be a nonnegative smooth function compactly supported on $U$. Put
 $$\psi:= \varphi_1+\varphi_2+ u_1 + \cdots + u_m, \quad \psi_k:= k^{-1}\max\{\psi, -k\}+1.$$
 and $\varphi_{jk}:= \max\{\varphi_j, -k\}$ for $1 \le j \le 2$. Observe that $0 \le \psi_k \le 1$ and  if $\psi_k >0$, then $\varphi_j>-k$ for $1 \le j \le 2$; and 
\begin{align}\label{limit-psiknon}
\psi_k(x) \ge 1- s/k
\end{align}
for  every $x \in A_{s/(m+2)}$ and $1 \le s \le k$.   Recall $v_k:= \varphi_{1k}- \varphi_{2k}$ which is bounded (but not necessarily uniformly in $k$). 
 Observe that $\langle d v \wedge \dc v \dot{\wedge} T \rangle$ has no mass on pluripolar sets because $T$ is so (see for example \cite[Lemma 2.1]{Viet-generalized-nonpluri}). Put $Q''_k:= \psi_k Q_k= \psi_k \bold{1}_{A_k} Q'_k$ By (\ref{limit-psiknon}) and Lemma \ref{le-bangnhauTnonpluripolar}, we have
\begin{align}\label{ine-dvdcvTlimitk} 
\langle d v \wedge \dc v \dot{\wedge} T \rangle &= \lim_{k\to \infty} \psi_k d v_k \wedge \dc v_k \wedge T\\
\nonumber
&= \lim_{k\to \infty} \psi_k d v_k \wedge \dc v_k \wedge T_k =\lim_{k\to \infty}Q''_k
\end{align}
on $U$. On the other hand, by (\ref{limit-psiknon}) again,  we see that the claim that $Q''_k$ is of mass uniformly bounded on compact subsets in $U$ is equivalent to that $\bold{1}_{A_k}Q'_k$ is so. This together with (\ref{ine-dvdcvTlimitk}) yields the desired assertion. 

Conversely, suppose now that  $\bold{1}_{A_k} Q'_k$ is of mass bounded uniformly in $k$ on compact subsets in $U$ for every $U$. Thus there exists a positive current $R$ on $U$ such that $\bold{1}_{A_k} R= \bold{1}_{A_k} Q'_k$ for every $k$ and $U$. Set 
$$\tilde{\psi} := \varphi_1+ \varphi_2, \quad \tilde{\psi}_k:= k^{-1} \max\{\tilde{\psi}, -k\}+1.$$
Let $s \in \N$ with $s \ge k$. Observe
\begin{align*}
\psi_{s} R=\tilde{\psi}_k  \psi_{s} R+ (1- \tilde{\psi}_k) \psi_{s} R.  
\end{align*}
The second term in the right-hand side of the last inequality tends to $0$ (uniformly in $s$) because $\tilde{\psi}_k$ converges pointwise to $1$ outside a pluripolar set and $R$ has no mass on pluripolar sets. Using Lemma \ref{le-bangnhauTnonpluripolar}, we have 
\begin{align*}
\tilde{\psi}_k  \psi_{s} R &= \tilde{\psi}_k  \psi_{s}  d v_{s} \wedge \dc v_{s} \wedge T_{s} \\
&= \tilde{\psi}_k  \psi_{s}  d v_{s} \wedge \dc v_{s} \wedge T= \tilde{\psi}_k  \psi_{s}  d v_{k} \wedge \dc v_{k} \wedge T,
\end{align*} 
here we used the plurifine topology properties with respect to $T$ (see \cite[Theorem 2.9]{Viet-generalized-nonpluri}), thanks to the fact that   $\varphi_{j,k} = \varphi_{j, s}$ on $\{\tilde{\psi}_k \not = 0\}$ for $j=1,2$ (recall $s \ge  k$), and they are bounded psh functions. Letting $s \to \infty$ gives
$$\tilde{\psi}_k R = \tilde{\psi}_k  \bold{1}_{\cup_{j=1}^m \{u_j > - \infty\}}  d v_{k} \wedge \dc v_{k} \wedge T=\tilde{\psi}_k  d v_{k} \wedge \dc v_{k} \wedge T$$
because the current $d v_{k} \wedge \dc v_{k} \wedge T$ has no mass on pluripolar sets. Now letting $k \to \infty$ gives the desired assertion. This finishes the proof.
\endproof

Thanks to Proposition \ref{pro-haidinhnghiaddctuongduong}, we can use the right-hand side of (\ref{eq-haidinhnghiatuongduong}) to define $\langle d v \wedge \dc v \dot{\wedge} T \rangle$ in the case where $T$ is the non-pluripolar product of some closed positive $(1,1)$-currents. By the same reason, in this case, we will use the expression $dv \wedge \dc w \wedge T_1 \wedge \ldots \wedge T_{n-1}$ to denote $\big\langle d v \wedge \dc w \dot{\wedge} \langle T_1 \wedge \cdots \wedge T_{n-1}\rangle \big\rangle$ whenever it is well-defined.

\subsection{Auxiliary facts on weights} \label{subsec-auxi}

In this subsection, we present some facts about weights needed for the proofs of main results.

Recall that  $\widetilde{\mathcal{W}}^-$ is the set of all convex, non-decreasing functions 
$\chi: \R_{\le 0}\rightarrow\R_{\le 0}$ such that $\chi(0)=0$ and $\chi \not \equiv 0$.  Let $M \ge 1$ be a constant and  $\mathcal{W}^+_M$ the usual space of  increasing concave functions $\chi: \R_{\le 0} \to \R_{ \le 0}$ such that $\chi(0)=0$, $\chi \not \equiv 0$, and $|t \chi'(t)| \le M|\chi(t)|$ for every $t \le 0$. 
 We have the following lemmas. 

\begin{lemma} \label{lem approchi} Let $c>0$, $0<\delta<1$ and $\chi: \R\rightarrow\R$ such that 
	$\chi(t)=ct$ for every $t\geq-\delta$ and $\chi|_{(-\infty, 0]}\in \widetilde{\mathcal{W}}^-\cup \mathcal{W}^+_M$ ($M\geq 1$).  Let $g$ be a smooth radial cut-off function supported in $[-1,1]$ on $\R$, \emph{i.e,} $g(t)= g(-t)$ for $t \in \R$, $0 \le g \le 1$ and $\int_\R g(t) dt =1$. Put $g_\epsilon(t):=  \epsilon^{-1}g(\epsilon t)$ for every constant $\epsilon >0$ and $\chi_\epsilon:= \chi * g_\epsilon$ (the convolution of $\chi$ with $g_\epsilon$). Then  the following assertions are true:
	
	$(i)$ if $\chi \in \widetilde{\mathcal{W}}^-$, then   $\chi_\epsilon|_{(-\infty, 0]}\in \widetilde{\mathcal{W}}^-$
	for every $0<\epsilon<\delta$,  $\chi_\epsilon\searrow \chi$ as $\epsilon\searrow 0$
	and $\sup (\chi_{\epsilon}-\chi)\leq c\epsilon$;
	
	$(ii)$ if $\chi \in \mathcal{W}^+_M$ and $0<\epsilon<\delta^2/2$ then  
	$\chi_\epsilon|_{(-\infty, 0]}\in \mathcal{W}^+_{M/(1-\delta)}$. Moreover, if $0<\epsilon<\delta^2/8$ then
	$$\overline{\chi}_{\epsilon}:=\chi_{\epsilon}( \cdot +\epsilon)-c\epsilon\in \mathcal{W}^+_{M/(1-\delta)^2}, \quad 
	\overline{\chi}_{\epsilon}\geq\chi-c\epsilon,$$
	 and $\overline{\chi}_{\epsilon}$ converges uniformly to $\chi$ as $\epsilon \to 0$ on compact subsets in $\R$.
\end{lemma}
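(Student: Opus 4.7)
\textbf{Proof plan for Lemma \ref{lem approchi}.} The strategy exploits two elementary facts: that $\chi$ is \emph{linear} with slope $c$ on $[-\delta,\infty)$, so $\chi_\epsilon$ coincides with $t \mapsto ct$ on the slightly shrunk interval $[-\delta+\epsilon, \delta-\epsilon]$; and that convexity, concavity, monotonicity, and the growth hypothesis on $\chi$ transfer to $\chi_\epsilon$ up to error terms of order $\epsilon$, thanks to the symmetry of $g$. (I shall tacitly work with the standard convention $g_\epsilon(t) = \epsilon^{-1}g(t/\epsilon)$ so that $\supp g_\epsilon \subset [-\epsilon,\epsilon]$ and $\int g_\epsilon = 1$.)

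For part (i), convexity, monotonicity and smoothness of $\chi_\epsilon$ are preserved by convolution with a non-negative symmetric kernel. Whenever $t \in [-\delta+\epsilon, \delta-\epsilon]$ one has $\chi(t-s) = c(t-s)$ throughout $\supp g_\epsilon$, and symmetry of $g$ gives $\chi_\epsilon(t) = ct$; in particular $\chi_\epsilon(0) = 0$, and monotonicity then forces $\chi_\epsilon \le 0$ on $\R_{\le 0}$. For the monotone convergence $\chi_\epsilon \searrow \chi$, writing $\chi_\epsilon(t) = \int \chi(t-\epsilon u) g(u)\, du$ and differentiating in $\epsilon$, the symmetry of $g$ reduces the derivative to $\int_0^\infty u\bigl[\chi'(t+\epsilon u) - \chi'(t - \epsilon u)\bigr] g(u)\, du$, which is non-negative because $\chi'$ is non-decreasing. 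Finally, convexity combined with $\chi(t) = ct$ on $[-\delta,0]$ forces $\chi'(t) \le c$ for all $t \le 0$, so $\chi|_{(-\infty,0]}$ is $c$-Lipschitz; the uniform bound $|\chi(t-s) - \chi(t)| \le c\epsilon$ integrated against $g_\epsilon$ yields $\sup(\chi_\epsilon - \chi) \le c\epsilon$.

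For part (ii), preservation of concavity, monotonicity and $\chi_\epsilon(0)=0$ is as above. The delicate point is the growth condition $|t\chi_\epsilon'(t)| \le \tfrac{M}{1-\delta}|\chi_\epsilon(t)|$. For $t \le -\epsilon$ the identity $|t| = |t-s| + s$ together with $\chi'(t-s) \ge 0$ gives
\begin{align*}
|t|\chi_\epsilon'(t) = \int |t-s|\,\chi'(t-s)\, g_\epsilon(s)\, ds + \int s\,\chi'(t-s)\, g_\epsilon(s)\, ds \le M|\chi_\epsilon(t)| + \epsilon\,\chi_\epsilon'(t),
\end{align*}
where the first integral is controlled by $|(t-s)\chi'(t-s)| \le M|\chi(t-s)|$ on $\R_{\le 0}$ and the second by $|s| \le \epsilon$; rearranging yields $(|t|-\epsilon)\chi_\epsilon'(t) \le M|\chi_\epsilon(t)|$. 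Combined with the trivial case $|t| \le \delta-\epsilon$, where $\chi_\epsilon(t) = ct$, it suffices to check $|t|/(|t|-\epsilon) \le 1/(1-\delta)$ for $|t| \ge \delta-\epsilon$, which rearranges to $\epsilon \le |t|\delta$ and is guaranteed by $\epsilon < \delta^2/2$.

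The shifted mollification $\overline{\chi}_\epsilon$ is engineered to restore the normalization $\overline{\chi}_\epsilon(0) = 0$ (via $\chi_\epsilon(\epsilon) = c\epsilon$ from the linear-region identity) while producing a lower approximant: since $\chi$ is non-decreasing, $\chi_\epsilon(t+\epsilon) = \int \chi(t+\epsilon-s)\, g_\epsilon(s)\, ds \ge \chi(t)$, hence $\overline{\chi}_\epsilon \ge \chi - c\epsilon$. The growth condition for $\overline{\chi}_\epsilon$ follows by applying the already established estimate to $\chi_\epsilon$ at the point $t+\epsilon$ and absorbing the extra factor $|t|/|t+\epsilon| \le 1/(1-\delta)$; the finer threshold $\epsilon < \delta^2/8$ is dictated by the need to absorb the defect $c\epsilon$ in $|\chi_\epsilon(t+\epsilon)| = |\overline{\chi}_\epsilon(t)| + c\epsilon$ into $|\overline{\chi}_\epsilon(t)|$, which is bounded below by roughly $c\delta/2$ on $|t|\ge \delta$ thanks to the lower bound $|\chi(t)| \ge c|t|$ (concavity plus the tangent line at $0$). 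Uniform convergence on compact subsets follows by bracketing $\chi - c\epsilon \le \overline{\chi}_\epsilon \le \chi(\cdot+\epsilon) - c\epsilon$ (the upper bound is Jensen applied to concave $\chi$) and invoking the uniform continuity of $\chi$ on compact subsets. The main nuisance throughout is purely bookkeeping: verifying that the specific thresholds $\delta^2/2$ and $\delta^2/8$ really do produce the stated constants $M/(1-\delta)$ and $M/(1-\delta)^2$, and that the various error terms near the boundary of the linear region can be absorbed without inflating these constants.
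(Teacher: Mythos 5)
Your proposal is correct and follows essentially the same route as the paper: exploit the exact linearity of $\chi_\epsilon$ on the shrunk interval and bound the shift factor $|t|/(|t|-\epsilon)$ (the paper's $t/(t+\epsilon)$) by $1/(1-\delta)$ using $\epsilon<\delta^2/2$; for $\overline{\chi}_\epsilon$ the paper shifts first, checks $h_\epsilon(t)=\chi(t+\epsilon)-c\epsilon\in\mathcal{W}^+_{M/(1-\delta)}$ with linear region $[-\delta/2,0]$, and then re-applies the mollification step with $\delta/2$ (whence $\epsilon<\delta^2/8$), which is only cosmetically different from your applying the established $\chi_\epsilon$ estimate at the shifted point $t+\epsilon$. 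Two sign slips in your write-up are worth fixing but are not gaps: for $t\le-\epsilon$ one has $|t|=|t-s|-s$, not $|t-s|+s$ (your final inequality $(|t|-\epsilon)\chi_\epsilon'(t)\le M|\chi_\epsilon(t)|$ is unaffected), and the relation between $|\chi_\epsilon(t+\epsilon)|$ and $|\overline{\chi}_\epsilon(t)|$ is reversed: for $t+\epsilon\le 0$ both quantities are negative, so $|\overline{\chi}_\epsilon(t)|=|\chi_\epsilon(t+\epsilon)|+c\epsilon\ge|\chi_\epsilon(t+\epsilon)|$, hence no absorption of the defect $c\epsilon$ is needed at all and the constant $M/(1-\delta)^2$ falls out directly from $|t|/(|t|-2\epsilon)\le 1/(1-\delta)$ for $|t|\ge\delta$; your fallback lower bound $|\chi(t)|\ge c|t|$ would also close the estimate, so the argument stands either way.
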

\begin{proof}
	The part $(i)$ follows from \cite[Lemma 2.1]{Vu_Do-MA}. The part $(ii)$ can be obtained more or less by similar arguments as in the last reference. We provide details for readers' convenience. 	
	It is clear that $\chi_{\epsilon}$ is a concave, increasing function with $\chi_{\epsilon}(0)=0$.
	 We will show that
	 \begin{equation}\label{eq1 lem2.7}
	 	\chi_{\epsilon}'(t)\leq\dfrac{M}{1-\delta}\dfrac{\chi_{\epsilon}(t)}{t},
	 \end{equation}
	for every $t<0$ and $0<\epsilon<\delta^2/2$.
	
	If $t<-\dfrac{\delta}{2}$ then we have
	\begin{align*}
	\chi_{\epsilon}'(t)=\int_{-\epsilon}^{\epsilon}\chi'(t-s)g_{\epsilon}(s)ds
		\leq\int_{-\epsilon}^{\epsilon}\dfrac{M \chi (t-s)}{t-s}g_{\epsilon}(s)ds
		&\leq \int_{-\epsilon}^{\epsilon}\dfrac{M \chi (t-s)}{t+\epsilon}g_{\epsilon}(s)ds\\
		&=\dfrac{M\chi_{\epsilon}(t)}{t+\epsilon}\\
		&=\dfrac{M t}{t+\epsilon}\dfrac{\chi_{\epsilon}(t)}{t}\\
		&\leq \dfrac{M}{1-\delta}\dfrac{\chi_{\epsilon}(t)}{t},
	\end{align*}
	for every $0<\epsilon<\delta^2/2$.
	
On the other hand, if $t\geq -\dfrac{\delta}{2}$, then  $\chi_{\epsilon}(t)=\chi(t)=ct$ for every  $0<\epsilon<\delta^2/2$. As a consequence, we have
$$\chi_{\epsilon}'(t)=\chi'(t)\leq \dfrac{M\chi(t)}{t}=M\dfrac{\chi_{\epsilon}(t)}{t}.$$
Thus, \eqref{eq1 lem2.7} follows.	Hence, $\chi_\epsilon|_{(-\infty, 0]}\in \mathcal{W}^+_{M/(1-\delta)}$.

Now, we consider $\overline{\chi}_{\epsilon}$. Since $\chi$ is increasing, one sees that 	$\overline{\chi}_{\epsilon}\geq\chi-c\epsilon$ and $\overline{\chi}_{\epsilon}$ converges uniformly to $\chi$ as $\epsilon \to 0$ on compact subsets in $\R$. It remains
to show that
$\overline{\chi}_{\epsilon}\in \mathcal{W}^+_{M(1+\delta)/(1-\delta)}$ for every $0<\epsilon<\delta^2/8$.
Note that
$$\overline{\chi}_{\epsilon}=h_{\epsilon}* g_{\epsilon},$$
where $h_{\epsilon}(t)=\chi(t+\epsilon)-c\epsilon$. The function $\overline{\chi}_\epsilon(t)$ is concave, increasing
and $\overline{\chi}+\epsilon(0)=0$.

 If $-\delta/2\leq t<0$ then  $h_{\epsilon}(t)=\chi(t)=ct$ for every $0<\epsilon<\delta^2/2$. Therefore
 $$h_{\epsilon}'(t)=\chi'(t)\leq \dfrac{M\chi(t)}{t}=M\dfrac{h_{\epsilon}(t)}{t}.$$
 If $t<-\delta/2$ then 
 \begin{align*}
 	h_{\epsilon}'(t)=\chi'(t+\epsilon)\leq M\dfrac{\chi (t+\epsilon)}{t+\epsilon}
 	\leq M\dfrac{\chi(t+\epsilon)-c\epsilon}{t+\epsilon}
 	=M\dfrac{h_{\epsilon}(t)}{t+\epsilon}
 	&=\dfrac{M t}{t+\epsilon}\dfrac{h_{\epsilon}(t)}{t}\\ 	
 	&\leq \dfrac{M }{1-\delta}\dfrac{h_{\epsilon}(t)}{t},
 \end{align*}
 for every $0<\epsilon<\delta^2/2$.
 
 Then, for every $0<\epsilon<\delta^2/2$, we have $h_{\epsilon}\in \mathcal{W}^+_{M/(1-\delta)}$ 
 and $h_{\epsilon}=ct$ for every $t\geq-\delta/2$. Hence, for every $0<\epsilon<\delta^2/8$, we have
 $$\overline{\chi}_{\epsilon}=h_{\epsilon}* g_{\epsilon}\in 
  \mathcal{W}^+_{\frac{M}{(1-\delta)(1-\delta/2)}}
  \subset  \mathcal{W}^+_{\frac{M}{(1-\delta)^2}}.$$
 The proof is completed.
\end{proof}

\begin{lemma}\label{le-regularizedchi}
	Let $\chi, \tilde{\chi}\in \widetilde{\mathcal{W}}^-\cup \mathcal{W}^+_M$ ($M \ge 1$) such that $\tilde{\chi} \le \chi$.	Then, there exist sequences of functions
	$\chi_j, \tilde{\chi}_j\in\widetilde{\mathcal{W}}^-\cup \mathcal{W}^+_{M_j}$
(with $M_j\searrow M$ as $j\to\infty$ )	satisfying the following conditions:
	\begin{itemize}
		\item 	$\chi_j\in \Cc^{\infty}(\R)$ for every $j$;
		\item  $\chi_j\geq\tilde{\chi}_j$ and $\chi_j\geq\chi-2^{-j}$ for every $j$ big enough;
		\item  $\tilde{\chi}-2^{-j}\leq\tilde{\chi}_j\leq\tilde{\chi}$ on $(-\infty, -1]$ for every $j$ big enough;
		\item  	$\chi_j$ converges uniformly to $\chi$ on compact subsets in $\R_{\le 0}$.
	\end{itemize}
\end{lemma}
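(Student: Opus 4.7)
The plan is to produce $\chi_j$ and $\tilde{\chi}_j$ by combining a preliminary linearization of $\chi, \tilde{\chi}$ near the origin with the mollification scheme of Lemma~\ref{lem approchi}.

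For $\chi_j$, fix a small parameter $\delta_j > 0$ and define $\chi^\flat_{\delta_j}$ to agree with $\chi$ on $(-\infty, -\delta_j]$ and to equal the linear piece $c^\chi_{\delta_j}\, t$ on $[-\delta_j, \infty)$, where $c^\chi_{\delta_j} := -\chi(-\delta_j)/\delta_j$ is the secant slope through the origin. Convexity, respectively concavity, of $\chi$ at the origin shows that $\chi^\flat_{\delta_j}$ stays in the same class as $\chi$, satisfies $\chi^\flat_{\delta_j} \geq \chi$ (resp. $\chi^\flat_{\delta_j} \leq \chi$), and $|\chi^\flat_{\delta_j} - \chi| \leq |\chi(-\delta_j)|$ pointwise on $\R_{\le 0}$. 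Then apply Lemma~\ref{lem approchi}: in the convex case set $\chi_j := (\chi^\flat_{\delta_j})_{\epsilon_j}$, which is smooth, lies in $\widetilde{\mathcal{W}}^-$, and satisfies $\chi_j \geq \chi^\flat_{\delta_j} \geq \chi$ by Jensen; in the concave case set $\chi_j$ equal to the shifted mollification $\overline{(\chi^\flat_{\delta_j})}_{\epsilon_j}$ of Lemma~\ref{lem approchi}(ii), which is smooth, lies in $\mathcal{W}^+_{M/(1-\delta_j)^2}$, and satisfies $\chi_j \geq \chi^\flat_{\delta_j} - c^\chi_{\delta_j}\epsilon_j$. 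Choosing $\delta_j, \epsilon_j \to 0$ so that $|\chi(-\delta_j)| + c^\chi_{\delta_j}\epsilon_j \leq 2^{-j}$ yields $\chi_j \geq \chi - 2^{-j}$ together with $M_j := M/(1-\delta_j)^2 \searrow M$. Uniform convergence of $\chi_j$ to $\chi$ on compact subsets of $\R_{\le 0}$ is standard, arising from the uniform convergence $\chi^\flat_{\delta_j} \to \chi$ combined with the standard smoothing estimate.

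For $\tilde{\chi}_j$, note that smoothness is not required and that the sandwich $\tilde{\chi} - 2^{-j} \leq \tilde{\chi}_j \leq \tilde{\chi}$ is demanded only on $(-\infty, -1]$. In the case $\chi \in \widetilde{\mathcal{W}}^-$, where $\chi_j \geq \chi$, the simplest choice $\tilde{\chi}_j := \tilde{\chi}$ satisfies all requirements since $\chi_j \geq \chi \geq \tilde{\chi}$. In the case $\chi \in \mathcal{W}^+_M$, set $\tilde{\chi}_j := \tilde{\chi} - 2^{-j-1}$ on $(-\infty, -1]$ and extend to $[-1, 0]$ by a curve in the class of $\tilde{\chi}$ joining $(-1, \tilde{\chi}(-1) - 2^{-j-1})$ to $(0, 0)$: in the convex-$\tilde{\chi}$ case I take the linear secant, whose slope $-\tilde{\chi}(-1) + 2^{-j-1}$ exceeds $\tilde{\chi}'_-(-1)$ by the convexity inequality $\tilde{\chi}'_+(-1) \leq -\tilde{\chi}(-1)$ (from $\tilde{\chi}(0) = 0$), so the glued function lies in $\widetilde{\mathcal{W}}^-$; in the concave-$\tilde{\chi}$ case I use a slightly rescaled copy $t \mapsto \lambda_j\, \tilde{\chi}(t/\lambda_j)$ with $\lambda_j \nearrow 1$, adjusted by a small constant, which lies in $\mathcal{W}^+_{M'_j}$ with $M'_j \to M$.

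The inequality $\chi_j \geq \tilde{\chi}_j$ is immediate on $(-\infty, -1]$: in the convex-$\chi$ case $\chi_j \geq \chi \geq \tilde{\chi} = \tilde{\chi}_j$, while in the concave-$\chi$ case $\chi_j \geq \chi - 2^{-j-1} \geq \tilde{\chi} - 2^{-j-1} = \tilde{\chi}_j$. On $[-1, 0]$ the key observation is that $\chi_j$ is essentially linear near $0$ with slope $c^\chi_{\delta_j}$ in the concave-$\chi$ case; the secant-slope monotonicity $c^\chi_{\delta_j} \leq -\chi(-1) \leq -\tilde{\chi}(-1)$ (valid for $\delta_j \leq 1$ when $\chi$ is concave) together with the constructed slope of $\tilde{\chi}_j$ yields $\chi_j \geq \tilde{\chi}_j$ on $[-\delta_j, 0]$, and the remaining interval $[-1, -\delta_j]$ is handled via the lower bound $\chi_j \geq \chi - c^\chi_{\delta_j}\epsilon_j$ combined with $\chi \geq \tilde{\chi}$. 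The principal technical obstacle is the concave-concave case, in which the naive linear gluing of $\tilde{\chi}_j$ at $t = -1$ may fail to preserve concavity, and the rescaling trick above is essential for simultaneously preserving the class and controlling the $M$-constant.
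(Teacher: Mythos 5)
Your construction of $\chi_j$ coincides with the paper's: in both cases one replaces $\chi$ on $[-\delta_j,0]$ by the secant line through the origin (which is exactly $\max\{\chi,c_jt\}$, resp.\ $\min\{\chi,c_jt\}$) and then mollifies via Lemma \ref{lem approchi}, with the shifted mollification in the concave case; the convex-$\chi$ case, where you keep $\tilde{\chi}_j=\tilde{\chi}$, also matches the paper. The genuine gap is your construction of $\tilde{\chi}_j$ when $\chi\in\mathcal{W}^+_M$. In the concave-$\tilde{\chi}$ subcase the design ``keep $\tilde{\chi}-2^{-j-1}$ on $(-\infty,-1]$ and rejoin $(0,0)$ by a concave curve'' is not merely delicate, it is impossible in general: take $\tilde{\chi}(t)=t$. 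The glued function has slope $1$ immediately to the left of $-1$, while the extension must rise from $-1-2^{-j-1}$ to $0$ on $[-1,0]$, hence has average slope $1+2^{-j-1}$; concavity forces the slopes on $[-1,0]$ to be at most the left slope $1$, a contradiction. Your specific rescaling $t\mapsto\lambda_j\tilde{\chi}(t/\lambda_j)$ runs into exactly this: its right derivative at the junction is $\tilde{\chi}'(-1/\lambda_j)\ge\tilde{\chi}'(-1)$ (derivatives of a concave function are non-increasing and $-1/\lambda_j<-1$), so the kink at $t=-1$ is convex, and the ``small constant'' adjustment destroys $\tilde{\chi}_j(0)=0$. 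In the mixed subcase (convex $\tilde{\chi}$ under concave $\chi$) your verification of $\chi_j\ge\tilde{\chi}_j$ on $[-1,-\delta_j]$ is also not what you state: the secant extension can lie strictly above $\tilde{\chi}$ there (e.g.\ $\chi(t)=t$, $\tilde{\chi}(t)=\max\{10t,t-1\}$, where at the kink $t=-1/9$ the secant is far above $\tilde{\chi}$), so ``$\chi_j\ge\chi-c\epsilon_j$ together with $\chi\ge\tilde{\chi}$'' does not dominate $\tilde{\chi}_j$; the inequality is still true, but only via the chord of $\chi$ (concavity gives $\chi(t)\ge-\chi(-1)t\ge(-\tilde{\chi}(-1)+2^{-j-1})t+2^{-j-1}|t|$) together with a choice $\epsilon_j\ll 2^{-j}\delta_j$, an argument absent from your text.

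The paper sidesteps all of this with a one-line device that you could adopt verbatim: in the concave case it sets $\tilde{\chi}_j:=\min\{\tilde{\chi},\chi_j\}$. This is automatically $\le\tilde{\chi}$ and $\le\chi_j$; it satisfies $\tilde{\chi}_j\ge\tilde{\chi}-2^{-j}$ on $(-\infty,-1]$ because $\chi_j\ge\chi-2^{-j}\ge\tilde{\chi}-2^{-j}$; and, for concave $\tilde{\chi}$, a minimum of two functions of class $\mathcal{W}^+_{M_j}$ is again concave, increasing, and satisfies $|th'(t)|\le M_j|h(t)|$ wherever it is differentiable. Replacing your gluing by this minimum (and, if you wish to treat the convex-$\tilde{\chi}$ subcase separately, completing the chord argument above) repairs the proof.
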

\begin{proof}
 We  split the proof into two cases.\\
 
 \noindent
{\bf Case 1}:  $\chi\in \widetilde{\mathcal{W}}^-$.\\
For every $j\geq 1$, we denote 
$$\overline{\chi}_j(t)=\begin{cases}
	\max\{\chi(t), c_jt\}\qquad\mbox{if}\quad t<0,\\
	c_jt\qquad\mbox{if}\quad t\geq 0,
\end{cases}$$
where $$c_j:=\dfrac{-\chi(-2^{-j})}{2^{-j}} \cdot$$
Then $\overline{\chi}_j$ satisfies the hypothesis of Lemma \ref{lem approchi} for $\delta:= 2^{-j}$.
Let $g$ be a smooth radial cut-off function supported in $[-1,1]$ on $\R$, \emph{i.e,} $g(t)= g(-t)$ for $t \in \R$, $0 \le g \le 1$ and $\int_\R g(t) dt =1$. For every $j\geq 1$, we define
\begin{center}
	$\chi_j=\overline{\chi}_j * g_{4^{-j-1}}\quad$ and $\quad \tilde{\chi}_j=\tilde{\chi}$.
\end{center}
By Lemma \ref{lem approchi}, we have $\chi_j$ and $\tilde{\chi}_j$ satisfy the desired conditions.\\

\noindent
{\bf Case 2}:  $\chi\in\mathcal{W}^+_M$.\\
Since $\chi \ge \tilde{\chi}$, we also have $\tilde{\chi}\in\mathcal{W}^+_M$.
Assume that $g$ and $c_j$ are as in Case 1.
 For every $j\geq 1$, we define
 $$\overline{\chi}_j(t)=\begin{cases}
 	\min\{\chi(t), c_jt\}\qquad\mbox{if}\quad t<0,\\
 	c_jt\qquad\mbox{if}\quad t\geq 0,
 \end{cases}$$
and
	$$\chi_j(t)=(\overline{\chi}_j(\cdot+4^{-j-1}) * g_{4^{-j-1}})(t)-c_j4^{-j-1}.$$
We also denote
 $\tilde{\chi}_j(t)=\min\{\tilde{\chi}(t), \chi_j(t)\}$.
  By  Lemma \ref{lem approchi}, we have $\chi_j$ and $\tilde{\chi}_j$ satisfy the desired conditions. The proof is completed.
\end{proof}

Let $\phi$ be a negative $\theta$-psh function. We denote by $\PSH(X, \theta, \phi)$ the set of $\theta$-psh functions $u \le \phi$. Recall that by monotonicity, we always have $\int_X \theta^n_u \le \int_X \theta^n_\phi$, where for every $\theta$-psh function $v$, we put $\theta_v:= \ddc v +\theta$.  We also define by $\mathcal{E}(X,\theta, \phi)$ the set of $u \in \PSH(X, \theta, \phi)$ of full Monge-Amp\`ere mass with respect to $\phi$, \emph{i.e,} $\int_X\theta_u^n = \int_X \theta_\phi^n.$ 

Let $\chi \in \widetilde{\mathcal{W}}^- \cup \mathcal{W}^+_M$, and $u \in \PSH(X, \theta, \phi)$. We put
$$E_{\chi, \theta,\phi}(u):= \int_X - \chi(u- \phi) \theta_u^n.$$
We also define by $\mathcal{E}_\chi(X, \theta,\phi)$ the set of $u \in \mathcal{E} (X, \theta, \phi)$ with $E_{\chi, \theta, \phi}(u)<\infty$.

\begin{lemma} \label{le-sosanhnangnluongintegrabig} Let $\chi \in \widetilde{\mathcal{W}}^-\cup \mathcal{W}^+_M$ and $u_1,u_2 \in \mathcal{E}_\chi(X, \theta, \phi)$. Then there exists a constant $C_1>0$ depending only on $n$ and $M$
	 such that
$$- \int_X \chi(u_1- \phi) \theta_{u_2}^n \le C_1 \sum_{j=1}^2 E_{\chi, \phi, \theta}(u_j),$$
and 
$$E_{\chi,\theta,\phi}\big(au_1+(1-a)\big)u_2 \le C_1 \sum_{j=1}^2 E_{\chi, \phi, \theta}(u_j),$$
for every $0<a<1$.
Furthermore if $u_1 \ge u_2$, then 
$$E_{\chi, \phi, \theta}(u_1) \le C_2 E_{\chi, \phi,\theta} (u_2),$$
for some constant $C_2$ depending only on $n$ and $M$.
\end{lemma}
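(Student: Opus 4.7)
The plan is to reduce all three estimates to weighted mixed Monge--Amp\`ere quantities that can be bounded by the energies $E_{\chi,\theta,\phi}(u_j)$ via the integration-by-parts formula (Theorem~\ref{th-integrabypart}) combined with the Cauchy--Schwarz inequality (Lemma~\ref{le-CSine}). First I would set up the reductions: by Lemma~\ref{le-regularizedchi}, approximate $\chi$ by a decreasing sequence $\chi_k\in\widetilde{\mathcal{W}}^-\cup\mathcal{W}^+_{M_k}$ of $\Cc^1$ weights with $M_k\searrow M$, which allows me to assume $\chi\in\Cc^1$ while only losing a factor close to $1$ in the constants. Then by replacing $u_j$ with the truncations $u_j^{(k)}:=\max\{u_j,\phi-k\}$ and using monotone/plurifine convergence together with the fact that $u_j^{(k)}-\phi$ is bounded, I may further assume $u_1-\phi$ and $u_2-\phi$ are bounded so that all currents of the form $d(u_j-\phi)\wedge d^c(u_j-\phi)\wedge T$ are classically defined.

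For the first inequality, I would write the telescoping identity
\begin{equation*}
\theta_{u_2}^n-\theta_{u_1}^n \;=\; \ddc(u_2-u_1)\wedge \sum_{k=0}^{n-1}\theta_{u_1}^{k}\wedge \theta_{u_2}^{n-1-k},
\end{equation*}
integrate $\int_X -\chi(u_1-\phi)(\theta_{u_2}^n-\theta_{u_1}^n)$ by parts, and apply Cauchy--Schwarz to each of the $n$ resulting terms of the form $\int \chi'(u_1-\phi)\,d(u_1-\phi)\wedge \dc(u_2-u_1)\wedge S$. The diagonal factor $\int \chi'(u_1-\phi)\,d(u_1-\phi)\wedge\dc(u_1-\phi)\wedge S$ is again handled by integration by parts and equals $\int -\chi(u_1-\phi)\,\ddc(u_1-\phi)\wedge S$, which after expanding $\ddc(u_1-\phi)=\theta_{u_1}-\theta_\phi$ and iterating across all mixed factors $S=\theta_{u_1}^{k}\wedge\theta_{u_2}^{n-1-k}$ reduces everything to $\int -\chi(u_i-\phi)\theta_{u_j}^a\wedge\theta_\phi^{n-a}$ for various indices. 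An induction on the number of $\theta_\phi$ factors, using the monotonicity of non-pluripolar products and the key weight property $|t\chi'(t)|\le M|\chi(t)|$ (which holds in $\mathcal{W}^+_M$ by hypothesis, and automatically with $M=1$ for convex $\chi\in\widetilde{\mathcal{W}}^-$ by the tangent-line inequality $t\chi'(t)\ge\chi(t)$), finally brings every term under a constant multiple of $E_{\chi,\theta,\phi}(u_1)+E_{\chi,\theta,\phi}(u_2)$.

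For the second inequality, set $v:=au_1+(1-a)u_2$. In both the concave and convex cases one has the elementary bound $-\chi(v-\phi)\le -\chi(u_1-\phi)-\chi(u_2-\phi)$ (by concavity in $\mathcal{W}^+_M$, and by monotonicity plus $v-\phi\ge\min\{u_1-\phi,u_2-\phi\}$ in $\widetilde{\mathcal{W}}^-$). The non-pluripolar product is multilinear in positive combinations, so
\begin{equation*}
\theta_v^n \;=\; \sum_{k=0}^n \binom{n}{k} a^k(1-a)^{n-k}\,\theta_{u_1}^{k}\wedge\theta_{u_2}^{n-k},
\end{equation*}
and the desired bound follows by iterating the first inequality (applied to each mixed term) after noting that each mixed integral $\int -\chi(u_i-\phi)\,\theta_{u_1}^{k}\wedge\theta_{u_2}^{n-k}$ is controlled by $C(E_\chi(u_1)+E_\chi(u_2))$, a fact proved along the way in paragraph two.

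For the third (monotonicity) inequality, with $u_1\ge u_2$, I would use the classical Guedj--Zeriahi scheme: expand
\begin{equation*}
E_{\chi,\theta,\phi}(u_1)=\int_X -\chi(u_1-\phi)\big(\theta_\phi+\ddc(u_1-\phi)\big)^n,
\end{equation*}
and integrate by parts repeatedly so that each $\ddc(u_1-\phi)$ is moved onto $-\chi(u_1-\phi)$, producing a combination of integrals $\int \chi'(u_1-\phi)\,d(u_1-\phi)\wedge \dc(u_1-\phi)\wedge \theta_\phi^{a}\wedge\theta_{u_1}^{n-1-a}$, each of which is $\le M\int -\chi(u_1-\phi)(\text{MA term})$ thanks to $|t\chi'(t)|\le M|\chi(t)|$ and $u_1-\phi\le 0$. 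Iterating reduces matters to $\int -\chi(u_1-\phi)\theta_\phi^n$; combining $u_1\ge u_2$ (which gives $-\chi(u_1-\phi)\le -\chi(u_2-\phi)$) with the bound $\int -\chi(u_2-\phi)\theta_\phi^n\le C E_{\chi,\theta,\phi}(u_2)$ (a special case of the first inequality applied with $u_1\leftrightarrow u_2$ and $\phi$ in place of one of the potentials) closes the argument.

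The main obstacle will be handling the potentially unbounded differences $u_j-\phi$ when combined with the relative integration-by-parts machinery: each integration by parts step produces a gradient current that is only well-defined via the framework of Subsection~\ref{subsec-intebyparts}, and one has to verify at every truncation level $k$ that the relevant currents have uniformly bounded mass so that passing to the limit $k\to\infty$ is legitimate; this uses Proposition~\ref{pro-haidinhnghiaddctuongduong} together with the finiteness of $E_{\chi,\theta,\phi}(u_j)$.
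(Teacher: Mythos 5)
There are two genuine gaps. First, in your argument for the first inequality, each Cauchy--Schwarz application produces \emph{two} factors, and you only treat the diagonal one $\int_X\chi'(u_1-\phi)\,d(u_1-\phi)\wedge\dc(u_1-\phi)\wedge S$. The off-diagonal factor $\int_X\chi'(u_1-\phi)\,d(u_2-u_1)\wedge\dc(u_2-u_1)\wedge S$ is never addressed, and it is not controllable by any routine device: the weight is evaluated at $u_1-\phi$ while the gradient involves $u_2$, there is no ordering between $u_1$ and $u_2$, and $\chi'$ need not be bounded, so the weight cannot simply be moved onto $u_2$. Taming exactly this kind of mismatched term is the central difficulty of the whole paper (it is what the $\epsilon$-interpolation in Lemma~\ref{le-secondhieuu1u2} and Proposition~\ref{pro-mainstabilitylowenergyconvex} is built for), so it cannot be left implicit. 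Moreover your induction terminates at mixed integrals $\int_X-\chi(u_i-\phi)\,\theta_{u_1}^k\wedge\theta_{u_2}^{n-k}$, which are of the same nature as the quantity being estimated, so the scheme risks circularity. For comparison, the paper does not reprove the first and third inequalities at all: they are quoted from \cite[Lemma 3.2]{Vu_Do-MA} (see also \cite{GZ-weighted}), and only the second inequality is proved there, by a different mechanism: after reducing to $u_j<\phi=0$, split $X=\{u_1<u_2\}\cup\{u_1>2u_2\}$, bound $\theta_v^n$ by mixed products, use plurifine locality to pass to Monge--Amp\`ere currents of max-type potentials, and then invoke the first and third inequalities. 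Your route to the second inequality (pointwise convexity/concavity of $\chi$ plus multilinearity of non-pluripolar products) is fine as such, but it needs the mixed form of the first inequality, which is precisely what your first-inequality paragraph fails to deliver.

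Second, the argument for the third inequality contains a step that is false. For a closed positive $(n-1,n-1)$-current $T$, integration by parts gives $\int_X-\chi(u_1-\phi)\,\ddc(u_1-\phi)\wedge T=\int_X\chi'(u_1-\phi)\,d(u_1-\phi)\wedge\dc(u_1-\phi)\wedge T\ge 0$, so the quantities $\int_X-\chi(u_1-\phi)\,\theta_{u_1}^{j}\wedge\theta_\phi^{n-j}$ \emph{increase} with $j$; iterating integration by parts can therefore only yield $E_{\chi,\theta,\phi}(u_1)\ge\int_X-\chi(u_1-\phi)\,\theta_\phi^n$, never the reverse bound that your reduction requires. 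The reverse bound is genuinely false: take $\theta=\omega$, $\phi=0$, $\chi(t)=t\in\mathcal{W}^+_1\cap\widetilde{\mathcal{W}}^-$, and $u$ a global $\omega$-psh function equal to $\max\{\log|z|,\log\epsilon\}$ in a coordinate chart; then $\int_X(-u)\,\omega_u^n\gtrsim|\log\epsilon|\to\infty$ while $\int_X(-u)\,\omega^n$ stays bounded as $\epsilon\to 0$. So ``iterating reduces matters to $\int_X-\chi(u_1-\phi)\theta_\phi^n$'' cannot be repaired; the monotonicity $E_{\chi,\theta,\phi}(u_1)\le C_2\,E_{\chi,\theta,\phi}(u_2)$ for $u_1\ge u_2$ has to exploit the ordering inside the integration-by-parts scheme itself (attaching weights and gradients to the correct potential at each step, as in the cited proofs), not only in a final comparison of zeroth-order terms.
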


\proof The first and third inequalities are  from \cite[Lemma 3.2]{Vu_Do-MA} (see also \cite[Propositions 2.3, 2.5]{GZ-weighted} for the case where $\phi=0$ and $\theta$ is a K\"ahler form). The second desired inequality was  implicitly proved in  the proof of convexity of finite energy classes in \cite[Proposition 3.3]{Viet-convexity-weightedclass} (in a much broader context). Alternatively one can use properties of envelopes in \cite{Lu-Darvas-DiNezza-mono} to get the same conclusion.  We prove here the second desired inequality using ideas from \cite{Viet-convexity-weightedclass} for readers' convenience. 

 Considering $u_j - \epsilon$ for $\epsilon>0$ instead of $u_j$, and taking $\epsilon \to 0$ later, without loss of generality, we can assume that $u_j <\phi \le 0$ for $j=1.2$. By replacing $u_j,\theta$ by $u_j- \phi$, $\theta_\phi$ respectively, we can assume that $\phi =0$, but $\theta$ is no longer a smooth form but a closed positive $(1,1)$-current. This change causes no trouble for us.  Let $v:= au_1+(1-a)u_2$.  Observe that $X \subset \{u_1 < u_2\} \cup \{u_1 > 2 u_2\}$. 
Hence 
\begin{align*}
	E_{\chi,\theta}(v)
 &\le \int_{\{u_1 < u_2\}} -\chi(v) \theta_v^n +\int_{\{u_1 > 2u_2\}} -\chi(v) \theta_v^n\\
& \le \int_{\{u_1 < u_2\}} -\chi(v) \theta_v^n +\int_{\{u_1 > 2u_2\}}-\chi(v) \theta_v^n\\
& \le  \sum_{k=0}^{n}\left(\int_{\{u_1 < u_2\}} -\chi(u_1) \theta_{u_1}^k \wedge \theta_{u_2}^{n-k} +\int_{\{u_1 > 2u_2\}}-\chi((2-a/2)u_2) \theta_{u_1}^k \wedge \theta_{u_2}^{n-k}\right)\\
& \le  \sum_{k=0}^{n}\int_{\{u_1 < u_2\}} -\chi(u_1) \theta_{u_1}^k \wedge \theta_{\max\{u_1,u_2\}}^{n-k} +\\
&\quad + \sum_{k=0}^n \int_{\{u_1 > 2u_2\}}-2^{k+1} \chi(u_2) \theta_{\max\{u_1/2, u_2\}}^k \wedge \theta_{u_2}^{n-k}\\
& \le  \sum_{k=0}^{n}\left(\int_X -\chi(u_1) \theta_{u_1}^k \wedge \theta_{\max\{u_1,u_2\}}^{n-k} +2^{k+1} \int_X- \chi(u_2) \theta_{\max\{u_1/2, u_2\}}^k \wedge \theta_{u_2}^{n-k}\right)\\
& \lesssim  E_{\chi,\theta}(u_1)+E_{\chi,\theta}(\max\{u_1, (u_1+u_2)/2\})+ E_{\chi,\theta}(u_2)+  E_{\chi, \theta}(\max\{u_1/4+u_2/2, u_2\}) \\
& \lesssim  E_{\chi,\theta}(u_1)+ E_{\chi,\theta}(u_2),
\end{align*}
where the two last estimates hold due to  the first and third inequalities of the lemma.
 This finishes the proof. 
\endproof

\begin{lemma}\label{le-uocluongchiepsilon} 
Let $\chi, \tilde{\chi} \in \widetilde{\mathcal{W}}^-\cup \mathcal{W}^+_M$ such that $\tilde{\chi} \le \chi$ and 
let $u_1, u_2,..., u_{n+1} \in \mathcal{E}(X, \theta,\phi)$. Denote $\varrho:= \vol(\theta_\phi)$. Then there exists a constant $C>0$ depending only on $n$ and $M$ such that  
$$- \int_X \chi(\epsilon (u_1-\phi))\theta_{u_2}\wedge...\wedge\theta_{u_{n+1}} \le C\, B\varrho  (1-\tilde{\chi}(-1))Q_{0}(\epsilon),$$
for every $0<\epsilon\leq 1$, where 
\begin{center}
	$B=1+\max_{1\leq j\leq n+1}E_{\tilde{\chi}, \theta, \phi}(u_j)/\varrho\quad$
	and $\quad Q_{0}(\epsilon):= \sup_{\{t \le -1\}}\dfrac{\chi(\epsilon t)}{\tilde{\chi}(t)}\cdot$
\end{center}

\end{lemma}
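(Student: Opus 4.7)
The plan is to split the domain of integration into $\{u_1-\phi \ge -1\}$ and its complement, and estimate each piece separately. On $\{u_1-\phi \ge -1\}$, the function $\epsilon(u_1-\phi)$ stays in $[-\epsilon,0] \subset [-1,0]$, so by monotonicity of $\chi$ we bound the integrand pointwise by $-\chi(-\epsilon)$. The assumed inequality $\tilde{\chi}\le\chi$ together with the definition of $Q_0$ evaluated at $t=-1$ gives (assuming $\tilde{\chi}(-1)<0$; otherwise $\chi(-\epsilon)=0$ by the sandwich and the contribution vanishes) the bound $-\chi(-\epsilon)\le Q_0(\epsilon)(-\tilde{\chi}(-1))\le Q_0(\epsilon)(1-\tilde{\chi}(-1))$. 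Since $\int_X \theta_{u_2}\wedge\cdots\wedge\theta_{u_{n+1}}=\varrho$ for $u_j\in\mathcal{E}(X,\theta,\phi)$, the contribution from this region is at most $\varrho(1-\tilde{\chi}(-1))Q_0(\epsilon)$.

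On $\{u_1-\phi<-1\}$, I apply the definition of $Q_0$ directly with $t=u_1-\phi$: for such $t$, $\chi(\epsilon t)\ge Q_0(\epsilon)\tilde{\chi}(t)$, hence $-\chi(\epsilon(u_1-\phi))\le Q_0(\epsilon)\bigl(-\tilde{\chi}(u_1-\phi)\bigr)$. The problem is thereby reduced to controlling the mixed-mass quantity
\[
J:=\int_X -\tilde{\chi}(u_1-\phi)\,\theta_{u_2}\wedge\cdots\wedge\theta_{u_{n+1}},
\]
and this will be the main work.

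To handle $J$, the natural idea is a polarization argument that converts mixed masses to a single Monge-Amp\`ere. Set $v:=\tfrac{1}{n}(u_2+\cdots+u_{n+1})$. Since each $u_j\le\phi$ and each $u_j$ is $\theta$-psh, $v\in\PSH(X,\theta,\phi)$ and $\theta_v=\tfrac{1}{n}(\theta_{u_2}+\cdots+\theta_{u_{n+1}})$. Using multilinearity of the non-pluripolar product and positivity of every mixed term in the multinomial expansion of $(n\theta_v)^n$, one obtains the pointwise inequality of measures
\[
\theta_{u_2}\wedge\cdots\wedge\theta_{u_{n+1}}\le \tfrac{n^n}{n!}\,\theta_v^n.
\]
By the convexity of finite-energy classes (the second part of Lemma \ref{le-sosanhnangnluongintegrabig}, iterated), $v\in\mathcal{E}_{\tilde{\chi}}(X,\theta,\phi)$ with $E_{\tilde{\chi},\theta,\phi}(v)\le C(n,M)\max_j E_{\tilde{\chi},\theta,\phi}(u_j)$. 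Applying the first inequality of Lemma \ref{le-sosanhnangnluongintegrabig} (with weight $\tilde{\chi}$) to $u_1$ and $v$ then yields
\[
J\le \tfrac{n^n}{n!}\int_X -\tilde{\chi}(u_1-\phi)\,\theta_v^n \le C(n,M)\bigl(E_{\tilde{\chi},\theta,\phi}(u_1)+E_{\tilde{\chi},\theta,\phi}(v)\bigr)\le C(n,M)\,B\varrho.
\]

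Combining the two pieces gives
\[
-\int_X\chi(\epsilon(u_1-\phi))\,\theta_{u_2}\wedge\cdots\wedge\theta_{u_{n+1}}\le \varrho(1-\tilde{\chi}(-1))Q_0(\epsilon)+ C(n,M)\,B\varrho\, Q_0(\epsilon),
\]
and since $B\ge 1$ and $1-\tilde{\chi}(-1)\ge 1$, the right-hand side is bounded by $C(n,M)\,B\varrho(1-\tilde{\chi}(-1))Q_0(\epsilon)$, which is the desired estimate. The main obstacle, as anticipated, is the mixed-mass inequality bounding $J$; the polarization-to-$\theta_v^n$ reduction together with the convexity/stability estimates already proved for $\mathcal{E}_{\tilde{\chi}}$ is what makes the argument go through with a constant depending only on $n$ and $M$.
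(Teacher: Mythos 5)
Your proof is correct, and its skeleton is the same as the paper's: split into $\{u_1\ge\phi-1\}$ and $\{u_1<\phi-1\}$, bound the first piece by $-\chi(-\epsilon)\varrho\le -\varrho Q_0(\epsilon)\tilde{\chi}(-1)$, and on the second piece use the definition of $Q_0$ to replace $\chi(\epsilon(u_1-\phi))$ by $Q_0(\epsilon)\tilde{\chi}(u_1-\phi)$. The only genuine divergence is how the resulting mixed-mass integral $\int_X-\tilde{\chi}(u_1-\phi)\,\theta_{u_2}\wedge\cdots\wedge\theta_{u_{n+1}}$ is controlled: the paper simply invokes Lemma \ref{le-sosanhnangnluongintegrabig} (in effect the mixed-product version of that estimate, which comes from the cited reference \cite{Vu_Do-MA}, since the lemma as stated in the paper only treats the pure power $\theta_{u_2}^n$), whereas you reduce to the pure-power case by averaging, $\theta_{u_2}\wedge\cdots\wedge\theta_{u_{n+1}}\le\frac{n^n}{n!}\theta_v^n$ with $v=\frac1n(u_2+\cdots+u_{n+1})$, and then combine the first (comparison) and second (convexity) inequalities of Lemma \ref{le-sosanhnangnluongintegrabig}. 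Your detour is slightly longer and costs a worse (but still $n,M$-dependent) constant through the factor $n^n/n!$ and the iterated convexity bound, but it has the merit of being self-contained with respect to the lemma exactly as stated; note only that you are also implicitly using that mixed non-pluripolar masses of potentials in $\mathcal{E}(X,\theta,\phi)$ all equal $\varrho$ (needed both for the first region and to see $v\in\mathcal{E}(X,\theta,\phi)$), which is the same standard monotonicity fact the paper uses without comment.
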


\proof  Let $L$ be the left-hand side of the desired inequality. 
We have
\begin{align*}
L  & \le - \int_{\{u_1 \ge\phi-1\}} \chi(\epsilon (u_1-\phi))  \theta_{u_2}\wedge...\wedge\theta_{u_{n+1}} 
- \int_{\{u_1 < \phi-1\}} \chi(\epsilon (u_1-\phi))  \theta_{u_2}\wedge...\wedge\theta_{u_{n+1}}\\
& \le  -\chi(-\epsilon)\varrho- Q_{0}(\epsilon)\int_{\{u_1 <\phi -1\}} \tilde{\chi}(u_1-\phi) \theta_{u_2}\wedge...\wedge\theta_{u_{n+1}}\\
&\le -\varrho Q_{0}(\epsilon)\tilde{\chi}(-1) - Q_{0}(\epsilon)\int_{X} \tilde{\chi}(u_1-\phi) \theta_{u_2}\wedge...\wedge\theta_{u_{n+1}}\\
&\le  -\varrho Q_{0}(\epsilon)\tilde{\chi}(-1)+C Q_{0}(\epsilon)\max_{1\leq j\leq n+1}E_{\tilde{\chi}, \theta, \phi}(u_j),\\
\end{align*}
where $C>0$ depends only on $n$ and $M$. The last estimate holds due to Lemma \ref{le-sosanhnangnluongintegrabig}.
Thus the desired inequality follows. 
\endproof

By the convexity/concavity and by the assumption $\tilde{\chi}\leq\chi$, we have
\begin{equation}
	\begin{cases}
		Q_{0}(\epsilon)\geq\epsilon Q_{0}(1)\quad\mbox{if}\quad\chi\in\widetilde{\mathcal{W}}^{-},\\
		Q_{0}(\epsilon)\leq\epsilon Q_{0}(1)\quad\mbox{if}\quad\chi\in\mathcal{W}_M^{+},
	\end{cases}
\end{equation}
for every $0<\epsilon \le 1$.
  Moreover, if $\chi\in\widetilde{\mathcal{W}}^{-}$ and $\chi(t)/\tilde{\chi}(t)\rightarrow 0$ as $t\rightarrow-\infty$, then by the definition of $Q_{0}$, we also have
\begin{equation}\label{eq chantrenQ2}
	Q_{0}(\epsilon)\leq \dfrac{\chi (-\epsilon^{1/2})}{\tilde{\chi}(-1)}
	+\sup_{\{t \le -\epsilon^{-1/2}\}}\dfrac{\chi(t)}{\tilde{\chi}(t)}\stackrel{\epsilon\to 0^+}{\longrightarrow} 0.
\end{equation}

Let $u_1,u_2 \in \mathcal{E}_\chi(X, \theta, \phi)$, and   
  $v:= \max\{u_1, u_2\}$. Put
$$\nu(u_1,u_2):= \chi(-|u_1- u_2|) (\theta_{u_2}^n- \theta_{u_1}^n),$$
and 
\begin{align} \label{eq-dinhgnhiaIchi}
I_\chi(u_1,u_2) &:= \int_{\{u_1< u_2\}} \nu(u_1,u_2)+\int_{\{u_1> u_2\}} \nu(u_2,u_1) = \int_X \nu(u_1,v) + \int_X \nu(u_2,v).
\end{align}

\begin{proposition}\label{prop Ichi positive}
	Let $\chi \in \widetilde{\mathcal{W}}^- \cup \mathcal{W}^+_M$.
Let $\phi$ is a negative $\theta$-psh function and $u_1, u_2\in \mathcal{E}_\chi(X, \theta,\phi)$.
	Then
	$$I_\chi(u_1,u_2)\geq 0.$$
\end{proposition}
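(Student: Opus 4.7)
The plan is to treat the two summands of $I_\chi(u_1,u_2)$ separately; by the evident symmetry $u_1\leftrightarrow u_2$, it suffices to show that
\[
J := \int_{\{u_1<u_2\}} \chi\bigl(-(u_2-u_1)\bigr)\,(\theta_{u_2}^n-\theta_{u_1}^n) \;\ge\; 0.
\]
Set $F(s):=-\chi(-s)$ for $s\ge 0$. Because $\chi\in\widetilde{\mathcal W}^-\cup\mathcal W^+_M$ is non-decreasing on $\R_{\le 0}$ with $\chi(0)=0$ (this is automatic for $\widetilde{\mathcal W}^-$; for $\mathcal W^+_M$ the function is increasing by definition), $F$ is non-negative, non-decreasing, and vanishes at $0$. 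Thus we have the Lebesgue--Stieltjes representation $F(s)=\int_{[0,s)} dF(r)$, with $dF\ge 0$.

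Inserting this representation into $-J$, truncating $F_k:=\min(F,k)$ to ensure absolute integrability against the positive and negative parts of the signed measure $\theta_{u_2}^n-\theta_{u_1}^n$, and applying Fubini yields
\[
-\int_{\{u_1<u_2\}} F_k(u_2-u_1)\,(\theta_{u_2}^n-\theta_{u_1}^n) \;=\; \int_0^\infty \Bigl[\int_{\{u_1<u_2-r\}}(\theta_{u_2}^n-\theta_{u_1}^n)\Bigr] dF_k(r).
\]
The passage $k\to\infty$ is justified by monotone convergence, after observing the pointwise bound $0\le F(u_2-u_1)\le -\chi(u_1-\phi)$ (since $0\le u_2-u_1\le \phi-u_1$ on $\{u_1<u_2\}$ and $F$ is non-decreasing), which is integrable against $\theta_{u_1}^n+\theta_{u_2}^n$ thanks to $u_1,u_2\in\mathcal E_\chi(X,\theta,\phi)$ combined with Lemma~\ref{le-sosanhnangnluongintegrabig}.

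The key step is then the pointwise inequality
\[
\int_{\{u_1<u_2-r\}}(\theta_{u_2}^n-\theta_{u_1}^n)\;\le\; 0\qquad\text{for every }r\ge 0.
\]
This is precisely the comparison principle applied to the pair $(u_1,\,u_2-r)$. The relevant verification is that $u_2-r$ still belongs to $\mathcal E(X,\theta,\phi)$: since $\phi\le 0$, $u_2\le\phi$ and $r\ge 0$ we have $u_2-r\le\phi$; and since adding a constant does not change $\theta_{u_2-r}=\theta_{u_2}$, the total mass $\int_X\theta_{u_2-r}^n=\int_X\theta_{u_2}^n=\int_X\theta_\phi^n$ is preserved. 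With both functions in the same full-mass class, the generalised comparison principle of Lu--Darvas--DiNezza~\cite{Lu-Darvas-DiNezza-mono} gives $\int_{\{u_1<u_2-r\}}\theta_{u_2-r}^n\le \int_{\{u_1<u_2-r\}}\theta_{u_1}^n$, as claimed.

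Combining the last two displays, every integrand on the right is $\le 0$ and $dF\ge 0$, hence $-J\le 0$; the symmetric argument on $\{u_1>u_2\}$ gives non-negativity of the other summand, and so $I_\chi(u_1,u_2)\ge 0$. The only genuinely delicate point I expect is the justification of Fubini when $\chi$ (hence $F$) is unbounded, but the truncation-and-monotone-convergence scheme above, together with the finite energy hypothesis entering via Lemma~\ref{le-sosanhnangnluongintegrabig}, handles it cleanly.
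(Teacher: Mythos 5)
Your proof is correct and follows essentially the same route as the paper's: you write the weight as an integral over thresholds (your Lebesgue--Stieltjes layer-cake is the paper's $-\chi(t)=\int_t^0\chi'(s)\,ds$), apply Fubini, and reduce to the comparison principle for the full-mass class, which gives $\int_{\{u_1<u_2-r\}}(\theta_{u_2}^n-\theta_{u_1}^n)\le 0$ for every $r\ge 0$ (the paper cites this as \cite[Lemma 2.3]{Lu-Darvas-DiNezza-logconcave}). Your truncation and domination argument justifying Fubini via Lemma \ref{le-sosanhnangnluongintegrabig} is a welcome extra care that the paper leaves implicit, but it does not change the substance of the argument.
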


\begin{proof}
	Denote $\mu=\theta_{u_2}^n- \theta_{u_1}^n$.
Since $\chi$ is absolutely continuous, we have $\chi$ is differentiable almost everywhere and 
$-\chi(t)=\int_t^0\chi'(s)ds$ for every $t<0$.
Hence
	\begin{align*}
		\int_{\{u_1< u_2\}} \nu(u_1,u_2)&=-\int_{\{u_1< u_2\}}\left(\int_{u_1-u_2}^0\chi'(t)dt\right)d\mu\\
		&=-\int_{\{u_1< u_2\}}\left(\int_{-\infty}^0\chi'(t)\mathbf{1}_{\{u_1<u_2+t\}}dt\right)d\mu\\
		&=-\int_{-\infty}^0\chi'(t)\mu\{u_1<u_2+t\}dt.
\end{align*}
Moreover, it follows from \cite[Lemma 2.3]{Lu-Darvas-DiNezza-logconcave} that $\mu\{u_1<u_2+t\}\leq 0$
for every $t\leq 0$. Hence
$$\int_{\{u_1< u_2\}} \nu(u_1,u_2)=-\int_{-\infty}^0\chi'(t)\mu\{u_1<u_2+t\}dt\geq 0.$$
Similarly, we have
$$\int_{\{u_2< u_1\}} \nu(u_2, u_1)\geq 0.$$
Thus $$I_\chi(u_1,u_2)= \int_{\{u_1< u_2\}} \nu(u_1,u_2)+\int_{\{u_2< u_1\}} \nu(u_2, u_1) \geq 0.$$
\end{proof}

\section{Stability estimates for fixed singularity type} \label{sec-fixedtype}

\subsection{Main results} 

Let $\chi, \tilde{\chi} \in \widetilde{\mathcal{W}}^-\cup\mathcal{W}_M^+$ ($M\geq 1$) such that $\tilde{\chi} \le \chi$.   For each constant $t\geq 0$,
 we denote
\begin{equation}\label{eq QB}
		Q(t)= Q_{\chi,\tilde{\chi}}(t):=
	\begin{cases}
		1 \quad\mbox{if}\quad t\ge 1,\\
			\left(Q_{0}(t)/Q_{0}(1)\right)^{1/2}\quad\mbox{if}\quad 0<t<1\quad\mbox{and}\quad\chi\in\widetilde{\mathcal{W}}^{-},\\
		t^{1/2}\quad\mbox{if}\quad 0<t<1\quad\mbox{and}\quad\chi\in\mathcal{W}_M^{+},\\
		\lim_{s\to 0^+}Q(s)\quad\mbox{if}\quad t=0.
	\end{cases}
\end{equation}
where $Q_{0}$ is defined as in Lemma \ref{le-uocluongchiepsilon}. We remove the subscript $\chi,\tilde{\chi}$ from $Q_{\chi,\tilde{\chi}}$ if $\chi, \tilde{\chi}$ are clear from the context.  Note that $Q$ is increasing continuous function in $t$ and 
\begin{equation}
	Q(0)=0\quad\mbox{if either }\quad \chi, \tilde{\chi} \in \mathcal{W}_M^+\quad\mbox{or}\quad
	\lim\limits_{t\to -\infty}\dfrac{\chi(t)}{\tilde{\chi}(t)}=0.
\end{equation}

  Now, we state the main results of this section. For the convenience, we normalize energies with respect to $\varrho:= \int_X \theta_\phi^n$ as follows
  $$E^0_{\tilde{\chi}, \theta, \phi}:=\varrho^{-1}E_{\tilde{\chi}, \theta, \phi}, \quad I^0_\chi(u_1,u_2)=\varrho^{-1}I_\chi(u_1,u_2).$$

\begin{theorem} \label{th-lowerenergy}  Let $\theta$ be a closed smooth real $(1,1)$-form  and $\phi$ be a  negative $\theta$-psh function such that $\varrho:=\vol(\theta_\phi)>0$.  Let $\chi, \tilde{\chi} \in \widetilde{\mathcal{W}}^-\cup\mathcal{W}^+_M$ ($M\geq 1$) such that $\tilde{\chi} \le \chi$. Let $B \ge 1$ be a constant and let $u_j, \psi_j \in \mathcal{E}(X, \theta, \phi)$ satisfy $u_1 \le u_2$ and 
$$E^0_{\tilde{\chi}, \theta, \phi}(u_j)+E^0_{\tilde{\chi}, \theta, \phi}(\psi_j) \le B,$$
for $j=1,2$. Then there exists a constant $C_n>0$ depending only on $n$ and $M$ such that 
\begin{align}\label{ine-thlowerenergy}
\int_X  -\chi (u_1-u_2)  (\theta_{\psi_1}^n- \theta_{\psi_2}^n) \le 
C_n\varrho B^2(1-\tilde{\chi}(-1))^2 Q^{\circ n}\big(I^0_\chi(u_1,u_2)\big),
\end{align}
where   $Q$ is defined by \eqref{eq QB}, and $Q^{\circ n}:= Q \circ Q \circ \cdots \circ Q$ ($n$-iterate of $Q$).
\end{theorem}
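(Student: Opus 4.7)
I plan to deduce Theorem~\ref{th-lowerenergy} from the gradient estimate of Theorem~\ref{the-mainstabilitylowenergyconvexintro} (equivalently, Proposition~\ref{pro-mainstabilitylowenergyconvex}) by one integration by parts combined with the Cauchy--Schwarz inequality of Lemma~\ref{le-CSine}. The $n$-fold composition $Q^{\circ n}$ on the right-hand side of \eqref{ine-thlowerenergy} arises as the square root of the bound furnished by the gradient estimate. Standard reductions come first: by Lemma~\ref{le-regularizedchi} one can approximate $\chi$ from above by smooth weights $\chi_j \in \cali{C}^\infty$ with $\tilde{\chi}_j \le \chi_j$ and $\chi_j \to \chi$ on compacts of $\R_{\le 0}$, so after passing to the limit one may assume $\chi \in \cali{C}^\infty(\R)$; and via the canonical bounded approximants $u_j^{(k)} := \max\{u_j, \phi-k\}$, $\psi_j^{(k)} := \max\{\psi_j, \phi-k\}$, together with the monotone convergence of non-pluripolar products \cite{Lu-Darvas-DiNezza-mono,Viet-generalized-nonpluri}, one reduces to the case where $u_j - \phi$ and $\psi_j - \phi$ are bounded.

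\textbf{Main decomposition.} Writing
$$\theta_{\psi_1}^n - \theta_{\psi_2}^n = \ddc(\psi_1-\psi_2) \wedge S, \qquad S := \sum_{k=0}^{n-1} \theta_{\psi_1}^k \wedge \theta_{\psi_2}^{n-1-k},$$
and applying Theorem~\ref{th-integrabypart} to each summand of $S$ with $w = u_1-u_2$, $v = \psi_1-\psi_2$, yields
$$\int_X -\chi(u_1-u_2)(\theta_{\psi_1}^n - \theta_{\psi_2}^n) = \int_X \chi'(u_1-u_2)\, d(u_1-u_2) \wedge \dc(\psi_1-\psi_2) \wedge S.$$
Lemma~\ref{le-CSine} then bounds this by $G_u^{1/2} G_\psi^{1/2}$, where $G_u$, $G_\psi$ are the analogous integrals with $d(u_1-u_2) \wedge \dc(u_1-u_2)$ and $d(\psi_1-\psi_2) \wedge \dc(\psi_1-\psi_2)$ in place of $d(u_1-u_2) \wedge \dc(\psi_1-\psi_2)$. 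The term $G_u$ is exactly the quantity Theorem~\ref{the-mainstabilitylowenergyconvexintro} is designed to control: although stated there with a pure power $\theta_{u_3}^{n-1}$, Proposition~\ref{pro-mainstabilitylowenergyconvex} applies equally well to mixed products $\theta_{\psi_1}^k \wedge \theta_{\psi_2}^{n-1-k}$, since the only input needed is the $\tilde{\chi}$-energy bounds $E^0_{\tilde{\chi},\theta,\phi}(\psi_j) \le B$. Summing over $k$ produces an estimate of the form $G_u \le C_n \varrho B^2 (1-\tilde{\chi}(-1))^2 \bigl( Q^{\circ n}(I^0_\chi(u_1,u_2)) \bigr)^2$.

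\textbf{The $G_\psi$ factor, assembly, and main obstacle.} For $G_\psi$, the factor $\chi'(u_1-u_2)$ is either uniformly bounded on $\R_{\le 0}$ (when $\chi \in \widetilde{\mathcal{W}}^-$, since $\chi'$ is then non-negative and increasing) or satisfies $|t\chi'(t)| \le M|\chi(t)|$ (when $\chi \in \mathcal{W}^+_M$); in either case, the identity $dv \wedge \dc v = \tfrac12 \ddc(v^2) - v\,\ddc v$ with $v = \psi_1-\psi_2$ together with one further integration by parts reduces $G_\psi$ to a finite sum of mixed Monge--Amp\`ere integrals of bi-degree $(n,n)$, each bounded by $C\varrho B^2 (1-\tilde{\chi}(-1))^2$ via Lemma~\ref{le-uocluongchiepsilon}. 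Multiplying $G_u^{1/2}$ by $G_\psi^{1/2}$ then gives \eqref{ine-thlowerenergy}. The main obstacle is the bound on $G_u$: for the classical linear weight $\chi = \mathrm{id}$ of \cite{Blocki_stability,Guedj-Zeriahi-big-stability} the integrand $\chi'(u_1-u_2) \equiv 1$ makes Cauchy--Schwarz essentially routine, but for general $\chi \in \widetilde{\mathcal{W}}^- \cup \mathcal{W}^+_M$ the appearance of $\chi'$ and $\chi''$ under the integral sign is delicate; handling it via the monotonicity argument of \cite{Viet-convexity-weightedclass,Vu_Do-MA} is precisely what forces the iterated $Q^{\circ n}$, rather than a single square root, in the final bound.
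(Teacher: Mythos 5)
Your reduction steps and your treatment of $G_u$ are fine (the inductive proof of Proposition \ref{pro-mainstabilitylowenergyconvex} does extend to mixed products, and $\sqrt{Q^{\circ(n-1)}}\le Q^{\circ n}$ since $Q(s)^2\ge s$ for $s\le 1$), but the proof breaks at the Cauchy--Schwarz splitting, because the second factor $G_\psi=\int_X \chi'(u_1-u_2)\, d(\psi_1-\psi_2)\wedge \dc(\psi_1-\psi_2)\wedge S$ is not controlled by the hypotheses --- it can even be infinite. The theorem only assumes a $\tilde{\chi}$-energy bound, and $\tilde{\chi}\in\widetilde{\mathcal{W}}^-$ may be \emph{bounded}, in which case every element of $\mathcal{E}(X,\theta,\phi)$ satisfies the hypothesis; but $\int_X d(\psi_1-\psi_2)\wedge\dc(\psi_1-\psi_2)\wedge S$ is (after integration by parts) a linear-weight quantity of the type $\int_X(\psi_2-\psi_1)(\theta_{\psi_1}^n-\theta_{\psi_2}^n)$, which is finite only for potentials of finite $\mathcal{E}^1$-type energy. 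Already in the full-mass case $\phi=V_\theta$, taking $\psi_2=V_\theta$ and $\psi_1\in\mathcal{E}(X,\theta)\setminus\mathcal{E}^1(X,\theta)$ makes $G_\psi=+\infty$ while all hypotheses hold, so the bound $G_u^{1/2}G_\psi^{1/2}$ is vacuous (with the truncation $\psi_j^{(k)}$ the quantity is finite but blows up as $k\to\infty$, so the limiting argument collapses). Your proposed repair of $G_\psi$ does not save this: the claim that $\chi'$ is uniformly bounded on $\R_{\le 0}$ for $\chi\in\widetilde{\mathcal{W}}^-$ is false (e.g.\ $\chi(t)=-\sqrt{-t}$ is convex increasing with $\chi'(t)\to\infty$ as $t\to 0^-$), and the identity $dv\wedge\dc v=\tfrac12\ddc(v^2)-v\,\ddc v$ followed by integration by parts produces terms weighted by $(\psi_1-\psi_2)$ and $(\psi_1-\psi_2)^2$, i.e.\ linear and quadratic weights in $\psi$, which Lemma \ref{le-uocluongchiepsilon} (a $\chi$-weighted estimate) cannot absorb under a $\tilde{\chi}$-energy bound. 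This is exactly the failure mode the introduction warns about: after integration by parts the ``other'' gradient term $\chi'(u_1-u_2)\,d(u_1-u_3)\wedge\dc(u_1-u_3)$ is easy only when $\chi=\id$.

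The missing idea is the mechanism of Lemma \ref{le-secondhieuu1u2}: instead of splitting by a global Cauchy--Schwarz, one bounds the \emph{mixed} term $\int_X\chi'(u_1-u_2)\,d(u_1-u_2)\wedge\dc(u_1-u_3)\wedge T$ directly by $\varrho B(1-\tilde{\chi}(-1))\,Q(J/\varrho)$, where $J$ is the pure $(u_1-u_2)$-gradient term only. This is done by comparing $u_1-u_2$ with $\epsilon(u_1+u_3-2\phi)$ on the regions $U(\epsilon)$, $V(\epsilon)$, replacing $u_2$ by the auxiliary potential $\max\{(u_2+\epsilon u_3)/(1+\epsilon),\,((1-\epsilon)u_1+2\epsilon\phi)/(1+\epsilon)\}$ (the ``monotonicity argument''), applying Cauchy--Schwarz only after this localization so that the bad factor is a $\chi$-weighted quantity of size $\epsilon^{-1}Q_0(\epsilon)$ handled by Lemma \ref{le-uocluongchiepsilon}, and then optimizing $\epsilon\searrow J/(2\varrho)$. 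With that lemma in hand, the paper also assembles the estimate differently from your decomposition: it writes $\theta_{\psi_j}^n-\theta_{u_1}^n=\ddc(\psi_j-u_1)\wedge T_j$ with $T_j=\sum_{l}\theta_{\psi_j}^l\wedge\theta_{u_1}^{n-1-l}$ for $j=1,2$ (rather than pairing $\psi_1$ against $\psi_2$), integrates by parts once, applies Lemma \ref{le-secondhieuu1u2} to each resulting mixed term, and only then invokes Proposition \ref{pro-mainstabilitylowenergyconvex} to bound $J$, yielding one extra application of $Q$ and hence $Q^{\circ n}$. Without an analogue of Lemma \ref{le-secondhieuu1u2}, your route cannot be completed under the stated hypotheses.
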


Since the measure $\theta_{\psi_1}^n- \theta_{\psi_2}^n$ is not positive, we need the following consequence of the above theorem for later applications on stability estimates.

\begin{theorem} \label{th-lowerenergy-kocochuanhoa}
	 Let $\theta$ be a closed smooth real $(1,1)$-form  and $\phi$ be a  negative $\theta$-psh function such that  $\phi=P_{\theta}[\phi]$, $\varrho:=\vol(\theta_\phi)>0$
	 and $\theta\leq A\omega$ for some constant $A \ge 1$.  Let 
	 $\chi, \tilde{\chi} \in \widetilde{\mathcal{W}}^-\cup\mathcal{W}^+_M$ ($M\geq 1$)
	 such that $\tilde{\chi} \le \chi$. Let $B \ge 1$ be a constant and $u_1, u_2, \psi \in \mathcal{E}(X, \theta, \phi)$ satisfying
	$$
	E^0_{\tilde{\chi}, \theta, \phi}(u_1)+
	E^0_{\tilde{\chi}, \theta, \phi}(u_2)+E^0_{\tilde{\chi}, \theta, \phi}(\psi) \le B,$$
	for $j=1,2$. Then, for every constant $m>0$ and $0<\gamma<1$, there exists a constant $C>0$ depending on $n, M, X, \omega, m$ and $\gamma$  such that 
\begin{align*}
\int_X  -\chi\big(-|u_1- u_2|\big) \theta_\psi^n \le -\varrho \chi\left(-|a_1-a_2|-\lambda^m\right)
+C\varrho A^{(1-\gamma)/m}(B-\tilde{\chi}(-A))^2(1-\tilde{\chi}(-1))^2\lambda^{\gamma},
\end{align*}
where $a_j:= \sup_Xu_j$ and $\lambda=Q^{\circ n}\big(I^0_\chi(u_1,u_2)\big)$.
\end{theorem}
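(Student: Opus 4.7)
The plan is to reduce the statement to Theorem \ref{th-lowerenergy} by splitting the integrand according to whether $|u_1-u_2|$ is close to or far from its typical scale $|a_1-a_2|$, and to control the far part by a capacity-type estimate extracted from Theorem \ref{th-lowerenergy} itself.

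First I would symmetrize using the envelope $v:=\max\{u_1,u_2\}\in\mathcal{E}(X,\theta,\phi)$. By plurifine locality and the fact that $\{u_1<u_2\}$ and $\{u_1>u_2\}$ are disjoint modulo pluripolar sets, one has the pointwise identity $-\chi(-|u_1-u_2|)=-\chi(u_1-v)-\chi(u_2-v)$; moreover, the elementary inequalities $I^0_\chi(u_j,v)\le I^0_\chi(u_1,u_2)$ show that Theorem \ref{th-lowerenergy} applied to each pair $(u_j,v)$ produces the same bound $\lambda$ on the right-hand side. Set $s_j:=u_j-a_j\in\PSH(X,\theta)\subset\PSH(X,A\omega)$ with $\sup_X s_j=0$ and $L:=|a_1-a_2|+\lambda^m$. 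Since $|u_1-u_2|\le|s_1-s_2|+|a_1-a_2|$, monotonicity of $\chi$ immediately gives the good-set bound
\begin{align*}
\int_{\{|s_1-s_2|\le\lambda^m\}}\!-\chi(-|u_1-u_2|)\,\theta_\psi^n\le -\chi(-L)\,\varrho=-\varrho\,\chi\!\left(-|a_1-a_2|-\lambda^m\right),
\end{align*}
which accounts exactly for the first term in the claimed inequality.

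The remaining task is the estimate over the bad set $\Omega:=\{|s_1-s_2|>\lambda^m\}\subset\{s_1<-\lambda^m/2\}\cup\{s_2<-\lambda^m/2\}$. Following the strategy of \cite{Guedj-Zeriahi-big-stability}, I would apply Theorem \ref{th-lowerenergy} to $(u_j,v)$ with $\psi_1=\psi$ and $\psi_2=(1-\epsilon)\psi+\epsilon\tilde\psi$ for an auxiliary $\tilde\psi\in\mathcal{E}(X,\theta,\phi)$ whose Monge--Amp\`ere mass is concentrated on $\{s_j<-\lambda^m/2\}$; expanding $\theta_{\psi_2}^n$ in $\epsilon$ shows that $\theta_{\psi_1}^n-\theta_{\psi_2}^n$ dominates a constant multiple of $\epsilon\,\theta_\psi^n$ on $\Omega$, producing a measure bound of the form $\theta_\psi^n(\Omega)\le C\epsilon^{-n}\lambda$. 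The $\tilde\chi$-energy of $\psi_2$ is controlled by Lemma \ref{le-sosanhnangnluongintegrabig} in terms of $B-\tilde\chi(-A)$. To upgrade this measure estimate into a bound on the integral itself---necessary because $-\chi(-|u_1-u_2|)$ is unbounded on $\Omega$---I would perform a layer-cake decomposition and invoke Lemma \ref{lem vol estimate}, the lower bound on sublevel sets of $A\omega$-psh functions, applied to $s_j$ to force the tail $\theta_\psi^n(\{s_j<-t\})$ to decay at a quantitative rate. Optimizing the parameter $\epsilon$ against the layer variable $t$ then yields the exponent $\gamma<1$, the scaling factor $A^{(1-\gamma)/m}$, and the squared factor $(B-\tilde\chi(-A))^2(1-\tilde\chi(-1))^2$.

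The main obstacle will be Step 3: constructing an auxiliary $\tilde\psi\in\mathcal{E}(X,\theta,\phi)$ that concentrates enough Monge--Amp\`ere mass on $\Omega$ while keeping its $\tilde\chi$-energy within a controlled multiple of $B-\tilde\chi(-A)$ and remaining within the prescribed singularity type $[\phi]$. Here the model hypothesis $\phi=P_\theta[\phi]$ enters essentially, since it permits such auxiliary potentials to be produced via roof-top envelopes without losing Monge--Amp\`ere mass; balancing the three competing constraints (energy, mass concentration, singularity type) and carrying out the optimization over $\epsilon$ to extract precisely the exponents in the statement is where most of the technical work will lie.
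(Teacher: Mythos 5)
Your good-set bound is fine, but the treatment of the bad set $\Omega=\{|s_1-s_2|>\lambda^m\}$ contains the real gap, and it occurs at two places. First, the perturbation step does not work as claimed: writing $\theta_{\psi_2}^n=((1-\epsilon)\theta_\psi+\epsilon\theta_{\tilde\psi})^n$ and expanding produces the mixed terms $\binom{n}{k}(1-\epsilon)^k\epsilon^{n-k}\theta_\psi^k\wedge\theta_{\tilde\psi}^{n-k}$, so $\theta_{\psi_1}^n-\theta_{\psi_2}^n$ is a signed measure which is \emph{not} bounded below by $c\,\epsilon\,\theta_\psi^n$ on $\Omega$; after applying Theorem \ref{th-lowerenergy} and rearranging you are left with the mixed integrals $\int_X-\chi(u_1-v)\,\theta_\psi^k\wedge\theta_{\tilde\psi}^{n-k}$, which are no easier than the quantity you started from. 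Second, even granting a measure bound $\theta_\psi^n(\Omega)\lesssim\epsilon^{-n}\lambda$, your device for handling the unboundedness of $-\chi(-|u_1-u_2|)$ on $\Omega$ fails: Lemma \ref{lem vol estimate} gives a \emph{lower} bound on the $\omega^n$-volume of superlevel sets of an $\omega$-psh function and says nothing about the decay of $\theta_\psi^n(\{s_j<-t\})$; the only tail control available there is Chebyshev against the $\tilde\chi$-energy, which decays like $1/|\tilde\chi(-t)|$ and cannot produce the factors $A^{(1-\gamma)/m}$ and $\lambda^\gamma$. Relatedly, concentrating the auxiliary mass on the bad set $\{s_j<-\lambda^m/2\}$ is exactly backwards: that set has no volume lower bound, so the normalized indicator density has uncontrolled $L^p$ norm and no useful estimate on the auxiliary potential follows.

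The missing idea is the paper's choice of auxiliary potential, concentrated on the \emph{good} region. Assuming $u_1\le u_2$, one solves $\theta_\varphi^n=(\varrho/b_t)\mathbf{1}_{W_t}\omega^n$ in $\mathcal{E}(X,\theta,\phi)$ with $W_t=\{u_1>a_1-t\}$ and $b_t=\vol(W_t)$; here Lemma \ref{lem vol estimate} enters only to give $b_t\gtrsim (t/A)^{2n}$, i.e.\ to control the $L^\infty$ (hence $L^p$) norm of the density. Then Theorem \ref{the P[u]-C<u} --- the Ko{\l}odziej-type $L^\infty$ estimate relative to the model potential, and this is precisely where $\phi=P_\theta[\phi]$ is used, not through any roof-top construction of $\tilde\psi$ --- yields $\varphi\ge\phi-C A(\log(Ae/t))$, hence $E^0_{\tilde\chi,\theta,\phi}(\varphi)\lesssim(\log(Ae/t))^{M}|\tilde\chi(-A)|$. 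Applying Theorem \ref{th-lowerenergy} with $(\psi_1,\psi_2)=(\psi,\varphi)$ then gives $\int_X-\chi(u_1-u_2)\,\theta_\psi^n\le\int_X-\chi(u_1-u_2)\,\theta_\varphi^n+C\varrho(\log(Ae/t))^{2M}(B-\tilde\chi(-A))^2(1-\tilde\chi(-1))^2\lambda$, the first term being at most $-\varrho\chi(a_1-a_2-t)$ since $\theta_\varphi^n$ lives on $W_t$; taking $t=\lambda^m$ and absorbing $(\log(Ae/\lambda^m))^{2M}\lesssim A^{(1-\gamma)/m}\lambda^{\gamma-1}$ gives the stated exponents. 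No splitting of the domain of integration, no perturbation in $\epsilon$, and no tail analysis on $\Omega$ are needed; without Theorem \ref{the P[u]-C<u} and the good-set concentration, your plan cannot reach the claimed estimate.
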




\subsection{Proof of Theorem \ref{th-lowerenergy}}  \label{subsec-key}

Here is the first step in the proof of Theorem \ref{th-lowerenergy}.
\begin{lemma} \label{le-lientuccungkididwedgedc} If  Theorem \ref{th-lowerenergy} holds for $u_j, \psi_j$ of the same singularity type as $\phi$, then it holds for the general case.
\end{lemma}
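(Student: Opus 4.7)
The plan is to reduce to the same-singularity-type case by a standard truncation argument. I set $u_j^k := \max\{u_j, \phi-k\}$ and $\psi_j^k := \max\{\psi_j, \phi-k\}$ for $k \in \N$. Each satisfies $\phi-k \le u_j^k \le \phi$, hence has the same singularity type as $\phi$; the pointwise ordering $u_1^k \le u_2^k$ is inherited from $u_1 \le u_2$; and $u_j^k \searrow u_j$, $\psi_j^k \searrow \psi_j$ as $k \to \infty$. Since $u_j \le u_j^k \le \phi$ with $u_j \in \mathcal{E}(X,\theta,\phi)$, the monotonicity of non-pluripolar products sandwiches $\int_X \theta_{u_j^k}^n$ between $\int_X\theta_{u_j}^n$ and $\int_X\theta_\phi^n$, forcing equality, so $u_j^k, \psi_j^k \in \mathcal{E}(X,\theta,\phi)$. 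The third inequality of Lemma~\ref{le-sosanhnangnluongintegrabig} applied to $u_j \le u_j^k$ gives $E_{\tilde{\chi},\theta,\phi}(u_j^k) \le C_2\, E_{\tilde{\chi},\theta,\phi}(u_j)$, and similarly for $\psi_j^k$; thus the energy hypothesis of Theorem~\ref{th-lowerenergy} holds for the approximants with $B$ replaced by $C_2 B$.

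Applying the assumed same-singularity-type case to the approximants yields, for every $k$,
\begin{equation*}
\int_X -\chi(u_1^k - u_2^k)\bigl(\theta_{\psi_1^k}^n - \theta_{\psi_2^k}^n\bigr) \le C_n\,\varrho\,(C_2 B)^2 \bigl(1-\tilde{\chi}(-1)\bigr)^2 Q^{\circ n}\bigl(I^0_\chi(u_1^k, u_2^k)\bigr).
\end{equation*}
For the right-hand side I would rewrite $I^0_\chi(u_1^k, u_2^k)$ via $v^k := \max\{u_1^k, u_2^k\} = \max\{v, \phi-k\}$ (with $v = \max\{u_1,u_2\}$) and the identity~\eqref{eq-dinhgnhiaIchi}. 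Because $u_j^k, v^k$ are decreasing sequences in $\mathcal{E}(X,\theta,\phi)$ of full Monge-Amp\`ere mass, the monotonicity theorem for non-pluripolar products (\cite{WittNystrom-mono, Lu-Darvas-DiNezza-mono}) provides weak convergence of the associated Monge-Amp\`ere measures. Together with the pointwise convergence $u_j^k - v^k \to u_j - v$ outside a pluripolar set and a cutoff of $\chi$ at level $-N$ (whose tail where $-\chi > N$ is controlled uniformly in $k$ by Lemma~\ref{le-uocluongchiepsilon} and the uniform energy bound), this yields $I^0_\chi(u_1^k, u_2^k) \to I^0_\chi(u_1, u_2)$, and continuity of $Q$ transports the right-hand side to the limit.

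For the left-hand side I would split the signed measure and view each term $\int_X -\chi(u_1^k - u_2^k)\theta_{\psi_j^k}^n$ as the integral of the nonnegative function $-\chi(u_1^k - u_2^k)$ against the positive measure $\theta_{\psi_j^k}^n$; the same truncation strategy at level $-N$, together with weak convergence $\theta_{\psi_j^k}^n \to \theta_{\psi_j}^n$ and the quasi-continuity of $u_j$, would give $\liminf_{k\to\infty}$ of the left-hand side $\ge$ the left-hand side for $(u_j, \psi_j)$, closing the lemma. This last step is where I expect the main obstacle: the integrand $\chi(u_1^k - u_2^k)$ is generally unbounded and depends on $k$, while the measures $\theta_{\psi_j^k}^n$ only converge weakly, so a naive weak-convergence argument against a varying test function does not apply directly. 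A careful interplay of plurifine locality, quasi-continuity of $\theta$-psh functions outside sets of small capacity, and the uniform tail control provided by Lemma~\ref{le-uocluongchiepsilon} should suffice to establish the required lower semi-continuity.
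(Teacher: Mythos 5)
Your reduction is the right general strategy (truncate at level $\phi-k$, use Lemma \ref{le-sosanhnangnluongintegrabig} for uniform energy bounds, apply the same-singularity case, pass to the limit), but the proof has a genuine gap exactly where you flag it, and the flag is not a fix. By truncating $u_j$ and $\psi_j$ with the \emph{same} parameter $k$, you are forced to take a simultaneous limit of $\int_X -\chi(u_1^k-u_2^k)\,(\theta_{\psi_1^k}^n-\theta_{\psi_2^k}^n)$ in which both the integrand and the measures vary with $k$: the integrand is discontinuous, not uniformly bounded in $k$, and the measures only converge in the monotone/weak sense, while the reference measure is \emph{signed}, so one would need lower semicontinuity against $\theta_{\psi_1^k}^n$ and upper semicontinuity against $\theta_{\psi_2^k}^n$ at the same time --- in effect genuine convergence, for which no available theorem applies. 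Appealing to ``a careful interplay of plurifine locality, quasi-continuity and tail control'' does not produce that convergence; this is precisely the step that has to be engineered away.

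The paper avoids the problem by decoupling the two truncations: it applies the same-singularity case to $u_{1,k},u_{2,k}$ and $\psi_{1,l},\psi_{2,l}$ with \emph{independent} indices, and first lets $l\to\infty$ with $k$ fixed. Then the integrand $-\chi(u_{1,k}-u_{2,k})$ is a fixed bounded quasi-continuous function, and the convergence of $\int -\chi(u_{1,k}-u_{2,k})\,\theta_{\psi_{j,l}}^n$ to $\int -\chi(u_{1,k}-u_{2,k})\,\theta_{\psi_j}^n$ follows from \cite[Theorem 2.2]{Darvas-Lu-DiNezza-singularity-metric}. Only afterwards does $k\to\infty$: on the left-hand side the measures $\theta_{\psi_j}^n$ are now fixed and one concludes by dominated convergence, using $|u_{1,k}-u_{2,k}|\le|u_1-u_2|$ and the integrable majorant $-\tilde{\chi}(u_1-\phi)$ supplied by Lemma \ref{le-sosanhnangnluongintegrabig}; on the right-hand side the convergence $I_\chi(u_{1,k},u_{2,k})\to I_\chi(u_1,u_2)$ is proved not by weak convergence of Monge--Amp\`ere measures (which meets the same varying-integrand difficulty you have on the left) but by plurifine locality, $\mathbf{1}_{\{u_1>\phi-k\}}\theta_{u_{j,k}}^n=\mathbf{1}_{\{u_1>\phi-k\}}\theta_{u_j}^n$, together with the total-mass identity $\int_X\theta_{u_{j,k}}^n=\int_X\theta_\phi^n$ and dominated convergence for the tail $\{u_1\le\phi-k\}$. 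If you reorganize your argument with two truncation levels and iterate the limits in this order, the rest of your outline goes through.
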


\proof Let $u_j, \psi_j$ ($j=1,2$) be as in the statement of Theorem \ref{th-lowerenergy}.  For every
$k>0$, we denote $u_{j,k}:= \max\{u_j,  \phi -k\}$
 and $\psi_{j, k}=\max\{\psi_j, \phi-k\}$. By Lemma \ref{le-sosanhnangnluongintegrabig}, there exists a constant $C_1>0$ depending
only on $n$ and $M$ such that
$$ E^0_{\tilde{\chi},\theta, \phi}(u_{j,k})+ E^0_{\tilde{\chi},\theta, \phi}(\psi_{j,k})\leq C_1B,$$
for $j=1, 2$ and for every $k>0$. Therefore, by the assumption, there exists a constant $C_2>0$
depending only on $n$ and $M$ such that
$$\int_X  -\chi (u_{1, k}-u_{2, k})  (\theta_{\psi_{1, l}}^n- \theta_{\psi_{2, l}}^n) \le 
C_2\varrho B^2(1-\tilde{\chi}(-1))^2 Q^{\circ (n)}(I^0_\chi(u_{1, k},u_{2, k})),$$
for every $k, l>0$. Letting $l\rightarrow\infty$ and using \cite[Theorem 2.2]{Darvas-Lu-DiNezza-singularity-metric}, we get
\begin{equation}\label{eq0 le-lientuccungkididwedgedc}
	\int_X  -\chi (u_{1, k}-u_{2, k})  (\theta_{\psi_{1}}^n- \theta_{\psi_{2}}^n) \le 
	C_2\varrho B^2(1-\tilde{\chi}(-1))^2 Q^{\circ (n)}(I^0_\chi(u_{1, k},u_{2, k}))
\end{equation}
for every $k>0$.
 We will show that
\begin{equation}\label{eq1 le-lientuccungkididwedgedc}
	I_\chi(u_1,u_2)=\lim_{k \to \infty}I_{\chi}(u_{1,k}, u_{2,k}).
\end{equation}
Denote
$$f:= \chi(u_{1}- u_{2}) (\theta_{u_{2}}^n - \theta_{u_{1}}^n), \quad f_k:=  \chi(u_{1,k}- u_{2,k}) (\theta_{u_{2,k}}^n - \theta_{u_{1,k}}^n).$$
We have 
\begin{align*}
	I_{\chi}(u_{1,k}, u_{2,k})  = \int_{X} f_k 
	&= \int_{\{u_1> \phi-k\}}f_k+ \int_{\{u_1 \le \phi-k\}}  f_k\\
	& =\int_{\{u_1> \phi-k\}} f+ \int_{\{u_1 \le \phi  -k\}}  f_k\\
	&= I_\chi(u_1,u_2)- \int_{\{u_1\le \phi -k\}}f + \int_{\{u_1 \le  \phi -k\}}  f_k.
\end{align*}
Then
\begin{align*}
	|I_{\chi}(u_{1,k}, u_{2,k})-I_\chi(u_1,u_2)|&= 
	\bigg|\int_{\{u_1\le \phi -k\}}f + \int_{\{u_1 \le  \phi -k\}}  f_k\bigg|\\
	&\leq\int_{\{u_1\le \phi -k\}}\mu+ \int_{\{u_1 \le  \phi -k\}} -\chi(u_{1,k}- u_{2,k}) (\theta_{u_{2,k}}^n + \theta_{u_{1,k}}^n)\\
	&\leq \int_{\{u_1\le \phi -k\}}\mu+ \int_{\{u_1 \le  \phi -k\}} -\chi(-k) (\theta_{u_{2,k}}^n + \theta_{u_{1,k}}^n),
\end{align*}
where $\mu=-\chi(u_1-\phi)(\theta_{u_1}^n+\theta_{u_2}^n)$.
By Lemma \ref{le-sosanhnangnluongintegrabig}, we have  $\int_X\mu<\infty$. Then it follows from Lebesgue's
dominated convergence theorem that $\lim_{k\to\infty}\int_{\{u_1\le \phi -k\}}\mu=0$. Therefore, 
\begin{equation}\label{eq2 le-lientuccungkididwedgedc}
	\limsup\limits_{k\to\infty}	|I_{\chi}(u_{1,k}, u_{2,k})-I_\chi(u_1,u_2)|
	\leq \limsup\limits_{k\to\infty}\int_{\{u_1 \le  \phi -k\}} -\chi(-k) (\theta_{u_{1,k}}^n + \theta_{u_{2,k}}^n).
\end{equation} 
By the fact that 
 $$\int_X\theta_{u_{1,k}}^n=\int_X\theta_{u_{1,k}}^n=\int_X\theta_{\phi}^n, \quad \mathbf{1}_{\{u_1 \le  \phi -k\}}\theta_{u_{j,k}}^n=\mathbf{1}_{\{u_1 \le  \phi -k\}}\theta_{u_{j}}^n \quad (j=1,2),$$
  we have
\begin{equation}\label{eq3 le-lientuccungkididwedgedc}
	-\chi(-k)\int_{\{u_1 \le  \phi -k\}} (\theta_{u_{1,k}}^n + \theta_{u_{2,k}}^n)
	= -\chi(-k)\int_{\{u_1 \le  \phi -k\}} (\theta_{u_{1}}^n + \theta_{u_{2}}^n)
	\leq \int_{\{u_1 \le  \phi -k\}}\mu.
\end{equation}
By using \eqref{eq2 le-lientuccungkididwedgedc}, \eqref{eq3 le-lientuccungkididwedgedc} and 
the fact $\lim_{k\to\infty}\int_{\{u_1\le \phi -k\}}\mu=0$, we get \eqref{eq1 le-lientuccungkididwedgedc}.
Now, combining \eqref{eq0 le-lientuccungkididwedgedc} and \eqref{eq1 le-lientuccungkididwedgedc}, we obtain 
$$	\int_X  -\chi (u_{1}-u_{2})  (\theta_{\psi_{1}}^n- \theta_{\psi_{2}}^n) \le 
C_2\varrho B^2(1-\tilde{\chi}(-1))^2 Q^{\circ (n)}(I^0_\chi(u_{1}, u_{2})).$$
The proof is completed.
\endproof

\begin{lemma}\label{le-secondhieuu1u2} 
	Let $M\geq 1$ and 	$\chi, \tilde{\chi}\in\widetilde{\mathcal{W}}^-\cup\mathcal{W}^{+}_M$
	such that $\tilde{\chi}\leq\chi$ and $\chi\in \Cc^1(\R)$.
Let $u_1, u_2, ..., u_{n+2} \in \mathcal{E}(X, \theta, \phi)$ such that $u_1 \le u_2$ and
	$u_j-\phi$ is bounded ($j=1, 2, ..., n+2$), where $\phi$ is a negative $\theta$-psh function satisfying 
	$\varrho:=\vol(\theta_\phi)>0$. 
 Denote 
 $$T=\theta_{u_4}\wedge...\wedge\theta_{u_{n+2}}, \quad I= \left|\int_X\chi'(u_1-u_2)d(u_1-u_2)\wedge d^c(u_1-u_3)\wedge T\right|,$$
 and
 $$J=\int_X\chi'(u_1-u_2)d(u_1-u_2)\wedge d^c(u_1-u_2)\wedge T.$$
	Then there exists $C>0$ depending only on $n$ and $M$ such that
	$$I \leq C\varrho B(1-\tilde{\chi}(-1))
	Q(J/\varrho),$$
 where $B:=\sum_{j=1}^{n+2} \max\{E^0_{\tilde{\chi},\theta, \phi}(u_j),1 \}$
	and $Q$ is defined by \eqref{eq QB}. 
\end{lemma}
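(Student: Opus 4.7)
The plan is to combine a direct unconditional bound on $I$ with a Cauchy--Schwarz bound of the form $I\le J^{1/2}K^{1/2}$, and then dispatch the two regimes $J/\varrho\geq 1$ and $J/\varrho<1$ separately. First, I would apply the integration-by-parts formula of Theorem \ref{th-integrabypart} with $w=u_1-u_2$ and $v=u_1-u_3$ to rewrite
$$I=\left|\int_X \chi(u_1-u_2)(\theta_{u_1}-\theta_{u_3})\wedge T\right|\le \int_X |\chi(u_1-u_2)|(\theta_{u_1}+\theta_{u_3})\wedge T.$$
Since $u_1\le u_2\le \phi$ gives $|\chi(u_1-u_2)|\le|\chi(u_1-\phi)|$ by monotonicity of $\chi$, applying Lemma \ref{le-uocluongchiepsilon} with $\epsilon=1$ (noting $Q_0(1)\le 1$ because $\tilde\chi\le\chi\le 0$) yields the direct bound
$$I \le C\,\varrho\, B\,(1-\tilde\chi(-1)).$$

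Next, I would invoke the pointwise Cauchy--Schwarz inequality (Lemma \ref{le-CSine}) with the nonnegative weight $\chi'(u_1-u_2)$ to get
$$I\le J^{1/2}K^{1/2},\qquad K:=\int_X\chi'(u_1-u_2)\,d(u_1-u_3)\wedge d^c(u_1-u_3)\wedge T,$$
so the essential task is to prove $K\le C\,\varrho\,B^2(1-\tilde\chi(-1))^2$. To this end I would decompose $u_1-u_3=(u_1-\phi)-(u_3-\phi)$ (each summand being $\le 0$ and bounded), apply Cauchy--Schwarz on the cross term, and reduce matters to estimating $K_j:=\int_X \chi'(u_1-u_2)\,d(u_j-\phi)\wedge d^c(u_j-\phi)\wedge T$ for $j\in\{1,3\}$. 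Each $K_j$ is then treated via the polarization identity $d(u_j-\phi)\wedge d^c(u_j-\phi)=\tfrac12\ddc((u_j-\phi)^2)-(u_j-\phi)(\theta_{u_j}-\theta_\phi)$, followed by one integration by parts (Theorem \ref{th-integrabypart}) on the first piece. The two resulting expressions are controlled by the pointwise inequality $\chi'(t)|t|\le M|\chi(t)|$ (valid for $\chi\in\widetilde{\mathcal{W}}^-\cup\mathcal{W}^+_M$), the substitution $|\chi(u_1-u_2)|\le|\chi(u_1-\phi)|$, and Lemma \ref{le-uocluongchiepsilon}; a dichotomy $\{|u_1-u_2|\ge 1\}$ versus $\{|u_1-u_2|<1\}$ lets one bound $\chi'(u_1-u_2)$ either by $M|\chi(u_1-u_2)|$ or by $M|\chi(-1)|\le M(1-\tilde\chi(-1))$ on the respective pieces.

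Combining the two bounds yields $I\le C\,\varrho\,B(1-\tilde\chi(-1))\min\{1,(J/\varrho)^{1/2}\}$. If $J/\varrho\ge 1$ this is exactly the claim since $Q(J/\varrho)=1$. If $J/\varrho<1$: in the concave case $\chi\in\mathcal{W}^+_M$, the definition of $Q$ gives $Q(J/\varrho)=(J/\varrho)^{1/2}$ directly, while in the convex case $\chi\in\widetilde{\mathcal{W}}^-$, the inequality $Q_0(\epsilon)\ge\epsilon Q_0(1)$ recorded right after Lemma \ref{le-uocluongchiepsilon} (a consequence of convexity of $\chi$) gives $Q(J/\varrho)\ge(J/\varrho)^{1/2}$, closing the argument in both cases. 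The main technical obstacle is the bound on $K$: a naive integration-by-parts generates $\chi''$-weighted expressions that are not controlled uniformly in $\chi$, so the decompositions above must be arranged so that $\chi'(u_1-u_2)$ is ultimately paired either with a Monge--Amp\`ere measure or with the bounded ``gap'' potential $u_j-\phi$, each of which is estimable by Lemma \ref{le-uocluongchiepsilon}.
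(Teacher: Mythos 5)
Your reduction to the two regimes and the trivial bound when $J/\varrho\ge 1$ match the paper, but the core of your argument has a genuine gap: the intermediate claim that $K:=\int_X\chi'(u_1-u_2)\,d(u_1-u_3)\wedge d^c(u_1-u_3)\wedge T$ satisfies $K\le C\varrho B^2(1-\tilde\chi(-1))^2$ with $C=C(n,M)$ is false, and so is the pointwise bound $\chi'(u_1-u_2)\le M(1-\tilde\chi(-1))$ on $\{|u_1-u_2|<1\}$ that you use to prove it. The weight $\chi'(u_1-u_2)$ is largest precisely where $u_1-u_2$ is near $0$, and nothing in the hypotheses controls $\chi'$ near $0$: for $\chi\in\widetilde{\mathcal{W}}^-$ a $\Cc^1$ smoothing of $\max\{Nt,-1\}$ has $\chi'(0)=N$ arbitrarily large while $B$, $1-\tilde\chi(-1)$ and the measure $d(u_1-u_3)\wedge d^c(u_1-u_3)\wedge T$ stay fixed (take $u_1=u_2$ to see $K\approx N\cdot\mathrm{const}$); for $\chi\in\mathcal{W}^+_M$ the weight $\chi(t)=-\sqrt{-t}$ lies in $\mathcal{W}^+_1$ and has $\chi'(0^-)=+\infty$. (The inequality $|t\chi'(t)|\le M|\chi(t)|$ only helps when the argument of $\chi'$ is bounded away from $0$, and in the convex case there is no such inequality at all.) A further warning sign: had your bound on $K$ been true, Cauchy--Schwarz would give $I\lesssim\varrho B(1-\tilde\chi(-1))(J/\varrho)^{1/2}$, which is strictly stronger than the stated conclusion, since for weights like the smoothed $\max\{Nt,-1\}$ one computes $Q\equiv 1$; the paper's downstream results (the iterated $Q^{\circ n}$ in Proposition \ref{pro-mainstabilitylowenergyconvex} and Theorem \ref{th-lowerenergy}) only reach the $(J/\varrho)^{1/2}$ rate for $\mathcal{W}^+_M$, not for general $\widetilde{\mathcal{W}}^-$.

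The paper's proof is designed exactly to avoid weighting $d(u_1-u_3)\wedge d^c(u_1-u_3)$ by $\chi'(u_1-u_2)$. It fixes a parameter $\epsilon$, splits $X$ into $U(\epsilon)=\{u_1-u_2<\epsilon(u_1+u_3-2\phi)\}$, $V(\epsilon)$ and a negligible boundary $\Gamma(\epsilon)$, and replaces $u_2$ by the auxiliary potential $\tilde u_2=\max\{\frac{u_2+\epsilon u_3}{1+\epsilon},\frac{(1-\epsilon)u_1+2\epsilon\phi}{1+\epsilon}\}$; after integration by parts and Cauchy--Schwarz, the partner integral $I_5$ carries the weight $\chi'((1+\epsilon)\tilde\varphi)$, which on the relevant region is dominated by $\chi'(\epsilon(u_1+u_3-2\phi))$ (resp.\ $\chi'(u_1+u_3-2\phi)$ in the concave case). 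That weight has the same argument as the gradient being integrated, so one more integration by parts turns it into a $\chi(\epsilon\,\cdot)$-energy controlled by Lemma \ref{le-uocluongchiepsilon}, yielding $I_5\lesssim B\varrho(1-\tilde\chi(-1))\epsilon^{-1}Q_0(\epsilon)$ rather than a uniform constant; the other factor $I_6$ is $\lesssim J+B\varrho(1-\tilde\chi(-1))\epsilon Q_0(2\epsilon)$, and optimizing $\epsilon\searrow J/(2\varrho)$ produces exactly the factor $Q(J/\varrho)$ in the statement. If you want to salvage your outline, you must build in this change of weight (or an equivalent device); as written, the estimate of $K$ cannot be repaired.
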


Clearly if $\chi \in \widetilde{\mathcal{W}}^-$, then the above constant $C$ does not depend on $M$. 

\proof 
In this proof, we use the symbols $\lesssim$ and $\gtrsim$ for inequalities modulo a  constant depending only on $n$ and $M$.
By Theorem \ref{th-integrabypart} and Lemma \ref{le-sosanhnangnluongintegrabig}, we have
$$I=\left|\int_X -\chi(u_1-u_2)dd^c(u_1-u_3)\wedge T\right|\lesssim \varrho B= \varrho B Q(1).$$ Therefore, without loss of generality,
we can assume that $J/\varrho<1$.
Approximating $u_3$ by $u_3-\delta$ with $\delta\searrow 0$, we can assume that $u_3<\phi$ on $X$.

For each $0<\epsilon<1/2$ we denote
\begin{center}
	$U(\epsilon)=\{u_1-u_2<\epsilon(u_1+u_3-2\phi)\}, V(\epsilon)=\{u_1-u_2>\epsilon(u_1+u_3-2\phi)\},$
\end{center}
	and $\Gamma(\epsilon)=\{u_1-u_2=\epsilon(u_1+u_3-2\phi)\}$.
Since $\Gamma(\epsilon_1)\cap\Gamma(\epsilon_2)=\emptyset$ for every $\epsilon_1\neq\epsilon_2$ (note $u_3<\phi$),
we have
\begin{equation}\label{eq epsilon lemforpr3.5}
	\int_{\Gamma(\epsilon)}d(u_1-u_3)\wedge d^c(u_1-u_3)\wedge T=0,
\end{equation}
for almost everywhere $\epsilon\in (0, 1/2)$. 

Let $0<\epsilon<1/2$ be a constant  satisfying \eqref{eq epsilon lemforpr3.5}. To simplify the notation, from now on, we write $U,V, \Gamma$ for $U(\epsilon), V(\epsilon), \Gamma(\epsilon)$ respectively.
Denote 
$$\tilde{u}_1=\dfrac{u_1+\epsilon u_3}{1+\epsilon},\qquad
\tilde{u}_2=\max\left\{\dfrac{u_2+\epsilon u_3}{1+\epsilon},\quad \dfrac{(1-\epsilon)u_1+2\epsilon\phi}{1+\epsilon}\right\} \quad\mbox{and}\quad \tilde{\varphi}=\tilde{u}_1-\tilde{u}_2.$$
Then
$\varphi:=(u_1-u_2)=(1+\epsilon)\tilde{\varphi}$ on $U$.
Hence
\begin{align*}
	I&=\left|\int_X-\chi(\varphi)dd^c(u_1-u_3)\wedge T\right|\\
	&\leq \left|\int_U-\chi(\varphi)dd^c(u_1-u_3)\wedge T\right|
	+\left|\int_{X\setminus U}-\chi(\varphi)dd^c(u_1-u_3)\wedge T\right|\\
	&\leq \left|\int_U-\chi((1+\epsilon)\tilde{\varphi})dd^c(u_1-u_3)\wedge T\right|
	+\left|\int_{X\setminus U}-\chi(\varphi)(\theta_{u_1}+\theta_{u_3})\wedge T\right|\\
	&\leq \left|\int_U-\chi((1+\epsilon)\tilde{\varphi})dd^c(u_1-u_3)\wedge T\right|
	+\left|\int_{X\setminus U}-\chi(\epsilon(u_1+u_3-2\phi))(\theta_{u_1}+\theta_{u_3})\wedge T\right|\\
	&:=I_1+I_2,
\end{align*}
where in the last inequality we used the fact that $\chi$ is increasing and $\varphi \ge \epsilon(u_1+ u_2- 2 \phi)$ on $X \backslash U$. By Lemma \ref{le-sosanhnangnluongintegrabig}, we have
$E^0_{\tilde{\chi},\theta, \phi}\left(\dfrac{u_1+u_3}{2}\right)\lesssim B$. Therefore, it follows from Lemma
\ref{le-uocluongchiepsilon} that
\begin{equation}\label{eqI2 lemforpr3.5}
	I_2\leq 2\int_{X}-\chi\left(2\epsilon\left(\dfrac{u_1+u_3}{2}-\phi\right)\right)\theta_{(u_1+u_3)/2}\wedge T \lesssim B\varrho(1-\tilde{\chi}(-1))Q_{0}(2\epsilon).
\end{equation}
In order to estimate $I_1$, we divide it into two terms
\begin{align*}
	I_1&\leq  \left|\int_X-\chi((1+\epsilon)\tilde{\varphi})dd^c(u_1-u_3)\wedge T\right|
	+ \left|\int_{X\setminus U}-\chi((1+\epsilon)\tilde{\varphi})dd^c(u_1-u_3)\wedge T\right|\\
	&:=I_3+I_4.
\end{align*}

Note that $\tilde{u}_1-\tilde{u}_2=\epsilon(u_1+u_3-2\phi)/(1+\epsilon)$ on $X\setminus U$. Hence
$$I_4\leq \int_{X\setminus U}-\chi((1+\epsilon)\tilde{\varphi})(\theta_{u_1}+\theta_{u_3})\wedge T
\leq \int_{X\setminus U}-\chi(\epsilon(u_1+u_2-2\phi))(\theta_{u_1}+\theta_{u_3})\wedge T.$$
Using Lemma \ref{le-uocluongchiepsilon} again, we get
\begin{equation}\label{eqI4 lemforpr3.5}
	I_4\lesssim B\varrho(1-\tilde{\chi}(-1))Q_{0}(2\epsilon).
\end{equation}
Using integration by parts, we have
	$$I_3=(1+\epsilon)\left|\int_X\chi'((1+\epsilon)\tilde{\varphi})d\tilde{\varphi}\wedge
	d^c(u_1-u_3)\wedge T\right|.$$
Moreover, by Cauchy-Schwarz inequality and by the choice of $\epsilon$ (see \eqref{eq epsilon lemforpr3.5}), we get
$$\int_{\Gamma}\chi'((1+\epsilon)\tilde{\varphi})d\tilde{\varphi}\wedge
	d^c(u_1-u_3)\wedge T=0.$$
Hence
\begin{equation}\label{eqI3 lemforpr3.5}
	I_3=(1+\epsilon)\left|\int_{U\cup V}\chi'((1+\epsilon)\tilde{\varphi})d\tilde{\varphi}\wedge
	d^c(u_1-u_3)\wedge T\right|\leq (1+\epsilon)(I_5I_6)^{1/2}
\end{equation}
where
$$I_5=\int_{U\cup V}\chi'((1+\epsilon)\tilde{\varphi})d(u_1-u_3)\wedge
d^c(u_1-u_3)\wedge T,$$
and
$$I_6=\int_{U\cup V}\chi'((1+\epsilon)\tilde{\varphi})d\tilde{\varphi}\wedge
d^c\tilde{\varphi}\wedge T.$$
Since $(1+\epsilon)\tilde{\varphi}\leq \epsilon(u_1+u_3-2\phi)$, if $\chi\in\widetilde{\mathcal{W}}^-$ (hence $\chi'$ is nonnegative and increasing on $\R_{ \le 0}$) then
\begin{align*}
	I_5&\leq \int_X\chi'(\epsilon(u_1+u_3-2\phi))d(u_1-u_3)\wedge
	d^c(u_1-u_3)\wedge T\\
	& \lesssim \int_X\chi'(\epsilon(u_1+u_3-2\phi))d(u_1-\phi)\wedge d^c(u_1-\phi)\wedge T\\
	&+\int_X\chi'(\epsilon(u_1+u_3-2\phi))d(u_3-\phi)\wedge d^c(u_3-\phi)\wedge T\\
	&\leq \int_X\chi'(\epsilon(u_1-\phi))d(u_1-\phi)\wedge	d^c(u_1-\phi)\wedge T
	+\int_X\chi'(\epsilon(u_3-\phi))d (u_3-\phi)\wedge d^c(u_3-\phi)\wedge T\\
	&=\epsilon^{-1}\int_X\chi(\epsilon(u_1-\phi))dd^c(u_1-\phi)\wedge T
	+\epsilon^{-1}\int_X\chi(\epsilon(u_3-\phi))dd^c(u_3-\phi)\wedge T\\
	&\lesssim B\varrho(1-\tilde{\chi}(-1))\epsilon^{-1}Q_{0}(\epsilon),
\end{align*}
where the last estimate holds due to Lemma \ref{le-uocluongchiepsilon}.

Denote $v_1:=(u_1+2u_3)/3$ and $v_2:=(2u_1+u_3)/3$.
Since 
$$(1+\epsilon)(\tilde{u}_1-\tilde{u}_2)\geq u_1+u_3-2\phi, \quad u_1- u_3= -(v_1- v_2)/3,$$
one sees that  if $\chi\in\mathcal{W}_M^+$ (hence $\chi'$ is nonnegative and decreasing in $\R_{\le 0}$) then
\begin{align*}
	I_5&\leq \int_X\chi'((u_1+u_3-2\phi))d(u_1-u_3)\wedge
	d^c(u_1-u_3)\wedge T\\
	&\lesssim \int_X\chi'((u_1+u_3-2\phi))\big(d(v_1-\phi)\wedge
	d^c(v_1-\phi)+ d(v_2-\phi)\wedge d^c(v_2-\phi)\big)\wedge T\\
	&\leq \int_X\chi'(3(v_1-\phi))d(v_1-\phi)\wedge
	d^c(v_1-\phi)\wedge T
	+ \int_X\chi'(3(v_2-\phi))d(v_2-\phi)\wedge
	d^c(v_2-\phi)\wedge T\\
	&= \dfrac{1}{3}\int_X-\chi(3(v_1-\phi))dd^c(v_1-\phi)\wedge T
	+ \dfrac{1}{3}\int_X-\chi(3(v_2-\phi))dd^c(v_2-\phi)\wedge T\\
	&\leq \int_X-\chi(3(v_1-\phi))(\theta_{v_1}+\theta_{\phi})\wedge T
	+ \int_X-\chi(3(v_2-\phi))(\theta_{v_1}+\theta_{\phi})\wedge T\\
	&\leq 3^{M}\int_X-\chi(v_1-\phi)(\theta_{v_1}+\theta_{\phi})\wedge T
	+ 3^{M}\int_X-\chi(v_2-\phi)(\theta_{v_1}+\theta_{\phi})\wedge T\\
	&\lesssim B\varrho,
\end{align*}
where the two last estimates hold due to Lemma \ref{le-sosanhnangnluongintegrabig} and the fact
$$\log(-\chi(3t))-\log(-\chi(t))=\int_t^{3t}\dfrac{\chi'(s)}{\chi(s)}ds\le \int_t^{3t}\dfrac{M}{s}ds
=M\log 3,$$
for every $\chi\in\mathcal{W}_M^+$ and $t \le 0$. Combining the estimates in both cases, we obtain
\begin{equation}\label{eqI5 lemforpr3.5}
	I_5\lesssim B\varrho (1-\tilde{\chi}(-1))\dfrac{Q(\epsilon)^2}{\epsilon},
\end{equation}
where we used the inequality $Q(\epsilon) \ge \epsilon^{1/2}$ if $\chi \in \widetilde{W}^-$.   Now, we estimate $I_6$. Since $U, V$ are open in the plurifine topology and 
$$(1+\epsilon)\tilde{\varphi}=\begin{cases}\varphi\quad\mbox{on}\quad U\\
\epsilon(u_1+u_3-2\varphi)\quad\mbox{on}\quad V\end{cases},$$ we have
\begin{align*}
	I_6&=\int_{U}\chi'((1+\epsilon)\tilde{\varphi})d\tilde{\varphi}\wedge
	d^c\tilde{\varphi}\wedge T+\int_{V}\chi'((1+\epsilon)\tilde{\varphi})d\tilde{\varphi}\wedge
	d^c\tilde{\varphi}\wedge T\\
	&=(1+\epsilon)^{-2}\int_{U}\chi'(\varphi)d\varphi\wedge
	d^c\varphi\wedge T\\
	&+\dfrac{\epsilon^2}{(1+\epsilon)^2}\int_{V}\chi'(\epsilon(u_1+u_3-2\varphi))d(u_1+u_3-2\varphi)\wedge
	d^c(u_1+u_3-2\varphi)\wedge T\\
		&\leq J+\epsilon^2\int_{X}\chi'(\epsilon(u_1+u_3-2\varphi))d(u_1+u_3-2\varphi)\wedge
	d^c(u_1+u_3-2\varphi)\wedge T\\
		&= J+\epsilon\int_{X}-\chi(\epsilon(u_1+u_3-2\varphi))dd^c(u_1+u_3-2\varphi)\wedge T.
\end{align*}
Therefore, it follows from Lemma \ref{le-uocluongchiepsilon} that
\begin{equation}\label{eqI6 lemforpr3.5}
	I_6\lesssim J+B\varrho (1-\tilde{\chi}(-1))\epsilon Q_{0}(2\epsilon).
\end{equation}
Combining \eqref{eqI2 lemforpr3.5}, \eqref{eqI3 lemforpr3.5}, \eqref{eqI4 lemforpr3.5}, \eqref{eqI5 lemforpr3.5}
and \eqref{eqI6 lemforpr3.5}, we get
\begin{align*}
	I\leq I_1+I_2&\leq I_3+I_4+I_2\\
	&\lesssim (I_5I_6)^{1/2}+I_4+I_2\\
	&\lesssim \left( B\varrho (1-\tilde{\chi}(-1))\epsilon^{-1}J\right)^{1/2} Q(\epsilon)+ B\varrho (1-\tilde{\chi}(-1))\epsilon Q(2\epsilon)^2.
\end{align*}
Letting $\epsilon\searrow J/(2\varrho)$ (and $\epsilon$ satisfies \eqref{eq epsilon lemforpr3.5}), we obtain
 $$I\lesssim B\varrho (1-\tilde{\chi}(-1)) Q(J/\varrho).$$
 The proof is completed.
\endproof

\begin{proposition} \label{pro-mainstabilitylowenergyconvex} Let $\chi, \tilde{\chi} \in\widetilde{\mathcal{W}}^-\cup\mathcal{W}^{+}_M$	such that $\tilde{\chi}\leq\chi$ and $\chi\in \Cc^1(\R)$.  Let $u_1, u_2, u_3 \in \mathcal{E}(X, \theta, \phi)$ such that $u_1 \le u_2$ and
	$u_j-\phi$ is bounded ($j=1, 2, 3$), where $\phi$ is a negative $\theta$-psh function satisfying 
	$\varrho:=\vol(\theta_\phi)>0$.  Then there exists a constant $C_n>0$ depending only on $n$ and $M$ such that 
	\begin{align} \label{ine-chigradientfixedtypelower}
		\int_X  \chi'(u_1-u_2) d(u_1- u_2) \wedge \dc (u_1- u_2) \wedge \theta_{u_3}^{n-1}  \le C_n\varrho B^2(1-\tilde{\chi}(-1))^2 Q^{\circ (n-1)}\big(I^0_\chi(u_1,u_2)\big),
	\end{align}
	where $B:=\sum_{j=1}^3 \max\{E^0_{\tilde{\chi},\theta, \phi}(u_j),1 \}$
	and $Q$ is defined by \eqref{eq QB}. 
\end{proposition}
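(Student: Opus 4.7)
The plan is to induct on the number of $\theta_{u_3}$-factors present in the current wedged against $\chi'(\varphi)\,d\varphi\wedge\dc\varphi$, using Lemma~\ref{le-secondhieuu1u2} as the engine at each step. Set $\varphi:=u_1-u_2$ and, for each multi-index $\alpha=(\alpha_1,\alpha_2,\alpha_3)$ with $|\alpha|=n-1$, write $T_\alpha:=\theta_{u_1}^{\alpha_1}\wedge\theta_{u_2}^{\alpha_2}\wedge\theta_{u_3}^{\alpha_3}$, $J(T_\alpha):=\int_X\chi'(\varphi)\,d\varphi\wedge\dc\varphi\wedge T_\alpha$, and $\beta_k:=\varrho^{-1}\max\{J(T_\alpha):\alpha_3\leq k\}$. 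The target inequality is a bound on $\beta_{n-1}$.

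For the base case $k=0$, Theorem~\ref{th-integrabypart} rewrites $J(T_\alpha)=\int_X-\chi(\varphi)(\theta_{u_1}-\theta_{u_2})\wedge T_\alpha$. Summing over all $\alpha$ with $\alpha_3=0$ telescopes to $\int_X-\chi(\varphi)(\theta_{u_1}^n-\theta_{u_2}^n)=I_\chi(u_1,u_2)$ (using $u_1\leq u_2$ and $\chi(0)=0$); since each summand is nonnegative, we obtain $\beta_0\leq I^0_\chi(u_1,u_2)$.

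For the inductive step, take $T_\alpha$ with $\alpha_3=k\geq 1$ and extract one factor: $T_\alpha=\theta_{u_3}\wedge T'$. The identity $\theta_{u_3}=\theta_{u_1}+dd^c(u_3-u_1)$ yields
$J(\theta_{u_3}\wedge T')-J(\theta_{u_1}\wedge T')=\int_X-\chi(\varphi)\,dd^c(u_3-u_1)\wedge(\theta_{u_1}-\theta_{u_2})\wedge T'$.
A further integration by parts (legal because $(\theta_{u_1}-\theta_{u_2})\wedge T'$ is closed) moves $dd^c$ onto $\chi(\varphi)$ and reshapes the right-hand side into $-\int_X\chi'(\varphi)\,d\varphi\wedge\dc(u_1-u_3)\wedge(\theta_{u_1}-\theta_{u_2})\wedge T'$, which splits into two integrals of the exact form estimated by Lemma~\ref{le-secondhieuu1u2}. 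Each is bounded by $C\varrho B(1-\tilde\chi(-1))\,Q(J(T'')/\varrho)$ for $T''\in\{\theta_{u_1}\wedge T',\theta_{u_2}\wedge T'\}$; since both $T''$ still carry only $k-1$ factors of $\theta_{u_3}$, we have $J(T'')/\varrho\leq\beta_{k-1}$. Combined with the trivial bound $J(\theta_{u_1}\wedge T')\leq\varrho\beta_{k-1}$ and $\beta_{k-1}\leq Q(\beta_{k-1})$ (valid while $\beta_{k-1}\leq 1$; the opposite regime is disposed of by the a~priori energy estimates from Lemma~\ref{le-sosanhnangnluongintegrabig}), this produces the recursion $\beta_k\leq D\,Q(\beta_{k-1})$ where $D:=C_1 B(1-\tilde\chi(-1))$.

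Unfolding this recursion $n-1$ times via the sub-multiplicativity $Q(Dt)\leq D^{1/2}Q(t)$ for $D\geq 1$---immediate from $Q(t)=t^{1/2}$ when $\chi\in\mathcal{W}^+_M$, and reduced via the convexity bound $|\chi(\lambda x)|\leq\lambda|\chi(x)|$ (for $\lambda\geq 1$, $x\leq 0$) to $Q_0(Dt)\leq D\,Q_0(t)$ when $\chi\in\widetilde{\mathcal{W}}^-$---yields $\beta_{n-1}\leq D^{\,2-1/2^{n-2}}Q^{\circ(n-1)}(\beta_0)\leq C_n B^2(1-\tilde\chi(-1))^2\,Q^{\circ(n-1)}(I^0_\chi(u_1,u_2))$, which is exactly \eqref{ine-chigradientfixedtypelower}. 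The main obstacle is this exponent bookkeeping: a naive iteration would inflate the constant to $B^{n-1}$, so recovering the advertised $B^2$ really hinges on the geometric series $\sum_{j\geq 0}2^{-j}=2$ produced by the sub-multiplicativity of $Q$. A secondary technical nuisance is that $\chi\in\Cc^1$ only, so the integrations by parts must first be performed on a $\Cc^\infty$ smoothing of $\chi$ via Lemma~\ref{le-regularizedchi} and then passed to the limit through the relative non-pluripolar machinery of Subsection~\ref{subsec-intebyparts}.
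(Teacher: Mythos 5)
Your proposal is correct and is essentially the paper's own proof: the same one-factor-at-a-time replacement $\theta_{u_3}=\theta_{u_1}+dd^c(u_3-u_1)$, the same double integration by parts feeding into Lemma \ref{le-secondhieuu1u2}, the same base case bounding the $u_3$-free terms by $\varrho\, I^0_\chi(u_1,u_2)$, and the same use of the monotonicity of $Q(t)^2/t$ (Lemma \ref{lem qt1t2}) to keep the constant at $B^2(1-\tilde{\chi}(-1))^2$ rather than a power growing with $n$. The only difference is bookkeeping: you track the maximum $\beta_k$ over all mixed products with at most $k$ factors of $\theta_{u_3}$ and unfold the recursion $\beta_k\le D\,Q(\beta_{k-1})$, whereas the paper runs an inverse induction on $(k,l)$ for the quantities $L_{k,l}$ — the same argument in different notation.
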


\begin{proof} 
 Let 
$$\varphi:= u_1- u_2, \quad T:= \sum_{j=1}^{n-1} \theta_{u_1}^j \wedge \theta_{u_2}^{n-1-j} ,$$
and
$$T_{k,l}:= \theta_{u_1}^{k} \wedge \theta_{u_2}^l   \wedge  \theta_{u_3}^{n-k-l-1}, \quad L_{k,l}:= \int_X \chi'(\varphi) d \varphi \wedge \dc \varphi \wedge T_{k,l}.$$
Observe 
$$\theta_{u_2}^n- \theta_{u_1}^n=-  \ddc \varphi \wedge T$$
 and 
\begin{align}\label{ine-Tklntru1}
L_{k,n-1-k} \le \int_X \chi'(\varphi) d \varphi \wedge \dc \varphi \wedge T = \varrho I^0_\chi(u_1, u_2)
 \end{align}
 by integration by parts. We now prove by inverse induction on $m:= k+l$ that 
 \begin{align} \label{ine-chigradientfixedtypeLkl}
L_{k,l}  \le   C_{m,n}\varrho B^2(1-\tilde{\chi}(-1))^2Q^{\circ (n-1-k-l)}\big(I^0_\chi(u_1,u_2)\big),
\end{align}
for some  constant $C_{m,n}>1$ depending only on $m, n$ and $M$.  
The desired assertion (\ref{ine-chigradientfixedtypelower}) is the case where $k=l=0$. 
In what follows  we use the symbols $\lesssim$ and $\gtrsim$ for inequalities modulo a constant depending
only on $n$ and $M$. 
We have checked  (\ref{ine-chigradientfixedtypeLkl}) for $k+l=n-1$.  Suppose that (\ref{ine-chigradientfixedtypeLkl}) holds for $k+l=m$ with $0<m\leq n-1$.
 We will verify it for $L_{k-1,l}$, where
$k+l=m$ and $k>1$. The case $L_{k,l-1}$ is done similarly.  

Denote $S_{k-1, l}=\theta_{u_1}^{k-1} \wedge \theta_{u_2}^l   \wedge  \theta_{u_3}^{n-k-l-1}$. Then
$$L_{k-1, l}-L_{k, l}=\int_X \chi'(\varphi) d \varphi \wedge \dc \varphi\wedge dd^c(u_3-u_1) \wedge S_{k-1,l}.$$
Using integration by parts, we have
\begin{align*}
	L_{k-1, l}-L_{k, l}
	&=\int_X -\chi(\varphi)dd^c(\varphi)\wedge dd^c(u_3-u_1) \wedge S_{k-1,l}\\
	&=\int_X -\chi(\varphi) dd^c(u_3-u_1) \wedge T_{k,l}-\int_X -\chi(\varphi) dd^c(u_3-u_1) \wedge T_{k-1,l+1}\\
&=\int_X\chi'(\varphi) d\varphi\wedge d^c(u_3-u_1) \wedge T_{k,l}
-\int_X\chi'(\varphi) d\varphi\wedge d^c(u_3-u_1) \wedge T_{k-1,l+1}\\
\end{align*}
Therefore, it follows from Lemma \ref{le-secondhieuu1u2} that
$$L_{k-1, l}-L_{k, l}\lesssim \varrho B(1-\tilde{\chi}(-1))\left(Q(L_{k, l}/\varrho)+Q(L_{k-1, l+1}/\varrho)\right).$$
Hence, by using the inductive hypothesis, we get
\begin{align*}
	L_{k-1, l}&\lesssim \varrho B^2(1-\tilde{\chi}(-1))^2Q^{\circ (n-1-m)}\big(I^0_\chi(u_1,u_2)\big)\\
	&+\varrho B(1-\tilde{\chi}(-1))Q\left(C_{m,n}B^2(1-\tilde{\chi}(-1))^2Q^{\circ (n-1-m)}\big(I^0_\chi(u_1,u_2)\big)\right)\\
	&\lesssim \varrho B^2(1-\tilde{\chi}(-1))^2Q^{\circ (n-m)}\big(I^0_\chi(u_1,u_2)\big).
\end{align*}
Here we use the fact $Q(t_1)\leq (t_1/t_2)^{1/2}Q(t_2)$ for every $t_1>t_2>0$ (see Lemma \ref{lem qt1t2}).

Thus, \eqref{ine-chigradientfixedtypeLkl} holds for $L_{k-1, l}$. This finishes the proof.
\end{proof}

\begin{lemma}\label{lem qt1t2}
	The function  $h(t)=\dfrac{(Q(t))^2}{t}$ is non-increasing in $\R_{>0}$.
\end{lemma}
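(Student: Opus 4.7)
The plan is to verify the claim piecewise according to the definition of $Q$ in \eqref{eq QB}, then check continuity at the junction $t=1$. On $[1,\infty)$ we have $Q(t)=1$, so $h(t)=1/t$, which is clearly strictly decreasing. Hence it suffices to analyze the two cases of the formula on $(0,1)$ and make sure the function is non-increasing across $t=1$.

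In the case $\chi\in\mathcal{W}_M^+$ the formula gives $Q(t)=t^{1/2}$ on $(0,1)$, so $h(t)=1$ is constant there, while $h(1)=1$ and $h(t)=1/t<1$ for $t>1$; thus $h$ is non-increasing on $\R_{>0}$.

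The substantive case is $\chi\in\widetilde{\mathcal{W}}^-$, in which $h(t)=Q_0(t)/(tQ_0(1))$ for $t\in(0,1)$. The key step is to show that $\epsilon\mapsto Q_0(\epsilon)/\epsilon$ is non-increasing on $(0,\infty)$. Fix $t_0\le -1$ and set $g(\epsilon):=\chi(\epsilon t_0)$. Since $\chi$ is convex on $\R_{\le 0}$ and $t_0<0$, the function $g$ is convex on $\R_{\ge 0}$ with $g(0)=\chi(0)=0$; by the standard monotonicity of difference quotients at a base point, $\epsilon\mapsto g(\epsilon)/\epsilon$ is non-decreasing. Because $g\le 0$ and $\tilde{\chi}(t_0)\le -\tilde{\chi}(-1)=-1<0$, dividing by $\epsilon\tilde{\chi}(t_0)$ (a negative quantity for $\epsilon>0$) flips monotonicity, so for each fixed $t_0\le -1$ the map $\epsilon\mapsto\chi(\epsilon t_0)/(\epsilon\tilde{\chi}(t_0))$ is non-increasing. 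Taking the supremum over $t_0\le -1$ preserves the property of being non-increasing (pointwise suprema of a family of non-increasing functions are non-increasing), hence $\epsilon\mapsto Q_0(\epsilon)/\epsilon$ is non-increasing on $(0,\infty)$. This gives that $h$ is non-increasing on $(0,1)$.

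It remains to match the two pieces at $t=1$. Directly from \eqref{eq QB}, $Q(1^{-})=(Q_0(1)/Q_0(1))^{1/2}=1=Q(1^{+})$ in both cases, so $h(1^{-})=1=h(1^{+})$, and combining the three subintervals yields that $h$ is non-increasing on all of $\R_{>0}$. No serious obstacle is expected; the only point requiring a moment of care is the sign-book-keeping when dividing $\chi(\epsilon t_0)/\epsilon$ by the negative number $\tilde{\chi}(t_0)$ in order to conclude non-increase from non-decrease.
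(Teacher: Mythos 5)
Your proof is correct and follows essentially the same route as the paper: split according to the definition of $Q$, and in the case $\chi\in\widetilde{\mathcal{W}}^-$ use the monotonicity of difference quotients of the convex function $\epsilon\mapsto\chi(\epsilon t_0)$ (with value $0$ at $0$), divide by the negative quantity $\tilde{\chi}(t_0)$ to flip the monotonicity, and take the supremum over $t_0\le -1$ to conclude that $Q_0(\epsilon)/\epsilon$ is non-increasing. Only a cosmetic slip: the inequality ``$\tilde{\chi}(t_0)\le -\tilde{\chi}(-1)=-1$'' should read $\tilde{\chi}(t_0)\le\tilde{\chi}(-1)<0$, and the exact limit claim $h(1^-)=1$ is unnecessary since your monotonicity of $Q_0(\epsilon)/\epsilon$ on all of $\R_{>0}$ already gives $h(t)\ge h(1)$ for $t<1$.
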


\begin{proof}
	If $\chi\in \mathcal{W}_M^+$ then we have
			$$h(t)=
	\begin{cases}
		\dfrac{1}{t} \quad\mbox{if}\quad t\ge 1,\\
		1\quad\mbox{if}\quad 0<t<1,
	\end{cases}$$
is a non-increasing function.

	 We consider the case $\chi\in\widetilde{\mathcal{W}}^-$. We have
	 $$h(t)=
	 \begin{cases}
	 	\dfrac{1}{t} \quad\mbox{if}\quad t\ge 1,\\
	 	\dfrac{Q_0(t)}{tQ_0(1)} \quad\mbox{if}\quad 0<t<1.
	 \end{cases}$$
 It is clear that $h$ is decreasing in $[1, \infty)$.
 We need to show that $h$ is non-increasing in $(0, 1)$. Since $\chi$ is convex, we have
 $$\dfrac{\chi (t_1s)}{t_1s}\leq\dfrac{\chi(t_2s)}{t_2s},$$
 for every $0<t_2<t_1<1$ and $s<0$. Dividing both sides of the last estimate by $\tilde{\chi}(s)/s$, we get
 $$\dfrac{\chi (t_1s)}{t_1\tilde{\chi}(s)}\leq\dfrac{\chi(t_2s)}{t_2\tilde{\chi}(s)}.$$
 Taking the supremum of both sides, we obtain 
 $$\dfrac{Q_0(t_1)}{t_1}=\sup_{s\leq -1}\dfrac{\chi (t_1s)}{t_1\tilde{\chi}(s)}\leq\sup_{s\leq-1}\dfrac{\chi(t_2s)}{t_2\tilde{\chi}(s)}=
 \dfrac{Q_0(t_2)}{t_2}.$$
 Then $h(t_1)\leq h(t_2)$. Hence, $h$ is non-increasing in $(0, 1)$. The proof is completed.
\end{proof}

\begin{proof}[End of the proof of Theorem \ref{th-lowerenergy}] By Lemma \ref{le-lientuccungkididwedgedc}
	and Lemma \ref{le-regularizedchi}, the problem is reduced to the case where
	$\chi \in \Cc^1(\R)$ and $u_j, \psi_j$ are of the same singularity type as $\phi$. 
 
  Let $L$ be the left-hand side of the desired inequality.  We have
 \begin{align*}
 	L&=\int_X-\chi(u_1-u_2)(\theta_{\psi_1}^n-\theta_{u_1}^n)
 	-\int_X-\chi(u_1-u_2)(\theta_{\psi_2}^n-\theta_{u_1}^n)\\
 	&=\int_X-\chi(u_1-u_2)dd^c(\psi_1-u_1)\wedge T_1-\int_X-\chi(u_1-u_2)dd^c(\psi_2-u_1)\wedge T_2\\
 	&=L_1-L_2,
 \end{align*}
 where $T_j=\sum_{l=0}^{n-1}\theta_{\psi_j}^l\wedge\theta_{u_1}^{n-l-1}$. Using integration by parts
 and Lemma \ref{le-secondhieuu1u2}, we get
 \begin{align*}
 	L_1&=\int_X\chi'(u_1-u_2)d(u_1-u_2)\wedge d^c(\psi_1-u_1)\wedge T_1\\
 	&\leq C_1\varrho B (1-\tilde{\chi}(-1))Q\left(\varrho^{-1}\int_X\chi'(u_1-u_2)d(u_1-u_2)\wedge d^c(u_1-u_2)\wedge T_1\right),
 \end{align*}
where $C_1>0$ depends only on $n$ and $M$. Moreover, it follows from Proposition \ref{pro-mainstabilitylowenergyconvex} that
 $$\varrho^{-1}\int_X\chi'(u_1-u_2)d(u_1-u_2)\wedge d^c(u_1-u_2)\wedge T_1\leq
 C_2 B^2(1-\tilde{\chi}(-1))^2Q^{\circ (n-1)}\left(I^0_{\chi}(u_1, u_2)\right),$$
 where $C_2>1$ depends only on $n$ and $M$.
 Then
 $$L_1\leq C_3\varrho B^2 (1-\tilde{\chi}(-1))^2Q^{\circ n}\left(I^0_{\chi}(u_1, u_2)\right),$$
 where $C_3>0$ depends only on $n$ and $M$. Here we use the fact $Q(t_1)\leq (t_1/t_2)^{1/2}Q(t_2)$ for every $t_1>t_2>0$.
 
  By the same arguments, we also have
 $$-L_2\leq C_4\varrho B^2 (1-\tilde{\chi}(-1))^2Q^{\circ n}\left(I^0_{\chi}(u_1, u_2)\right),$$
 where $C_4>0$ depends only on $n$ and $M$.
 
 Hence
 $$L=L_1-L_2\leq (C_3+C_4)\varrho B^2 (1-\tilde{\chi}(-1))^2Q^{\circ n}\left(I^0_{\chi}(u_1, u_2)\right).$$
   The proof is completed.  
\end{proof}

\subsection{Proof of Theorem \ref{th-lowerenergy-kocochuanhoa}}

Recall that for every Borel set $E$ in $X$, we define
$$\capK_{\theta, \phi}(E):= \sup \bigg\{\int_E \theta_h^n: \quad h \in \PSH(X,\theta), \quad   \phi-1 \le h \le \phi  \bigg\}.$$
The following is an improvement of results from \cite{Lu-Darvas-DiNezza-logconcave,Lu-Darvas-DiNezza-mono}  (see also \cite{BEGZ,Kolodziej_2003}).

\begin{theorem}\label{the P[u]-C<u}
	Let $A\geq 1$ be a constant and let $\theta$ be a closed smooth real $(1,1)$-form such that $\theta\leq A\omega$.
	Let $\phi\in \PSH(X, \theta)$
	and $0\leq f\in L^p(X)$  for some constant $p>1$
	such that $\phi=P[\phi]$ and $0<\int_X f \omega^n=\int_X\theta_{\phi}^n:=\varrho$.
	Assume $u\in\mathcal{E}(X, \theta, \phi)$ satisfies $\sup_X(u-\phi)=0$ and $\theta_u^n=fdV.$
	Then, there exists a constant $C \ge 1$ depending only on $X, \omega, n$ and $p$ such that
	\begin{equation}
		u\geq\phi-C\, A\left(\log\|f\,\mathrm{vol}_{\omega}(X)^q/\varrho\|_{L^p}+\log A+1\right),
	\end{equation}
 where $\vol_\omega(X):= \int_X\omega^n$ and $q=\dfrac{p}{p-1}$.
\end{theorem}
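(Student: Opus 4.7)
I would follow Kolodziej's capacity--iteration scheme, specialized to the prescribed-singularity capacity $\capK_{\theta,\phi}$, with careful book-keeping of the $A$-dependence so as to recover the claimed linear factor $A$ (not $A^n$). Setting $w := u - \phi \le 0$, $U(t) := \{w < -t\}$, and $g(t) := \capK_{\theta,\phi}(U(t))$, it suffices to exhibit some $t_\infty \le CA\bigl(\log\|f\,\vol_\omega(X)^q/\varrho\|_{L^p} + \log A + 1\bigr)$ with $g(t_\infty) = 0$, since this forces $u \ge \phi - t_\infty$.

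First I would establish a comparison inequality. For any $h \in \PSH(X,\theta)$ with $\phi - 1 \le h \le \phi$ and any $0 < s \le 1$, set $v_s := sh + (1-s)\phi$. Then $v_s$ is $\theta$-psh with $\phi - s \le v_s \le \phi$, and by the multilinear inequality for non-pluripolar products one has $\theta_{v_s}^n \ge s^n \theta_h^n$. Moreover $U(t+s) \subset \{u < v_s - t\}$, since on this set $u + t + s \le \phi$ while $v_s \ge \phi - s$. The comparison principle in the class $\mathcal{E}(X,\theta,\phi)$ (see \cite{BEGZ,Lu-Darvas-DiNezza-mono}) gives $\int_{\{u<v_s-t\}} \theta_{v_s}^n \le \int_{\{u<v_s-t\}} \theta_u^n$, and taking the supremum over $h$ yields
$$s^n g(t+s) \;\le\; \int_{U(t)} f \, dV, \qquad 0 < s \le 1. \qquad(\ast)$$

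Next I would bound the right-hand side of $(\ast)$ in terms of $g(t)$. H\"older's inequality gives $\int_E f\,dV \le \|f\|_{L^p}\,\vol_\omega(E)^{1/q}$, and the volume is controlled by $\capK_{\theta,\phi}$ through a Skoda--Zeriahi exponential integrability estimate: every competitor $h$ in the definition of $\capK_{\theta,\phi}$ is $A\omega$-plurisubharmonic, and rescaling $A^{-1}(h-\phi)$ produces an $\omega$-psh function of bounded oscillation, yielding
$$\vol_\omega(E) \;\le\; C_1 \exp\!\Bigl(-\frac{\alpha_1}{A\,\capK_{\theta,\phi}(E)^{1/n}}\Bigr),$$
with $C_1,\alpha_1$ depending only on $(X,\omega,n)$. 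Combining with $(\ast)$,
$$s^n g(t+s) \;\le\; C_1^{1/q}\|f\|_{L^p}\,\exp\!\Bigl(-\frac{\alpha_1}{qA\,g(t)^{1/n}}\Bigr).$$
A standard De Giorgi/Kolodziej iteration, choosing $s_j \sim 2^{-j}$ so that $\sum s_j < \infty$, then shows that from any $t_0$ at which $g(t_0)$ is below a universal threshold $\delta_0$, one has $g(t_\infty)=0$ for $t_\infty := t_0 + \sum_j s_j$. Such a $t_0$ of order $O(A)$ is produced by a Chebyshev argument: using $\sup_X(u-\phi)=0$ together with standard $L^1$ bounds on $A\omega$-psh functions one obtains $\int_X(\phi - u)\,\omega^n \lesssim A$, which, fed back through the volume--capacity bound above, yields $g(t_0) \le \delta_0$ for $t_0 \asymp A$.

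\textbf{Main obstacle.} The key difficulty is the \emph{linear} dependence on $A$: one cannot afford a cruder polynomial volume--capacity bound, which would produce $A^n$. This forces one to keep the sharp exponential Skoda--Zeriahi form above and to track the factor $1/A$ in its exponent all the way through the iteration. A secondary subtlety is ensuring that $\|f\|_{L^p}$ and the mass $\varrho$ enter the final bound only through the dimensionless ratio $\|f\|_{L^p}\vol_\omega(X)^q/\varrho$; this is precisely encoded by the normalization factor $\vol_\omega(X)^q$ and is handled by tracing the role of $\varrho$ in the initial Chebyshev step. Finally, the logarithmic corrections generated at each round of the iteration must telescope into a single $\log A + \log(\|f\|_{L^p}\vol_\omega(X)^q/\varrho)$ -- this is the standard but delicate bookkeeping at the heart of Kolodziej's \emph{a priori} estimate.
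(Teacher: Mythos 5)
Your overall frame (the sublevel-set function $g$, the comparison inequality $(\ast)$, the Skoda-type volume--capacity estimate, and a De Giorgi/Ko{\l}odziej iteration) is the same family of argument as the paper's, and $(\ast)$ itself is essentially the paper's inequality \eqref{eq1 proof the P[u]-C<u}. But there is a genuine gap at the step where you produce the starting level $t_0$: you propose Chebyshev from $\int_X(\phi-u)\,\omega^n\lesssim A$, which only yields polynomial decay, $\vol_\omega(U(t))\lesssim A/t$, hence via $(\ast)$ and H\"older only $g(t+1)\lesssim \|f\|_{L^p}(A/t)^{1/q}$. Moreover the threshold $\delta_0$ needed to close the iteration is \emph{not} universal: from your own iteration inequality it must be of size roughly $\varrho\bigl(c/(A\log(\|f\|_{L^p}/\cdot))\bigr)^{n}$, since the multiplicative constant $\|f\|_{L^p}$ and the factor $1/A$ in the exponent enter it. Feeding the Chebyshev decay into such a threshold forces $t_0\gtrsim A\,(\|f\|_{L^p}A^n(\log\cdots)^n/\varrho)^{q}$, i.e.\ a bound \emph{polynomial} in $\|f\|_{L^p}/\varrho$ and with a much worse power of $A$ --- not the claimed $C\,A(\log(\|f\|_{L^p}\vol_\omega(X)^q/\varrho)+\log A+1)$. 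No rearrangement of the iteration repairs this: starting from $g(0)\le\varrho$, one round of your inequality only gives $g(t+s)\le Ks^{-n}e^{-c/A}$ with $K\sim\|f\|_{L^p}$, so reaching the threshold still costs a power of $\|f\|_{L^p}/\varrho$ in $t$.

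The missing ingredient, which is where the paper uses the $L^p$ density a second time, is an \emph{exponential} decay of the capacity of sublevel sets: with $b\sim 1/(A\nu q)$ one has
\begin{equation*}
\varrho^{-1}\capK_{\theta,\phi}\{u<\phi-s\}\;\le\;\varrho^{-1}\int_X e^{\,b(\phi-u-s)}\theta_u^n\;\le\;B_1 e^{-bs},
\end{equation*}
where the middle integral is controlled by H\"older, $\int_X e^{-bu}f\,dV\le\|f\|_{L^p}\bigl(\int_X e^{-bqu}\,dV\bigr)^{1/q}$, combined with the uniform Skoda inequality $\int_X e^{-u/(A\nu)}\omega^n\le C_0^2$ for $\theta$-psh $u\subset\PSH(X,A\omega)$ with $\sup_X u=0$ (here one uses $\phi\le 0$ and $\sup_X u=0$, the latter because $\phi$ is model). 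This exponential decay reaches the ($A$- and $f$-dependent) smallness threshold at cost $s_0\sim \frac1b\bigl(\log A+\log\|f/\varrho\|_{L^p}+1\bigr)\sim A(\log A+\log\|f/\varrho\|_{L^p}+1)$, which is exactly what produces the stated estimate; the paper then closes with a quadratic capacity inequality and the EGZ lemma rather than a dyadic $s_j\sim 2^{-j}$ scheme, but that difference is cosmetic. A secondary correction: your volume--capacity estimate must be normalized by the mass, i.e.\ $\vol_\omega(E)\le C\exp\bigl(-\tfrac{\alpha}{A}(\capK_{\theta,\phi}(E)/\varrho)^{-1/n}\bigr)$; as written (without $\varrho$) it is false when $\varrho$ is small (take $E=X$), and this normalization --- not the Chebyshev step --- is what makes $f$ and $\varrho$ enter only through $f/\varrho$.
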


By H\"older inequalities, one sees that
$$1=\int_X\dfrac{f}{\varrho}\omega^n\leq \|f/\varrho\|_{L^p}\left(\mathrm{vol}_{\omega}(X)\right)^q,$$
and then $\log\|f \vol_{\omega}(X)^q/\varrho\|_{L^p}\geq 0$.

\begin{proof}
	Without loss of generality, we can assume that $\vol_{\omega}(X)=1$.
Recall that there exists a constant $\nu>0$ depending only on $X, \omega$ such that 		
$$\int_X \exp\left(-\psi/\nu\right)\omega^n\leq C_0^2,$$
	for every $\psi\in\PSH(X, \omega)$ with $\sup_X\psi=0$, where
	$C_0>0$ is a constant depending only on $X$ and $\omega$. Consequently, one gets
	$$\int_X \exp\left(-\psi/(A\nu)\right)\omega^n\leq C_0^2,$$
	for every $\psi\in\PSH(X, \theta)\subset\PSH(X, A\omega)$ with $\sup_X\psi=0$.
	By the same arguments as in the proof of
	\cite[Proposition 4.30]{Lu-Darvas-DiNezza-mono} (use  \cite[Lemma 3.9]{Lu-Darvas-DiNezza-logconcave}
	instead of \cite[Lemma 4.9]{Lu-Darvas-DiNezza-mono}), we have 
	$$\int_E\omega^n\leq C_0\exp\left(-\dfrac{1}{2A\nu}\left(\dfrac{\capK_{\theta, \phi}(E)}{\varrho}\right)^{-1/n}\right),$$
	for every Borel set $E\subset X$. Therefore, by the H\"older inequality and the fact
	$e^{-1/t}\leq m!t^m$ for every $m \in \N$ and every $t >0$, there exists $A_0>0$ depending only on 
	$X, \omega, n$ and $p$ such that
	\begin{equation}\label{eq0.1 proof the P[u]-C<u}
		\varrho^{-1}\int_E\theta_u^n= \int_E (f/\varrho) \omega^n \leq \|f/\varrho\|_{L^p}\left(\int_E \omega^n\right)^{1/q}\leq
		 A_0A^{2n}\|f/\varrho\|_{L^p}
		\dfrac{\capK_{\theta, \phi}(E)^2}{\varrho^2},
	\end{equation}
	for every Borel set $E\subset X$, where $1/p+1/q=1$. On the other hand, denoting
	$b=(A\nu q)^{-1}$ and $B_0=(C_0)^{1/q}$, we have
	\begin{equation}\label{eq0.2 proof the P[u]-C<u}
		\varrho^{-1}\int_Xe^{-b w}\theta_u^n\leq \|f/\varrho\|_{L^p}\left(\int_Xe^{-b q w}dV\right)^{1/q}
		\leq B_0\|f/\varrho\|_{L^p},
	\end{equation}
	for every $w\in \PSH(X, \theta)$ with $\sup_X w=0$. 
	
	For every $h\in \PSH(X, \theta)$ with $\phi-1\leq h\leq \phi$, for each $0\leq t\leq 1$ and $s>0$, we have
	\begin{align*}
		t^n\int\limits_{\{u<\phi-t-s\}}\theta_h^n\leq\int\limits_{\{u<(1-t)\phi+t h-s\}}\theta_{(1-t)\phi+t h}^n
		&\leq \int\limits_{\{u<(1-t)\phi+t h-s\}}\theta_{u}^n\\
		&\leq \int\limits_{\{u<\phi-s\}}\theta_{u}^n,
	\end{align*}
	where the third estimate holds due to the comparison principle \cite[Lemma 2.3]{Lu-Darvas-DiNezza-logconcave}.
	Then
	\begin{equation}\label{eq1 proof the P[u]-C<u}
		t^n \, \capK_{\phi}(u<\phi-t-s)\leq \int\limits_{\{u<\phi-s\}}\theta_{u}^n,
	\end{equation}
	for every $0\leq t\leq 1$, $s>0$. Therefore, it follows from \eqref{eq0.1 proof the P[u]-C<u} that
	\begin{equation*} 
		t^n \, \varrho^{-1}\capK_{\phi}(u<\phi-t-s)\leq A_1 \,  \varrho^{-2}\capK_{\phi}(u<\phi-s)^{2},
	\end{equation*}
	where $A_1=A_0A^{2n}\|f/\varrho\|_{L^p}$.
	Putting $g(s)=\varrho^{-1/n}\capK_{\phi}(u<\phi-s)^{1/n}$, the above inequality becomes
	\begin{equation*} 
		tg(t+s)\leq A_1^{1/n}g(s)^2.
	\end{equation*}
	Hence, it follows from \cite[Lemma 2.4 and Remark 2.5]{EGZ} that if $g(s_0)<1/(2A_1^{1/n})$
	then $g(s)=0$ for all $s\geq s_0+2$. Moreover, by \eqref{eq1 proof the P[u]-C<u}
	and the condition \eqref{eq0.2 proof the P[u]-C<u}, we have
	$$g(s+1)^n\leq \varrho^{-1}\int\limits_{\{u<\phi-s\}}\theta_{u}^n\leq \varrho^{-1}\int\limits_{X}e^{b(\phi-u-s)}\theta_{u}^n
	\leq B_1 \, e^{-bs},$$
	for every $s>0$, where $B_1=B_0\|f/\varrho\|_{L^p}$. Then $g(s+1)<1/(2A_1^{1/n})$ provided that
	$$s>\frac{n\log 2+\log A_1}{b}+\frac{\log B_1}{b} \cdot$$ 
	Hence $g(s)=0$ for every 
	$$s\geq \frac{n\log 2+\log A_1}{b}+\frac{\log B_1}{b}+4.$$
	Thus
	\begin{center}
		$u\geq\phi -\left(\dfrac{n\log 2+\log A_1}{b}+\dfrac{\log B_1}{b}+4\right)
		= \phi-C_1\log\|f/\varrho\|_{L^p}-C_2,$
	\end{center}
	where $C_1=\frac{2}{b}=2\nu q A$ and 
	\begin{align*}
	C_2 &=4+\frac{n\log 2+\log A_0+\log B_0+2n\log A}{b}\\
	&=4+2\nu q(n\log 2+\log A_0+\log B_0+2n\log A) A.
	\end{align*}
		The proof is finished.
\end{proof}

\begin{lemma}\label{lem vol estimate}
	There exists a constant $C>0$ depending only on $n, X$ and $\omega$ such that for every $u\in\PSH (X, \omega)$ satisfying $\sup_X u=0$ and for every constant $0<t \le 1$, one has
	\begin{equation}
		\int_{\{u>-t\}} \omega^n \geq C t^{2n}.
	\end{equation}
\end{lemma}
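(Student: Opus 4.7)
The plan is to reduce to a local statement via the sub-mean value inequality, applied at a point where $u$ attains its supremum. Since $u$ is upper semi-continuous on the compact manifold $X$, the value $\sup_X u = 0$ is attained at some $x_0 \in X$. By covering $X$ with finitely many holomorphic charts, I may fix positive constants $r_0, c_0$ depending only on $(X, \omega)$ and choose coordinates $z$ centered at $x_0$ in which $\omega = dd^c \rho$ for a smooth function $\rho$ satisfying $\rho(0)=0$ and $|\rho(z)| \le c_0 |z|^2$ on the Euclidean ball $B(0, r_0)$. Then $v := u + \rho$ is plurisubharmonic on $B(0, r_0)$ with $v(0) = 0$ and $v \le c_0 r^2$ on $B(0, r)$ for every $0 < r \le r_0$.

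The key computation is a one-line Markov estimate: the sub-mean value inequality for $v$ at $0$ gives
\begin{equation*}
 0 = v(0) \le \frac{1}{\lambda(B(0,r))} \int_{B(0,r)} v \, d\lambda,
\end{equation*}
where $\lambda$ denotes Lebesgue measure on $\C^n$. Rewriting this as $\int_{B(0,r)} (c_0 r^2 - v)\, d\lambda \le c_0 r^2 \lambda(B(0,r))$ and using that the integrand is nonnegative, Markov yields
\begin{equation*}
\lambda\bigl(\{z \in B(0,r) : v(z) < -c_0 r^2\}\bigr) \le \tfrac{1}{2} \lambda(B(0,r)).
\end{equation*}
On the complementary set $E_r \subset B(0,r)$ one has $u = v - \rho \ge -2 c_0 r^2$, while $\lambda(E_r) \ge \tfrac{1}{2} \lambda(B(0,r))$.

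To conclude, I would choose $r := \min\bigl(r_0, \sqrt{t/(4c_0)}\bigr)$ for the given $0 < t \le 1$, so that $E_r \subset \{u > -t\}$ by construction. This gives $\lambda(E_r) \gtrsim r^{2n} \gtrsim t^n$, where the last inequality absorbs the truncation at $r_0$ using $t \le 1$. Comparing Lebesgue measure with $\omega^n$ on the relatively compact chart by a constant depending only on $(X,\omega)$ then yields $\int_{\{u > -t\}} \omega^n \ge C t^n$; since $t \le 1$ one has $t^n \ge t^{2n}$, so the stated bound follows (with the slightly stronger exponent $t^n$ in fact).

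There is no real obstacle: the argument is entirely soft, relying only on the sub-mean value inequality and Markov, and uses none of the pluripotential machinery from the body of the paper. The only subtle point is ensuring uniformity of the constants $r_0, c_0$ and of the comparison between Lebesgue measure and $\omega^n$ as $x_0$ varies over $X$; this is handled in standard fashion by covering $X$ with finitely many charts and taking the worst constants.
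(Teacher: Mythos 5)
Your proof is correct and follows essentially the same route as the paper: localize at a point where $u$ attains its supremum, apply the sub-mean value inequality to $u$ plus a local potential of $\omega$, with constants made uniform via a finite cover of charts. The one real difference is your normalization of the potential: subtracting its pluriharmonic first-order part at the center gives the quadratic bound $|\rho(z)|\le c_0|z|^2$, so the radius can be taken of order $\sqrt{t}$ rather than of order $t$ as in the paper (which only uses a Lipschitz bound $C_\rho r$ on the potential); combined with your Markov step this yields the stronger exponent $Ct^{n}$, which in particular implies the stated $Ct^{2n}$. The uniformity of $r_0,c_0$ as the base point varies is indeed the only delicate point, and your finite-cover argument (second derivatives of the local potentials bounded on the fixed charts) settles it.
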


\begin{proof}
	Let $(U_j, \varphi_j)_{j=1}^m$ such that $U_j\subset X$ are open, $\varphi_j: 4\B \longrightarrow U_j$
	are biholomorphic and $\cup_{j=1}^m\varphi_j(\B)=X$ (where $\B$ is the open unit ball in $\C^n$), and  there is a smooth psh	function $\rho_j$ in $U_j$ such that $dd^c\rho_j=\omega$ for $1 \le j \le m$. Denote
	$$C_{\rho}=\sup\limits_{1\leq j\leq m}\sup\limits_{2\B}\|\nabla(\rho_j\circ\varphi_j)\|.$$
	Assume $u(z_0)=0$. Then there exists $1\leq j_0\leq m$ such that $z_0\in\varphi_{j_0}(\B)$.
	Denote $w_0=\varphi_{j_0}^{-1}(z_0)$, $\widehat{u}(w)=u\circ \varphi_{j_0}(w)$
	and $\widehat{\rho}(w)=\rho_{j_0}\circ\varphi_{j_0}(w)-\rho_{j_0}\circ\varphi_{j_0}(w_0)$. 
	By the plurisubharmonicity of $\widehat{u}+\widehat{\rho}$, for every $t>0$ and $0<r<1$,
	we have
	\begin{align*}
		0=(\widehat{u}+\widehat{\rho})(w_0)
		&\leq\dfrac{1}{\vol_{\C^n}(r\B)}\int_{r\B}(\widehat{u}+\widehat{\rho})dV_{2n}\\
		&\leq C_{\rho}r+\dfrac{1}{c_{2n}r^{2n}}\int_{r\B}\widehat{u} dV_{2n}\\
		&\leq C_{\rho}r-\dfrac{t}{c_{2n}r^{2n}}\int_{r\B \cap\{\widehat{u}\leq-t\}} dV_{2n}\\
		&\leq C_{\rho}r-t+\dfrac{t}{c_{2n}r^{2n}}\int_{r\B \cap\{\widehat{u}>-t\}}dV_{2n}\\
		&\leq C_{\rho}r-t+\dfrac{C_{\omega}t}{r^{2n}}\mathrm{vol}_\omega(\{u>-t\}),\\
	\end{align*}
	where $c_{2n}=\mathrm{vol}_{\C^n}(\B)$ and $C_{\omega}>0$ is a constant depending only on $n, X, \omega$.
	It follows that
	$$\mathrm{vol}_\omega(\{u>-t\})
	\geq \dfrac{r^{2n}}{C_{\omega}}\left(1-\dfrac{C_{\rho}r}{t}\right).$$
	Hence, for every $0<t<1$, by choosing $r=\frac{t}{1+C_{\rho}}$, we have
	$$\mathrm{vol}_\omega(\{u>-t\})\geq C t^{2n},$$
	where $C=\dfrac{1}{C_{\omega}(1+C_{\rho})^{2n+1}}$ depends only on $n, X$ and $\omega$.
\end{proof}

\begin{proof}[End of the proof of Theorem \ref{th-lowerenergy-kocochuanhoa}]
		Without loss of generality, we can assume that 
	$u_1\leq u_2$. Denote 
	$W_t=\{u_1>a_1-t\}$ for $0<t\leq 1$. We have
	\begin{equation}\label{eq1 proof GZ12}
		\int_{W_t}-\chi(u_1-u_2)\omega^n\leq\int_{W_t}-\chi(u_1-a_2)\omega^n\leq -b_t\chi(a_1-a_2-t),
	\end{equation} 
where $	b_t:=\vol (W_t)$.

	It follows from Lemma \ref{lem vol estimate} that $W_t\neq\emptyset$. Moreover,
	\begin{equation}\label{eq1.1 proof GZ12}
		b_t:=\int_{W_t}\omega^n\geq C_1\left(\dfrac{t}{A}\right)^{2n},
	\end{equation}
where $C_1>0$ is a constant depending only on $n, X$ and $\omega$. By \cite[Theorem A]{Lu-Darvas-DiNezza-logconcave}
(see also \cite[Theorem 3]{Vu_Do-MA}), there exists a unique $\varphi\in\mathcal{E}(X, \theta, \phi)$
with $\sup_X (\varphi-\phi)=0$ such that
$$\theta_{\varphi}^n=\dfrac{\varrho}{b_t}\mathbf{1}_{W_t}\omega^n.$$
It follows from Theorem \ref{the P[u]-C<u} that
	\begin{equation}\label{eq2 proof GZ12}
	\phi-C_2A\left(-\log t+\log A+1\right)\leq\varphi\leq 	\phi,
\end{equation}
for some constant $C_2 \ge 1$ depending only on $n, X$ and $\omega$. Thus, we have
	$$E^0_{\tilde{\chi}, \theta, \phi}(\varphi)\leq -\tilde{\chi}\big(C_2\, A\left(-\log t+\log A+1\right)\big)
	\leq -C_3\left(\log\dfrac{Ae}{t}\right)^M\tilde{\chi}(-A),$$
	where $C_3>0$ depends only on $n, X, \omega$ and $M$.
	
Hence, it follows from Theorem \ref{th-lowerenergy} that
\begin{equation}\label{eq3 proof GZ12}
	\int_X-\chi(u_1-u_2)(\theta_{\psi}^n-\theta_{\varphi}^n)\leq 
	C_4\varrho \left(\log\dfrac{Ae}{t}\right)^{2M}(B-\tilde{\chi}(-A))^2(1-\tilde{\chi}(-1))^2\lambda,
\end{equation}
where $\lambda=Q^{\circ (n)}(I^0_\chi(u_1,u_2))$ and $C_4>0$ depends only on $n, X, \omega$ and $M$.

Combining \eqref{eq1 proof GZ12} and \eqref{eq3 proof GZ12}, we get
$$\int_X-\chi(u_1-u_2)\theta_{\psi}^n\leq -\varrho\chi(a_1-a_2-t)
	+C_4\varrho \left(\log\dfrac{Ae}{t}\right)^{2M}(B-\tilde{\chi}(-A))^2(1-\tilde{\chi}(-1))^2 \lambda.$$
	Letting $t\rightarrow \lambda^{m}$, we get 
	\begin{align*}
		\int_X-\chi(u_1-u_2)\theta_{\psi}^n&\leq -\varrho\chi(a_1-a_2-\lambda^m)
		+C_4\varrho \left(\log\dfrac{Ae}{\lambda^m}\right)^{2M}(B-\tilde{\chi}(-A))^2(1-\tilde{\chi}(-1))^2 \lambda\\
		&\leq -\varrho\chi(a_1-a_2-\lambda^m)+
		 C_5\varrho \dfrac{A^{(1-\gamma)/m}}{\lambda^{1-\gamma}}(B-\tilde{\chi}(-A))^2(1-\tilde{\chi}(-1))^2 \lambda\\
			&\leq  -\varrho\chi(a_1-a_2-\lambda^m)+
			 C_5\varrho A^{(1-\gamma)/m}{\lambda^{1-\gamma}}(B-\tilde{\chi}(-A))^2(1-\tilde{\chi}(-1))^2 \lambda^{\gamma},
	\end{align*}
	where $C_5>0$ depends only on $n, X, \omega, M, m$ and $\gamma$.
	
		The proof is completed.
\end{proof}

\subsection{Applications} \label{subsec-appli}

\subsubsection{Quantitative version for the domination principle}

\begin{theorem}\label{the domination}
	Let $A \ge 1$ be a constant and let $\theta\leq A\omega$ be a closed smooth real $(1,1)$-form 
	 and $\phi$ be a  model $\theta$-psh function, and $\varrho:=\vol(\theta_\phi)>0$.  Let $B \ge 1$ be a constant, $\tilde{\chi} \in \widetilde{\mathcal{W}}^-$ and $u_1, u_2\in \mathcal{E}(X, \theta, \phi)$ such that $\tilde{\chi}(-1)=-1$ and
	$$E^0_{\tilde{\chi}, \theta, \phi}(u_1)+E^0_{\tilde{\chi}, \theta, \phi}(u_2) \le B.$$
	Assume that there exists a constant $0\leq c<1$ and a Radon measure $\mu$ on $X$ satisfying
	$\theta_{u_1}^n\leq c\theta_{u_2}^n+\varrho\mu$ on $\{u_1<u_2\}$ and 
	$c_{\mu}:=\int_{\{u_1<u_2\}}d\mu\leq 1$.
	Then there exists a constant $C>0$ depending only on $n, X$ and $\omega$   such that 
	$$\capK_{\omega}\{u_1<u_2-\epsilon\}\leq 
	\dfrac{C\vol(X) (A+B)^2}{\epsilon(1-c)h^{\circ n}(1/c_{\mu})},$$
	for every $0<\epsilon<1$,
	where  $h(s)=(-\tilde{\chi}(-s))^{1/2}$ for every $0 \le s \le \infty$.
	
	In particular, if $c_{\mu}=0$ then $\capK_{\omega}\{u_1<u_2-\epsilon\}=0$ for every $\epsilon>0$,
	and then $u_1\geq u_2$ on whole $X$.
\end{theorem}

The standard domination principle corresponds to the case where $c=0$ and $\mu:=0$. A non-quantitative version of this domination principle (\emph{i.e,} for $\mu=0$) in the non-K\"ahler setting was obtained in \cite{Guedj-Lu-3}. In order to prove Theorem \ref{the domination}, we need the following result which is an immediate consequence
of the Chern-Levine-Nirenberg inequality:

\begin{proposition}\label{pro-dominatedcapacitybigomega} Let $\theta\leq A\omega$ be a closed smooth real $(1,1)$-form (where $A\geq 1$ is a constant) and $\phi$ be a model $\theta$-psh function with $\varrho:=\int_X\theta_{\phi}^n>0$.
	Let $0 \le w \le 1$ is an $\omega$-psh function and $\psi$ is the unique solution to the problem
	\begin{equation}\label{eq propfordomination}
		\begin{cases}
			u\in\mathcal{E}(X, \theta, \phi),\\
			\theta_u^n=\dfrac{\varrho}{\vol (X)}(\ddc w+ \omega)^n,\\
			\sup_X u=0.
		\end{cases}
	\end{equation}
	Then there exists a constant $C>0$ depending only on $X$ and $\omega$ such that 
	$$\int_X|\psi|\theta_{\psi}^n\leq C\, A\varrho.$$
\end{proposition}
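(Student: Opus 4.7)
\medskip
\noindent\textbf{Proof sketch.} The plan is to invoke the equation defining $\psi$, renormalize $\psi$ by $A$ so that it becomes $\omega$-psh, and then reduce to a uniform Chern-Levine-Nirenberg bound established by iterated integration by parts.

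Since $\phi$ is a negative model and $\psi\in\mathcal{E}(X,\theta,\phi)$, one has $\psi\le\phi\le 0$, so $|\psi|=-\psi$. Substituting the defining equation $\theta_\psi^n=\frac{\varrho}{\vol(X)}\omega_w^n$ (with $\omega_w:=\omega+\ddc w$) yields
$$\int_X|\psi|\,\theta_\psi^n \;=\; \frac{\varrho}{\vol(X)}\int_X(-\psi)\,\omega_w^n,$$
so it suffices to prove $\int_X(-\psi)\,\omega_w^n\le CA\,\vol(X)$ for some $C=C(X,\omega)$. Because $\theta\le A\omega$, we have $A\omega+\ddc\psi=(A\omega-\theta)+\theta_\psi\ge 0$, hence $u:=\psi/A$ lies in $\PSH(X,\omega)$ with $\sup_X u=0$. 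The task is thus reduced to the uniform bound
$$\int_X(-u)\,\omega_w^n \;\le\; C(X,\omega)$$
for every normalized $u\in\PSH(X,\omega)$ and every $w\in\PSH(X,\omega)$ with $0\le w\le 1$.

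I would establish this bound inductively on $k$ for the quantities $I_k:=\int_X(-u)\,\omega_w^k\wedge\omega^{n-k}$. Splitting $\omega_w=\omega+\ddc w$ and integrating the $\ddc w$ factor by parts (with $\omega_u:=\omega+\ddc u\ge 0$),
$$I_k\;=\;I_{k-1}+\int_X w(\omega-\omega_u)\wedge\omega_w^{k-1}\wedge\omega^{n-k}\;\le\;I_{k-1}+\vol(X),$$
where the non-negative term $\int_X w\,\omega_u\wedge(\cdots)$ is discarded, and we use $w\le 1$ together with $[\omega_w]=[\omega]$. The base case $I_0=\int_X(-u)\omega^n\le C(X,\omega)$ is the classical $L^1$-bound for normalized $\omega$-psh functions, so iterating gives $I_n\le C(X,\omega)$ as required.

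The only subtle point is that $u$ (and hence $\psi$) may be unbounded, so the integration by parts needs justification. I would handle this by truncating $u^{(j)}:=\max\{u,-j\}$, applying the formula to the bounded potential $u^{(j)}$ (classical, and a direct instance of Theorem \ref{th-integrabypart}), noting that each constant appearing in the induction depends only on $X,\omega$ and on the uniform bounds $\sup u\le 0$ and $0\le w\le 1$, and then letting $j\to\infty$ by monotone convergence. This is the main technical hurdle, but it is routine; combining all three steps yields the desired estimate $\int_X|\psi|\,\theta_\psi^n\le CA\varrho$ with $C=C(X,\omega)$.
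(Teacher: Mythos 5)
Your argument is correct and follows the same route the paper intends: the paper states the proposition as an immediate consequence of the Chern--Levine--Nirenberg inequality, which is exactly your reduction (use the equation to replace $\theta_\psi^n$ by $\tfrac{\varrho}{\vol(X)}\omega_w^n$, note $\psi/A\in\PSH(X,\omega)$ with $\sup_X\psi/A=0$, and apply the uniform CLN bound against $\omega_w^n$). The only difference is that you also spell out the standard inductive integration-by-parts proof of that CLN bound, which the paper simply cites.
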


\begin{proof}[Proof of Theorem \ref{the domination}]
	Let $w$ be an arbitrary $\omega$-psh function satisfying $0\leq w\leq 1$ and 
	$\psi$ is the unique solution
	to \eqref{eq propfordomination}. Denote $v=\max\{u_1, u_2\}$ and 
	$\chi(t)=\max\{t, -1\}\geq \tilde{\chi}(t)$. By Theorem \ref{th-lowerenergy}
	and Proposition \ref{pro-dominatedcapacitybigomega}, there exists  a constant $C_1>0$ depending only on $n, X$ an 
	$\omega$ such that
	\begin{equation}\label{eq1 proof domination}
		I_1:=\int_X  -\chi (u_1-v)  (\theta_{\psi}^n- \theta_{u_1}^n) \le 
		C_1\varrho (A+B)^2 Q^{\circ (n)}(I^0_\chi(u_1, v)),
	\end{equation}
	and
	\begin{equation}\label{eq2 proof domination}
		I_2:=\int_X  -\chi (u_1-v)  (\theta_{u_2}^n- \theta_{u_1}^n) \le 
		C_1\varrho (A+B)^2 Q^{\circ (n)}(I^0_\chi(u_1, v)).
	\end{equation}
	Moreover,  by the fact $\theta_v^n=\theta_{u_2}^n$ on $\{u_1<u_2\}$ and
	by the assumption $\theta_{u_1}^n\leq c\theta_{u_2}^n+\varrho\mu$ on $\{u_1<u_2\}$
	, we have
	\begin{equation}\label{eq3 proof domination}
		I^0_\chi(u_1, v)
		=\varrho^{-1}\int_{\{u_1<u_2\}} -\chi (u_1-v)(\theta_{u_1}^n-\theta_{u_2}^n)
		\leq\varrho^{-1}\int_{\{u_1<u_2\}} -\chi (u_1-v)(\theta_{u_1}^n-c\theta_{u_2}^n)\leq c_{\mu}.
	\end{equation}
	Combining \eqref{eq1 proof domination}, \eqref{eq2 proof domination} and \eqref{eq3 proof domination}, we get
	\begin{align*}
		(1-c)	\int_X  -\chi (u_1-v)  \theta_{\psi}^n
		&=\int_X -\chi (u_1-v)(\theta_{u_1}^n-c\theta_{u_2}^n)+(1-c)I_1+cI_2\\
		&\leq \int_X -\chi (u_1-v)(\theta_{u_1}^n-c\theta_{u_2}^n)
		+ C_1\varrho (A+B)^2(1-\tilde{\chi}(-1))^2 Q^{\circ n}(c_{\mu})\\
		&\leq \varrho c_{\mu}+ C_1\varrho (A+B)^2 Q^{\circ n}(c_{\mu})\\
		&\leq C\varrho (A+B)^2 Q^{\circ n}(c_{\mu}),
	\end{align*}
	where $C=C_1+1$. Hence
	$$\int_{\{u_1<u_2-\epsilon\}}\theta_{w}^n=\dfrac{\vol (X)}{\varrho}\int_{\{u_1<u_2-\epsilon\}}\theta_{\psi}^n
	\leq \dfrac{C\vol(X) (A+B)^2 Q^{\circ n}(c_{\mu})}{(1-c)\epsilon},$$
	for every $0<\epsilon<1$. Since $w$ is arbitrary, it follows that
	\begin{equation}\label{eq4 proof domination}
		\capK_{\omega}\{u_1<u_2-\epsilon\}\leq 
		\dfrac{C\vol(X) (A+B)^2 Q^{\circ n}(c_{\mu})}{(1-c)\epsilon}.
	\end{equation}
	Moreover, by the definition of $\chi$ and the formula of $Q$, we have
	$$Q(s)=\dfrac{1}{(-\tilde{\chi}(-1/s))^{1/2}}=\dfrac{1}{h(1/s)},$$
	for every $0<s\leq 1$, and $Q(0)=0$. Then
	\begin{equation}\label{eq5 proof domination}
		Q^{\circ n}(s)=\dfrac{1}{h^{\circ n}(1/s)},
	\end{equation}
for every $0\leq s\leq 1$.
	The proof is completed.
\end{proof}

\subsubsection{Quantitative version of Dinew's uniqueness theorem}

\begin{theorem}\label{thequantitativeuniqueness}
	Let $A \ge 1$ be a constant. Let $\theta\leq A\omega$ be a closed smooth real $(1,1)$-form and let $\phi$ be a  model $\theta$-psh
	 function such that $\varrho:=\vol(\theta_\phi)>0$. 
	Let $B \ge 1$, $\tilde{\chi} \in \mathcal{W}^-$ and $u_1, u_2\in \mathcal{E}(X, \theta, \phi)$ such that $\tilde{\chi}(-1)=-1$ and
	$$E^0_{\tilde{\chi}, \theta, \phi}(u_1)+E^0_{\tilde{\chi}, \theta, \phi}(u_2) \le B.$$
	Then, for every $0<\gamma<1$,
	there exists $C>0$ depending only on $n, X, \omega$ and $\gamma$  such that 
	$$d_{\capK}(u_1, u_2)^2\leq C\, (A+|a_1-a_2|)\left( |a_1-a_2|
	+A (A+B)^2 \mathbf{\tau}^{\gamma}\right),$$
	where $a_j:= \sup_Xu_j$,
		$\mathbf{\tau}=\dfrac{1}{h^{\circ n}(\varrho/\|\theta_{u_1}^n-\theta_{u_2}^{n}\|)}$
	and $h(s)=(-\tilde{\chi}(-s))^{1/2}$.
\end{theorem}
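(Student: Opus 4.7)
The plan is to reduce to the setting of equal suprema and then invoke Theorem~\ref{th-lowerenergy-kocochuanhoa} for an auxiliary potential $\psi$ obtained by solving a Monge--Amp\`ere equation against $\omega_w^n$. Assume without loss of generality that $a_1 \ge a_2$, and set $\tilde u_1 := u_1 - (a_1-a_2)$, so that $\tilde u_1 \in \mathcal{E}(X,\theta,\phi)$ with $\sup \tilde u_1 = a_2 = \sup u_2$ and $\theta_{\tilde u_1}^n = \theta_{u_1}^n$. Since $g(x) := -\tilde\chi(-x)$ is concave with $g(0)=0$ and $g(1)=1$, it is subadditive and obeys $g(x) \le x + 1$; consequently $E^0_{\tilde\chi,\theta,\phi}(\tilde u_1) \le B + |a_1-a_2| + 1$. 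Because $|u_1 - \tilde u_1| \equiv |a_1-a_2|$ one has $d_\capK(u_1, \tilde u_1)^2 = |a_1-a_2|\,\vol(X)^2$, so the triangle inequality reduces matters to estimating $d_\capK(\tilde u_1, u_2)^2$; the shift contribution will be absorbed into the $(A+|a_1-a_2|)\,|a_1-a_2|$ part of the claim.

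Fix $w \in \PSH(X,\omega)$ with $0 \le w \le 1$, and let $\psi \in \mathcal{E}(X,\theta,\phi)$ be the unique solution of $\theta_\psi^n = (\varrho/\vol(X))\,\omega_w^n$ normalized by $\sup\psi = 0$. Proposition~\ref{pro-dominatedcapacitybigomega} gives $\int |\psi|\,\theta_\psi^n \le CA\varrho$, which together with $g(|\psi-\phi|) \le g(|\psi|) \le 1+|\psi|$ yields $E^0_{\tilde\chi,\theta,\phi}(\psi) \le C(1+A)$. Take $\chi(t) := \max(t,-1) \in \widetilde{\mathcal{W}}^-$; the inequality $\tilde\chi \le \chi$ follows from the chord bound for $\tilde\chi$ on $[-1,0]$ together with $\tilde\chi(t) \le \tilde\chi(-1) = -1$ for $t \le -1$. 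A direct computation of $Q_0$ for this pair gives $Q(s) = 1/(-\tilde\chi(-1/s))^{1/2} = 1/h(1/s)$ on $(0,1]$, hence $Q^{\circ n}(s) = 1/h^{\circ n}(1/s)$. Since $|\chi| \le 1$, one has $I^0_\chi(\tilde u_1, u_2) \le \|\theta_{u_1}^n - \theta_{u_2}^n\|/\varrho$, and the monotonicity of $Q^{\circ n}$ forces $\lambda := Q^{\circ n}(I^0_\chi(\tilde u_1, u_2)) \le \tau$. Applying Theorem~\ref{th-lowerenergy-kocochuanhoa} with parameters $m \ge 1$ and an auxiliary exponent $\gamma' \in (0,1)$ (to be tuned) gives
\[
\int_X \min(|\tilde u_1 - u_2|,1)\,\theta_\psi^n \le C\varrho\,\tau^m + C\varrho\, A^{(1-\gamma')/m}(A+B+|a_1-a_2|)^2\,\tau^{\gamma'},
\]
using also $-\tilde\chi(-A) \le A+1$ and $(1-\tilde\chi(-1))^2 = 4$.

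Finally, to upgrade this truncated bound to one on $d_\capK(\tilde u_1, u_2)^2 = \sup_w(\int |\tilde u_1-u_2|^{1/2}\,\omega_w^n)^2$, write $|s|^{1/2} \le \min(|s|,1)^{1/2} + |s|^{1/2}\,\mathbf{1}_{\{|s|>1\}}$. The first piece is controlled by $\vol(X)^{1/2}\,(\int \min(|\tilde u_1-u_2|,1)\,\omega_w^n)^{1/2}$ via Cauchy--Schwarz; transferring $\omega_w^n = (\vol(X)/\varrho)\,\theta_\psi^n$ plugs in the stability estimate above. For the tail, Chebyshev gives $\omega_w^n(\{|\tilde u_1-u_2|>1\}) \le \int \min(|\tilde u_1-u_2|,1)\,\omega_w^n$, and another Cauchy--Schwarz combined with the a priori bound $\int |\tilde u_1-u_2|\,\omega_w^n \le CA$---obtained via Chern--Levine--Nirenberg applied to the $\omega$-psh functions $(\tilde u_j - a_2)/A$---controls the tail by (stability)$^{1/2}\cdot A^{1/2}$. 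Squaring, taking $\sup_w$, choosing $\gamma' = \gamma$ and $m$ sufficiently large, and estimating $(A+B+|a_1-a_2|)^2 \le C((A+B)^2 + |a_1-a_2|^2)$ then delivers the claim. The main obstacle is the tail: the constraint $\tilde\chi \le \chi$ forces $\chi$ to be bounded (equal to $-1$ beyond $-1$), so Theorem~\ref{th-lowerenergy-kocochuanhoa} only yields control of $\min(|\cdot|,1)$, and recovering the full $|\cdot|^{1/2}$ integrand against $\omega_w^n$ requires a careful balance between the stability estimate and the coarse Chern--Levine--Nirenberg $L^1$-bound so that neither side dominates when $\tau$ is small.
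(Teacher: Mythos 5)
Your overall strategy is the paper's: fix $w$, solve $\theta_\psi^n=(\varrho/\vol(X))\,\omega_w^n$, control $E^0_{\tilde\chi,\theta,\phi}(\psi)$ via Proposition \ref{pro-dominatedcapacitybigomega}, apply Theorem \ref{th-lowerenergy-kocochuanhoa} with $\chi(t)=\max\{t,-1\}$, identify $Q^{\circ n}(s)=1/h^{\circ n}(1/s)$, and finish by splitting at $|u_1-u_2|=1$ with Cauchy--Schwarz plus a Chern--Levine--Nirenberg bound. All of those individual steps are fine. The gap is in your preliminary normalization and the final absorption. Shifting $u_1$ to $\tilde u_1=u_1-(a_1-a_2)$ is unnecessary --- Theorem \ref{th-lowerenergy-kocochuanhoa} (unlike the introduction's version) does not require equal suprema, and the quantity $|a_1-a_2|$ is already built into its conclusion through the term $-\varrho\chi(-|a_1-a_2|-\lambda^m)$, where it enters only \emph{linearly} --- and it is harmful: the shift raises the energy bound to $B'\approx B+|a_1-a_2|$, so the factor $(B'-\tilde\chi(-A))^2$ becomes $(A+B+|a_1-a_2|)^2$, i.e.\ $|a_1-a_2|$ now enters \emph{quadratically} in front of $\tau^{\gamma}$. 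Your closing claim that $(A+B+|a_1-a_2|)^2\le C((A+B)^2+|a_1-a_2|^2)$ ``delivers the claim'' is where the argument fails: the resulting term $(1+A)A^{(1-\gamma)/m}|a_1-a_2|^2\tau^{\gamma}$ is not dominated by $C(A+|a_1-a_2|)\bigl(|a_1-a_2|+A(A+B)^2\tau^{\gamma}\bigr)$ in general. For instance with $B=1$, $|a_1-a_2|\sim A^{3}$ and $\tau$ of order one (but $<1$), your bound is of size $A^{7+(1-\gamma)/m}$ while the claimed right-hand side is of size $A^{6}$, so no constant $C=C(n,X,\omega,\gamma)$ (even after enlarging $m$) closes the estimate. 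A secondary, minor issue: after replacing $\lambda$ by $\tau$ you keep a $\tau^{m}$ term, which for $\tau\ge 1$ is not controlled by $\tau^{\gamma}$; this is harmless only if you add the case distinction that for $\tau\ge 1$ the statement follows from the trivial bound $d_{\capK}(u_1,u_2)^2\lesssim A+|a_1-a_2|$, which you do not state.

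The repair is exactly the paper's route: do not shift. Apply Theorem \ref{th-lowerenergy-kocochuanhoa} directly to $u_1,u_2,\psi$ (with $m=1$, say), so the energy parameter stays $\approx A+B$ and the sup-difference appears only through $-\chi(-|a_1-a_2|-\lambda)\le |a_1-a_2|+\lambda$, with $\lambda=Q^{\circ n}\bigl(I^0_\chi(u_1,u_2)\bigr)\le 1$ and $\lambda\le\lambda^{\gamma}$; then the same Cauchy--Schwarz splitting (whose tail now uses $\int_X|u_1-a_1-u_2+a_2|\,\theta_\psi^n\lesssim\varrho A$, producing the factor $(A+|a_1-a_2|)$) gives precisely $d_{\capK}(u_1,u_2)^2\le C(A+|a_1-a_2|)\bigl(|a_1-a_2|+A(A+B)^2\lambda^{\gamma}\bigr)$, and the stated theorem follows since $I^0_\chi(u_1,u_2)\le\varrho^{-1}\|\theta_{u_1}^n-\theta_{u_2}^n\|$ forces $\lambda\le\tau$. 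This is how the paper proceeds, via the intermediate Theorem \ref{thequantitativeuniqueness2}.
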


Note that if $\chi(t)=\max\{t, -1\}$ then $I^0_{\chi}(u_1, u_2)\leq \varrho^{-1}\|\theta_{u_1}^n-\theta_{u_2}^{n}\|$. Therefore, Theorem 
\ref{thequantitativeuniqueness} is a consequence of the following:

\begin{theorem}\label{thequantitativeuniqueness2}
		Let $\theta\leq A\omega$ be a closed smooth real $(1,1)$-form ($A\geq 1$) 
		and let $\phi$ be a  model $\theta$-psh function such that $\vol(\theta_\phi)>0$. 
	Let $B \ge 1$, $\tilde{\chi} \in \widetilde{\mathcal{W}}^-$ and $u_1, u_2\in \mathcal{E}(X, \theta, \phi)$ such that $\tilde{\chi}(-1)=-1$ and
	$$E^0_{\tilde{\chi}, \theta, \phi}(u_1)+E^0_{\tilde{\chi}, \theta, \phi}(u_2) \le B.$$
	Denote $\chi(t)=\max\{t, -1\}$.
	Then, for every $0<\gamma<1$,
	there exists $C>0$ depending only on $n, X, \omega$ and $\gamma$  such that 
	\begin{equation}\label{eq0uniqueness2}
		d_{\capK}(u_1, u_2)^2\leq C\, (A+|a_1-a_2|) \left( |a_1-a_2|
		+A(A+B)^2 \lambda^{\gamma}\right),
	\end{equation}
	where $a_j:= \sup_Xu_j$,
	$\lambda=\dfrac{1}{h^{\circ n}(1/I_{\chi}^0(u_1, u_2))}$
	and $h(s)=(-\tilde{\chi}(-s))^{1/2}$.
\end{theorem}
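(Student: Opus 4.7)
The plan is to reduce the statement to an application of Theorem \ref{th-lowerenergy-kocochuanhoa} tested against an auxiliary Monge--Amp\`ere measure derived from the test potential $w$, and then to pass from the ``$L^1$-type'' control provided by the weight $\chi(t)=\max\{t,-1\}$ to the $L^{1/2}$-type integrals defining $d_{\capK}$ by a min/max decomposition combined with Cauchy--Schwarz. Throughout the sketch, $C>0$ denotes a constant depending only on $n,X,\omega,\gamma$, allowed to change from line to line.

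First, fix $w\in\PSH(X,\omega)$ with $0\le w\le 1$ and let $\psi$ be the unique solution in $\mathcal{E}(X,\theta,\phi)$, with $\sup_X(\psi-\phi)=0$, of $\theta_\psi^n=\tfrac{\varrho}{\vol_\omega(X)}\omega_w^n$, whose existence is guaranteed by the hypothesis that $\phi$ is model (cf.\ \cite{Lu-Darvas-DiNezza-logconcave,Vu_Do-MA}). Proposition~\ref{pro-dominatedcapacitybigomega} yields $\int_X(-\psi)\theta_\psi^n\le CA\varrho$. On the other hand, the normalization $\tilde\chi(-1)=-1$ together with convexity, $\tilde\chi(0)=0$ and monotonicity forces $|\tilde\chi(t)|\le\max\{|t|,1\}$ on $\R_{\le 0}$, so $E^0_{\tilde\chi,\theta,\phi}(\psi)\le CA$. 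The chord inequality gives $\tilde\chi(t)\le t$ on $[-1,0]$, while monotonicity gives $\tilde\chi(t)\le\tilde\chi(-1)=-1$ for $t\le-1$; hence $\tilde\chi\le\chi$ on $\R_{\le 0}$, and Theorem~\ref{th-lowerenergy-kocochuanhoa} applies to the triple $u_1,u_2,\psi$ with effective bound $B':=C(A+B)$.

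Applying Theorem~\ref{th-lowerenergy-kocochuanhoa} with $m=1$ and using $\tilde\chi(-A)\ge -A$ (a consequence of $\tilde\chi(t)\ge t$ on $(-\infty,-1]$) to estimate $(B'-\tilde\chi(-A))^2(1-\tilde\chi(-1))^2\le C(A+B)^2$, together with $-\chi(-s)=\min\{s,1\}$, produces
\[
\int_X\min\{|u_1-u_2|,1\}\,\theta_\psi^n\le \varrho(|a_1-a_2|+\lambda)+C\varrho A^{1-\gamma}(A+B)^2\lambda^\gamma.
\]
Converting to $\omega_w^n$ via the defining equation for $\psi$, absorbing $\lambda\le\lambda^\gamma$ for $\lambda\le 1$ (the case $\lambda\ge 1$ making \eqref{eq0uniqueness2} trivial by enlarging $C$), and using $A^{1-\gamma}\le A$, we obtain
\[
\int_X\min\{|u_1-u_2|,1\}\,\omega_w^n\le C\bigl(|a_1-a_2|+A(A+B)^2\lambda^\gamma\bigr).
\]
For the complementary ``large'' part, write $|u_1-u_2|\le |a_1-a_2|+(a_1-u_1)+(a_2-u_2)$. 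Each $u_j-a_j$ is $\theta$-psh with supremum $0$, hence $(u_j-a_j)/A$ is $\omega$-psh with supremum $0$, and a standard Chern--Levine--Nirenberg estimate (using $0\le w\le 1$) yields $\int_X(a_j-u_j)\omega_w^n\le CA$. Therefore $\int_X\max\{|u_1-u_2|,1\}\,\omega_w^n\le C(A+|a_1-a_2|)$.

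To conclude, use the pointwise identity $|u_1-u_2|^{1/2}=\min\{|u_1-u_2|,1\}^{1/2}\cdot\max\{|u_1-u_2|,1\}^{1/2}$ and Cauchy--Schwarz:
\[
\Bigl(\int_X|u_1-u_2|^{1/2}\omega_w^n\Bigr)^{\!2}\le\int_X\min\{|u_1-u_2|,1\}\,\omega_w^n\cdot\int_X\max\{|u_1-u_2|,1\}\,\omega_w^n,
\]
and take the supremum over admissible $w$ to deduce \eqref{eq0uniqueness2}. The main obstacle is the weight-compatibility step: arranging that $\tilde\chi\le\chi$ for the specific $\chi(t)=\max\{t,-1\}$ imposed by the statement and that the auxiliary solution $\psi$ has $E^0_{\tilde\chi,\theta,\phi}(\psi)$ controlled purely by $A$. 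Both points rely crucially on the normalization $\tilde\chi(-1)=-1$, the convexity/monotonicity of $\tilde\chi$, and the sharp estimate of Proposition~\ref{pro-dominatedcapacitybigomega}; once they are in place, Theorem~\ref{th-lowerenergy-kocochuanhoa} does the heavy lifting and the rest is routine.
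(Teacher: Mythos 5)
Your proposal is correct and follows essentially the same route as the paper's own proof: the same auxiliary solution $\psi$ of $\theta_\psi^n=\tfrac{\varrho}{\vol_\omega(X)}\omega_w^n$ with its energy controlled by Proposition \ref{pro-dominatedcapacitybigomega}, the same application of Theorem \ref{th-lowerenergy-kocochuanhoa} (with the observations $\tilde{\chi}\le\chi$ and $-\tilde{\chi}(-A)\le A$ from convexity and $\tilde{\chi}(-1)=-1$), and the same Cauchy--Schwarz splitting of $|u_1-u_2|^{1/2}$ into its bounded and unbounded parts handled by the Chern--Levine--Nirenberg inequality. The only step left implicit is the routine identification $Q_{\chi,\tilde{\chi}}^{\circ n}(s)=1/h^{\circ n}(1/s)$ for $\chi(t)=\max\{t,-1\}$, which matches the $\lambda$ of Theorem \ref{th-lowerenergy-kocochuanhoa} with the $\lambda$ in the statement and is verified in the paper (see \eqref{eq5 proof domination}).
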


\begin{proof}
	Suppose that $w$ is an arbitrary $\omega$-psh function satisfying $0\leq w\leq 1$ and 
	$\psi$ is the unique solution
	to  the problem
	\begin{equation}\label{eqpsithedinewunique}
		\begin{cases}
			u\in\mathcal{E}(X, \theta, \phi),\\
			\theta_u^n=\dfrac{\varrho}{\vol (X)}(\ddc w+ \omega)^n,\\
			\sup_X u=0.
		\end{cases}
	\end{equation}
	Recall that  $\lambda=Q_{\chi, \tilde{\chi}}^{\circ n}(I^0_\chi(u_1,u_2))$,  and one has $-\tilde{\chi}(-A)\leq A$ because $\tilde{\chi}(-1)=-1$.
	It follows from Theorem \ref{th-lowerenergy-kocochuanhoa} and Proposition \ref{pro-dominatedcapacitybigomega} that, 
	for every $0<\gamma<1$,
	there exists $C_1>0$ depending only on $n, X, \omega$ and $\gamma$  such that 
	\begin{equation}\label{eq1thedcap}
		I:=\int_X  -\chi\big(-|u_1- u_2|\big) \theta_\psi^n \le -\varrho \chi\left(-|a_1-a_2|-\lambda\right)
		+C_1\varrho A(A+B)^2\lambda^{\gamma}.
	\end{equation}
	Moreover
	\begin{align*}
		\dfrac{\varrho}{\vol (X)}\int_X|u_1-u_2|^{1/2}(\omega+dd^cw)^n
		&=\int_X|u_1-u_2|^{1/2}\theta_{\psi}^n\\
		&=\int_{\{|u_1-u_2|\leq 1\}}|u_1-u_2|^{1/2}\theta_{\psi}^n+\int_{\{|u_1-u_2|> 1\}}|u_1-u_2|^{1/2}\theta_{\psi}^n
		\end{align*}
		which is less than or equal to
		$$\leq I^{1/2}\left( \left(\int_{\{|u_1-u_2|\leq 1\}}\theta_{\psi}^n\right)^{1/2}+
		\left(\int_{\{|u_1-u_2|> 1\}}|u_1-u_2|\theta_{\psi}^n\right)^{1/2}\right),$$
	where the last estimate holds due to the Cauchy-Schwarz inequality. Moreover, it follows from 
	Chern-Levine-Nirenberg inequality (\cite{Kolodziej05}) that
	\begin{align*}
		\int_{X}|u_1-a_1-u_2+a_2|\theta_{\psi}^n
		&=\dfrac{\varrho}{\vol (X)}\int_{X}|u_1-a_1-u_2+a_2|(\ddc w+ \omega)^n\\
		&\leq C_2\varrho (\|u_1-a_1\|_{L^1(X)}+\|u_2-a_2\|_{L^1(X)})\\
		&\leq \varrho C_3A,
	\end{align*}
where $C_2, C_3>0$ depend only on $X$ and $\omega$. Here, the last estimate holds due to the 
compactness of $\{u\in\PSH(X, \omega): \sup_Xu=0\}$ in $L^1(X)$.

Hence, we have
	\begin{equation}\label{eq2thedcap}
		\dfrac{\varrho}{\vol (X)}\int_X|u_1-u_2|^{1/2}(\omega+dd^cw)^n\leq C_4 I^{1/2}\varrho^{1/2}(A+|a_1-a_2|)^{1/2},
	\end{equation}
	where $C_4>0$ depends only on $X$ and $\omega$.
	
	Combining \eqref{eq1thedcap} and \eqref{eq2thedcap}, we get
	\begin{align*}
		\left(\int_X|u_1-u_2|^{1/2}(\omega+dd^cw)^n\right)^2
		&\leq C_5(A+|a_1-a_2|)\left(-\chi\left(-|a_1-a_2|-\lambda\right)
		+A(A+B)^2\lambda^{\gamma}\right)\\
		&\leq  C_5(A+|a_1-a_2|)\left(|a_1-a_2|+\lambda
		+A(A+B)^2\lambda^{\gamma}\right)\\
		&\leq C_6(A+|a_1-a_2|)\left(|a_1-a_2|
		+A(A+B)^2\lambda^{\gamma}\right),
	\end{align*}
	where $C_5, C_6>0$ depend only on $n, X, \omega$ and $\gamma$. Since $w$ is arbitrary, we obtain desired inequality.	
	The proof is completed.
\end{proof}

\begin{remark}\label{rmkAB}
	If $B\geq A$ then the inequality \eqref{eq0uniqueness2} is equivalent to
		$$d_{\capK}(u_1, u_2)^2\leq \widetilde{C}\, (A+|a_1-a_2|) \left( |a_1-a_2|
		+A\, B^2 \lambda^{\gamma}\right),$$
	where $\tilde{C}>0$ depends only on $n, X, \omega$ and $\gamma$.
\end{remark}

\subsubsection{Relation to Darvas's metrics on the space of potentials of finite energy} \label{subsec-metricenergy}

Let $\chi \in \mathcal{W}^- \cup \mathcal{W}^+_M$. Let $\theta$ be a closed smooth real $(1,1)$-form in a big cohomology class. When $\theta$ is K\"ahler, it was proved in \cite{Darvas-finite-energy,Darvas-kahlerclass,Darvas-lower-energy} that there is a natural metric $d_\chi$ on $\mathcal{E}_\chi(X, \theta)$ which makes the last space to be a complete metric space. When $\chi(t)= t$, such metrics have a long history and play an important role in the study of complex Monge-Amp\`ere equations. We refer to these last references and  \cite{Berman-Boucksom-Jonsson-KE,Berman-Darvas-Lu} for more details.  
We now draw the connection between $I_\chi(u,v)$ and the metric on $\mathcal{E}_\chi(X,\theta)$. 
Let 
$$\tilde{I}_\chi(u,v)= \int_{\{u<v\}} -\chi(u-v) (\theta_v^n + \theta_u^n)+\int_{\{u>v\}} -\chi(v-u) (\theta_u^n + \theta_v^n) \ge I_\chi(u,v).$$
By \cite{Darvas-finite-energy,Darvas-kahlerclass,Darvas-lower-energy}, there exists a constant $C>0$ such that 
$$C^{-1} \tilde{I}_\chi(u,v) \le d_\chi(u,v) \le C \tilde{I}_\chi(u,v)$$
for every $u,v \in \mathcal{E}_\chi(X,\theta)$ and $\theta$ is K\"ahler. It was proved in \cite{Gupta} (and also \cite{Darvas-finite-energy,DDL-L1metric,Lu-DiNezza-Lpmetric,Trusiani-energy,Xia-energy}) that   $\tilde{I}_\chi(u,v)$ satisfies a quasi-triangle inequality, and  the convergence in  $\tilde{I}_\chi(u,v)$  implies the convergence in capacity by using the plurisubharmonic envelope. Such a method is not quantitative.  We present below quantitative version of this fact by using our approach. 


\begin{theorem}\label{the-capmetricdarvas1}
	Let $\theta\leq A\omega$ be a closed smooth real $(1,1)$-form ($A\geq 1$ is a constant) 
	and $\phi$ be a  model $\theta$-psh function with $\varrho:=\vol(\theta_\phi)>0$.  
		Let $B\geq 1$, $\tilde{\chi} \in \mathcal{W}^-$ and $u_1, u_2\in \mathcal{E}(X, \theta, \phi)$ such that $|\sup_Xu_1-\sup_Xu_2|\leq A$, $\tilde{\chi}(-1)=-1$ and
	$$E^0_{\tilde{\chi}, \theta, \phi}(u_1)+E^0_{\tilde{\chi}, \theta, \phi}(u_2) \le B.$$
	Then there exist  $C>0$ depending only on $n, X$ and $\omega$ such that
	$$d_{\capK}(u_1, u_2)^2\leq  \dfrac{C\,A\, (A+B)^2}{h^{\circ n}(\varrho/\tilde{I}_{\tilde{\chi}}(u_1, u_2))},$$
	where $h(s)=(-\tilde{\chi}(-s))^{1/2}$ for every $0 \le s \le \infty$.
\end{theorem}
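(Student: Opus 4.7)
The plan is to adapt the strategy of Theorem \ref{thequantitativeuniqueness2}, but to invoke Theorem \ref{th-lowerenergy} \emph{directly} (rather than Theorem \ref{th-lowerenergy-kocochuanhoa}) so as to produce the sharp exponent-one decay $1/h^{\circ n}$ in the denominator instead of a power $\lambda^\gamma$ with $\gamma<1$.

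Fix an arbitrary $w\in\PSH(X,\omega)$ with $0\le w\le 1$ and let $\psi\in\mathcal{E}(X,\theta,\phi)$ denote the unique solution to \eqref{eqpsithedinewunique} furnished by Proposition \ref{pro-dominatedcapacitybigomega}; it suffices to bound $\int_X|u_1-u_2|^{1/2}(\omega+\ddc w)^n$ uniformly in $w$. Since $\tilde{\chi}$ is convex with $\tilde{\chi}(-1)=-1$, the monotonicity of chord slopes yields $|\tilde{\chi}(t)|\le|t|$ for $t\le -1$. Combining this with $\int_X|\psi|\theta_\psi^n\le CA\varrho$ from Proposition \ref{pro-dominatedcapacitybigomega}, one finds $E^0_{\tilde{\chi},\theta,\phi}(\psi)\le C'(A+1)$; and Lemma \ref{le-sosanhnangnluongintegrabig} yields $E^0_{\tilde{\chi},\theta,\phi}(\max(u_1,u_2))\le C_2 B$. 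Replacing $B$ by $B':=C''(A+B)$, all four relevant potentials then have normalized $\tilde{\chi}$-energy at most $B'$.

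Set $v:=\max(u_1,u_2)$ and $\chi(t):=\max\{t,-1\}$; the same chord-slope argument shows $\chi\ge\tilde{\chi}$ on $\R_{\le 0}$. Apply Theorem \ref{th-lowerenergy} twice: first with ordered pair $(u_1,v)$ and $(\psi_1,\psi_2)=(\psi,u_1)$, then with $(u_2,v)$ and $(\psi,u_2)$. Using the plurifine locality identities $\theta_v^n=\theta_{u_j}^n$ on $\{u_j\ge u_{3-j}\}$ and summing, one obtains
\[
\int_X-\chi(-|u_1-u_2|)\theta_\psi^n \;\le\; 2C_n\varrho(A+B)^2\,Q^{\circ n}_{\chi,\tilde{\chi}}\!\bigl(I^0_\chi(u_1,u_2)\bigr)\,+\,\mathcal{E},
\]
where the error $\mathcal{E}:=\int_{\{u_1<u_2\}}-\chi(u_1-u_2)\theta_{u_1}^n+\int_{\{u_1>u_2\}}-\chi(u_2-u_1)\theta_{u_2}^n$ satisfies $\mathcal{E}\le 2\tilde{I}_{\tilde{\chi}}(u_1,u_2)$ via $-\chi\le-\tilde{\chi}$. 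A direct analysis of the supremum defining $Q_0$ shows, using the convexity of $\tilde{\chi}$, that it is attained at $t=-1/\epsilon$, producing $Q_0(\epsilon)=1/h(1/\epsilon)^2$ and hence $Q^{\circ n}_{\chi,\tilde{\chi}}(\epsilon)=1/h^{\circ n}(1/\epsilon)$ by induction. Together with $I^0_\chi(u_1,u_2)\le\tilde{I}_{\tilde{\chi}}(u_1,u_2)/\varrho$, this gives a bound of the form $C'_n\varrho(A+B)^2/h^{\circ n}(\varrho/\tilde{I}_{\tilde{\chi}})+2\tilde{I}_{\tilde{\chi}}$.

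The error term is then absorbed into the principal one by combining the crude bound $\tilde{I}_{\tilde{\chi}}/\varrho\le C''' B$ from Lemma \ref{le-sosanhnangnluongintegrabig} with the elementary inequality $h^{\circ n}(s)\le s^{1/2^n}$ for $s\ge 1$---itself a consequence of $h(s)\le s^{1/2}$ on $[1,\infty)$ via the convexity of $\tilde{\chi}$. To conclude, we convert to $d_\capK$ exactly as in the proof of Theorem \ref{thequantitativeuniqueness2}: split $X=\{|u_1-u_2|\le 1\}\cup\{|u_1-u_2|>1\}$, apply the Cauchy--Schwarz inequality together with the Chern--Levine--Nirenberg estimate (using $|a_1-a_2|\le A$) to reach
\[
\left(\int_X|u_1-u_2|^{1/2}\theta_\psi^n\right)^{\!2}\le C_X\varrho A\int_X-\chi(-|u_1-u_2|)\theta_\psi^n,
\]
then divide by $(\varrho/\vol X)^2$ and take the supremum over $w$. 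The main technical obstacle lies in the error-absorption step: without it only the weaker bound with exponent $\gamma<1$, as in Theorem \ref{thequantitativeuniqueness2}, would be obtained; the improvement to exponent one crucially relies on being able to invoke Theorem \ref{th-lowerenergy} in its unnormalized form after paying the price of the extra error term $\mathcal{E}$.
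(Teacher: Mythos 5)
Your proposal is correct and follows essentially the same route as the paper: fix $w$, solve \eqref{eqpsithedinhnghia}-type problem for $\psi$ via Proposition \ref{pro-dominatedcapacitybigomega}, apply Theorem \ref{th-lowerenergy} (with $v=\max\{u_1,u_2\}$ and the pairs $(\psi,u_j)$) to get $\int_X-\chi(-|u_1-u_2|)\theta_\psi^n\le \tilde{I}_{\chi}(u_1,u_2)+C\varrho(A+B)^2Q^{\circ n}(I^0_\chi)$, identify $Q^{\circ n}(s)=1/h^{\circ n}(1/s)$, absorb the $\tilde{I}$ term using $\tilde{I}_{\tilde{\chi}}\lesssim B\varrho$, and finish with the Cauchy--Schwarz/Chern--Levine--Nirenberg step from the proof of Theorem \ref{thequantitativeuniqueness2} together with $|a_1-a_2|\le A$. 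The details you supply (the double application of Theorem \ref{th-lowerenergy}, the computation of $Q_0$ for $\chi(t)=\max\{t,-1\}$, and the error-absorption via $h^{\circ n}(s)\le s^{1/2^n}$) are precisely the steps the paper leaves implicit, so there is no substantive divergence.
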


One sees from the above estimate that if $\tilde{I}_{\tilde{\chi}}(u_1, u_2)$ is small, then so is $d_\capK(u_1,u_2)$ (uniformly in $u_1,u_2 \in \mathcal{E}(X, \theta,\phi)$ of $\tilde{\chi}$-energy  bounded by a fixed constant). 

\begin{proof}
	Let $\chi(t)=\max\{t, -1\}$.
	Suppose that $w$ is an arbitrary $\omega$-psh function satisfying $0\leq w\leq 1$. By the proof of Theorem \ref{thequantitativeuniqueness2} (see \eqref{eq2thedcap}), there exists $C_1>0$ depending only on $X$ and $\omega$ such that
\begin{equation}\label{eq1darvas1}
\left(\int_X|u_1-u_2|^{1/2}(\omega+dd^cw)^n\right)^2\leq C_1A \varrho^{-1}\int_X  -\chi\big(-|u_1- u_2|\big) \theta_{\psi}^n,
\end{equation}
where $\psi$ is defined by \eqref{eqpsithedinewunique}. Moreover, it follows from Theorem \ref{th-lowerenergy}
and Proposition \ref{pro-dominatedcapacitybigomega} that
$$\int_X  -\chi\big(-|u_1- u_2|\big) \theta_{\psi}^n\leq  \tilde{I}_{\chi}(u_1, u_2)
+C_2\varrho (A+B)^2Q_{\chi, \tilde{\chi}}^{\circ (n)}(I^0_\chi(u_1,u_2)),$$
where $C_2>0$ depends only on $n$. Therefore, by the facts 	$Q^{\circ(n)}(s)=\dfrac{1}{h^{\circ(n)}(1/s)}$ and 
$I_\chi(u_1,u_2)\leq \tilde{I}_{\chi}(u_1, u_2)\leq \tilde{I}_{\tilde{\chi}}(u_1, u_2))$, we obtain
\begin{equation}\label{eq2darvas1}
\int_X  -\chi\big(-|u_1- u_2|\big) \theta_{\psi}^n\leq
\dfrac{C_3\varrho (A+B)^2}{h^{\circ(n)}(\varrho/\tilde{I}_{\tilde{\chi}}(u_1, u_2))},
\end{equation}
	where $C_3>0$ depends only on $n, X$ and $\omega$.	Combining \eqref{eq1darvas1} and \eqref{eq2darvas1}, we get 
	$$\left(\int_X|u_1-u_2|^{1/2}(\omega+dd^cw)^n\right)^2\leq \dfrac{C\, A\, (A+B)^2}{h^{\circ(n)}(\varrho/\tilde{I}_{\tilde{\chi}}(u_1, u_2))},$$
	where $C>0$ depends only on $n, X$ and $\omega$. Since $w$ is arbitrary, we get the desired inequality.
		The proof is completed.
\end{proof}

When $\tilde{\chi} \in \mathcal{W}^+_M$, our estimate is more explicit.

\begin{theorem}\label{the-capmetricdarvas2}
	Let $\theta\leq A\omega$ be a closed smooth real $(1,1)$-form ($A\geq 1$) and $\phi$ be a  model $\theta$-psh function such that $\varrho:=\vol(\theta_\phi)>0$.  Let $B\ge 1$,  $\tilde{\chi} \in \mathcal{W}_M^+$ ($M\geq 1$)  and $u_1, u_2\in \mathcal{E}(X, \theta, \phi)$ such that $\tilde{\chi}(-1)=-1$ and
	$$E^0_{\tilde{\chi}, \theta, \phi}(u_1)+E^0_{\tilde{\chi}, \theta, \phi}(u_2) \le B.$$
	Then there exists  $C>0$ depending only on $n$ and $M$ such that
	\begin{align*}
	\int_X-\tilde{\chi}(-|u_1-u_2|)\theta_{\psi}^n
		\leq C\varrho B^2\left(\tilde{I}_{\tilde{\chi}}(u_1, u_2)/\varrho\right)^{2^{-n}},
	\end{align*}
for every $\psi\in\PSH (X, \theta)$ with  $\phi-1\leq \psi\leq\phi$. Moreover, if $\sup_X u_1=\sup_X u_2$ then 
there exists $C'>0$ depending on $n, X, \omega, A$ and $M$ such that
	\begin{align*}
	\tilde{I}_{\tilde{\chi}}(u_1, u_2)
	\le 
	C'\varrho A^{1/2}B^2\left(I^0_{\tilde{\chi}}(u_1, u_2)\right)^{2^{-n-1}}.
\end{align*}
\end{theorem}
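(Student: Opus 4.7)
The plan is to specialize Theorems \ref{th-lowerenergy} and \ref{th-lowerenergy-kocochuanhoa} to the case $\chi = \tilde{\chi} \in \mathcal{W}^+_M$. The elementary but crucial observation is that the function $Q_{\tilde{\chi},\tilde{\chi}}$ defined in \eqref{eq QB} takes the explicit form $Q(t) = t^{1/2}$ on $(0,1)$ and $Q(t) = 1$ on $[1,\infty)$, so $Q^{\circ n}(t) \le t^{2^{-n}}$ for every $t \ge 0$.

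For the first estimate, set $v := \max\{u_1,u_2\}$. Since at every point exactly one of $u_1 - v$ and $u_2 - v$ vanishes, the pointwise identity $-|u_1-u_2| = (u_1-v) + (u_2-v)$ upgrades to the exact decomposition
\[
\int_X -\tilde{\chi}(-|u_1-u_2|)\theta_\psi^n = \sum_{j=1}^2 \int_X -\tilde{\chi}(u_j-v)\theta_\psi^n.
\]
By the monotonicity clause of Lemma \ref{le-sosanhnangnluongintegrabig}, $v \in \mathcal{E}(X,\theta,\phi)$ with $E^0_{\tilde{\chi},\theta,\phi}(v)$ controlled by a constant multiple of $B$, and the condition $\phi-1\le\psi\le\phi$ forces $\psi \in \mathcal{E}(X,\theta,\phi)$ (same singularity type as $\phi$) with $E^0_{\tilde{\chi},\theta,\phi}(\psi) \le -\tilde{\chi}(-1) = 1$. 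Splitting
\[
\int_X -\tilde{\chi}(u_j-v)\theta_\psi^n = \int_X -\tilde{\chi}(u_j-v)(\theta_\psi^n - \theta_{u_j}^n) + \int_X -\tilde{\chi}(u_j-v)\theta_{u_j}^n,
\]
I apply Theorem \ref{th-lowerenergy} to the ordered pair $u_j \le v$ with $(\psi_1,\psi_2) = (\psi, u_j)$ to bound the first piece by $C\varrho B^2 Q^{\circ n}(I^0_{\tilde{\chi}}(u_j,v))$; plurifine locality yields $I^0_{\tilde{\chi}}(u_j,v) \le I^0_{\tilde{\chi}}(u_1,u_2) \le \tilde{I}_{\tilde{\chi}}(u_1,u_2)/\varrho$, and the residual is dominated by $\tilde{I}_{\tilde{\chi}}(u_j,v) \le \tilde{I}_{\tilde{\chi}}(u_1,u_2)$. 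A brief case split between $\tilde{I}_{\tilde{\chi}}/\varrho \le 1$ and $\tilde{I}_{\tilde{\chi}}/\varrho \ge 1$ (the latter handled via the a priori bound $\tilde{I}_{\tilde{\chi}} \le C\varrho B$ from Lemma \ref{le-sosanhnangnluongintegrabig}) absorbs every contribution into $C'\varrho B^2(\tilde{I}_{\tilde{\chi}}/\varrho)^{2^{-n}}$.

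For the second estimate, I apply Theorem \ref{th-lowerenergy-kocochuanhoa} with $\chi = \tilde{\chi}$ and $\psi = u_j$ for $j = 1,2$ (both legitimate since $u_j \in \mathcal{E}(X,\theta,\phi)$), choosing $m = 1$ and $\gamma = 1/2$ so that $A^{(1-\gamma)/m} = A^{1/2}$; the factor $(B - \tilde{\chi}(-A))^2$ is absorbed into the $A, M$-dependent constant using $-\tilde{\chi}(-A) \le A^M$, and $(1 - \tilde{\chi}(-1))^2 = 4$ is harmless. With $\lambda = Q^{\circ n}(I^0_{\tilde{\chi}}(u_1,u_2)) \le (I^0_{\tilde{\chi}}(u_1,u_2))^{2^{-n}}$ and the hypothesis $a_1 = a_2$, concavity gives $-\tilde{\chi}(-\lambda) \le \lambda$ whenever $\lambda \le 1$, which is dominated by $\lambda^{1/2} = (I^0_{\tilde{\chi}}(u_1,u_2))^{2^{-n-1}}$; the regime $\lambda \ge 1$ is again trivial via $\tilde{I}_{\tilde{\chi}} \le C\varrho B$. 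Summing the two resulting bounds over $\psi = u_1$ and $\psi = u_2$ yields the second conclusion.

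The only non-trivial bookkeeping is the passage between the ``$\lambda$ small'' and ``$\lambda$ large'' regimes and the verification that the auxiliary potentials $v$ and $\psi$ belong to $\mathcal{E}(X,\theta,\phi)$ with controlled $\tilde{\chi}$-energy; both are routine given Lemma \ref{le-sosanhnangnluongintegrabig}. The substantive input is the recognition that the final exponent $2^{-n-1}$ arises as the composition of $\gamma = 1/2$ with the $n$-fold iterate of $Q(t) = t^{1/2}$.
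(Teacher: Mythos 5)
Your argument is correct and follows essentially the same route as the paper's own proof: the decomposition via $v=\max\{u_1,u_2\}$ with Theorem \ref{th-lowerenergy} applied to the pairs $(\psi,u_j)$ for the first bound, and Theorem \ref{th-lowerenergy-kocochuanhoa} with $m=1$, $\gamma=1/2$, $\psi=u_j$ together with the concavity bound $-\tilde{\chi}(-\lambda)\le\lambda$ for the second. The bookkeeping points you flag (energy of $\psi$ and $v$, the trivial regime $I^0_{\tilde{\chi}}(u_1,u_2)\ge 1$, absorbing $(B-\tilde{\chi}(-A))^2\le (1+A^M)^2B^2$) are exactly those handled in the paper, so nothing further is needed.
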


\begin{proof}
	The case $I^0_{\tilde{\chi}}(u_1, u_2)\geq 1$ is trivial. It remains to consider the case 
	$I^0_{\tilde{\chi}}(u_1, u_2)<1$.
	
Denote $v=\max\{u_1, u_2\}$.
By Lemma \ref{le-sosanhnangnluongintegrabig}, we have 
$v\in \mathcal{E}(X, \theta, \phi)$ 
and $E^0_{\tilde{\chi}, \theta, \phi}(v)\leq C_1B,$ where $C_1>0$ depends only on $n$ and $M$. 
Taking $\chi=\tilde{\chi}$ and using Theorem \ref{th-lowerenergy}, we get
\begin{equation}\label{eq1 th2darvas}
		\int_X-\tilde{\chi}(u_j-v)\theta_{\psi}^n\leq \int_X-\tilde{\chi}(u_j-v)\theta_{u_j}^n
		+C_2\varrho B^2 \big(I^0_{\tilde{\chi}}(u_j, v)\big)^{2^{-n}},
\end{equation} 
for $j=1,2$, where $C_2>0$ depends on $n$ and $M$. Note that
$$\int_X-\tilde{\chi}(u_1-v)\theta_{u_1}^n+\int_X-\tilde{\chi}(u_2-v)\theta_{u_2}^n
\leq\int_X-\tilde{\chi}(-|u_1-u_2|)(\theta_{u_1}^n+\theta_{u_2}^n)=\tilde{I}_{\tilde{\chi}}(u_1, u_2),$$
and
$$I^0_{\tilde{\chi}}(u_1, v)+I^0_{\tilde{\chi}}(u_2, v)=I^0_{\tilde{\chi}}(u_1, u_2)\leq\varrho^{-1}\tilde{I}_{\tilde{\chi}}(u_1, u_2).$$
Hence, by \eqref{eq1 th2darvas}, we get
\begin{align*}
	\int_X-\tilde{\chi}(-|u_1-u_2|)\theta_{\psi}^n
	&=\int_X-\tilde{\chi}(u_1-v)\theta_{\psi}^n+\int_X-\tilde{\chi}(u_2-v)\theta_{\psi}^n\\
	&\leq \int_X-\tilde{\chi}(u_1-v)\theta_{u_1}^n+\int_X-\tilde{\chi}(u_2-v)\theta_{u_2}^n\\
	&+C_2\varrho B^2 \left(\big(I^0_{\tilde{\chi}}(u_1, v)\big)^{2^{-n}}+
	\big(I^0_{\tilde{\chi}}(u_2, v)\big)^{2^{-n}}\right)\\
	&\leq \tilde{I}_{\tilde{\chi}}(u_1, u_2)
	+2C_2\varrho B^2 \big(\tilde{I}_{\tilde{\chi}}(u_1, u_2)/\varrho\big)^{2^{-n}}\\
	&\leq C_3\varrho B^2 \big(\tilde{I}_{\tilde{\chi}}(u_1, u_2)/\varrho\big)^{2^{-n}},
\end{align*}
where $C_3>0$ depends on $n$ and $M$. Here, the last estimate holds due to the fact
$\tilde{I}_{\tilde{\chi}}(u_1, u_2)\leq \varrho B$.

Now, we consider the case $\sup_Xu_1=\sup_Xu_2$. By Theorem \ref{th-lowerenergy-kocochuanhoa}
(choose $m=1$ and $\gamma=1/2$),
there exists $C_4>0$ depending only on $n, X, \omega$ and $M$ such that
\begin{align}\label{eq2 th2darvas}
\tilde{I}_{\tilde{\chi}}(u_1, u_2) &\leq	\int_X-\tilde{\chi}(-|u_1-u_2|)(\theta_{u_1}^n+\theta_{u_2}^n)\\
\nonumber
&\leq
	-2\varrho\,\tilde{\chi}\left(-\big(I^0_{\tilde{\chi}}(u_1, u_2)\big)^{2^{-n}}\right)
	+C_4\varrho A^{1/2} B^2 \big(I^0_{\tilde{\chi}}(u_1, u_2)\big)^{2^{-n-1}}.
\end{align}
Moreover, since $\tilde{\chi}$ is concave, we have 
$$\dfrac{\tilde{\chi}(t)}{t}\leq \dfrac{\tilde{\chi}(-1)}{-1}=1,$$
for every $-1<t<0$. Hence, by \eqref{eq2 th2darvas}, we have
\begin{align*}
	\tilde{I}_{\tilde{\chi}}(u_1, u_2)
	&\leq 2\varrho\,\big(I^0_{\tilde{\chi}}(u_1, u_2)\big)^{2^{-n}}
	+C_4\varrho A^{1/2}B^2 \big(I^0_{\tilde{\chi}}(u_1, u_2)\big)^{2^{-n-1}}\\
	&\leq (2+C_4)\varrho A^{1/2}B^2 \big(I^0_{\tilde{\chi}}(u_1, u_2)\big)^{2^{-n-1}}.
\end{align*}
The proof is completed.
\end{proof}

\subsubsection{Comparison of capacities}

For every Borel subset $E$ in $X$ and for every $\varphi \in \PSH(X,\theta)$, one denotes
$$\capK_{\theta,\varphi}(E):= \sup\big\{\int_E \theta_\psi^n: \, \psi \in \PSH(X, \theta), \quad \varphi- 1 \le \psi \le \varphi\big\}.$$
In \cite{Lu-comparison-capacity}, Lu showed that if $\varphi_j$ ($j=1, 2$) is a  $\theta_j$-psh function with
 $\int_X(\theta_j+dd^c\varphi_j)^n>0$ then there exists a continuous function $f:\R_{ \le 0}\rightarrow\R_{ \ge 0}$
 with $f(0)=0$ such that $\capK_{\theta_1,\varphi_1}(E)\leq f(\capK_{\theta_2,\varphi_2}(E))$ for every
 Borel set $E\subset X$. As an application of Theorem \ref{th-lowerenergy-kocochuanhoa}, by giving a very explicit form for $f$, we will strengthen Lu's result for the case where $\varphi_j$ is a model $\theta_j$-psh function. 
 
 First, we need the following lemma:
\begin{lemma}\label{lem CLNfortheta}
	Let $A, B>0$ be constants.
	Let $\theta$ be a closed smooth real $(1,1)$-form representing a big cohomology class such that 
	$\theta\leq A\omega$. Assume that $u, v$ are $\theta$-psh functions satisfying $v\leq u\leq v+B$.
	Then, 
	$$\int_X(-\psi)\theta_u^n\leq\int_X(-\psi)\theta_v^n+nA^nB\int_X\omega^n,$$
	for every negative $A\omega$-psh function $\psi$.
\end{lemma}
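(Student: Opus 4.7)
The plan is to prove the equivalent estimate $\int_X(-\psi)(\theta_u^n-\theta_v^n)\leq nA^nB\int_X\omega^n$, which gives the lemma on adding $\int_X(-\psi)\theta_v^n$ to both sides (read in $[0,\infty]$, since the argument for the bounded truncations $\psi_k:=\max\{\psi,-k\}$ will pass to the limit by monotone convergence on each of the positive measures $\theta_u^n,\theta_v^n$). The starting point is the telescoping identity
$$\theta_u^n-\theta_v^n=\ddc(u-v)\wedge T,\qquad T:=\sum_{k=0}^{n-1}\theta_u^k\wedge\theta_v^{n-1-k},$$
where $T$ is a closed positive $(n-1,n-1)$-current and $u-v$ is a bounded dsh function thanks to $0\leq u-v\leq B$. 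Applying the integration-by-parts formula of Theorem \ref{th-integrabypart} to the bounded pair $(u-v,\psi_k)$ gives $\int_X(-\psi_k)(\theta_u^n-\theta_v^n)=-\int_X(u-v)\,\ddc\psi_k\wedge T$.

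Next I would use that $\psi_k$ is $A\omega$-psh by writing $\ddc\psi_k=\tau_k-A\omega$ with $\tau_k:=\ddc\psi_k+A\omega\geq 0$, so that
$$-\int_X(u-v)\,\ddc\psi_k\wedge T=-\int_X(u-v)\,\tau_k\wedge T+A\int_X(u-v)\,\omega\wedge T.$$
Since $u-v\geq 0$ and $\tau_k\wedge T$ is a positive measure, the first term is non-positive and can be dropped; the bound $u-v\leq B$ then dominates the second by $AB\int_X\omega\wedge T$. The whole problem is therefore reduced to the mass estimate
$$\int_X\omega\wedge T\;\leq\;nA^{n-1}\int_X\omega^n.$$

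I expect this last estimate to be the main obstacle, because $\theta$ itself need not be a positive $(1,1)$-form, so $\theta^{n-1}$ carries no pointwise domination by $(A\omega)^{n-1}$, and even the cohomological intersection $[\omega]\cdot[\theta]^{n-1}$ is not obviously comparable with $A^{n-1}\int_X\omega^n$ when $\theta$ has negative parts. The way around is to move from $\theta$ to $A\omega$ at the level of the \emph{currents} $\theta_u,\theta_v$ rather than at the level of the form $\theta$. Setting $\eta:=A\omega-\theta\geq 0$ (a smooth positive form), the identity $\ddc u+A\omega=\theta_u+\eta$ together with multilinearity of non-pluripolar products against smooth forms gives
$$(\ddc u+A\omega)^k=\sum_{j=0}^k\binom{k}{j}\theta_u^j\wedge\eta^{k-j}\;\geq\;\theta_u^k$$
as positive currents, and likewise $(\ddc v+A\omega)^{n-1-k}\geq\theta_v^{n-1-k}$. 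Hence
$$\omega\wedge\theta_u^k\wedge\theta_v^{n-1-k}\;\leq\;\omega\wedge(\ddc u+A\omega)^k\wedge(\ddc v+A\omega)^{n-1-k}$$
as positive measures, and the mass of the right-hand side is at most the cohomological intersection $A^{n-1}\int_X\omega^n$ by the standard upper bound for the mass of non-pluripolar products \cite{BEGZ,WittNystrom-mono}. Summing the $n$ terms in $T$ yields $\int_X\omega\wedge T\leq nA^{n-1}\int_X\omega^n$, and assembling the chain of inequalities completes the proof.
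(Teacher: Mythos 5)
Your proposal is correct and follows essentially the same route as the paper: the telescoping identity $\theta_u^n-\theta_v^n=\ddc(u-v)\wedge T$, integration by parts via Theorem \ref{th-integrabypart}, the bound $\ddc(-\psi)\le A\omega$ against the positive current $T$, and then $0\le u-v\le B$. The only cosmetic differences are that you truncate $\psi$ instead of smoothing it and that you spell out the mass estimate $\int_X\omega\wedge T\le nA^{n-1}\int_X\omega^n$ (via $\theta_u^k\le(\ddc u+A\omega)^k$ and the standard non-pluripolar mass bound), which the paper uses implicitly.
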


\begin{proof}
	Using approximations, we can assume that $\psi$ is smooth. 
	Denote 
	$$T=\sum_{l=0}^{n-1}\theta_u^l\wedge\theta_v^{n-l-1}.$$
	We have $\theta_u^n-\theta_v^n=dd^c(u-v)\wedge T$.
	Moreover,  using integration by parts (Theorem \ref{th-integrabypart}), we get
	$$\int_X(-\psi)dd^c(u-v)\wedge T
	=\int_X (u-v)dd^c(-\psi)\wedge T\leq A\int_X (u-v)\omega\wedge T\leq nA^nB\int_X\omega^n.$$
	Hence
	$$\int_X(-\psi)\theta_u^n\leq\int_X(-\psi)\theta_v^n+nA^nB\int_X\omega^n.$$
\end{proof}

\begin{theorem}\label{the comparisoncap} (Comparison of capacities)
Assume that $\theta_1, \theta_2\leq A\omega$ are closed smooth real $(1,1)$-forms representing big cohomology classes	and, for $j=1,2$,  $\phi_j$ is a model $\theta_j$-psh function satisfying 
	$\int_X(\theta_j+dd^c\phi_j)^n=\varrho_j>0$. Then, for every $0<\gamma<1$, there exists $C>0$
	depending only on $n, X, \omega, A$ and $\gamma$ such that
	$$\frac{\capK_{\theta_1, \phi_1}(E)}{\varrho_1}\leq 
C	\bigg(\frac{\capK_{\theta_2, \phi_2}(E)}{\varrho_2}\bigg)^{2^{-n}\gamma},$$
	for every Borel set $E\subset X$.
\end{theorem}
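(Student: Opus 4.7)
The plan is to transport between the two cohomology classes via a Monge-Amp\`ere solve and then apply the quantitative domination principle (Theorem \ref{the domination}) together with a Chern-Levine-Nirenberg-type estimate. Fix a Borel set $E\subset X$, and normalize with $\tilde{\chi}(t):=\max\{t,-1\}$, so that the function $h(s):=(-\tilde{\chi}(-s))^{1/2}=s^{1/2}$ appearing in the conclusion of Theorem \ref{the domination} satisfies $h^{\circ n}(x)=x^{2^{-n}}$, matching the exponent in the target statement. Writing $c_j:=\capK_{\theta_j,\phi_j}(E)/\varrho_j\in[0,1]$, the task is to prove $c_1\le C c_2^{2^{-n}\gamma}$. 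Since $\capK_{\theta_1,\phi_1}(E)$ is defined as a supremum, it is enough to bound $\int_E\theta_{1,\psi_1}^n/\varrho_1$ uniformly over test potentials $\psi_1\in\PSH(X,\theta_1)$ with $\phi_1-1\le\psi_1\le\phi_1$.

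Given such a $\psi_1$, set $\mu:=\theta_{1,\psi_1}^n/\varrho_1$, a probability measure vanishing on pluripolar sets. By \cite{Lu-Darvas-DiNezza-logconcave} there is a unique $v\in\mathcal{E}(X,\theta_2,\phi_2)$ with $\sup_X(v-\phi_2)=0$ solving $\theta_{2,v}^n=\varrho_2\mu$, and Lemma \ref{le-sosanhnangnluongintegrabig} bounds the $\tilde{\chi}$-energies of $v$ and $\phi_2$ by constants depending only on $n,X,\omega,A$. I then split
\begin{equation*}
\int_E\theta_{2,v}^n=\int_{E\cap\{v>\phi_2-1\}}\theta_{2,v}^n+\int_{E\cap\{v\le\phi_2-1\}}\theta_{2,v}^n.
\end{equation*}
By plurifine locality the first integrand coincides with $\theta_{2,\max\{v,\phi_2-1\}}^n$; since $\max\{v,\phi_2-1\}$ is an admissible test potential for $\capK_{\theta_2,\phi_2}(E)$, the first term is at most $c_2\varrho_2$. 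The second term equals $\varrho_2\mu(E\cap\{v\le\phi_2-1\})$, so the problem reduces to bounding $\mu(\{v\le\phi_2-1\})$ by a suitable power of $c_2$.

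Applying Theorem \ref{the domination} with $u_1=v$, $u_2=\phi_2$, $c=0$ and auxiliary measure $\mu$ yields $\capK_\omega(\{v<\phi_2-1\})\le C_1 c_\mu^{2^{-n}}$, where $c_\mu:=\mu(\{v<\phi_2\})\le 1$. A Chern-Levine-Nirenberg-type comparison, made uniform through Proposition \ref{pro-dominatedcapacitybigomega} applied to an auxiliary Monge-Amp\`ere solve attached to the $\omega$-extremal function of $\{v<\phi_2-1\}$, then converts this $\omega$-capacity estimate into $\mu(\{v\le\phi_2-1\})\le C_2 c_\mu^{2^{-n}\gamma}$ for every $\gamma<1$. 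Since $c_\mu$ is in turn controlled by $c_1$ up to fixed constants, the splitting closes into a self-improving inequality $c_1\le c_2+C_3 c_1^{2^{-n}\gamma}$, which iterates to the desired $c_1\le C c_2^{2^{-n}\gamma}$; the degenerate case $c_2=0$ is handled separately by the quantitative uniqueness (Theorem \ref{thequantitativeuniqueness}). The hard part will be the Chern-Levine-Nirenberg step: the constant in the conversion between $\omega$-capacity and $\mu$-measure must be uniform in the test potential $\psi_1$, which is precisely where the new quantitative tools of this paper, rather than a classical Ko\l{}odziej-type argument, play an essential role.
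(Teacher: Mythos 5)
Your reduction to bounding $\varrho_1^{-1}\int_E\theta_{1,\psi_1}^n$ over test potentials, the transport solve $\theta_{2,v}^n=\varrho_2\mu$ in $\mathcal{E}(X,\theta_2,\phi_2)$, the energy bound for $v$ (via Lemma \ref{lem CLNfortheta} and the model property of $\phi_1$), and the plurifine/admissibility argument bounding $\int_{E\cap\{v>\phi_2-1\}}\theta_{2,v}^n$ by $\capK_{\theta_2,\phi_2}(E)$ are all sound and parallel the paper. The argument breaks at the second term. You need $\mu(E\cap\{v\le\phi_2-1\})\le C\,c_2^{2^{-n}\gamma}$, i.e.\ smallness in terms of $\capK_{\theta_2,\phi_2}(E)$, but your chain produces at best a bound by a power of $c_\mu=\mu(\{v<\phi_2\})$, a quantity that does not see $E$ at all: for the solution $v$ of $\theta_{2,v}^n=\varrho_2\mu$ one typically has $v<\phi_2$ on a set of full $\mu$-measure, so $c_\mu=1$ however small $E$ is, and the assertion that ``$c_\mu$ is controlled by $c_1$'' is unsupported and false in general. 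Structurally, once you drop $E$ from $\mu(\{v\le\phi_2-1\})$ there is no mechanism left to make that term small in terms of either capacity. Even granting your claimed bound, the closing step fails: since $c_1\le 1$ and the constants are $\ge 1$, the inequality $c_1\le c_2+C_3c_1^{2^{-n}\gamma}$ is vacuous, and in general such an inequality only yields ``$c_1\le 2c_2$ or $c_1\le(2C_3)^{1/(1-2^{-n}\gamma)}$''; it does not iterate to $c_1\le Cc_2^{2^{-n}\gamma}$ (test $c_2=0$). Two further defects: $\tilde{\chi}(t)=\max\{t,-1\}$ is bounded, hence not in $\mathcal{W}^-$ as Theorem \ref{the domination} requires, and for it $h^{\circ n}$ is not $x\mapsto x^{2^{-n}}$ (you want $\tilde{\chi}(t)=t$); and the ``CLN conversion'' from smallness of $\capK_\omega(\{v<\phi_2-1\})$ to smallness of $\mu(\{v\le\phi_2-1\})$, uniformly in $\psi_1$, is exactly the step you leave unproved, and it is doubtful as stated since $\mu$ ranges over all normalized Monge-Amp\`ere measures of test potentials and such measures can concentrate on sets of small $\omega$-capacity.

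What is missing is the device by which $E$ itself generates the power of $c_2$: the paper takes $u_1=h^*_{E,\theta_2,\phi_2}$ (the relative extremal function of $E$ in the class $(\theta_2,\phi_2)$) and $u_2=\phi_2$, so that $|u_1-u_2|\ge 1$ on $E$ off a pluripolar set while $I^0_\chi(u_1,u_2)\le E^0_{\chi,\theta_2,\phi_2}(u_1)=\varrho_2^{-1}\capK_{\theta_2,\phi_2}(E)$ for $\chi=\tilde{\chi}=t$. Applying Theorem \ref{th-lowerenergy-kocochuanhoa} to these $u_1,u_2$ and to $\psi=\psi_0$, the solution of $\theta_{2,\psi_0}^n=(\varrho_2/\varrho_1)\,\theta_{1,\varphi}^n$ for a near-extremal test potential $\varphi$ of $\capK_{\theta_1,\phi_1}(E)$ (your transport step, with the same energy bound you indicate), gives directly $\int_E\theta_{2,\psi_0}^n\le C\varrho_2\,c_2^{2^{-n}\gamma}$ and hence the theorem, with no splitting, no domination principle, and no self-improvement needed. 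If you want to salvage your route, replace the pair $(v,\phi_2)$ in the smallness step by the pair $(h^*_{E,\theta_2,\phi_2},\phi_2)$; as written, the proof has a genuine gap.
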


\begin{proof}
By the inner regularity of capacities (see \cite[Lemma 4.2]{Lu-Darvas-DiNezza-mono}), we only need consider
the case where $E$ is compact. Since the case $\capK_{\theta_2, \phi_2}(E)=\varrho_2$ is trivial, we can also
assume that $\capK_{\theta_2, \phi_2}(E)<\varrho_2$. In particular,
 by \cite[Proposition 3.7]{Lu-Darvas-DiNezza-logconcave} and \cite[Lemma 2.7]{Darvas-Lu-DiNezza-singularity-metric},
 we have
 $$\sup_Xh_{E, \theta_2, \phi_2}^*=\sup_X(h_{E, \theta_2, \phi_2}^*-\phi_2)=0,$$
  where 
 $$h_{E, \theta_2, \phi_2}=\sup\left\{w\in\PSH(X, \theta_2): w|_E\leq\phi_2-1, w\leq\phi_2\right\}.$$
Set $\chi(t)=\tilde{\chi}(t)=t$. We will use Theorem \ref{th-lowerenergy-kocochuanhoa} for $u_1=(h_{E, \theta_2, \phi_2})^*$ and $u_2=\phi_2$.
It is clear that $E^0_{\tilde{\chi}, \theta_2, \phi_2}(u_2)=0$ and $u_1=u_2-1$ on $E\setminus N$, where
$N$ is a pluripolar set. Moreover, it follows from
\cite[Proposition 3.7]{Lu-Darvas-DiNezza-logconcave} that
$$I^0_{\chi}(u_1, u_2) \le E^0_{\tilde{\chi}, \theta_2, \phi_2}(u_1)=\varrho_2^{-1}\capK_{\theta_2, \phi_2}(E)\leq 1.$$
 By Theorem \ref{th-lowerenergy-kocochuanhoa}, for every $0<\gamma<1$ and $B\geq 1$, there exists $C>0$
depending only on $X, \omega, n, A$ and $\gamma$ such that
\begin{equation}\label{eq2 comparisoncap}
	\int_E\theta_{\psi}^n\leq\int_X\-\chi(-|u_1-u_2|)\theta_{\psi}^n\leq
	 C\varrho_2A(A+B)^2\left(\capK_{\theta_2, \phi_2}(E)/\varrho_2\right)^{2^{-n}\gamma},
\end{equation}
for every compact set $E$  and for each 
$\psi\in\mathcal{E}(X, \theta_2, \phi_2)$ with $E^0_{\tilde{\chi}, \theta_2, \phi_2}(\psi)\leq B$.
Let $\varphi\in \mathcal{E}(X, \theta_1, \phi_1)$ such that $\phi_1-1\leq\varphi\leq\phi_1$
and $\int_E(\theta_1+dd^c\varphi)^n\geq\dfrac{1}{2}\capK_{\theta_1, \phi_1}(E)$.
By \cite{Lu-Darvas-DiNezza-logconcave}, there exists a unique function 
$\psi_0\in\mathcal{E}(X, \theta_2, \phi_2)$ such that $\sup_X\psi_0=0$ and
$(\theta_2+dd^c\psi_0)^n=\dfrac{\varrho_2}{\varrho_1}(\theta_1+dd^c\varphi)^n$.
When $\psi=\psi_0$, we have
\begin{equation}\label{eq3 comparisoncap}
	\int_E\theta_{\psi}^n\geq \dfrac{\varrho_2}{2\varrho_1}\capK_{\theta_1, \phi_1}(E).
\end{equation}
Moreover, by using Lemma \ref{lem CLNfortheta} for $\varphi, \phi_1$ and using the fact that 
$(\theta_2+dd^c\phi_2)^n\leq\mathbf{1}_{\{\phi_2=0\}}\theta_2^n$ (see \cite[Theorem 3.8]{Lu-Darvas-DiNezza-mono}), we have
\begin{equation}\label{eq4 comparisoncap}
	\varrho_1 E^0_{\tilde{\chi}, \theta_2, \phi_2}(\psi_0)
	= \int_X(\phi_2-\psi_0)(\theta_1+dd^c\varphi)^n
	\leq\int_X(-\psi_0)(\theta_1+dd^c\phi_1)^n+nA^n\int_X\omega^n
	\leq B,
\end{equation}
where $B\geq 1$ depends only on $A, X, \omega, n$. Combining \eqref{eq2 comparisoncap}, \eqref{eq3 comparisoncap}
and \eqref{eq4 comparisoncap}, we get
\begin{align*}
	\capK_{\theta_1, \phi_1}(E)\leq \dfrac{2\varrho_1}{\varrho_2}\int_E\theta_{\psi_0}^n&\leq
	\dfrac{2\varrho_1}{\varrho_2}\int_X\-\chi(-|u_1-u_2|)\theta_{\psi_0}^n\\
	&\leq
	2C\varrho_1A(A+B)^2\left(\capK_{\theta_2, \phi_2}(E)/\varrho_2\right)^{2^{-n}\gamma}.
\end{align*}
The proof is completed.
\end{proof}

\section{Stability estimates for varied singularity type and cohomology class}  \label{sec-variedtype}  

\subsection{Pseudo-metric on the space of singularity types} 

We first  recall some facts about the pseudo-metric on the space of singularity types. Let $\alpha$ be a big cohomology class and $\theta$ a smooth closed $(1,1)$-form in $\alpha$. Let $\mathcal{S}(\theta)$ be the space of singularity types of $\theta$-psh functions and 
$$\mathcal{S}_\delta(\theta):= \{[u] \in \mathcal{S}(\theta): \int_X\theta_u^n \ge \delta\}.$$
The pseudo-distance $d_\mathcal{S}$ on $\mathcal{S}$ was introduced in \cite{Darvas-Lu-DiNezza-singularity-metric}, and it satisfies
\begin{equation}\label{eqdS}
	d_{\mathcal{S}(\theta)}([u],[v]) \le \sum_{j=0}^n \bigg( 2 \int_X \theta_{V_\theta}^j \wedge \theta_{\max\{u,v\}}^{n-j} - \int_X \theta_{V_\theta}^j \wedge \theta_u^{n-j}-\int_X \theta_{V_\theta}^j \wedge \theta_v^{n-j} \bigg)\\
	\le C d_{\mathcal{S}(\theta)}([u],[v]),
\end{equation} 
where $C>1$ depends only on $n$. Here $V_{\theta}$ is the upper envelope of all non-positive $\theta$-psh functions:
$$V_{\theta}:=\sup\{\varphi\in\PSH(X, \theta): \varphi\leq 0\quad\mbox{on}\quad X\}.$$

For all $\theta$-psh functions $u, v$, we put
$$d_\theta(u,v):= 2\int_X\theta_{\max\{u, v\}}^n-\int_X \theta_u^n - \int_X \theta_v^n .$$
In particular, if $u\leq v$ then $d_{\theta}(u, v)= \int_X \theta_v^n-\int_X\theta_u^n$.
By \eqref{eqdS}, we have $$d_{\theta}(u, v)\leq  C d_{\mathcal{S}(\theta)}([u],[v]),$$ where $C=C(n)>0$.
 Moreover, if $\theta=A\omega$ for some $A>0$ then, we have 
 $$d_{\mathcal{S}(A\omega)}([u],[v])\leq A^nd_{(A+1)\omega}(u, v),$$
for every $u, v\in\PSH(X, A\omega)$. In the sequel, we provide more properties of $d_{\theta}$.

\begin{lemma}\label{prop-dAomegadtheta}
	Let $u_1, u_2$ be $\theta$-psh functions. Let $\theta'$ be a smooth real closed $(1,1)$-form  such that $\theta' \ge \theta$. Then
	$$d_{\theta}(u_1, u_2)\leq d_{\theta'}(u_1, u_2).$$
\end{lemma}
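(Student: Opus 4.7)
The plan is to reduce the inequality to the standard monotonicity of non-pluripolar products by expanding the difference $\theta' - \theta$ as a smooth positive form. Set $\beta := \theta' - \theta$, so $\beta$ is a smooth closed positive $(1,1)$-form on $X$. Every $\theta$-psh function $u$ is automatically $\theta'$-psh, and we have the pointwise-current identity $\theta'_u = \theta_u + \beta$.

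Since $\beta$ is smooth, the non-pluripolar self-product distributes binomially:
\begin{align*}
	\int_X \theta'^n_u = \int_X \langle (\theta_u + \beta)^n \rangle = \sum_{k=0}^n \binom{n}{k} \int_X \theta_u^k \wedge \beta^{n-k},
\end{align*}
where, for $k<n$, the integrand is understood as the relative non-pluripolar product $\langle \theta_u^k \dot\wedge \beta^{n-k}\rangle$ in the sense of Subsection \ref{subsec-intebyparts}; smoothness of $\beta$ makes this unambiguous. Applying the same expansion to $u_1$, $u_2$, and $v := \max\{u_1,u_2\}$ and taking the appropriate combination yields
\begin{align*}
	d_{\theta'}(u_1,u_2) = \sum_{k=0}^n \binom{n}{k} D_k, \qquad D_k := 2\int_X \theta_v^k \wedge \beta^{n-k} - \int_X \theta_{u_1}^k \wedge \beta^{n-k} - \int_X \theta_{u_2}^k \wedge \beta^{n-k}.
\end{align*}

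The term $D_n$ equals $d_\theta(u_1,u_2)$, so it suffices to show $D_k \ge 0$ for every $0 \le k \le n-1$. Since $v \ge u_j$ pointwise (in particular $v$ is less singular than $u_j$) for $j=1,2$, the monotonicity of non-pluripolar products (\cite{Lu-Darvas-DiNezza-mono,WittNystrom-mono,Viet-generalized-nonpluri}) applied with the smooth positive form $\beta^{n-k}$ gives
\begin{align*}
	\int_X \theta_{u_j}^k \wedge \beta^{n-k} \le \int_X \theta_v^k \wedge \beta^{n-k}, \qquad j=1,2,
\end{align*}
from which $D_k \ge 0$ follows by adding the two inequalities. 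Summing over $k$ yields $d_{\theta'}(u_1,u_2) \ge d_\theta(u_1,u_2)$, which is the desired conclusion. There is no real obstacle here; the only point requiring a little care is to invoke monotonicity in the relative form (with the smooth coefficient $\beta^{n-k}$), which is immediate from the cited references.
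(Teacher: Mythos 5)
Your proof is correct: the binomial expansion of $\langle(\theta_u+\beta)^n\rangle$ with $\beta:=\theta'-\theta$ smooth and positive is justified by the multilinearity of non-pluripolar products, each $D_k$ is nonnegative by monotonicity applied with $v=\max\{u_1,u_2\}\ge u_j$, and $D_n=d_\theta(u_1,u_2)$, so the inequality follows. The route differs from the paper's in the choice of decomposition: the paper first reduces to the ordered case $u_1\le u_2$ (using $d_\eta(u_1,u_2)=d_\eta(u_1,\max\{u_1,u_2\})+d_\eta(u_2,\max\{u_1,u_2\})$), then writes $(\theta'+dd^cu_j)^n-(\theta+dd^cu_j)^n=(\theta'-\theta)\wedge\sum_{l=0}^{n-1}(\theta'+dd^cu_j)^l\wedge(\theta+dd^cu_j)^{n-1-l}$ and concludes by applying monotonicity of mixed non-pluripolar masses to the two currents $T_1,T_2$ built from $u_1\le u_2$; you instead keep the maximum in play, expand $d_{\theta'}(u_1,u_2)$ binomially in $\beta$, and isolate $d_\theta(u_1,u_2)$ as the top term of a sum of nonnegative quantities. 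Both arguments hinge on exactly the same key ingredient (the monotonicity theorem of \cite{Lu-Darvas-DiNezza-mono,WittNystrom-mono}) and on the same algebraic fact that the non-pluripolar product is multilinear against the smooth form $\beta$; yours buys a slightly shorter argument without the preliminary reduction, while the paper's telescoping identity compares masses of $u_1$ and $u_2$ directly, which is the form it reuses elsewhere (e.g.\ in the proof of Lemma \ref{le-sosanhwedgeTduvnho}). Either way, no gap.
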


\begin{proof}By the fact $d_{\eta}(u_1, u_2)=d_{\eta}(u_1, \max\{u_1, u_2\})+d_{\eta}(u_2, \max\{u_1, u_2\})$ for
	$\eta=\theta, \theta'$, the problem is reduced to the case  $u_1\leq u_2$. Then we have
	$$d_{\eta}(u_1, u_2)=\int_X(\eta+dd^cu_2)^n-\int_X(\eta+dd^cu_1)^n,$$
	for $\eta=\theta, \theta'$. Moreover,
	$$(\theta'+dd^cu_j)^n-(\theta+dd^cu_j)^n
	=(\theta'-\theta)\wedge\sum\limits_{l=0}^{n-1}(\theta'+dd^cu_j)^l\wedge(\theta+dd^cu_j)^{n-l-1},$$
	for $j=1, 2$. Hence
	$$d_{\theta'}(u_1, u_2)-d_{\theta}(u_1, u_2)
	=\int_X (\theta'-\theta)\wedge T_2-\int_X (\theta'-\theta)\wedge T_1,$$
	where
	$$T_j=\sum\limits_{l=0}^{n-1}(\theta'+dd^cu_j)^l\wedge(\theta+dd^cu_j)^{n-l-1}.$$
	Thus, by the monotonicity of non-pluripolar products \cite[Theorem 1.1]{Lu-Darvas-DiNezza-mono}, we obtain
	$$d_{\theta'}(u_1, u_2)-d_{\theta}(u_1, u_2)\geq 0.$$
	The proof is completed.
\end{proof}

\begin{lemma}\label{le-sosanhwedgeTduvnho} Let $\delta>0, A>0$ be constants. Let $u,v$ be $\theta$-psh functions such that $u \le v$ and  $\int_X \theta_u^n \ge \delta$. Let $\psi$ be an $\eta$-psh function, where $\eta$ is a closed smooth $(1,1)$-form.   Assume that $\theta \le A\omega, \eta \le A\omega$. Then there exists a constant $C$ depending only on  $n,\omega$ such that 
$$\bigg| \int_X \theta_u^m \wedge \eta_\psi^{n-m} - \int_X \theta_v^m \wedge \eta_\psi^{n-m} \bigg| \le C A^n   \left(\dfrac{d_{\theta}(u, v)}{\delta}\right)^{1/n}.$$
\end{lemma}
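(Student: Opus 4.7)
My plan is to reduce via monotonicity and integration by parts to the case where $\eta_\psi^{n-m}$ is replaced by $\omega^{n-m}$, then to apply a generalized Khovanskii-Teissier estimate. By Witt-Nystr\"om monotonicity of non-pluripolar products~\cite{WittNystrom-mono,Lu-Darvas-DiNezza-mono}, the hypothesis $u\le v$ gives $\int \theta_u^m\wedge\eta_\psi^{n-m}\le \int\theta_v^m\wedge\eta_\psi^{n-m}$, so the absolute value equals the non-negative quantity $\Delta:=\int(\theta_v^m-\theta_u^m)\wedge\eta_\psi^{n-m}$. Since $A\omega-\eta\ge 0$ is smooth positive, expanding $(A\omega)_\psi^{n-m}=(\eta_\psi+(A\omega-\eta))^{n-m}$ by the binomial formula writes $(A\omega)_\psi^{n-m}-\eta_\psi^{n-m}$ as a sum of positive currents, each pairing non-negatively with $\theta_v^m-\theta_u^m$ by monotonicity; hence $\Delta\le \int(\theta_v^m-\theta_u^m)\wedge(A\omega)_\psi^{n-m}$.

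To remove $\psi$, I would exploit that $\theta_v^m-\theta_u^m$ is a \emph{closed} signed $(m,m)$-current (both summands being closed non-pluripolar products). Truncating $\psi_L:=\max(\psi,-L)$ and writing $(A\omega+\ddc\psi_L)^{n-m}=(A\omega)^{n-m}+\ddc\psi_L\wedge Q_L$ for some closed positive $(n-m-1,n-m-1)$-current $Q_L$, the integration-by-parts identity of Theorem~\ref{th-integrabypart} applied to the closed currents $Q_L\wedge\theta_v^m$ and $Q_L\wedge\theta_u^m$ makes the $\ddc\psi_L$-contribution vanish. Letting $L\to\infty$ yields
\[
\Delta\le A^{n-m}\,\widetilde{\Delta},\qquad \widetilde{\Delta}:=\int\theta_v^m\wedge\omega^{n-m}-\int\theta_u^m\wedge\omega^{n-m}.
\]

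It then suffices to show $\widetilde{\Delta}\le CA^m(d_\theta(u,v)/\delta)^{1/n}$. Consider the non-decreasing, log-concave sequence $a_j:=\int\theta_u^{m-j}\wedge\theta_v^j\wedge\omega^{n-m}$, so that $\widetilde{\Delta}=a_m-a_0$. The generalized Khovanskii-Teissier inequality for non-pluripolar products, $\int T_1\wedge\cdots\wedge T_n\ge\prod_j(\int T_j^n)^{1/n}$ (cf.~\cite{BEGZ,WittNystrom-mono}), provides the lower bounds $a_j\ge \rho_0^{(m-j)/n}\rho_n^{j/n}C_\omega^{(n-m)/n}$ with $\rho_k=\int\theta_k^n$ (so $\rho_0\ge\delta$) and $C_\omega=\int\omega^n$. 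Combining these with the cohomological upper bound $a_j\le A^mC_\omega$ (from $\theta\le A\omega$), the Alexandrov-Fenchel log-concavity $a_j^2\ge a_{j-1}a_{j+1}$, and the $n$-th-root subadditivity $\rho_n^{1/n}-\rho_0^{1/n}\le d_\theta(u,v)^{1/n}$, one should derive the announced bound by telescoping through the log-concave sequence $(a_j)$. The hard part is precisely this last step: the KT lower bounds do not individually vanish as $d_\theta\to 0$, so one must quantitatively exploit the uniqueness theorem of~\cite{Lu-Darvas-DiNezza-mono,WittNystrom-mono} (equal masses $\rho_0=\rho_n$ force all mixed masses to coincide) in order to extract the sharp modulus of continuity $(d_\theta/\delta)^{1/n}$, keeping careful track of the dependence on $A$ and $\delta$.
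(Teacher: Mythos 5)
Your proposal has two genuine gaps. First, the reduction that discards $\psi$ does not go through: after truncating, the integration by parts argument does give $\int_X\theta_w^m\wedge(A\omega+\ddc\psi_L)^{n-m}=\int_X\theta_w^m\wedge(A\omega)^{n-m}$ for $w=u,v$, but when you let $L\to\infty$ the non-pluripolar masses are only \emph{upper} semicontinuous (mass can be lost in the limit), i.e. you only get $\int_X\theta_w^m\wedge(A\omega)_\psi^{n-m}\le\int_X\theta_w^m\wedge(A\omega)^{n-m}$ for each $w$ separately. Since $u$ and $v$ enter the difference with opposite signs, this one-sided information does not yield $\Delta\le A^{n-m}\widetilde{\Delta}$; the mass deficits caused by the singularities of $\psi$ need not cancel, and there is no monotonicity statement comparing the two deficits. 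Second, the core of the lemma — extracting the modulus $(d_\theta(u,v)/\delta)^{1/n}$ — is exactly the step you leave open: the Khovanskii--Teissier lower bounds $a_j\ge\rho_0^{(m-j)/n}\rho_n^{j/n}C_\omega^{(n-m)/n}$ together with the crude upper bound $a_j\le A^mC_\omega$ and log-concavity do not force $a_m-a_0$ to be small when $\rho_n-\rho_0$ is small (the upper bound is of order one, not of order $d_\theta$), and the "quantitative uniqueness" you invoke is precisely what would have to be proved. Moreover the general log-concavity/KT inequality for non-pluripolar products that you take as input is itself delicate in this generality. So the argument, as written, does not establish the estimate even in the model case $\psi=0$, $\eta=\omega$.

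For comparison, the paper's proof avoids both issues and keeps $\psi$ throughout. Assuming $d_\theta(u,v)\le\delta/2^{n+2}$, one picks $b>2$ with $b^n\approx\delta/d_\theta(u,v)$, so that $b^n\int_X\theta_u^n\ge(b^n-1)\int_X\theta_v^n$ and hence $w_b:=P_\theta\big(bu+(1-b)v\big)$ is a well-defined $\theta$-psh function (by \cite[Lemma 4.3]{Darvas-Lu-DiNezza-singularity-metric}). Then $b^{-1}w_b+(1-b^{-1})v\le u$, and monotonicity of non-pluripolar products gives
\begin{equation*}
\int_X\theta_u^m\wedge\eta_\psi^{n-m}\ \ge\ (1-b^{-1})^m\int_X\theta_v^m\wedge\eta_\psi^{n-m},
\end{equation*}
so the difference is bounded by $m\,b^{-1}$ times the total mass $\le nA^n\int_X\omega^n\,b^{-1}\lesssim A^n\big(d_\theta(u,v)/\delta\big)^{1/n}$. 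If you want to salvage your route, you would need either a correct argument controlling the $\psi$-dependent mass deficits uniformly in $u,v$, or a genuinely quantitative version of the equal-mass rigidity you allude to; the envelope trick above is the mechanism that supplies exactly that quantitative control.
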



\proof
This is essentially the proof of \cite[Proposition 4.8]{Darvas-Lu-DiNezza-singularity-metric}. Note that by monotonicity we have
$$d_{\theta}(u,v)= \int_X \theta_v^n - \int_X \theta_u^n, \quad \int_X \theta_v^m  \wedge \eta_\psi^{n-m} \ge \int_X \theta_u^m \wedge \eta_\psi^{n-m}.$$
 Without loss of generality, we can assume $d_{\theta}(u,v) \le \delta/2^{n+2}$. If $d_{\theta}(u,v)=0$, then using $u \le v$ and \cite{Lu-Darvas-DiNezza-mono}, we get $P_\theta[u]=P_\theta[v]$. In this case the left-hand side of the desired inequality is also zero. Hence from now on we assume $d_{\theta}(u,v)>0$. 

  Let $b>2$ be a constant such that $\delta/ d_{\theta}(u,v)<2b^n< 2\delta/ d_{\theta}(u,v)$. We have 
$$b^n \int_X \theta_u^n \ge (b^n-1)\int_X \theta_v^n.$$
By  this and \cite[Lemma 4.3]{Darvas-Lu-DiNezza-singularity-metric}, we obtain $w_b:= P_{\theta}(bu+ (1-b)v) \in \PSH(X,\theta)$. 
Observe 
$$b^{-1} w_b+ (1- b^{-1}) v \le b^{-1}(b u + (1-b)v)+ (1-b^{-1})v  = u.$$
Combining this with monotonicity of non-pluripolar products gives
$$\int_X \theta_u^m \wedge \eta_\psi^{n-m}  \ge \int_X \theta_{b^{-1} w_b+ (1-b^{-1}) v}^m \wedge \eta_\psi^{n-m} \ge (1-b^{-1})^m \int_X \theta_v^m \wedge \eta_\psi^{n-m}.$$
It follows that  
$$\int_X \theta_u^m \wedge \eta_\psi^{n-m} - \int_X \theta_v^m \wedge \eta_\psi^{n-m} \ge -m b^{-1} \int_X \theta_v^m \wedge \eta_\psi^{n-m} \ge - n b^{-1} A^n \int_X \omega^n$$
by monotonicity. Hence
$$\bigg| \int_X \theta_u^m \wedge \eta_\psi^{n-m} - \int_X \theta_v^m \wedge \eta_\psi^{n-m} \bigg| \le 
 C b^{-1} \le 2^{1/n}C \left(\dfrac{d_{\theta}(u, v)}{\delta}\right)^{1/n},$$
where $C:= n A^n \int_X \omega^n$. This finishes the proof.
\endproof

By Lemma \ref{le-sosanhwedgeTduvnho}, we have 

\begin{proposition}\label{pro-equivdistance}  Let $\alpha, \theta$ be as above. Then there exists a constant $C>0$ such that 
$$C^{-1} \delta \, d_{\mathcal{S}(\theta)}([u],[v])^n \le d_\theta(u,v) \le C d_{\mathcal{S}(\theta)}([u],[v])$$
for every $[u],[v] \in \mathcal{S}_\delta(\alpha)$. Moreover if $\theta'$ is a smooth real closed $(1,1)$-form and $A$ is a positive constant such that 
$$\theta' \le A \omega, \quad \theta \le A \omega,$$
 for some constant $A>0$, then there exists a constant $C_1>0$ depending only on $A,\omega$ such that
$$\delta \big(d_{\theta'}(u,v)\big)^n \le C_1 d_\theta(u,v),$$
for every $u, v \in \mathcal{S}_\delta(\alpha)$.   
\end{proposition}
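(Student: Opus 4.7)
The plan is to establish the three bounds in succession. For the upper bound $d_\theta(u,v) \le C d_{\mathcal{S}(\theta)}([u],[v])$, I would simply observe that $d_\theta(u,v)$ is precisely the $j=0$ summand in the middle expression of \eqref{eqdS}, while monotonicity of non-pluripolar products (\cite[Theorem 1.1]{Lu-Darvas-DiNezza-mono}) applied to $u,v \le \max\{u,v\}$ forces every other summand to be nonnegative. Hence $d_\theta(u,v)$ is dominated by the full sum, which is itself bounded by $C d_{\mathcal{S}(\theta)}([u],[v])$.

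For the lower bound $C^{-1}\delta\, d_{\mathcal{S}(\theta)}([u],[v])^n \le d_\theta(u,v)$, I would control each summand on the middle of \eqref{eqdS} via Lemma \ref{le-sosanhwedgeTduvnho}. Writing $w := \max\{u,v\}$, the additivity $d_\theta(u,v) = d_\theta(u,w) + d_\theta(v,w)$ gives $d_\theta(u,w), d_\theta(v,w) \le d_\theta(u,v)$. For each $0 \le j \le n$, applying Lemma \ref{le-sosanhwedgeTduvnho} with $m = n-j$ and test datum $\eta = \theta$, $\psi = V_\theta$ (using $\int_X \theta_u^n \ge \delta$ together with a constant $A$ satisfying $\theta \le A\omega$) yields
$$0 \le \int_X \theta_{V_\theta}^j \wedge (\theta_w^{n-j} - \theta_u^{n-j}) \le C A^n \left(\frac{d_\theta(u,v)}{\delta}\right)^{1/n},$$
and the analogous bound with $v$ in place of $u$. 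Summing the $(n+1)$ contributions and feeding into \eqref{eqdS} gives $d_{\mathcal{S}(\theta)}([u],[v]) \le C' A^n (d_\theta(u,v)/\delta)^{1/n}$; the assertion follows by raising to the $n$-th power.

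For the ``moreover'' claim, the idea is to route through $A\omega$ as a common upper form. Lemma \ref{prop-dAomegadtheta} and the hypothesis $\theta' \le A\omega$ give $d_{\theta'}(u,v) \le d_{A\omega}(u,v)$, so it suffices to prove $\delta(d_{A\omega}(u,v))^n \le C_1 d_\theta(u,v)$. Setting $\sigma := A\omega - \theta$, a smooth closed positive $(1,1)$-form with $\sigma \le A\omega$, one has $(A\omega)_u = \theta_u + \sigma$ at the level of currents (and likewise for $v,w$), and binomial expansion gives
$$d_{A\omega}(u,v) = \sum_{k=1}^{n} \binom{n}{k} \int_X \sigma^{n-k} \wedge \bigl(2\theta_w^k - \theta_u^k - \theta_v^k\bigr),$$
the $k=0$ contribution vanishing identically. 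Each summand is controlled by Lemma \ref{le-sosanhwedgeTduvnho}, applied with the smooth closed positive $(n-k,n-k)$-form $\sigma^{n-k}$ playing the role of $\eta_\psi^{n-m}$, yielding a bound by a constant times $A^n (d_\theta(u,v)/\delta)^{1/n}$. Summing the binomial terms and raising to the $n$-th power furnishes $\delta(d_{A\omega}(u,v))^n \le C_1 d_\theta(u,v)$, and hence the desired estimate.

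The main subtlety is ensuring that Lemma \ref{le-sosanhwedgeTduvnho} remains valid when the positive test current $\eta_\psi^{n-m}$ is replaced by a closed positive smooth form such as $\sigma^{n-k}$. This is essentially cosmetic: inspection of the proof of that lemma shows that the only ingredients used are the relation $b^{-1} w_b + (1-b^{-1})v \le u$ coming from the envelope construction, together with the monotonicity $\int_X \theta_a^m \wedge \Omega \le \int_X \theta_b^m \wedge \Omega$ (for $a \le b$) against an arbitrary closed positive test current $\Omega$; both persist verbatim in the new setting. A small cautionary remark is that ``$d_{\theta'}(u,v)$'' a priori presupposes $u,v \in \PSH(X,\theta')$, which in the applications of interest (cf.\ the use of this proposition in the proof of Theorem \ref{the-hoitucapacitymassnorm}) is automatic from $\theta \le \theta'$; the argument above supplies the bound in that regime.
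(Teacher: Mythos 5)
Your proposal is correct and follows essentially the same route as the paper, whose own proof is just a terse appeal to Lemma \ref{le-sosanhwedgeTduvnho} (for both the comparability with $d_{\mathcal{S}(\theta)}$ via \eqref{eqdS} and the bound $\delta\,d_{A\omega}(u,v)^n \le C_1 d_\theta(u,v)$) combined with Lemma \ref{prop-dAomegadtheta}; you simply spell out the details the paper leaves implicit, including the binomial expansion in $\sigma = A\omega-\theta$ and the observation that the lemma's argument tolerates the smooth positive test form $\sigma^{n-k}$ (where, to keep $C_1$ depending only on $A,\omega$, the uniform mass bound $\int_X \theta_w^k\wedge\sigma^{n-k}\le A^n\int_X\omega^n$ follows from monotonicity together with the nefness of $\{A\omega-\theta\}$ and pseudoeffectivity of $\alpha$).
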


\proof The first desired assertion is clear from Lemma \ref{le-sosanhwedgeTduvnho}. Also by the same lemma, one gets 
$$\delta  \big(d_{A \omega}(u,v)\big)^n \le C_1 d_\theta(u,v),$$
for every $u, v \in \mathcal{S}_\delta(\alpha)$, and some constant $C_1$ independent of $u,v,\delta$. This coupled with Lemma \ref{prop-dAomegadtheta} gives the last desired inequality. The proof is complete.
\endproof

If $\theta'$ is another closed smooth form in $\alpha$, then $\mathcal{S}_\delta(\theta)$ and $\mathcal{S}_\delta(\theta')$ are isometric under the map $u \mapsto u+\varphi$, where $\varphi$ is a  smooth function such that $\ddc \varphi= \theta'- \theta$. Hence in general in order to study singularity types in $\alpha$,  it is enough to fix a smooth form in $\alpha$.

\subsection{The case of fixed cohomology}

In this subsection, we will study the stability question when solutions are in the same cohomology class.

\subsubsection{Proof of Theorem \ref{main1}}

Let $\theta, \eta$ be closed smooth real $(1,1)$-forms representing  big cohomology classes.
For every $\chi\in\widetilde{\mathcal{W}}^{-}$ and $u\in\PSH(X, \theta)$, we denote
\begin{equation}
	\tilde{E}_{\chi, \eta, \theta}(u)=\sup\left\{ \int_X-\chi(\psi)\theta_u^n:
	 \psi\in\PSH^-(X, \eta),\, \sup_X\psi=0\right\},
\end{equation}
where we recall that $\PSH^-(X,\eta)$ is the space of negative $\eta$-psh functions on $X$.  If $\chi$ is bounded then it is clear that
$\tilde{E}_{\chi, \eta, \theta}(u)<\infty$ for every $u\in\PSH(X, \theta)$.
Moreover, it follows from \cite[Proposition 3.2]{BEGZ} that for every $u\in\PSH(X, \theta)$,
there exists $\chi\in \mathcal{W}^{-}$ such that $\tilde{E}_{\chi, \eta, \theta}(u)<\infty$. 

For every constant $B>0$ and for every $\chi\in\widetilde{\mathcal{W}}^{-}$, we define
\begin{equation}
	\tilde{\mathcal{E}}_{\chi, \eta, B}(X, \theta)=\{u\in	\PSH^-(X, \theta):
	 \tilde{E}_{\chi, \eta, \theta}(u)\leq B\}.
\end{equation}
For the convenience, in the case $\eta=\theta$, we denote $\tilde{E}_{\chi, \theta}(u):=\tilde{E}_{\chi, \theta,
	 \theta}(u)$ and $\tilde{\mathcal{E}}_{\chi,  B}(X, \theta)=\tilde{\mathcal{E}}_{\chi, \theta, B}(X, \theta)$.
 
 If $u, v\in \tilde{\mathcal{E}}_{\chi, B}(X, \theta)$ then we also denote
\begin{equation}
	I_{\chi}(u, v)=\int_{\{u<v\}}-\chi(u-v)(\theta_u^n-\theta_v^n)
	+\int_{\{v<u\}}-\chi(v-u)(\theta_v^n-\theta_u^n).
\end{equation}
In general, $I_{\chi}(u, v)$ may be negative. However, by Lemma
\ref{cor-echikichik} below (observer that there always exists $\tilde{\chi} \in \mathcal{W}^-$
such that both $\tilde{E}_{\tilde{\chi},\theta}(u),\tilde{E}_{\tilde{\chi},\theta}(v)$ are finite), if $\inf \chi=-1$  then $I_{\chi}(u, v)$ is bounded from below by $-d_{\theta}(u, v)$.

\begin{lemma}\label{lemBk}
	Let $\chi \in \widetilde{\mathcal{W}}^-$.
	Assume that $u, \phi$ are negative $\theta$-psh functions satisfying $u\leq\phi$. Denote $u_{k}=\max\{u, \phi-k\}$ for every $k>0$. Then
	$$\int_X-\chi(u_{k}-\phi)\theta_{u_{k}}^n=
	\int_X-\chi(u-\phi)\theta_{u}^n-\chi(-k)d_{\theta}(u, \phi),$$
	for every $k>0$. 
\end{lemma}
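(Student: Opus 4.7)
The plan is to exploit the plurifine locality of the non-pluripolar Monge--Amp\`ere operator. The key observation is that $u_k - \phi = \max\{u-\phi, -k\}$, so on the plurifine open set $\{u > \phi - k\}$ we have $u_k = u$, while on the plurifine open set $\{u < \phi - k\}$ we have $u_k = \phi - k$ and hence $\theta_{u_k} = \theta_\phi$ there. By \cite[Theorem 2.9]{Viet-generalized-nonpluri}, this yields
$$\mathbf{1}_{\{u > \phi-k\}}\,\theta_{u_k}^n = \mathbf{1}_{\{u > \phi-k\}}\,\theta_{u}^n, \qquad \mathbf{1}_{\{u < \phi-k\}}\,\theta_{u_k}^n = \mathbf{1}_{\{u < \phi-k\}}\,\theta_{\phi}^n.$$
On the boundary $\{u = \phi - k\}$ both $-\chi(u_k - \phi)$ and $-\chi(u-\phi)$ take the common value $-\chi(-k)$, so the contribution from this set is harmless when present.

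First I would split the integral on the left-hand side using this decomposition as
$$\int_X -\chi(u_k - \phi)\,\theta_{u_k}^n = \int_{\{u > \phi - k\}} -\chi(u - \phi)\,\theta_{u}^n + (-\chi(-k))\int_{\{u < \phi - k\}}\theta_{\phi}^n,$$
then transform the right-hand side of the claim by writing $\int_X -\chi(u-\phi)\theta_u^n = \int_{\{u>\phi-k\}}(\cdots) + \int_{\{u\le \phi-k\}}(\cdots)$ and invoking the definition $d_\theta(u,\phi) = \int_X \theta_\phi^n - \int_X\theta_u^n$ (applicable since $u \le \phi$). Matching the two sides then reduces to a bookkeeping identity for the integrals on $\{u<\phi-k\}$, which should follow from the plurifine description of $\theta_{u_k}^n$ combined with the mass identity $\int_X\theta_{u_k}^n = \int_X\theta_u^n + \int_{\{u<\phi-k\}}(\theta_\phi^n - \theta_u^n)$.

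Finally I would extend the identity from generic $k$ (those for which $\{u=\phi-k\}$ carries no mass) to all $k>0$ via a continuity argument: since the level sets of the dsh function $u-\phi$ at distinct values are pairwise disjoint and the measures $\theta_u^n,\theta_\phi^n,\theta_{u_k}^n$ are finite, only countably many $k$ are non-generic, and both sides depend continuously on $k$ by monotone convergence.

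The main obstacle I anticipate is the bookkeeping step, in particular making sure the residual contribution on $\{u\le\phi-k\}$ from the plurifine decomposition combines correctly against $-\chi(-k)\,d_\theta(u,\phi)$; the difficulty is that this difference involves simultaneously a variation of $\chi$ between the values $-k$ and $u-\phi$ \emph{and} a variation of the measures $\theta_u^n,\theta_\phi^n$, and one must check that these conspire to give exactly $-\chi(-k)\,d_\theta(u,\phi)$ as asserted.
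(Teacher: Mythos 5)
Your decomposition is the same one the paper uses (plurifine locality on $\{u>\phi-k\}$ plus the mass identity $\int_X\theta_{u_k}^n=\int_X\theta_\phi^n$, which holds because $\phi-k\le u_k\le\phi$), but the argument cannot be closed as you hope, for two concrete reasons. First, your opening display silently drops the $\theta_{u_k}^n$-mass carried by the level set $\{u=\phi-k\}$: the integrands agree there, but the measures do not, and $\theta_{u_k}^n$ genuinely charges this set. For example on $\P^1$ with $\theta=2\omega$ ($\omega$ of mass $1$), $\phi=0$ and $u$ a potential of a point mass (so $\langle\theta_u\rangle=\omega$, $d_\theta(u,\phi)=1$), the measure $\theta_{u_k}$ puts mass $1-e^{-2k}$ on $\{u=-k\}$ for \emph{every} $k>0$; so neither the ``harmless'' remark nor the generic-$k$/continuity reduction helps, since the exceptional set is governed by the $k$-dependent measure $\theta_{u_k}^n$, not by $\theta_u^n,\theta_\phi^n$. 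Relatedly, the auxiliary mass identity you invoke, $\int_X\theta_{u_k}^n=\int_X\theta_u^n+\int_{\{u<\phi-k\}}(\theta_\phi^n-\theta_u^n)$, is false in general (it would force $\theta_u^n$ and $\theta_\phi^n$ to have equal mass on $\{u\ge\phi-k\}$); the only global input available is $\int_X\theta_{u_k}^n=\int_X\theta_\phi^n$, which is how the paper routes the level-set mass into the $d_\theta$ term.

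Second, and more seriously, the ``bookkeeping identity'' you defer to is exactly where the equality breaks down. Carrying the decomposition out correctly gives
\begin{align*}
\int_X-\chi(u_k-\phi)\,\theta_{u_k}^n=\int_X-\chi\big(\max\{u-\phi,-k\}\big)\,\theta_u^n-\chi(-k)\,d_\theta(u,\phi),
\end{align*}
i.e.\ the weight on the right must also be truncated at $-k$. Replacing $\chi(\max\{u-\phi,-k\})$ by $\chi(u-\phi)$ changes the right-hand side by $\int_{\{u\le\phi-k\}}\big(\chi(-k)-\chi(u-\phi)\big)\theta_u^n\ge0$, which is strictly positive whenever $\theta_u^n$ charges $\{u<\phi-k\}$ and $\chi$ is not constant below $-k$ (in the $\P^1$ example with $\chi(t)=t$, $k=1$, the two sides of the claimed identity differ by $e^{-2}/2$). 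So by this route you can only obtain the inequality $\int_X-\chi(u_k-\phi)\theta_{u_k}^n\le\int_X-\chi(u-\phi)\theta_u^n-\chi(-k)\,d_\theta(u,\phi)$, or the exact identity above with the truncated weight. Note that this inequality is all that is actually used of the lemma (the first estimate in Lemma \ref{cor-echikichik}); the paper's own computation performs precisely the substitution of $\chi(u-\phi)$ for $\chi(-k)$ on $\{u\le\phi-k\}$ in its last line, so the step you flagged as the main obstacle is indeed the one that cannot be justified as an equality, and you should state and prove the lemma in the inequality (or truncated-weight) form instead.
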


\begin{proof}
	Since $\theta_{u_{k}}^n=\theta_{u}^n$ in $\{u>\phi-k\}$ and $u_k=\phi-k$ in $\{u\leq\phi-k\}$, we have
	\begin{equation}\label{eq1 lemBk}
		\int_X-\chi(u_{k}-\phi)\theta_{u_{k}}^n
		=\int_{\{u\leq\phi-k\}}-\chi(-k)\theta_{u_{k}}^n+\int_{\{u>\phi-k\}}-\chi(u-\phi)\theta_{u}^n.
	\end{equation}
Since $\int_X\theta_{\phi}^n=\int_X\theta_{u_{k}}^n$, we have
\begin{align}\label{eq2 lemBk}
	\int_{\{u\leq\phi-k\}}-\chi(-k)\theta_{u_{k}}^n
	&=\int_X-\chi(-k)\theta_{u_{k}}^n+\int_{\{u>\phi-k\}}\chi(-k)\theta_{u_{k}}^n\\
	\nonumber
	&=\int_X-\chi(-k)\theta_{\phi}^n+\int_{\{u>\phi-k\}}\chi(-k)\theta_{u}^n.
\end{align}
Combining \eqref{eq1 lemBk} and \eqref{eq2 lemBk}, we get
\begin{align*}
	\int_X-\chi(u_{k}-\phi)\theta_{u_{k}}^n
	&=\int_X-\chi(-k)\theta_{\phi}^n+\int_{\{u>\phi-k\}}\chi(-k)\theta_{u}^n
	+\int_{\{u>\phi-k\}}-\chi(u-\phi)\theta_{u}^n\\
	&=-\chi(-k)d_{\theta}(\phi, u)+\int_{\{u\leq\phi-k\}}-\chi(-k)\theta_{u}^n
	+\int_{\{u>\phi-k\}}-\chi(u-\phi)\theta_{u}^n\\
	&=-\chi(-k)d_{\theta}(\phi, u)	+\int_{X}-\chi(u-\phi)\theta_{u}^n.
\end{align*}
The proof is completed.
\end{proof}

\begin{lemma}\label{lem-estimateechik}
Let $\chi, \tilde{\chi} \in \widetilde{\mathcal{W}}^-$
such that $\inf_{\R_{<0}}\chi=-1$.
Assume that $u_1, u_2, u_3, \phi$ are negative $\theta$-psh functions satisfying $u_1\leq u_2\leq\phi$
and $u_1\leq u_3\leq \phi$. Denote $u_{j, k}=\max\{u_j, \phi-k\}$ for every $k>1$ and $j=1,2,3$. Then
$$\int_X-\chi(u_{1,k}-u_{2,k})\theta_{u_{3,k}}^n\leq 
\int_X-\chi(u_{1}-u_{2})\theta_{u_3}^n+d_{\theta}(u_3, \phi)+\dfrac{1}{\tilde{\chi}(-k)}
\int_X\tilde{\chi}(u_1-\phi)\theta_{u_3}^n,$$
for every $k>1$. In particular, if $\sup_Xu_1=0$ and 
$u_3\in \tilde{\mathcal{E}}_{\tilde{\chi}, B}(X, \theta)$ for some $B>0$ then
$$\int_X-\chi(u_{1,k}-u_{2,k})\theta_{u_{3,k}}^n\leq 
\int_X-\chi(u_{1}-u_{2})\theta_{u_3}^n+d_{\theta}(u_3, \phi)
-\dfrac{B}{\tilde{\chi}(-k)},$$
for every $k>1$.
\end{lemma}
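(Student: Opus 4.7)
The natural approach is to split $X = \{u_1 > \phi-k\} \cup \{u_1 \le \phi-k\}$ and exploit plurifine locality of the non-pluripolar product on each piece. On the open set $\{u_1 > \phi-k\}$ we have $u_{1,k}=u_1$, and since $u_2 \ge u_1$ and $u_3 \ge u_1$ we also have $u_{2,k}=u_2$ and $u_{3,k}=u_3$ there; plurifine locality of the non-pluripolar product then gives $\mathbf 1_{\{u_1>\phi-k\}}\theta_{u_{3,k}}^n = \mathbf 1_{\{u_1>\phi-k\}}\theta_{u_3}^n$, so the contribution from this region is bounded above by $\int_X -\chi(u_1-u_2)\,\theta_{u_3}^n$ (here we also use $-\chi \ge 0$ to extend the domain of integration to $X$).

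On $\{u_1 \le \phi-k\}$ the factor $-\chi(u_{1,k}-u_{2,k})$ is at most $\inf_{\R_{<0}}(-\chi)=1$, so it suffices to bound $\int_{\{u_1\le\phi-k\}}\theta_{u_{3,k}}^n$. Observe that $u_{3,k}$ has the same singularity type as $\phi$ (it lies between $\phi-k$ and $\phi$), hence $\int_X \theta_{u_{3,k}}^n = \int_X \theta_\phi^n$ by \cite{Lu-Darvas-DiNezza-mono}. Writing
\[
\int_{\{u_1\le\phi-k\}}\theta_{u_{3,k}}^n = \int_X \theta_{u_{3,k}}^n - \int_{\{u_1>\phi-k\}}\theta_{u_{3,k}}^n = \int_X\theta_\phi^n - \int_{\{u_1>\phi-k\}}\theta_{u_3}^n,
\]
where we used plurifine locality again, this equals $d_\theta(u_3,\phi) + \int_{\{u_1\le\phi-k\}}\theta_{u_3}^n$.

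The remaining piece $\int_{\{u_1\le\phi-k\}}\theta_{u_3}^n$ is handled by a Chebyshev-type bound: on $\{u_1\le\phi-k\}$ we have $\tilde\chi(u_1-\phi)\le\tilde\chi(-k)<0$, so $\tilde\chi(u_1-\phi)/\tilde\chi(-k)\ge 1$ there; integrating and then enlarging the domain to $X$ (using $\tilde\chi(u_1-\phi)\le 0$ everywhere and $1/\tilde\chi(-k)<0$) yields
\[
\int_{\{u_1\le\phi-k\}}\theta_{u_3}^n \le \frac{1}{\tilde\chi(-k)}\int_X \tilde\chi(u_1-\phi)\,\theta_{u_3}^n.
\]
Summing the two contributions gives the first inequality. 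For the final assertion, since $\phi \le 0$ and $\tilde\chi$ is non-decreasing we have $-\tilde\chi(u_1-\phi)\le -\tilde\chi(u_1)$; combined with $\sup_X u_1=0$ and the defining bound $\tilde E_{\tilde\chi,\theta}(u_3)\le B$, we obtain $\int_X \tilde\chi(u_1-\phi)\theta_{u_3}^n \ge -B$, and dividing by $\tilde\chi(-k)<0$ converts this into $-B/\tilde\chi(-k)$, as claimed. The only delicate point is the identification $\int_X \theta_{u_{3,k}}^n = \int_X \theta_\phi^n$, which is where the hypothesis $u_3\le\phi$ is used essentially; the rest is elementary manipulation with plurifine locality and monotonicity of $\tilde\chi$.
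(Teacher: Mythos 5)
Your proof is correct and follows essentially the same route as the paper's: split over $\{u_1>\phi-k\}$ and $\{u_1\le\phi-k\}$, use plurifine locality on the first set, the total-mass identity $\int_X\theta_{u_{3,k}}^n=\int_X\theta_\phi^n$ plus the Chebyshev-type bound $\int_{\{u_1\le\phi-k\}}\theta_{u_3}^n\le\frac{1}{\tilde\chi(-k)}\int_X\tilde\chi(u_1-\phi)\theta_{u_3}^n$ on the second, and $-\tilde\chi(u_1-\phi)\le-\tilde\chi(u_1)$ for the final assertion. The only blemish is the wording ``at most $\inf_{\R_{<0}}(-\chi)=1$'', which should read $\sup_{\R_{<0}}(-\chi)=-\inf_{\R_{<0}}\chi=1$; the bound you actually use is the correct one.
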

\begin{proof}
	Denote 
		$$ I_k:=\int_X-\chi(u_{1,k}-u_{2,k})\theta_{u_{3,k}}^n.$$
Since $\theta_{u_{3, k}}^n=\theta_{u_3}^n$ in $\{u_1>\phi-k\}\subset\{u_3>\phi-k\}$, we have
\begin{align*}
	I_k&=\int_{\{u_1>\phi-k\}}-\chi(u_{1}-u_{2})\theta_{u_3}^n
	+\int_{\{u_1\leq \phi-k\}}-\chi(u_{1,k}-u_{2,k})\theta_{u_{3,k}}^n\\
	&\leq \int_{\{u_1>\phi-k\}}-\chi(u_{1}-u_{2})\theta_{u_3}^n
	+\int_{\{u_1\leq \phi-k\}}-\chi(-k)\theta_{u_{3,k}}^n\\
	&= \int_{\{u_1>\phi-k\}}-\chi(u_{1}-u_{2})\theta_{u_3}^n
	+\int_{\{u_1\leq \phi-k\}}\theta_{u_{3,k}}^n.
\end{align*}
Then, by the fact $\int_X\theta_{\phi}^n=\int_X\theta_{u_{3, k}}^n$, we get
\begin{align*}
	I_k&\leq \int_{\{u_1>\phi-k\}}-\chi(u_{1}-u_{2})\theta_{u_3}^n
	+\int_{X}\theta_{\phi}^n-\int_{\{u_1>\phi-k\}}\theta_{u_3}^n\\
	&=\int_{\{u_1>\phi-k\}}-\chi(u_{1}-u_{2})\theta_{u_3}^n
	+d_{\theta}(u_3, \phi)+\int_{\{u_1\leq\phi-k\}}\theta_{u_3}^n\\
	&\leq \int_X-\chi(u_{1}-u_{2})\theta_{u_3}^n+d_{\theta}(u_3, \phi)+\dfrac{1}{\tilde{\chi}(-k)}
	\int_X\tilde{\chi}(u_1-\phi)\theta_{u_3}^n.
\end{align*}
The proof is completed.
\end{proof}

\begin{lemma}\label{lem-estimateichik}
	Let $\chi, \tilde{\chi} \in \widetilde{\mathcal{W}}^-$
	such that $\inf_{\R_{<0}}\chi=-1$.
	Assume that $u_1, u_2, u_3, \phi$ are negative $\theta$-psh functions satisfying $u_1\leq u_2\leq\phi$
	and $u_1\leq u_3\leq \phi$. Denote $u_{j, k}=\max\{u_j, \phi-k\}$ for every $k>1$ and $j=1,2,3$. Then
	$$\int_X-\chi(u_{1,k}-u_{2,k})\theta_{u_{3,k}}^n\geq 
	\int_X-\chi(u_{1}-u_{2})\theta_{u_3}^n
	-\dfrac{1}{\tilde{\chi}(-k)}\int_X\tilde{\chi}(u_1-\phi)\theta_{u_3}^n,$$
	for every $k>1$. In particular, if $\sup_Xu_1=0$ and 
	$u_3\in \tilde{\mathcal{E}}_{\tilde{\chi}, B}(X, \theta)$ for some $B>0$ then
	$$\int_X-\chi(u_{1,k}-u_{2,k})\theta_{u_{3,k}}^n\geq 
\int_X-\chi(u_{1}-u_{2})\theta_{u_3}^n
+\dfrac{B}{\tilde{\chi}(-k)},$$
for every $k>1$.
\end{lemma}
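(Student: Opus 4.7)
The plan is to mirror the argument of Lemma \ref{lem-estimateechik}, but now derive a lower bound instead of an upper bound. The key observation is that on the set $\{u_1>\phi-k\}$, since $u_1\leq u_2\leq\phi$ and $u_1\leq u_3\leq\phi$, we also have $u_2>\phi-k$ and $u_3>\phi-k$, so $u_{j,k}=u_j$ for $j=1,2,3$ there; in particular $\theta_{u_{3,k}}^n=\theta_{u_3}^n$ on $\{u_1>\phi-k\}$ by the plurifine locality of non-pluripolar products.

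First I would split $I_k:=\int_X-\chi(u_{1,k}-u_{2,k})\theta_{u_{3,k}}^n$ into contributions from $\{u_1>\phi-k\}$ and $\{u_1\leq\phi-k\}$. Since $-\chi\geq 0$, dropping the second piece gives
\[
I_k\;\geq\;\int_{\{u_1>\phi-k\}}-\chi(u_1-u_2)\,\theta_{u_3}^n \;=\;\int_X-\chi(u_1-u_2)\,\theta_{u_3}^n-\int_{\{u_1\leq\phi-k\}}-\chi(u_1-u_2)\,\theta_{u_3}^n.
\]
So it suffices to bound the last integral from above. Here I use $\inf\chi=-1$, hence $-\chi\leq 1$, to get
\[
\int_{\{u_1\leq\phi-k\}}-\chi(u_1-u_2)\,\theta_{u_3}^n\;\leq\;\int_{\{u_1-\phi\leq-k\}}\theta_{u_3}^n.
\]
A Chebyshev-type argument then takes care of the remaining mass: on $\{u_1-\phi\leq-k\}$ one has $\tilde{\chi}(u_1-\phi)\leq\tilde{\chi}(-k)<0$, so $\tilde{\chi}(u_1-\phi)/\tilde{\chi}(-k)\geq 1$, and enlarging the domain of integration to $X$ (permissible because $\tilde{\chi}(u_1-\phi)\leq 0$ everywhere and we divide by a negative number) yields
\[
\int_{\{u_1-\phi\leq-k\}}\theta_{u_3}^n\;\leq\;\frac{1}{\tilde{\chi}(-k)}\int_X\tilde{\chi}(u_1-\phi)\,\theta_{u_3}^n.
\]
Combining the three displayed inequalities gives the first claim.

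For the ``in particular'' statement, the point is that $u_1-\phi$ is not itself $\theta$-psh, but since $\phi\leq 0$ we have $u_1-\phi\geq u_1$ with both sides nonpositive, so monotonicity of $\tilde{\chi}$ gives $-\tilde{\chi}(u_1-\phi)\leq-\tilde{\chi}(u_1)$. The function $u_1$ is a legitimate test function in the definition of $\tilde{E}_{\tilde{\chi},\theta}(u_3)$ (negative $\theta$-psh with $\sup_X u_1=0$), hence
\[
\int_X-\tilde{\chi}(u_1-\phi)\,\theta_{u_3}^n\;\leq\;\int_X-\tilde{\chi}(u_1)\,\theta_{u_3}^n\;\leq\;B.
\]
Dividing by $\tilde{\chi}(-k)<0$ flips the inequality and yields $\tfrac{1}{\tilde{\chi}(-k)}\int_X\tilde{\chi}(u_1-\phi)\,\theta_{u_3}^n\leq -B/\tilde{\chi}(-k)$, which inserted into the first part produces the desired bound $I_k\geq\int_X-\chi(u_1-u_2)\theta_{u_3}^n+B/\tilde{\chi}(-k)$. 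The only subtlety is tracking signs carefully when dividing by the negative quantity $\tilde{\chi}(-k)$; no deep new idea beyond the one already used in Lemma \ref{lem-estimateechik} is needed.
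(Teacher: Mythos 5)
Your proof is correct and follows essentially the same route as the paper's: restrict to $\{u_1>\phi-k\}$ where plurifine locality gives $\theta_{u_{3,k}}^n=\theta_{u_3}^n$ and $u_{j,k}=u_j$, use $-\chi\le 1$ on $\{u_1\le\phi-k\}$, and control the residual mass by the Chebyshev bound $\int_{\{u_1\le\phi-k\}}\theta_{u_3}^n\le\frac{1}{\tilde{\chi}(-k)}\int_X\tilde{\chi}(u_1-\phi)\theta_{u_3}^n$. Your handling of the ``in particular'' part (via $-\tilde{\chi}(u_1-\phi)\le-\tilde{\chi}(u_1)$ and testing with $u_1$) is also the intended one, with the signs tracked correctly.
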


\begin{proof}
	Since $\theta_{u_{3, k}}^n=\theta_{u_3}^n$ in $\{u_1>\phi-k\}$, we have
	\begin{align*}
\int_X-\chi(u_{1,k}-u_{2,k})\theta_{u_{3,k}}^n
&\geq \int_{\{u_1>\phi-k\}}-\chi(u_{1,k}-u_{2,k})\theta_{u_{3,k}}^n\\
&=  \int_{\{u_1>\phi-k\}}-\chi(u_1-u_2)\theta_{u_3}^n\\
&=\int_X-\chi(u_1-u_2)\theta_{u_3}^n-\int_{\{u_1\leq\phi-k\}}-\chi(u_1-u_2)\theta_{u_3}^n\\
&\geq\int_X-\chi(u_1-u_2)\theta_{u_3}^n-\int_{\{u_1\leq\phi-k\}}\theta_{u_3}^n\\
&\geq\int_X-\chi(u_{1}-u_{2})\theta_{u_3}^n
-\dfrac{1}{\tilde{\chi}(-k)}\int_X\tilde{\chi}(u_1-\phi)\theta_{u_3}^n.
	\end{align*}
This finishes the proof.
\end{proof}

\begin{lemma}\label{cor-echikichik}
		Let $\theta$ be a closed smooth real $(1,1)$-form representing  a big cohomology class. Let $\chi, \tilde{\chi} \in \widetilde{\mathcal{W}}^-$
	such that $\inf_{\R_{<0}}\chi=-1$.
	Assume that $B>0$ and
	 $u_1, u_2\in\tilde{\mathcal{E}}_{\tilde{\chi}, B}(X, \theta)$ with $\sup_Xu_1=\sup_Xu_2=0$. Denote $\phi=P_{\theta}[\max\{u_1, u_2\}]$ and $u_{j, k}=\max\{u_j, \phi-k\}$ for every $k>1$ and $j=1,2$. Then
	 \begin{equation}
	 	E_{\tilde{\chi}, \theta, \phi}(u_{j, k})\leq B-\tilde{\chi}(-k)d_{\theta}(u_j, \phi),
	 \end{equation}
	and
	\begin{equation}\label{eq0corechikichik}
		\dfrac{4B}{\tilde{\chi}(-k)}\leq I_{\chi}(u_{1, k}, u_{2, k})-I_{\chi}(u_1, u_2)
		\leq -\dfrac{4B}{\tilde{\chi}(-k)}+d_{\theta}(u_1, u_2),
	\end{equation}
for every $k>1$.
\end{lemma}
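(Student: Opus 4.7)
The plan is to derive both assertions from the preceding Lemmas \ref{lemBk}, \ref{lem-estimateechik}, \ref{lem-estimateichik}, using the key structural fact that $\phi = P_\theta[\max\{u_1,u_2\}]$ preserves the total mass of the non-pluripolar Monge-Amp\`ere.

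\smallskip

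\emph{First inequality.} Since $u_j \le \phi$ (take $\psi = u_j$ as a competitor in the definition of $P_\theta[\max\{u_1,u_2\}]$), Lemma \ref{lemBk} applied with $\tilde\chi$ in place of $\chi$ gives
\[
E_{\tilde\chi,\theta,\phi}(u_{j,k}) = \int_X -\tilde\chi(u_j-\phi)\,\theta_{u_j}^n - \tilde\chi(-k)\,d_\theta(u_j,\phi).
\]
The monotonicity of $-\tilde\chi$ on $\R_{\le 0}$ together with $u_j - \phi \ge u_j$ (as $\phi \le 0$) gives $-\tilde\chi(u_j-\phi)\le -\tilde\chi(u_j)$, and $u_j$ is a valid test function in the definition of $\tilde E_{\tilde\chi,\theta}(u_j)$ since $\sup_X u_j = 0$; hence the first integral is bounded by $\tilde E_{\tilde\chi,\theta}(u_j) \le B$.

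\smallskip

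\emph{Second inequality.} Write $v := \max\{u_1,u_2\}$ and $v_k := \max\{u_{1,k},u_{2,k}\} = \max\{v,\phi-k\}$. Using that $\chi(0)=0$ and plurifine locality (so that $\theta_v^n = \theta_{u_j}^n$ on $\{u_j = v\}$), decompose
\[
I_\chi(u_1,u_2) = \sum_{j=1}^2 \int_X -\chi(u_j-v)\bigl(\theta_{u_j}^n - \theta_v^n\bigr),
\]
and analogously for $I_\chi(u_{1,k},u_{2,k})$. Since $u_j \le v \le \phi$, the triples $(u_j,v,u_j)$ and $(u_j,v,v)$ satisfy the hypotheses of Lemmas \ref{lem-estimateechik} and \ref{lem-estimateichik}. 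Applying the general forms of these lemmas (and controlling $\int_X \tilde\chi(u_j-\phi)\,\theta_{u_3}^n$ by using the test function $u_j$ in $\tilde E_{\tilde\chi,\theta}(u_3)$, where for $u_3 = v$ the bound $\tilde E_{\tilde\chi,\theta}(v) \lesssim B$ follows from $\theta_v^n \le \theta_{u_1}^n + \theta_{u_2}^n$ via plurifine splitting over $\{u_1 \ge u_2\}$ and $\{u_1 < u_2\}$), I obtain two-sided estimates on each summand of $I_\chi(u_{1,k},u_{2,k})$ in terms of the corresponding summand of $I_\chi(u_1,u_2)$ plus correction terms of size $O(B/\tilde\chi(-k))$ and $d_\theta(u_j,\phi)$ or $d_\theta(v,\phi)$.

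\smallskip

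\emph{Closing identity.} The crucial ingredient is the mass identity for the model envelope: $\int_X \theta_\phi^n = \int_X \theta_v^n$, which follows from $\phi = P_\theta[v]$ by \cite[Theorem 1.3]{Lu-Darvas-DiNezza-mono}. This gives immediately $d_\theta(v,\phi) = 0$, which kills the lower-bound correction term, and the telescoping identity
\[
d_\theta(u_1,\phi) + d_\theta(u_2,\phi) = 2\int_X \theta_\phi^n - \int_X \theta_{u_1}^n - \int_X \theta_{u_2}^n = d_\theta(u_1,u_2),
\]
which produces the right-hand side of \eqref{eq0corechikichik} after summing the upper bounds over $j=1,2$.

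\smallskip

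The main obstacle is the careful bookkeeping of the $B/\tilde\chi(-k)$ terms, since $v = \max\{u_1,u_2\}$ is only controlled by the combined $\tilde\chi$-energies of $u_1$ and $u_2$ (not by a single $B$), so one must choose which test function to insert into the general form of Lemma \ref{lem-estimateichik} carefully to match the constant $4B$ appearing in the claimed estimate; everything else is a routine combination of the cited lemmas with the mass identity $\int_X \theta_\phi^n = \int_X \theta_v^n$.
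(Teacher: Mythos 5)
Your proof of the first inequality is correct and is exactly the paper's argument (Lemma \ref{lemBk} applied with $\tilde{\chi}$, then $-\tilde{\chi}(u_j-\phi)\le-\tilde{\chi}(u_j)$ and the test function $\psi=u_j$ in $\tilde{E}_{\tilde{\chi},\theta}(u_j)\le B$). Your skeleton for \eqref{eq0corechikichik} also coincides with the paper's: write $v=\max\{u_1,u_2\}$, $v_k=\max\{v,\phi-k\}$, decompose $I_\chi(u_1,u_2)=\sum_{j}\int_X-\chi(u_j-v)(\theta_{u_j}^n-\theta_v^n)$ and likewise for the truncations, estimate the $\theta_{u_j}$-terms and the $v$-terms separately via Lemmas \ref{lem-estimateechik} and \ref{lem-estimateichik}, and close with the mass identity $\int_X\theta_\phi^n=\int_X\theta_v^n$ (so $d_\theta(v,\phi)=0$ and $d_\theta(u_1,\phi)+d_\theta(u_2,\phi)=d_\theta(u_1,u_2)$).

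The gap is in the step where you apply the lemmas to the triple $(u_j,v,v)$: there you must control $\int_X-\tilde{\chi}(u_j-\phi)\,\theta_v^n$ (equivalently, you need $v\in\tilde{\mathcal{E}}_{\tilde{\chi},B}(X,\theta)$ for the ``in particular'' clauses), and you justify this by the inequality $\theta_v^n\le\theta_{u_1}^n+\theta_{u_2}^n$ ``via plurifine splitting''. That inequality is false. Plurifine locality only gives $\mathbf{1}_{\{u_1>u_2\}}\theta_v^n=\mathbf{1}_{\{u_1>u_2\}}\theta_{u_1}^n$ and $\mathbf{1}_{\{u_1<u_2\}}\theta_v^n=\mathbf{1}_{\{u_1<u_2\}}\theta_{u_2}^n$; the contact set $\{u_1=u_2\}$ is not plurifine open, and $\theta_{\max\{u_1,u_2\}}^n$ may charge it with mass not dominated by $\theta_{u_1}^n+\theta_{u_2}^n$. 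Already for $n=1$ and two smooth $\theta$-psh functions crossing transversally, $\theta_{\max\{u_1,u_2\}}$ contains a singular measure on $\{u_1=u_2\}$ while $\theta_{u_1}+\theta_{u_2}$ is smooth. So the energy bound you need for $\theta_v^n$ does not follow from pointwise domination, and this is precisely the delicate point of the lemma: the $B/\tilde{\chi}(-k)$ corrections have to be generated from sublevel-set mass estimates against $\theta_{u_1}^n$ and $\theta_{u_2}^n$ only (after cancelling the contribution of $\{u_j>\phi-k\}$ by plurifine locality, one uses bounds of the type $\int_{\{u_i\le\phi-k\}}\theta_{u_j}^n\le B/(-\tilde{\chi}(-k))$ together with the total-mass identities $\int_X\theta_{u_{j,k}}^n=\int_X\theta_{v_k}^n=\int_X\theta_\phi^n$), not from an unproved energy bound for the maximum. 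Relatedly, even if one grants a bound of order $2B$ for the $v$-terms, your bookkeeping would not return the precise constant $4B$ in \eqref{eq0corechikichik}; you flag this but do not resolve it. As written, the second inequality is therefore not proved.
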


\begin{proof}
	The first inequality is obtained directly from Lemma \ref{lemBk}. It remains to prove
	\eqref{eq0corechikichik}. For $j=1,2$ and $k>1$, we denote
	$$I_{1, j}=\int_{\{u_1<u_2\}}-\chi(u_{1, k}-u_{2,k})\theta_{u_{j,k}}^n
	+\int_{\{u_1<u_2\}}\chi(u_1-u_2)\theta_{u_j}^n,$$
	and
	$$I_{2, j}=\int_{\{u_2<u_1\}}-\chi(u_{2, k}-u_{1,k})\theta_{u_{j,k}}^n
	+\int_{\{u_1<u_2\}}\chi(u_2-u_1)\theta_{u_j}^n.$$
	We have
	\begin{equation}\label{eq1corechikichik}
		I_{\chi}(u_{1, k}, u_{2, k})-I_{\chi}(u_1, u_2)=(I_{1, 1}-I_{1, 2})+(I_{2, 2}-I_{2, 1}):=I_1+I_2.
	\end{equation}
We will estimate $I_1$ and $I_2$. By Lemmas \ref{lem-estimateechik} and \ref{lem-estimateichik}
(replace $u_2, u_3$ and $\phi$, respectively, by $\max\{u_1, u_2\}$, $u_1$ and $\max\{u_1, u_2\}$), we have
\begin{equation}\label{eq2corechikichik}
	\dfrac{B}{\tilde{\chi}(-k)}\leq
	I_{1,1}\leq d_{\theta}(u_1, \max\{u_1, u_2\})-\dfrac{B}{\tilde{\chi}(-k)}.
\end{equation}
Using Lemmas \ref{lem-estimateechik} and \ref{lem-estimateichik} again
(replace $u_2, u_3$ and $\phi$ by  $\max\{u_1, u_2\}$), we get
\begin{equation}\label{eq3corechikichik}
	\dfrac{B}{\tilde{\chi}(-k)}\leq
	\int_{\{u_1<u_2\}}-\chi(u_{1, k}-u_{2,k})\theta_{u_{2,k}}^n
	+\int_{\{u_1<u_2\}}\chi(u_1-u_2)\theta_{u_2}^n\leq -\dfrac{B}{\tilde{\chi}(-k)}.
\end{equation}
Combining \eqref{eq2corechikichik} and \eqref{eq3corechikichik}, we obtain
\begin{equation}\label{eq4corechikichik}
\dfrac{2B}{\tilde{\chi}(-k)}\leq I_1\leq -\dfrac{2B}{\tilde{\chi}(-k)}+d_{\theta}(u_1, \max\{u_1, u_2\}).
\end{equation}
Similar, we have
\begin{equation}\label{eq5corechikichik}
	\dfrac{2B}{\tilde{\chi}(-k)}\leq I_2\leq 
	-\dfrac{2B}{\tilde{\chi}(-k)}+d_{\theta}(u_2, \max\{u_1, u_2\}).
\end{equation}
Combining \eqref{eq1corechikichik}, \eqref{eq4corechikichik} and \eqref{eq5corechikichik}, we have
$$\dfrac{4B}{\tilde{\chi}(-k)}\leq I_{\chi}(u_{1, k}, u_{2, k})-I_{\chi}(u_1, u_2)
\leq -\dfrac{4B}{\tilde{\chi}(-k)}+d_{\theta}(u_1, u_2).$$
The proof is completed.
\end{proof}

The following theorem is the key step to prove the main results in the case of fixed cohomology:

\begin{theorem}\label{the-dcapfixcohomology}
	Let $\theta\leq A\omega$ be a closed smooth real $(1,1)$-form  representing  a big cohomology class  ($A\geq 1$). Let $0<\delta<1$, $B\geq A$, $\tilde{\chi} \in\widetilde{\mathcal{W}}^-$ and
	$u_1, u_2\in\tilde{\mathcal{E}}_{\tilde{\chi}, B\delta}(X, \theta)$ such that $\inf\tilde{\chi}<\tilde{\chi}(-1)=-1$, $\sup_Xu_1=\sup_Xu_2=0$
	and $\int_X\theta_{u_1}^n+\int_X\theta_{u_2}^n\geq 2\delta$. 
	Let $\epsilon>0$ be a constant such that
	$$\inf\tilde{\chi}<\dfrac{-4B\delta}{\epsilon+d_{\theta}(u_1, u_2)}.$$
	Then, for every $0<\gamma<1$,
	there exists $C>0$ depending only on $n, X, \omega$ and $\gamma$  such that 
	$$d_{\capK}(u_1, u_2)^2\leq C(A B)^2 
	\left(h^{\circ n}\left(\dfrac{\delta}{|I_{\chi}(u_1, u_2)|+\epsilon
	+d_{\theta}(u_1, u_2)}\right)\right)^{-\gamma},$$
	where  $\chi(t)=\max\{t, -1\}$ and $h(s)=(-\tilde{\chi}(-s))^{1/2}$.
\end{theorem}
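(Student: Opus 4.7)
The strategy is to reduce to the fixed model singularity setting by truncation and then invoke the quantitative uniqueness already established in $\mathcal{E}(X,\theta,\phi)$ (Theorem \ref{thequantitativeuniqueness}).

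First, introduce the model $\phi := P_\theta[\max\{u_1, u_2\}]$. Then $u_j \le \max\{u_1,u_2\} \le \phi \le 0$, $\sup_X\phi=0$, and $\varrho := \int_X\theta_\phi^n = \int_X \theta_{\max\{u_1,u_2\}}^n \ge \max_j\int_X\theta_{u_j}^n \ge \delta$. The equality of the $\max$-mass with $\varrho$ (a consequence of $\phi$ being the model of $\max\{u_1,u_2\}$) yields the crucial additive identity $d_\theta(u_1,u_2) = d_\theta(u_1,\phi) + d_\theta(u_2,\phi)$, so in particular $d_\theta(u_j,\phi) \le d_\theta(u_1,u_2)$. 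For $k>1$, set $u_{j,k} := \max\{u_j,\phi-k\} \in \mathcal{E}(X,\theta,\phi)$; these have $\sup_X u_{j,k} = 0$ and the same singularity type as $\phi$.

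Next, apply Lemma \ref{cor-echikichik} (with the energy bound $B\delta$) and Lemma \ref{lemBk} to obtain
\begin{align*}
E^0_{\tilde\chi,\theta,\phi}(u_{j,k}) &\le B + |\tilde\chi(-k)|\, d_\theta(u_1,u_2)/\delta, \\
|I_\chi(u_{1,k},u_{2,k})| &\le |I_\chi(u_1,u_2)| + 4B\delta/|\tilde\chi(-k)| + d_\theta(u_1,u_2).
\end{align*}
The hypothesis $\inf\tilde\chi < -4B\delta/(\epsilon + d_\theta(u_1,u_2))$ allows us to choose $k>1$ with $\tilde\chi(-k) = -4B\delta/(\epsilon + d_\theta(u_1,u_2))$. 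With this choice, $E^0_{\tilde\chi,\theta,\phi}(u_{j,k}) \le 5B$ and
$$|I_\chi^0(u_{1,k},u_{2,k})| \le \bigl(|I_\chi(u_1,u_2)| + \epsilon + 2\,d_\theta(u_1,u_2)\bigr)/\delta.$$
Applying Theorem \ref{thequantitativeuniqueness} in $\mathcal{E}(X,\theta,\phi)$ to $u_{1,k},u_{2,k}$ (the $|a_1-a_2|$-term vanishes since both sups are $0$, and $B \ge A$), together with $\varrho \ge \delta$, yields
$$d_\capK(u_{1,k},u_{2,k})^2 \le C_1(AB)^2\bigl(h^{\circ n}(\delta/(|I_\chi(u_1,u_2)|+\epsilon+d_\theta(u_1,u_2)))\bigr)^{-\gamma}.$$

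To close the argument I bound the truncation errors $d_\capK(u_j,u_{j,k})$ and invoke the quasi-triangle inequality $d_\capK(u_1,u_2)^{1/2} \le \sum_{\bullet} d_\capK(\bullet)^{1/2}$. On $\{u_j<\phi-k\} \subset \{u_j<-k\}$ one has $u_{j,k}-u_j = \phi-k-u_j \le -u_j$; combining $(-u_j)^{1/2} \le k^{-1/2}(-u_j)$ with the Chern--Levine--Nirenberg bound $\int_X(-u_j)\,\omega_w^n \le CA$ (uniform in $w$) gives $d_\capK(u_j,u_{j,k})^2 \lesssim A^2/k$. The normalization $\tilde\chi(-1)=-1$ combined with convexity of $\tilde\chi$ implies $k \ge |\tilde\chi(-k)|$, hence $A^2/k \le A^2(\epsilon+d_\theta(u_1,u_2))/(4B\delta)$. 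The main (and only real) obstacle is to verify that this linear-in-$(\epsilon+d_\theta)$ contribution is absorbed into the iterated-root RHS above; this uses the convexity bound $-\tilde\chi(-s) \ge s$ for $s \ge 1$, which yields $h^{\circ n}(s) \ge s^{2^{-n}}$ so that the truncation term is dominated (after mildly shrinking $\gamma$) by the main estimate. Combining all three pieces via the triangle inequality finishes the proof.
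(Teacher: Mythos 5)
Your proposal follows essentially the same route as the paper's proof: the same model potential $\phi=P_\theta[\max\{u_1,u_2\}]$, the same truncations $u_{j,k}=\max\{u_j,\phi-k\}$, the transfer of energy and $I_\chi$ bounds via Lemmas \ref{lemBk} and \ref{cor-echikichik}, the same choice of $k$ through $\tilde\chi(-k)=-4B\delta/(\epsilon+d_\theta(u_1,u_2))$, the Chern--Levine--Nirenberg estimate $d_\capK(u_j,u_{j,k})^2\lesssim A^2/k$ for the truncation error, and the final triangle inequality. Two points, however, need repair. First, the quantitative uniqueness theorem you invoke should be Theorem \ref{thequantitativeuniqueness2}, not Theorem \ref{thequantitativeuniqueness}: the latter requires $\tilde\chi\in\mathcal{W}^-$ (unbounded), whereas here $\tilde\chi\in\widetilde{\mathcal{W}}^-$ may be bounded (this is exactly what the hypothesis on $\inf\tilde\chi$ is for), and its right-hand side is governed by $\|\theta_{u_{1,k}}^n-\theta_{u_{2,k}}^n\|$, which is not controlled by $|I_\chi(u_1,u_2)|+\epsilon+d_\theta(u_1,u_2)$. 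The intermediate bound you display is correct, but it is the conclusion of Theorem \ref{thequantitativeuniqueness2} (whose $\lambda$ is expressed through $I^0_\chi$ with $\chi(t)=\max\{t,-1\}$), not of the theorem you cite.

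Second, the closing absorption step, which you yourself single out as the only real obstacle, is argued with a convexity inequality in the wrong direction. For $s\ge 1$, convexity together with $\tilde\chi(-1)=-1$ gives $-\tilde\chi(-s)\le s$, not $\ge s$; the bound $h^{\circ n}(s)\ge s^{2^{-n}}$ holds for $s\le 1$, and being a lower bound on $h^{\circ n}$ it only makes the main term smaller, so it cannot dominate the truncation contribution. What is needed is the opposite estimate $h^{\circ n}(s)\le\max\{1,s^{2^{-n}}\}$ (from $-\tilde\chi(-s)\le s$ for $s\ge1$ and $h\le 1$ on $(0,1]$), combined with the reduction to the case $4B\delta\ge\epsilon+d_\theta(u_1,u_2)$. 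That reduction is also what guarantees the existence of $k>1$ with $\tilde\chi(-k)=-4B\delta/(\epsilon+d_\theta(u_1,u_2))$, which you assert but which fails when $4B\delta<\epsilon+d_\theta(u_1,u_2)$; in that complementary case the statement is trivial because $d_\capK(u_1,u_2)^2\le C_\omega A$ by Chern--Levine--Nirenberg while the right-hand side is bounded below by a constant times $(AB)^2$. With these corrections (no shrinking of $\gamma$ is actually required), your argument coincides with the paper's.
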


\begin{proof}
	Without loss of generality, we can assume that
	$$\dfrac{4B\delta}{\epsilon+d_{\theta}(u_1, u_2)}\geq 1.$$
	 Denote $\phi=P_{\theta}[\max\{u_1, u_2\}]$ and $u_{j, k}=\max\{u_j, \phi-k\}$ for every $k>1$ and $j=1,2$.
	 By Theorem \ref{thequantitativeuniqueness2}, Remark \ref{rmkAB} and Lemma \ref{cor-echikichik}, we get
	 	$$d_{\capK}(u_{1, k}, u_{2, k})^2\leq C_1A^2\left(B-\dfrac{\tilde{\chi}(-k)d_{\theta}(u_1, u_2)}{\delta}\right)^2
	 \left(h^{\circ n}\left(\dfrac{\delta}{I_{\chi}(u_1, u_2) -\dfrac{4B\delta}{\tilde{\chi}(-k)}
	 	+d_{\theta}(u_1, u_2)}\right)\right)^{-\gamma},$$
for every $k>1$, where $C_1>0$ depends only on $n, X, \omega$ and $\gamma$.

Let $k_0>1$ such that  $\tilde{\chi}(-k_0)=\dfrac{-4B\delta}{\epsilon+d_{\theta}(u_1, u_2)}$. We have
	\begin{equation}\label{eq1the-dcapfixcohomology}
	d_{\capK}(u_{1, k_0}, u_{2, k_0})^2\leq 25 C_1(AB)^2
	\left(h^{\circ(n)}\left(\dfrac{\delta}{I_{\chi}(u_1, u_2)+\epsilon
		+2d_{\theta}(u_1, u_2)}\right)\right)^{-\gamma}.
	\end{equation}
	On the other hand, for every $\varphi\in\PSH(X, \omega)$ with $0\leq \varphi\leq 1$, we have, 
	\begin{align*}
\left(\int_X|u_j-u_{j, k_0}|^{1/2}\omega_{\varphi}^n\right)^2
&=\left(\int_{\{u_j<\phi-k_0\}}|u_j-u_{j, k_0}|^{1/2}\omega_{\varphi}^n\right)^2\\
&\leq \dfrac{1}{k_0}\left(\int_{\{u_j<\phi-k_0\}}|u_j|\omega_{\varphi}^n\right)^2\\
&\leq \dfrac{C_2A^2}{k_0},
	\end{align*}
	for $j=1,2$, where $C_2>0$ depends only on $X$ and $\omega$. The last inequality holds due to the Chern-Levine-Nirenberg inequality.

Hence, by the facts $t\geq-\tilde{\chi}(-t)$ for every $t\geq 1$ and $s\leq h(s)$ for every $0<s<1$, we get
\begin{equation}\label{eq2the-dcapfixcohomology}
d_{\capK}(u_j, u_{j, k_0})^2\leq\dfrac{C_2A^2}{k_0}\leq\dfrac{C_2A^2(\epsilon+d_{\theta}(u_1, u_2))}{4B\delta}
\leq C_2A^2\left(h^{\circ n}\left(\dfrac{\delta}{\epsilon
	+d_{\theta}(u_1, u_2)}\right)\right)^{-1}.
\end{equation}
Combining \eqref{eq1the-dcapfixcohomology} and \eqref{eq2the-dcapfixcohomology}, we obtain
$$d_{\capK}(u_1, u_2)^2\leq C_3(A B)^2 
\left(h^{\circ n}\left(\dfrac{\delta}{|I_{\chi}(u_1, u_2)|+\epsilon
	+d_{\theta}(u_1, u_2)}\right)\right)^{-\gamma},$$
where $C_3>0$ depends only on $n, X, \omega$ and $\gamma$.

The proof is completed.
\end{proof}

Now, we prove Theorem \ref{main1} for the case of fixed cohomology:

\begin{theorem}\label{cor-massnorm-fixcohomology}
		Let $\theta\leq A\omega$ be a closed smooth real $(1,1)$-form representing  a big cohomology class ($A\geq 1$).
		 Let $u\in\PSH(X, \theta)$ such that  $\sup_Xu=0$ and $\int_X\theta_{u}^n:=\delta>0$. 
	Assume	$u\in\tilde{\mathcal{E}}_{\tilde{\chi}, B\delta}(X, \theta)$, where 
	 $B\geq A$ is a given constant and $\tilde{\chi} \in \mathcal{W}^-$ with $\tilde{\chi}(-1)=-1$.
	Then, for every $0<\gamma<1$,
	there exists $C>0$ depending only on $n, X, \omega$ and $\gamma$  such that 
	$$d_{\capK}(u, v)^2\leq C (A\, B)^2 
	\left(h^{\circ n}\left(\dfrac{\delta}{\|\theta_{u}^n-\theta_{v}^n\|
		+d_{\theta}(u, v)}\right)\right)^{-\gamma},$$
	for every $v\in\PSH(X, \theta)$ with $\sup_Xv=0$, where  $h(s)=(-\tilde{\chi}(-s))^{1/2}$.
\end{theorem}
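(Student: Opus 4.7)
The plan is to deduce this estimate from Theorem \ref{the-dcapfixcohomology} applied to the pair $(u,v)$ with the auxiliary weight $\chi(t):=\max\{t,-1\}$, which lies in $\widetilde{\mathcal{W}}^-$ and satisfies the key boundedness $|\chi|\leq 1$ on $\R_{\leq 0}$; this is precisely what converts the abstract quantity $I_\chi$ appearing in that theorem into the desired mass norm $\|\theta_u^n-\theta_v^n\|$ of Monge--Amp\`ere measures.

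First I verify the compatibility $\tilde\chi\leq\chi$ on $\R_{\leq 0}$ required by Theorem \ref{the-dcapfixcohomology}: convexity of $\tilde\chi$ together with $\tilde\chi(0)=0$ and $\tilde\chi(-1)=-1$ forces $\tilde\chi(t)\leq t=\chi(t)$ on $[-1,0]$, while monotonicity of $\tilde\chi$ yields $\tilde\chi(t)\leq-1=\chi(t)$ on $(-\infty,-1]$. Furthermore, the technical condition $\inf\tilde\chi<-4B\delta/(\epsilon+d_\theta(u,v))$ is automatic for every $\epsilon>0$ since $\tilde\chi\in\mathcal{W}^-$ satisfies $\tilde\chi(-\infty)=-\infty$. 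The bound $|\chi|\leq 1$ and the defining formula for $I_\chi$ then give directly
\[
|I_\chi(u,v)|\;\leq\;\|\theta_u^n-\theta_v^n\|.
\]
Applying Theorem \ref{the-dcapfixcohomology} with $(u_1,u_2)=(u,v)$, weights $(\chi,\tilde\chi)$, and an arbitrary $\epsilon>0$, substituting this upper bound on $|I_\chi|$, and finally letting $\epsilon\to 0^+$ using the continuity and monotonicity of $h^{\circ n}$ produces the asserted inequality.

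The main obstacle is that Theorem \ref{the-dcapfixcohomology} also requires $v\in\tilde{\mathcal{E}}_{\tilde\chi,B\delta}(X,\theta)$ together with the combined mass bound $\int_X\theta_u^n+\int_X\theta_v^n\geq 2\delta$, neither of which is assumed on $v$. I would dispose of these by a case split. When $\int_X\theta_v^n<\delta/2$, the monotonicity of non-pluripolar products combined with $\max\{u,v\}\geq u$ gives $d_\theta(u,v)\geq\delta/2$, so that the argument of $h^{\circ n}$ is at most $2$; the desired inequality in this regime is absorbed into the universal bound $d_{\capK}(u,v)^2\leq C(X,\omega)A$ obtained from Cauchy--Schwarz together with the Chern--Levine--Nirenberg estimate $\int_X|u|\,\omega_w^n\lesssim A$ for $A\omega$-psh functions normalized by $\sup=0$. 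For the missing energy bound on $v$, I would approximate $v$ from above by truncations $v_k:=\max\{v,V_\theta-k\}$, which lie in a controlled energy class thanks to Lemma \ref{lemBk}; applying the above argument to each pair $(u,v_k)$ and then passing to the limit $k\to\infty$ via the triangle inequality $d_{\capK}(u,v)\leq d_{\capK}(u,v_k)+d_{\capK}(v_k,v)$, together with the convergences $\|\theta_u^n-\theta_{v_k}^n\|\to\|\theta_u^n-\theta_v^n\|$ and $d_\theta(u,v_k)\to d_\theta(u,v)$ supplied by Lemmas \ref{lem-estimateechik} and \ref{lem-estimateichik}, delivers the desired estimate for general $v$.
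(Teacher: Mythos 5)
Your reduction to Theorem \ref{the-dcapfixcohomology} with the auxiliary weight $\chi(t)=\max\{t,-1\}$, the verification $\tilde\chi\le\chi$, and the bound $|I_\chi(u,v)|\le\|\theta_u^n-\theta_v^n\|$ are exactly the right start and agree with the paper. The gap is in how you supply the missing hypothesis $v\in\tilde{\mathcal{E}}_{\tilde\chi,\,\cdot}(X,\theta)$. Truncating the potential, $v_k:=\max\{v,V_\theta-k\}$, does not put $v_k$ in a controlled class of this type: membership requires a bound on $\tilde E_{\tilde\chi,\theta}(v_k)=\sup_\psi\int_X-\tilde\chi(\psi)\,\theta_{v_k}^n$ over all negative $\theta$-psh $\psi$, whereas Lemma \ref{lemBk} only controls the \emph{relative} energy $\int_X-\tilde\chi(v_k-\phi)\,\theta_{v_k}^n$, a different quantity. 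For a fixed unbounded $\tilde\chi\in\mathcal{W}^-$ the supremum over $\psi$ can be infinite even when $v$ is bounded (take $\theta_v^n$ with an $L^p$ density blowing up like $|z|^{-\alpha}$ at a point and $\tilde\chi$ growing fast enough that $-\tilde\chi(c\log|z|)$ is not integrable against it), and at best it grows like $-\tilde\chi(-k)$, so the constant $(AB_k)^2$ in Theorem \ref{the-dcapfixcohomology} blows up with $k$ and must be played off against the other small quantities — a balancing act your argument does not perform. Moreover, even granting an energy bound, the limit $k\to\infty$ does not return the stated right-hand side: $\int_X\theta_{v_k}^n=\vol(\theta)$ for every $k$, so whenever $\int_X\theta_v^n<\vol(\theta)$ (the case of genuinely prescribed singularities, which this theorem is meant to cover) both $\|\theta_u^n-\theta_{v_k}^n\|$ and $d_\theta(u,v_k)$ stay bounded below by $\vol(\theta)-\int_X\theta_v^n>0$ and do not converge to $\|\theta_u^n-\theta_v^n\|$ and $d_\theta(u,v)$; Lemmas \ref{lem-estimateechik} and \ref{lem-estimateichik} say nothing of the sort. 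So the approximation scheme yields only a strictly weaker inequality.

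The paper's fix is to truncate the \emph{weight}, not the potential: with $t_0:=\|\theta_u^n-\theta_v^n\|+d_\theta(u,v)$ one sets $M:=5B\delta/t_0$ and $\tilde\chi_M:=\max\{\tilde\chi,-M\}$. Then $v$ automatically lies in $\tilde{\mathcal{E}}_{\tilde\chi_M,\,B\delta+Mt_0}(X,\theta)$, since $\int_X-\tilde\chi_M(\psi)\theta_v^n\le\int_X-\tilde\chi_M(\psi)\theta_u^n+M\|\theta_u^n-\theta_v^n\|$, the condition $\inf\tilde\chi_M=-M<-4B\delta/(\epsilon+d_\theta(u,v))$ holds by the very choice of $M$, and since $h_M=h$ on $(0,M]$ the conclusion of Theorem \ref{the-dcapfixcohomology} for $\tilde\chi_M$ converts back into the stated bound for $h$. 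This choice of $M$ in terms of the target quantity $t_0$ is the key idea your proposal is missing. A smaller point: in your case split $\int_X\theta_v^n<\delta/2$, knowing only that the argument of $h^{\circ n}$ is at most $2$ is not enough, because $h^{\circ n}(2)$ depends on $\tilde\chi$ while $C$ may not; you should use that in this regime also $\|\theta_u^n-\theta_v^n\|\ge\delta-\int_X\theta_v^n>\delta/2$, so the argument is below $1$, $h^{\circ n}\le1$ there, and the Chern--Levine--Nirenberg bound indeed absorbs this case.
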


\begin{proof}
Put
$$t_0=\|\theta_{u}^n-\theta_{v}^n\|+d_{\theta}(u, v) .$$
 Denote
	$$M=\dfrac{5B\delta}{t_0}, \quad \tilde{\chi}_M(s)=\max\{\tilde{\chi}(s), -M\}\quad\mbox{and}\quad
	h_M(s)=(-\tilde{\chi}_M(-s))^{1/2}.$$
	We have $v\in\tilde{E}_{\tilde{\chi}_M, B\delta+Mt_0}(X, \theta)$ and $Mt_0=5B\delta$. Since 
	$\inf\tilde{\chi}_M=-M<\dfrac{-4B\delta}{\|\theta_{u}^n-\theta_{v}^n\|+d_{\theta}(u, v)}$,
	it follows from Theorem \ref{the-dcapfixcohomology} that
	$$d_{\capK}(u, v)^2\leq C_1(A B)^2 
	\left(h_M^{\circ n}\left(\dfrac{\delta}{|I_{\chi}(u, v)|+\|\theta_{u}^n-\theta_{v}^n\|
		+d_{\theta}(u, v)}\right)\right)^{-\gamma},$$
	where  $\chi(s)=\max\{s, -1\}$ and $C_1>0$ depends only on $n, X, \omega$ and $\gamma$.
	Since $|I_{\chi}(u, v)|\leq \|\theta_{u}^n-\theta_{v}^n\|$ and $B\geq 1$, it follows that
	$$d_{\capK}(u, v)^2\leq C_2(A\, B)^2 
	\left(h_M^{\circ n}\left(\dfrac{\delta}{\|\theta_{u}^n-\theta_{v}^n\|
		+d_{\theta}(u, v)}\right)\right)^{-\gamma},$$
	where $C_2=4C_1$. By the fact $h_M(t)=h(t)\leq M$ for every $0<t\leq M$, we obtain
		$$d_{\capK}(u, v)^2\leq C_2(A\, B)^2 
	\left(h^{\circ n}\left(\dfrac{\delta}{\|\theta_{u}^n-\theta_{v}^n\|
		+d_{\theta}(u, v)}\right)\right)^{-\gamma}.$$
	The proof is completed.
\end{proof}

\subsubsection{Proof of Theorems \ref{main3} and \ref{main2}}

In order to prove the next main results, we need several auxiliary lemmas.

\begin{lemma}\label{lem estimate int p-pe}
	Let $u: B_{R+r}:=\{z\in\C^n: |z|<R+r\}\rightarrow [-\infty, 0]$ be a measurable function such that $A:=\int_{B_{R+r}}e^{-u}dV<\infty$, where $R>r>0$ and $dV$ denotes the Lebesgue measure on $\C^n$. Assume that
	$h$ is a non-negative smooth function on $\C^n$ satisfying $\int_{\C^n}hdV=1$ and $\supp(h)\subset B_{\epsilon}$ for some $\epsilon\in (0, r)$. Then, for every $0<a<1$, there exists $C>0$ depending
	only on $R, n, a$ and $A$ such that
	$$\bigg|\int_{B_R}(u\ast h-u) dV \bigg|\leq C \epsilon^{a}.$$
\end{lemma}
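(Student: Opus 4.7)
The key idea is to convert the pointwise difference $u*h-u$ into a boundary-type integral via Fubini, and then exploit that the exponential integrability of $-u$ gives control on every $L^p$-norm of $u$.

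First I would apply Fubini and the change of variables $z' = z - w$ to rewrite
$$\int_{B_R}(u*h - u)\,dV = \int h(w)\Big[\int_{B_R - w} u\,dV - \int_{B_R} u\,dV\Big]\,dw,$$
using $\int h\,dV = 1$. Since $\supp h \subset B_\epsilon$ with $\epsilon < r$, the shifted ball $B_R - w$ lies inside $B_{R+r}$, so $u$ is defined on the domains appearing above. The symmetric difference $(B_R - w)\,\triangle\,B_R$ is contained in the thin annulus $\mathcal{A}_\epsilon := B_{R+\epsilon} \setminus B_{R-\epsilon}$, whose Lebesgue measure is at most $C_{R,n}\,\epsilon$. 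Because $u \le 0$, this yields
$$\Big|\int_{B_R - w} u\,dV - \int_{B_R} u\,dV\Big| \le \int_{\mathcal{A}_\epsilon} |u|\,dV.$$

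Next I would control $\int_{\mathcal{A}_\epsilon} |u|\,dV$ by Hölder with exponents $p = 1/(1-a)$ and $q = 1/a$, obtaining
$$\int_{\mathcal{A}_\epsilon} |u|\,dV \le \|u\|_{L^p(B_{R+r})}\,\operatorname{vol}(\mathcal{A}_\epsilon)^{1/q} \le \|u\|_{L^p(B_{R+r})}\,(C_{R,n})^{a}\,\epsilon^{a}.$$
To bound $\|u\|_{L^p(B_{R+r})}$ I would use the elementary inequality $x^p \le p!\,e^{x}$ for $x \ge 0$ (which is just the $p$-th term of the Taylor expansion of $e^x$), applied with $x = -u \ge 0$; this gives $|u|^p \le p!\,e^{-u}$, and hence $\|u\|_{L^p(B_{R+r})} \le (p!\,A)^{1/p}$. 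Combining with the previous display and integrating against the probability density $h$ produces the target estimate $|\int_{B_R}(u*h - u)\,dV| \le C\,\epsilon^{a}$ with a constant $C$ depending only on $R, n, a$ and $A$.

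There is no serious obstacle here: the argument is essentially a one-step reduction (Fubini $+$ thin annulus) followed by Hölder. The only place requiring a small idea is the passage from the exponential integrability hypothesis to an $L^p$-control of $u$; this is done through the factorial bound $x^p \le p!\,e^x$, which is crucial because it gives an explicit constant depending only on $a$ (via $p = 1/(1-a)$) and $A$, rather than on any finer information about $u$.
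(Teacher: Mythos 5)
Your proof is correct, and its first half (Fubini, the translation $B_R-w$, and the observation that the symmetric difference $(B_R-w)\,\triangle\,B_R$ lies in an annulus of width $2\epsilon$ about $\partial B_R$) is exactly the paper's reduction: the paper's bound $C_nR^{2n-1}k|w|$ is precisely (sup of the truncated function) times (measure of that symmetric difference). Where you genuinely diverge is in how the hypothesis $\int e^{-u}\,dV=A<\infty$ is used to handle the unboundedness of $u$. The paper truncates: it replaces $u$ by $\max\{u,-k\}$, estimates the shift error by $C_nR^{2n-1}k\epsilon$, bounds the truncation error by $\int_{\{u<-k\}}(-u-k)\,dV\le Ae^{-k}$ using only $x\le e^x$, and then optimizes with $k=\epsilon^{a-1}$. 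You avoid the auxiliary parameter altogether by applying H\"older on the annulus with exponents $p=1/(1-a)$, $q=1/a$, and controlling $\|u\|_{L^p(B_{R+r})}$ through $x^p\le \Gamma(p+1)e^{x}$, so that $\|u\|_{L^p}\le(\Gamma(p+1)A)^{1/p}$. Your route is slightly more direct (no optimization step, and it actually proves the stronger statement that $\int_{B_{R+\epsilon}\setminus B_{R-\epsilon}}|u|\,dV\le C\epsilon^{a}$), at the price of the sharper factorial-type inequality; the paper's version needs only the cruder tail bound but trades it for the choice of $k$. Both give a constant depending only on $R,n,a,A$, as required. One cosmetic fix: since $p=1/(1-a)$ is in general not an integer, write $\Gamma(p+1)$ instead of $p!$, or use $|u|^{p}\le 1+|u|^{\lceil p\rceil}\le 1+\lceil p\rceil!\,e^{-u}$; this does not affect the argument.
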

\begin{proof}
	We have
	\begin{align*}
		\bigg|\int_{B_R}(u\ast h-u)dV\bigg|&
		=\bigg|\int_{B_R}\int_{B_{\epsilon}}h(w)(u(z-w)-u(z))dV_wdV_z\bigg|\\
		&=\bigg|\int_{B_{\epsilon}}h(w)\int_{B_R}(u(z-w)-u(z))dV_wdV_z\bigg|\\
		&\leq \int_{B_{\epsilon}}h(w)|\hat{u}(-w)-\hat{u}(0)|dV_w,
	\end{align*}
	where $\hat{u}(w)=\int_{\{|\xi-w|<R\}}(-u)(\xi)dV_{\xi}$. Moreover, for every $w\in B_{\epsilon}$ and
	$k>0$, we have
	\begin{multline*}
		\bigg|\int_{B_R}\max\{u(z-w), k\}dV_z-\int_{B_R}\max\{u(z), k\}dV_z\bigg|
		\\=\bigg|\int_{B_R(-w)}\max\{u(z), k\}dV_z-\int_{B_R}\max\{u(z), k\}dV_z\bigg| \leq C_nR^{2n-1}k|w|,
	\end{multline*}
	where $C_n>0$ is a constant depending only on $n$, and
	$$\int_{B_R}|u(z-w)-\max\{u(z-w), -k\}|dV_z\leq\int_{\{u<-k\}}(-u-k)dV
	\leq \int_{B_{R+r}}e^{-u-k}dV=Ae^{-k}.$$
	Then
	$$\bigg|\int_{B_R}(u\ast h-u)dV\bigg|\leq\int_{B_{\epsilon}}h(w)|\hat{u}(-w)-\hat{u}(0)|dV_w\leq C_nR^{2n-1}k\epsilon+Ae^{-k}.$$
	Choosing $k=\epsilon^{a-1}$, we get the desired inequality.
\end{proof}

\begin{lemma}  \label{le-localglobal}
	Let $(X,\omega)$ be a compact K\"ahler manifold and let $\mu$ be a Randon measure
	on $X$ such that
	$$\int_X\min\{|u_1-u_2|, 1\}d\mu\leq H(\|u_1-u_2\|_{L^1(X)}),$$
	for all $u_1, u_2\in\PSH(X, \omega)$, where $H:\R_{\geq 0}\rightarrow\R_{\geq 0}$ is an
	increasing function. Let $\Omega\subset X$ such that there exists a biholomorphic mapping
	 $\varphi: 2\B\rightarrow\Omega$, where $\B$ is the unit ball in $\C^n$. Then there exists a constant $C\geq 1$ depending only on $X$, $\omega$, $\Omega$ and $\varphi$  such that 
	$$\int_{\varphi(\B)} |(u-v)\circ \varphi^{-1}| d \mu \le  CM\, H(\|(u-v)\circ \varphi^{-1}\|_{L^1(\varphi(2\B))}). $$
	for every $M \ge 1$ and for all psh functions $u, v\in\PSH (2\B)$ with $-M\leq u,v\leq 0$. 
\end{lemma}

\proof  Let $u':= \max\{u,  M|z|^2-2M\}$. We have $u= u'$ on $\B$ and $u' =  M|z|^2-2M$ outside $\sqrt{2}\B$.
Let $\phi\in \Cc_0^{\infty}(2 \B)$ such that $0\leq\phi\leq 1$ and $\phi\equiv 1$ on $\frac{3}{2}\B$. Then 
$\tilde{u}=(\phi u')\circ\varphi^{-1}$
is  a $(C M \omega)$-psh function on $X$, where $C\geq 1$ depends only on $X, \omega, \varphi$ and $\phi$.  We do similarly for $v$ to obtain $\tilde{v}$. By the assumption, we get 
$$\int_X \dfrac{|\tilde{u}-\tilde{v}|}{8C M} d \mu \le  H\left(\dfrac{\|\tilde{u} -\tilde{v}\|_{L^1(X)}}{8C M}\right).$$  
Since $\tilde{u}=\tilde{v}$ outside $\varphi (\sqrt{2}\B)$, we have
$$\|\tilde{u} -\tilde{v}\|_{L^1(X)}=\|\tilde{u}-\tilde{v}\|_{L^1(\varphi(\sqrt{2}\B))}\leq \|(u-v)\circ \varphi^{-1}\|_{L^1(\varphi(2\B))},$$
 Thus, we obtain
$$\int_{\varphi(\B)} |(u-v)\circ \varphi^{-1}| d \mu \le
\int_X |\tilde{u}-\tilde{v}|d \mu\le 8C M\, H(\|(u-v)\circ \varphi^{-1}\|_{L^1(\varphi(2\B))}). $$
The proof is completed.
\endproof

\begin{theorem} \label{th-localglobal} Let $(X,\omega)$ be a compact K\"ahler manifold and let $\mu$ be a Randon measure
	on $X$ such that
	$$\int_X\min\{|u_1-u_2|, 1\}d\mu\leq H(\|u_1-u_2\|_{L^1(X)}),$$
	for all $u_1, u_2\in\PSH(X, \omega)$, where $H:\R_{\geq 0}\rightarrow\R_{\geq 0}$ is an increasing function.
	Assume that $\Omega$ is an open subset of $X$ and $K$ is a compact subset of $\Omega$. Then, there
	exists a constant $C>0$ depending only on $n$, $X$, $\omega$, $K$ and $\Omega$  such that 
	$$\int_{K} \min\{|u-v|, 1\} d \mu \le C M\, H\left(\|u- v\|_{L^1(\Omega)}\right)
	+\int_{K\cap \{u<-M\}}d\mu	+ \int_{K\cap \{v<-M\} }d\mu,$$
	for all negative $\omega$-psh function $u, v$ in $\Omega$ and for every $M \ge 1$.
\end{theorem}


\begin{proof}
	By using a suitable open cover of $\Omega$, we can assume that there exists a biholomorphic mapping $\varphi:2\B\rightarrow\Omega$ such that $K\subset\varphi (\B)$.  Put $u_M:=\max\{u,-M\}$ and $v_M:= \max\{v, -M\}$. 
	We have
	\begin{align*}
	\min\{|u-v|, 1\} & \leq |u_M- v_M|+\min\{|u- u_M|, 1\}+\min\{|v- v_M|, 1\}\\
	& \leq |u_M- v_M|+|u_{M+1}- u_M|
	+|v_{M+1}- v_M|.
	\end{align*}
	Then
	$$\int_{K} \min\{|u-v|, 1\} d \mu  \le \int_{K} |u_M- v_M| d \mu + \int_{K} |u_{M+1}- u_M|d \mu
	+ \int_{K} |v_{M+1}- v_M| d \mu.$$
	By Lemma \ref{le-localglobal}, we have 
	$$\int_{K} |u_M- v_M|d \mu \le C M\, H\left(\|u- v\|_{L^1(\Omega)}\right),$$
		where $C\geq 1$ 	 depends only on $n$, $X$, $\omega$ and $\varphi$.
		
		Moreover
	$$\int_{K}  |u_{M+1}- u_M| d \mu\leq \int_{K\cap \{u<-M\}}d\mu,$$
	and
	$$\int_{K} |v_{M+1}- v_M| d \mu
	\leq \int_{K\cap \{v<-M\} }d\mu,$$

	Therefore
	$$\int_{K} \min\{|u-v|, 1\} d \mu \le C M\, H\left(\|u- v\|_{L^1(\Omega)}\right)
 +\int_{K\cap \{u<-M\}}d\mu	+ \int_{K\cap \{v<-M\} }d\mu,$$
	
	This finishes the proof.
\end{proof}

The following result can be considered as a generalization of Theorems \ref{main3} and \ref{main2} for
the case of fixed cohomology:

\begin{theorem}\label{the MAtocap}
	Let $\theta$ be a closed smooth real $(1,1)$-form such that $\theta\leq A\omega$ for a given constant $A\geq 1$. Let $0<\delta \le 1$, $B\geq A$, $\tilde{\chi} \in \mathcal{W}^-$ and
$u_1, u_2\in\tilde{\mathcal{E}}_{\tilde{\chi}, B\delta}(X, \theta)$ such that $\tilde{\chi}(-1)=-1$, $\sup_Xu_1=\sup_Xu_2=0$
and $\int_X\theta_{u_1}^n+\int_X\theta_{u_2}^n\geq 2\delta$. Assume that there exists a
concave increasing function $H:\R_{\geq 0}\rightarrow\R_{\geq 0}$ such that, for $j=1, 2$,
\begin{equation}\label{eq0-the MAtocap}
	\int_X\min\{|\psi_1-\psi_2|, 1\}\theta_{u_j}^n\leq H(\|\psi_1-\psi_2\|_{L^1(X)}),
\end{equation}
for every $\psi_1, \psi_2\in\PSH(X, \omega)$.
Then, for every $0<a, b, \gamma<1$ and $m>0$,
there exists $C>0$ depending only on $n, X, \omega, H(1), a, b, \gamma$ and $m$  such that 
$$d_{\capK}(u_1, u_2)^2\leq C (A\, B)^2 
\left(h^{\circ n}\left(\dfrac{\delta}{A\, G(x)
	+d_{\theta}(u_1, u_2)}\right)\right)^{-\gamma},$$
where  
$$x:=\dist_{-1}(\theta_{u_1}^n, \theta_{u_2}^n), \quad G(x)=A^2H^a(Ax^{a(1-b)})+H(H^m(x^{1-b}))+x^{ab},$$ 
 and $h(s)=(-\tilde{\chi}(-s))^{1/2}$.
\end{theorem}


\begin{proof}
Without loss of generality, we can assume that $0<x:=\dist_{-1}(\theta_{u_1}^n, \theta_{u_2}^n)<1$.
Denote $\chi(t)=\max\{t, -1\}$. Since $\inf\tilde{\chi}=-\infty$, it follows from Theorem \ref{the-dcapfixcohomology} that
\begin{equation}\label{eq0.1theMAtocap}
d_{\capK}(u_1, u_2)^2\leq C_0 (A\, B)^2 
	\left(h^{\circ n}\left(\dfrac{\delta}{|I_{\chi}(u_1, u_2)|
		+d_{\theta}(u_1, u_2)}\right)\right)^{-\gamma},
\end{equation}
 where $C_0>0$ depends only on $n, X, \omega$ and $\gamma$.
We will estimate $|I_{\chi}(u_1, u_2)|$.
	
	For  every $k>0$ and $j=1,2$,  it follows from the Skoda integrability theorem that
	\begin{equation}\label{eq1.1 MAtocap}
		\int_{\{u_j<-k\}}\omega^n\leq\int_X\exp\left(\dfrac{-c_0(u_j+k)}{A}\right)\omega^n
		\leq C_1\exp\left(\dfrac{-c_0k}{A}\right),
	\end{equation}
	where $c_0, C_1>0$ depend only on $X$ and $\omega$. 
	For every $k>0$ and for each $0<\epsilon<1$, 	by using the standard convolution and a partition of unit,
	we can find a smooth function $u_{j, k, \epsilon}$ such that 
	$ \max\{u_j, -k\}-\epsilon\leq u_{j, k, \epsilon}\leq 0$ and
	\begin{equation}\label{eq2 MAtocap}
		\|u_{j, k, \epsilon}\|_{C^1(X)}\leq \dfrac{C_2k}{\epsilon},
	\end{equation}  
	and
	\begin{equation}\label{eq3 MAtocap}
		\|u_{j, k, \epsilon}-\max\{u_j, -k\}\|_{L^1(X)}\leq C_3A\epsilon^{a},
	\end{equation} 
	where $0<a<1$, $C_2=C_2(X, \omega)>0$ and $C_3=C_3(X, \omega, a)>0$. Here, the last inequality holds due to
	Lemma \ref{lem estimate int p-pe}. Moreover, there exists a finite open cover $\{U_l\}$ of $X$ such that
	$u_{j, k, \epsilon}$ is $(A+1)\omega$-psh on $U_l$ for every $l$. It follows from Theorem \ref{th-localglobal} that 
	\begin{align*}
	\int_X \min\{|u_j-u_{j, k, \epsilon}|, 2\}d\mu &
	\leq 	\int_X \min\{|\max\{u_j, -k-3\}-u_{j, k, \epsilon}|, 2\}d\mu\\
	& \le C_4A M\, H\left(\|\max\{u_j, -k-3\}-u_{j, k, \epsilon}\|_{L^1(X)}\right)
	 +4\int_{\{u_j, u_{j, k, \epsilon}<-M\} }d\mu\\
	& \le C_4A M\, H\left(\|\max\{u_j, -k-3\}-u_{j, k, \epsilon}\|_{L^1(X)}\right)
	 +4\int_{\{u_j<-M+1\}}d\mu,
\end{align*}
	for every $k, M\geq 1$, $j=1, 2$ and $0<\epsilon<1$, where $\mu=\theta_{u_1}^n+\theta_{u_2}^n$
	and $C_4>0$ is a constant depending only on $n$, $X$ and $\omega$.
	Therefore, by \eqref{eq1.1 MAtocap} and \eqref{eq3 MAtocap}, we have
	\begin{equation}\label{eq3.1 MAtocap}
		\int_X \min\{|u_j-u_{j, k, \epsilon}|, 2\}d\mu\leq
		C_5A M\, H\left(A\epsilon^a+\exp\left(\dfrac{-c_0k}{A}\right)\right)
		+4\int_{\{u_j<-M+1\}}d\mu,
	\end{equation}
where $C_5>0$ depends on $n, X, \omega$ and $a$. Using the fact 
$\mathbf{1}_{\{u_j<-M+1\}}\leq |\max\{u_j, -M+1\}-\max\{u_j, -M+2\}|\leq \mathbf{1}_{\{u_j<-M+2\}}$, we have
	$$\int_{\{u_j<-M+1\}}d\mu\leq\int_X|\max\{u_j, -M+1\}-\max\{u_j, -M+2\}|d\mu
	\leq 2A\, H\left(\int_{\{u_j<-M+2\}}\omega^n\right).$$
	Then, by \eqref{eq1.1 MAtocap}, we get
	\begin{equation}\label{eq3.2 MAtocap}
		\int_{\{u_j<-M+1\}}d\mu\leq C_6A\, H\left(\exp\left(\dfrac{-c_0M}{A}\right)\right),
	\end{equation}
	where $C_6>0$ depends only on $X$ and $\omega$. Combining \eqref{eq3.1 MAtocap} and \eqref{eq3.2 MAtocap},
	 we get
	\begin{equation}\label{eq3.3 MAtocap}
			\int_X \min\{|u_j-u_{j, k, \epsilon}|, 2\}d\mu\leq
		C_5A M\, H\left(A\epsilon^a+\exp\left(\dfrac{-c_0k}{A}\right)\right)
		+C_6A\, H\left(\exp\left(\dfrac{-c_0M}{A}\right)\right)
	\end{equation}
	Recall that
	\begin{align*}
		I_{\chi}(u_1, u_2)&=\int_{\{u_1<u_2\}}\min\{|u_1-u_2|, 1\}(\theta_{u_1}^n-\theta_{u_2}^n)
		+\int_{\{u_2<u_1\}}\min\{|u_1-u_2|, 1\}(\theta_{u_2}^n-\theta_{u_1}^n)\\
		&=\int_X\max\{\min\{u_2-u_1, 1\}, -1\}(\theta_{u_1}^n-\theta_{u_2}^n).
	\end{align*}
	By the fact that
	$$\max\{t_1, t_3\}-\max\{t_2, t_3\}=\min\{-t_2, -t_3\}-\min\{-t_1, -t_3\}\leq \max\{t_1-t_2, 0\},$$
	we have
	$$|\max\{\min\{u_2-u_1, 1\}, -1\}-\max\{\min\{u_{2, k, \epsilon}-u_{1, k, \epsilon}, 1\}, -1\}|
	\leq |u_2-u_1-u_{2, k, \epsilon}+u_{1, k, \epsilon}|,$$
	for every $k>0$ and $0<\epsilon<1$.Since the LHS of the last inequality is bounded by 2, it follows that
	$$|\max\{\min\{u_2-u_1, 1\}, -1\}-\max\{\min\{u_{2, k, \epsilon}-u_{1, k, \epsilon}, 1\}, -1\}|
	\leq\Psi_{k, \epsilon},$$
	where 
	$$\Psi_{k, \epsilon}=\min\{|u_2-u_1-u_{2, k, \epsilon}+u_{1, k, \epsilon}|, 2\}.$$
	 Therefore
	\begin{equation}\label{eq4 MAtocap}
		|I_{\chi}(u_1, u_2)|\leq\left|\int_X\Phi_{k, \epsilon}(\theta_{u_1}^n-\theta_{u_2}^n)\right|
		+\int_X \Psi_{k, \epsilon}(\theta_{u_1}^n+\theta_{u_2}^n),
	\end{equation}
	where $\Phi_{k, \epsilon}=\max\{\min\{u_{2, k, \epsilon}-u_{1, k, \epsilon}, 1\}, -1\}$.
	By \eqref{eq2 MAtocap}, we have
	\begin{equation}\label{eq5 MAtocap}
		\left|\int_X\Phi_{k, \epsilon}(\theta_{u_1}^n-\theta_{u_2}^n)\right|\leq \dfrac{C_2k}{\epsilon}\dist_{-1}(\theta_{u_1}^n, \theta_{u_2}^n).
	\end{equation}
 Combining \eqref{eq3.3 MAtocap}, \eqref{eq4 MAtocap},
\eqref{eq5 MAtocap}, and choosing $\epsilon=x^{1-b}$, $M=-\dfrac{Am\log (H(\epsilon)/H(1))}{c_0}$, $k=-\dfrac{A\log\epsilon}{c_0}$, we get
	\begin{equation}\label{eq7theMAtocap}
		|I_{\chi}(u_1, u_2)|\leq C_7A\left(A^2H^a(Ax^{a(1-b)})+H(H^m(x^{1-b}))+x^{ab}\right),
	\end{equation}
	where $C_7>1$ depends only on $n, X, \omega, a, b, m$ and $H(1)$. Combining \eqref{eq0.1theMAtocap} and 
	\eqref{eq7theMAtocap}, we obtain the desired inequality.
	
	The proof is completed.
\end{proof}

\subsection{Application to the space of singularity types} \label{subsec-appli-singu}

In this part we apply quantitative stability theorems in the previous subsection to deduce
some properties of the pseudometric space of singularity
types in a big cohomology class. 
\begin{proposition}\label{cormodel}
	Let $\theta\leq A\omega$ be a closed smooth real $(1,1)$-form  representing  a big cohomology class  ($A\geq 1$). Assume that $u_1$ and $u_2$ are model $\theta$-psh functions such that 
 $\int_X\theta_{u_1}^n+\int_X\theta_{u_2}^n\geq 2\delta>0$, where $\delta>0$ is a constant. 
	Then, for every $0<\gamma<1$,
	there exists $C>0$ depending only on $n, X, \omega$ and $\gamma$  such that 
	$$d_{\capK}(u_1, u_2)^2\leq C\dfrac{A^{2n+4}}{\delta^2} 
	\left(\dfrac{d_{\theta}(u_1, u_2)}{\delta}\right)^{2^{-n}\gamma}.$$
\end{proposition}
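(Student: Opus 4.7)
The plan is to invoke Theorem~\ref{the-dcapfixcohomology} with the choices $\tilde\chi(t)=t$ and $\chi(t)=\max\{t,-1\}$. Two miraculous simplifications arising from \emph{modelness} will do nearly all the work: the quantity $I_\chi(u_1,u_2)$ vanishes identically, and the $\tilde\chi$-energy of $u_j$ is controlled purely in terms of $A$. After normalizing, using that $d_\capK$ and $d_\theta$ are invariant under a common shift of both potentials and that a model $u_j$ with $\int_X\theta_{u_j}^n>0$ agrees with $V_\theta$ on a set of positive $\theta_{V_\theta}^n$-mass, I would arrange $\sup_X u_1=\sup_X u_2=0$ (adjusting the normalization of $V_\theta$ if needed).

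The first step is $I_\chi(u_1,u_2)=0$. By \cite[Theorem~3.8]{Lu-Darvas-DiNezza-mono}, a model $\theta$-psh function $\phi$ satisfies $\theta_\phi^n\le \mathbf{1}_{\{\phi=V_\theta\}}\theta_{V_\theta}^n$; in particular $\theta_{u_1}^n$ gives no mass to $\{u_1<V_\theta\}$, which contains $\{u_1<u_2\}$ since $u_2\le V_\theta$. The comparison principle \cite[Lemma~2.3]{Lu-Darvas-DiNezza-logconcave} then gives $\mathbf{1}_{\{u_1<u_2\}}\theta_{u_2}^n\le\mathbf{1}_{\{u_1<u_2\}}\theta_{u_1}^n=0$, so $\chi(u_1-u_2)(\theta_{u_2}^n-\theta_{u_1}^n)=0$ on $\{u_1<u_2\}$; by the symmetric argument on $\{u_1>u_2\}$ the same holds there, and $I_\chi(u_1,u_2)=0$.

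The second step is the energy bound. Using again $\theta_{u_j}^n\le\theta_{V_\theta}^n$,
$$\tilde E_{\tilde\chi,\theta}(u_j)=\sup_{\psi\in\PSH^-(X,\theta),\,\sup\psi=0}\int_X(-\psi)\,\theta_{u_j}^n\le\sup_\psi\int_X(-\psi)\,\theta_{V_\theta}^n.$$
Because $\theta_{V_\theta}^n$ is concentrated on the ample locus, where $V_\theta$ is locally bounded, and has total mass $\mathrm{vol}(\theta)\le A^n\int_X\omega^n$, while $\|\psi\|_{L^1(\omega^n)}\lesssim A$ (compactness of $\{\psi/A:\psi\in\PSH^-(X,\theta),\sup\psi=0\}$ in $\PSH^-(X,\omega)$), a Chern--Levine--Nirenberg type estimate yields $\tilde E_{\tilde\chi,\theta}(u_j)\le C_{X,\omega}A^{n+1}$. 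Setting $B:=\max(A,C_{X,\omega}A^{n+1}/\delta)\lesssim A^{n+1}/\delta$ puts $u_j$ in $\tilde{\mathcal{E}}_{\tilde\chi,B\delta}(X,\theta)$.

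Finally I would apply Theorem~\ref{the-dcapfixcohomology}. Since $\inf\tilde\chi=-\infty$, any $\epsilon>0$ satisfies the required hypothesis; letting $\epsilon\to 0^+$ and using $I_\chi(u_1,u_2)=0$,
$$d_\capK(u_1,u_2)^2\le C(AB)^2\bigl(h^{\circ n}(\delta/d_\theta(u_1,u_2))\bigr)^{-\gamma},$$
where $h(s)=(-\tilde\chi(-s))^{1/2}=s^{1/2}$, so $h^{\circ n}(s)=s^{2^{-n}}$. Substituting $(AB)^2\lesssim A^{2n+4}/\delta^2$ gives exactly the stated inequality. The main obstacle is the CLN-type bound in the energy step: since $V_\theta$ typically fails to be globally bounded, the classical statement does not apply verbatim, and one must work locally on the ample locus and exploit that $\theta_{V_\theta}^n$ is concentrated there. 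The initial normalization $\sup_X u_j=0$ is a secondary technicality but needs to be honestly checked.
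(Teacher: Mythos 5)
Your overall strategy (take $\tilde\chi(t)=t$, $\chi(t)=\max\{t,-1\}$, bound the energies using $\theta_{u_j}^n\le\mathbf{1}_{\{u_j=0\}}\theta^n\le A^n\omega^n$, then feed Theorem \ref{the-dcapfixcohomology} with $h^{\circ n}(s)=s^{2^{-n}}$ and let $\epsilon\to0$) is exactly the paper's, and the energy step is fine — in fact simpler than you make it: since $\theta_{u_j}^n\le A^n\omega^n$ \emph{as measures}, the bound $\int_X(-\psi)\theta_{u_j}^n\le C A^{n+1}$ follows from $L^1$-compactness of normalized $A\omega$-psh functions, with no delicate CLN argument on the ample locus needed.

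The genuine gap is your first step, the claim $I_\chi(u_1,u_2)=0$. What modelness gives (via \cite[Theorem 3.8]{Lu-Darvas-DiNezza-mono}) is that $\theta_{u_1}^n$ charges only $\{u_1=0\}$, so the two ``positive'' halves of $I_\chi$ vanish and $I_\chi(u_1,u_2)\le 0$; but your argument that the remaining halves also vanish is not valid. The comparison principle \cite[Lemma 2.3]{Lu-Darvas-DiNezza-logconcave} is an inequality between \emph{integrals}, not a pointwise domination $\mathbf{1}_{\{u_1<u_2\}}\theta_{u_2}^n\le\mathbf{1}_{\{u_1<u_2\}}\theta_{u_1}^n$, and more importantly it requires the two potentials to lie in the same relative full-mass class; here $u_1,u_2$ are model potentials whose total masses may differ, and then the inequality simply fails. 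Concretely, take $\theta$ Kähler, $u_2=V_\theta=0$ and $u_1$ a model potential of strictly smaller mass: then $\{u_1<u_2\}=\{u_1<0\}$ carries positive $\theta_{u_2}^n$-mass while $\theta_{u_1}^n$ vanishes there, so $I_\chi(u_1,u_2)<0$ strictly. Without a substitute, the term $|I_\chi(u_1,u_2)|$ in Theorem \ref{the-dcapfixcohomology} is uncontrolled and your final estimate does not follow. The missing ingredient is the \emph{lower} bound $I_\chi(u_1,u_2)\ge -d_\theta(u_1,u_2)$, which the paper obtains from Lemma \ref{cor-echikichik}: setting $\phi=P_\theta[\max\{u_1,u_2\}]$ and $u_{j,k}=\max\{u_j,\phi-k\}$, the truncated potentials have equal relative mass, so $I_\chi(u_{1,k},u_{2,k})\ge0$ by Proposition \ref{prop Ichi positive}, and the truncation errors are $O\big(B/|\tilde\chi(-k)|\big)+d_\theta(u_1,u_2)$; letting $k\to\infty$ gives $|I_\chi(u_1,u_2)|\le d_\theta(u_1,u_2)$, after which your application of Theorem \ref{the-dcapfixcohomology} goes through (the extra $d_\theta$ inside $h^{\circ n}$ only costs a harmless factor).
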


The above result implies in particular that for model potentials,  the convergence in $d_\mathcal{S}$ is stronger than that in capacity. The last non-quantitative fact follows also from \cite[Theorem 5.6]{Darvas-Lu-DiNezza-singularity-metric}.

\begin{proof}
	By \cite[Theorem 3.8]{Lu-Darvas-DiNezza-mono}, we have
	$$\theta_{u_j}^n\leq \mathbf{1}_{\{u_j=0\}}\theta^n\leq A^n\omega^n,$$
	for $j=1, 2$. Therefore, there exists $C_1>0$ depending only on $X$ and $\omega$ such that
	$$\int_X(-\psi)\theta_{u_j}^n\leq C_1A^{n+1},$$
	for every $\psi\in\PSH(X, \theta)\subset\PSH (X, A\omega)$ with $\sup_X\psi=0$. Using Theorem 
	\ref{the-dcapfixcohomology} for $\tilde{\chi}(t)=t$, we get
	\begin{equation}\label{eq1cormodel}
		d_{\capK}(u_1, u_2)^2\leq C_2\left(A\dfrac{C_1A^{n+1}}{\delta}\right)^2 
		\left(\dfrac{|I_{\chi}(u_1, u_2)|
			+d_{\theta}(u_1, u_2)}{\delta}\right)^{2^{-n}\gamma},
	\end{equation}
where $\chi(t)=\max\{t, -1\}$ and $C_2>0$ is a constant depending only on $n, X, \omega$ and $\gamma$. Since 
$\theta_{u_j}^n\leq \mathbf{1}_{\{u_j=0\}}\theta^n$, we have 
$$\int_{\{u_1<u_2\}}-\chi(u_1-u_2)\theta_{u_1}^n=\int_{\{u_2<u_1\}}-\chi(u_2-u_1)\theta_{u_2}^n=0.$$
Therefore
\begin{equation}\label{eq2cormodel}
	I_{\chi}(u_1, u_2)\leq 0.
\end{equation}
Moreover, it follows from Lemma \ref{cor-echikichik} that
\begin{equation}\label{eq3cormodel}
	I_{\chi}(u_1, u_2)\geq -d_{\theta}(u_1, u_2).
\end{equation}
Combining \eqref{eq1cormodel}, \eqref{eq2cormodel} and \eqref{eq3cormodel}, we obtain
	$$d_{\capK}(u_1, u_2)^2\leq C_3\dfrac{A^{2n+4}}{\delta^2} 
\left(\dfrac{d_{\theta}(u_1, u_2)}{\delta}\right)^{2^{-n}\gamma},$$
where $C_3>0$ depends only on $n, X, \omega$ and $\gamma$.
The proof is completed.
\end{proof}

By using Proposition \ref{cormodel}, we recover the following result  which is obtained in \cite{Darvas-Lu-DiNezza-singularity-metric} (with a different proof). 

\begin{proposition} \label{cor-DDLcomplete} Let $\delta>0$ be a constant.  Let $\mathcal{S}_{\delta}(\theta)$ be the subset of $\mathcal{S}(\theta)$ consisting of $[u] \in \mathcal{S}_\theta$ such that $\int_X \theta_u^n  \ge \delta$. Then $(\mathcal{S}_{\delta}(\theta), d_\mathcal{S})$ is a complete (pseudo)-metric space.
\end{proposition}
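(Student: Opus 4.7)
The plan is to use Proposition \ref{cormodel} to convert a Cauchy sequence in $d_{\mathcal{S}}$ into a Cauchy sequence in $d_{\capK}$, then extract an $L^1$-limit, and finally identify the singularity type of the limit via the model envelope. Fix $A\geq 1$ with $\theta\leq A\omega$ and let $([u_j])_{j\geq 1}$ be a Cauchy sequence in $(\mathcal{S}_\delta(\theta), d_{\mathcal{S}})$. Using \cite[Theorem 1.3]{Lu-Darvas-DiNezza-mono} I would replace each $u_j$ by the unique model representative $\phi_j:=P_\theta[u_j]$ in $[u_j]$, so that $\phi_j\leq 0$ and $\int_X \theta_{\phi_j}^n\geq \delta$. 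By Proposition \ref{pro-equivdistance}, the Cauchy condition yields $d_\theta(\phi_j,\phi_k)\to 0$, and the elementary inequality $d_\theta(u,v)\geq |\int_X\theta_u^n-\int_X\theta_v^n|$ (from $\int_X\theta_{\max\{u,v\}}^n\geq \max\{\int_X\theta_u^n,\int_X\theta_v^n\}$) shows that the numerical sequence $\int_X\theta_{\phi_j}^n$ is Cauchy in $\R$, with limit $\delta_\infty\geq\delta$.

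Next I would invoke Proposition \ref{cormodel} applied to the pair $(\phi_j,\phi_k)$: since both are model with masses $\geq\delta$,
\[
d_{\capK}(\phi_j,\phi_k)^2 \leq C\,\frac{A^{2n+4}}{\delta^2}\left(\frac{d_\theta(\phi_j,\phi_k)}{\delta}\right)^{2^{-n}\gamma}\longrightarrow 0,
\]
so $(\phi_j)$ is $d_{\capK}$-Cauchy. Taking $w\equiv 0$ in the definition of $d_{\capK}$ gives $\int_X|\phi_j-\phi_k|^{1/2}\omega^n \leq d_{\capK}(\phi_j,\phi_k)$, so $\bigl(\int_X|\phi_j|^{1/2}\omega^n\bigr)_j$ is uniformly bounded; combined with $\phi_j\leq 0$, the pointwise inequality $|\phi_j|^{1/2}\geq |\sup_X\phi_j|^{1/2}$ forces $\sup_X\phi_j\geq -M$ for some $M$ independent of $j$. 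In particular $(\phi_j)$ is precompact in $L^1(X,\omega^n)$; along a subsequence (which I relabel) $\phi_j\to\phi_\infty$ in $L^1$ and quasi-everywhere, with $\phi_\infty$ a non-positive $\theta$-psh function, and the $d_{\capK}$-Cauchy property upgrades this to convergence in capacity. Set $\tilde\phi:=P_\theta[\phi_\infty]$, the unique model representative of $[\phi_\infty]$.

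It remains to prove that $[\tilde\phi]\in\mathcal{S}_\delta(\theta)$ and $d_{\mathcal{S}}([\phi_j],[\tilde\phi])\to 0$. Since $[\tilde\phi]=[\phi_\infty]$, all singularity-type quantities for $\tilde\phi$ agree with those for $\phi_\infty$, so by Proposition \ref{pro-equivdistance} it suffices to show
\[
d_\theta(\phi_j,\phi_\infty) \;=\; 2\int_X \theta_{\max\{\phi_j,\phi_\infty\}}^n-\int_X\theta_{\phi_j}^n-\int_X\theta_{\phi_\infty}^n \;\longrightarrow\; 0
\]
together with $\int_X\theta_{\phi_\infty}^n=\delta_\infty\geq\delta$. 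The main obstacle is therefore a continuity statement for non-pluripolar masses along capacity-convergent sequences of model potentials with uniformly bounded supremum: namely $\int_X\theta_{\phi_j}^n\to\int_X\theta_{\phi_\infty}^n$ and $\int_X\theta_{\max\{\phi_j,\phi_\infty\}}^n\to\int_X\theta_{\phi_\infty}^n$. I would establish these by combining the weak continuity of non-pluripolar Monge-Amp\`ere operators along uniformly bounded capacity-convergent sequences (as in \cite{BEGZ,Lu-Darvas-DiNezza-mono}) with the observation that $\max\{\phi_j,\phi_\infty\}\to\phi_\infty$ in capacity and that model potentials are maximal in their singularity classes, which prevents the loss of mass in the limit. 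With these two limits in hand the displayed difference converges to $2\delta_\infty-\delta_\infty-\delta_\infty=0$ and $[\tilde\phi]\in\mathcal{S}_\delta(\theta)$, finishing the proof.
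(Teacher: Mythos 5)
Your skeleton up to the extraction of a limit is the same as the paper's: pass to the model representatives $\phi_j=P_\theta[u_j]$, use Proposition \ref{pro-equivdistance} to convert the $d_{\mathcal S}$-Cauchy condition into $d_\theta(\phi_j,\phi_k)\to 0$, apply Proposition \ref{cormodel} to get a $d_{\capK}$-Cauchy sequence, and obtain a $\theta$-psh limit $\phi_\infty$ in capacity (your detour through $\sup_X\phi_j$ is unnecessary, since model potentials already satisfy $\sup_X\phi_j=0$). The genuine gap is in what you yourself call the main obstacle: the continuity of the non-pluripolar masses. The tool you invoke, ``weak continuity of non-pluripolar Monge--Amp\`ere operators along uniformly bounded capacity-convergent sequences,'' does not apply, because the $\phi_j$ are in general unbounded; and convergence in capacity alone does not prevent loss of mass in the limit (for instance $\max\{u,-j\}\searrow u$ converges in capacity, each truncation has full mass, while $\int_X\theta_u^n$ can be strictly smaller). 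The sentence ``model potentials are maximal in their singularity classes, which prevents the loss of mass'' is precisely the assertion that must be proved, not a citable fact. The paper's mechanism is concrete: since the $\phi_j$ are model, \cite[Theorem 3.8]{Lu-Darvas-DiNezza-mono} gives the uniform domination $\theta_{\phi_j}^n\le \mathbf{1}_{\{\phi_j=0\}}\theta^n\le A^n\omega^n$, and then Lemma \ref{le-hoituMAchanmeasure} (a dominated-convergence statement for non-pluripolar Monge--Amp\`ere measures under convergence in capacity) yields $\theta_{\phi_j}^n\to\theta_{\phi_\infty}^n$ weakly, hence $\int_X\theta_{\phi_\infty}^n=\lim_j\int_X\theta_{\phi_j}^n\ge\delta$. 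Without this (or an equivalent) argument, your proof does not rule out $\int_X\theta_{\phi_\infty}^n<\delta_\infty$.

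A second, related issue: your plan requires the exact limit $\int_X\theta_{\max\{\phi_j,\phi_\infty\}}^n\to\int_X\theta_{\phi_\infty}^n$, which again is not covered by your tools, since $\max\{\phi_j,\phi_\infty\}$ need not be model and its Monge--Amp\`ere measure has no obvious uniform dominant. The paper sidesteps this: for fixed $j$, $\max\{u_j,u_k\}\to\max\{u_j,u_\infty\}$ in capacity as $k\to\infty$, and the general one-sided semicontinuity of non-pluripolar masses under capacity convergence, combined with the exact convergence $\int_X\theta_{u_k}^n\to\int_X\theta_{u_\infty}^n$ from the previous step, gives
\begin{equation*}
d_\theta(u_j,u_\infty)\le\liminf_{k\to\infty} d_\theta(u_j,u_k),
\end{equation*}
and the right-hand side is small for large $j$ by the Cauchy property. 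If you replace your ``prevents loss of mass'' sentence by the domination-plus-dominated-convergence argument, and the claimed convergence of the max term by this semicontinuity trick, your proof becomes complete and essentially coincides with the paper's.
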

\begin{proof}
	Let $([u_j])_j$ be a Cauchy sequence in $\mathcal{S}_{\delta}(\theta)$ (recall $[u_j]$ denotes the singularity type of a $\theta$-psh function $u_j$ with $\sup_X u_j= 0$), \emph{i.e,} for every constant $\epsilon>0$, there exists $k_\epsilon \in \N$ such that $d_\theta(u_j, u_k) \le \epsilon$ for every $j \ge k_\epsilon,$ and $k \ge k_\epsilon$. We need to prove that there exists a class $[u_\infty] \in \mathcal{S}_{\delta}(\theta)$ so that $d_\theta(u_j, u_\infty) \to 0$ as $j \to \infty$. By using contradiction, it suffices to prove it for some subsequence of $(u_j)_j$. Hence we can assume safely that 
	$$d_{\theta}(u_j, u_{j+1})\leq 4^{-n2^{n+1}},$$
	because one can always extract a subsequence of $(u_j)_j$ with that property.
	
	Since $d_{\theta}\big(u,P_\theta[u]\big)=0$ for every $u \in \PSH(X, \theta)$, without loss of generality, we can assume that $u_j= P_\theta[u_j]$ for every $j \in \N$, in other words, $u_j$'s are  model $\theta$-psh functions.
	Consequently, by Proposition \ref{cormodel} (with $\gamma=1/2$), we get
	$$d_{\capK}(u_j, u_{j+1})\leq 2^{-n}C_1,$$
	for every $j$, where $C_1>0$ is a constant depending only on $n, X, \omega, \theta$ and $\delta$. Therefore, there
	exists a $\theta$-psh function $u_{\infty}$ such that $u_j$ converges to $u_{\infty}$ in capacity as $j\to\infty$.

	Moreover, it follows from  \cite[Theorem 3.8]{Lu-Darvas-DiNezza-mono} that
	\begin{align}\label{ine-chantrencuamodelmeasure}
		\theta_{u_j}^n \le \bold{1}_{\{u_j=0\}}\theta^n \le C_2 \omega^n,
	\end{align}
	for some constant $C_2>0$ independent of $j$. This coupled with Lemma \ref{le-hoituMAchanmeasure} below yields that 
	\begin{align}\label{limit-MAujuCKX}
		\theta_{u_j}^n \to \theta_{u_{\infty}}^n,
	\end{align}
	as $j \to \infty$. It is clear that $\int_X\theta_{u_{\infty}}^n\geq\delta$. It remains to show that
	$d_{\theta}(u_j, u_{\infty})\rightarrow 0$ as $j\to\infty$.
	
	Since
	\begin{align*}
		d_\theta(u_j, u_k)= 2 \int_X \theta_{\max\{u_j, u_k\}}^n - \int_X \theta_{u_j}^n - \int_X \theta_{u_k}^n,
	\end{align*}
	using (\ref{limit-MAujuCKX}) and the fact that $\max\{u_j, u_k\} \to \max\{u_j, u_\infty\}$ in capacity as $k \to \infty$ (for $u_k \to u_\infty$ in capacity), one gets 
	$$\liminf_{k \to \infty} d_\theta(u_j, u_k) \ge 2 \int_X \theta_{\max\{u_j, u_\infty\}}^n - \int_X \theta_{u_j}^n - \int_X \theta_{u_\infty}^n= d_\theta(u_j, u_\infty).$$
	It follows that $d_\theta(u_j, u_\infty) \to 0$ as $j \to \infty$. In other words, $[u_j] \to [u_\infty]$ in the topology induced by the pseudo-metric $d_\mathcal{S}$ (we note that $[u_\infty]$ might not be unique, but  the singularity type of its envelope $P_\theta[u_\infty]$ is unique). 
\end{proof}

The following lemma is probably known. We present a proof for readers' convenience.

\begin{lemma} \label{le-hoituMAchanmeasure} Let $\Omega$ be an open subset in $\C^n$. Let $(u_j)_j$ be a sequence of psh functions converging to a psh function $u_\infty$ in capacity in $\Omega$. Assume that the non-pluripolar product $(\ddc u_j)^n$ is well-defined for $1 \le j \le \infty$, and  there exists a non-pluripolar Radon measure $\mu$ on $\Omega$ such that $(\ddc u_j)^n \le \mu$ for every $j$. Then $(\ddc u_j)^n$ converges
	weakly to $(\ddc u_{\infty})^n$ as $j \to \infty$. 
\end{lemma}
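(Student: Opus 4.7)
The plan is to follow a truncation-plus-tail-control argument, localized by testing against a smooth $\chi\ge 0$ with compact support $K\subset\Omega$; it suffices to show $\int\chi(\ddc u_j)^n\to\int\chi(\ddc u_\infty)^n$. Since $(\ddc u_j)^n\le\mu$, the sequence of measures is weakly precompact on $K$, so by the standard subsequence argument it is enough to extract, from any subsequence, a further one along which the integrals converge. Combining convergence in capacity with Borel--Cantelli (using Bedford--Taylor's theorem that capacity-zero Borel sets are pluripolar) allows me, after extracting once more, to assume $u_j\to u_\infty$ outside a pluripolar set $N$; since $\mu$ is non-pluripolar, $\mu(N)=0$, so this is also $\mu$-almost everywhere convergence.

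Next I would truncate. For $k>0$ let $\phi_k:\R\to[0,1]$ be a continuous cutoff equal to $0$ on $(-\infty,-k-1]$ and to $1$ on $[-k,\infty)$, and set $u_j^{k+1}:=\max\{u_j,-k-1\}$. On the plurifine open set $\{u_j>-k-1\}$ one has $u_j=u_j^{k+1}$, while outside this set $\phi_k(u_j)=\phi_k(u_j^{k+1})=0$, so plurifine locality of non-pluripolar products delivers the key identity
\begin{equation*}
\int\chi\,\phi_k(u_j)(\ddc u_j)^n=\int\chi\,\phi_k(u_j^{k+1})(\ddc u_j^{k+1})^n,
\end{equation*}
and the analogous identity for $u_\infty$. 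For fixed $k$ the functions $u_j^{k+1}$ lie in $[-k-1,0]$ and converge in capacity to $u_\infty^{k+1}$, so the classical convergence theorem for uniformly bounded psh functions converging in capacity gives $(\ddc u_j^{k+1})^n\to(\ddc u_\infty^{k+1})^n$ weakly. Combined with $\phi_k(u_j^{k+1})\to\phi_k(u_\infty^{k+1})$ in capacity (continuity of $\phi_k$) and the scaling bound $(\ddc v)^n\le(k+1)^n\,\capK$ valid for any psh $v$ with values in $[-k-1,0]$, this yields $\int\chi\phi_k(u_j^{k+1})(\ddc u_j^{k+1})^n\to\int\chi\phi_k(u_\infty^{k+1})(\ddc u_\infty^{k+1})^n$ as $j\to\infty$.

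It remains to absorb the tails. The bound $(\ddc u_j)^n\le\mu$ gives $|\int\chi(1-\phi_k(u_j))(\ddc u_j)^n|\le\|\chi\|_\infty\,\mu(\{u_j\le -k\}\cap K)$, and similarly for $u_\infty$. For all but countably many values of $k$, both $\mu(\{u_\infty=-k\})=0$ and $(\ddc u_\infty)^n(\{u_\infty=-k\})=0$; choosing such a $k$ large enough, non-pluripolarity of $\mu$ and of $(\ddc u_\infty)^n$ makes both $\mu(\{u_\infty\le-k\}\cap K)$ and $(\ddc u_\infty)^n(\{u_\infty\le -k\}\cap K)$ smaller than any prescribed $\epsilon>0$. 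Since $\mathbf{1}_{\{u_j\le-k\}}\to\mathbf{1}_{\{u_\infty\le-k\}}$ $\mu$-a.e.\ (the $\mu$-atom at $-k$ having been avoided and $u_j\to u_\infty$ $\mu$-a.e.), dominated convergence transfers this smallness to $\mu(\{u_j\le-k\}\cap K)$ for large $j$. Combining everything yields $\limsup_j|\int\chi[(\ddc u_j)^n-(\ddc u_\infty)^n]|\lesssim\epsilon$, and the arbitrariness of $\epsilon$ gives weak convergence.

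The hard part will be the ``mixed'' convergence $\int\chi\phi_k(u_j^{k+1})(\ddc u_j^{k+1})^n\to\int\chi\phi_k(u_\infty^{k+1})(\ddc u_\infty^{k+1})^n$, where both the density $\phi_k(u_j^{k+1})$ and the Bedford--Taylor measure $(\ddc u_j^{k+1})^n$ depend on $j$. The standard triangle-inequality split reduces this to (i) weak convergence of the bounded psh Monge--Amp\`ere measures tested against the bounded quasi-continuous limit density $\phi_k(u_\infty^{k+1})$, and (ii) a vanishing estimate for $\int\chi|\phi_k(u_j^{k+1})-\phi_k(u_\infty^{k+1})|(\ddc u_j^{k+1})^n$, obtained by splitting along super-level sets of $|u_j-u_\infty|$ and invoking the scaling estimate above to dominate the Monge--Amp\`ere mass of a small-capacity set by $(k+1)^n$ times its capacity, which then vanishes by the assumed capacity convergence.
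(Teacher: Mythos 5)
Your proof is correct and follows essentially the same route as the paper's: truncate at level $-k$, use plurifine locality to identify the truncated non-pluripolar measures with Bedford--Taylor measures of $\max\{u_j,-k-1\}$, invoke the classical convergence theorem for uniformly bounded psh functions converging in capacity, and absorb the tail on $\{u_j\le -k\}$ using the domination $(\ddc u_j)^n\le\mu$ together with the non-pluripolarity of $\mu$ and of $(\ddc u_\infty)^n$. The only cosmetic differences are that you work with a continuous cutoff $\phi_k$ and transfer the $\mu$-smallness of the tail through an a.e.-convergent subsequence plus dominated convergence, whereas the paper uses the inclusion $\{u_j<-k\}\subset\{u_\infty<-k+1\}\cup\{|u_j-u_\infty|\ge 1\}$ and the fact that a measure vanishing on pluripolar sets gives small mass to sets of small capacity.
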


\proof  Let $\nu$ be a limit measure of the sequence $\big((\ddc u_j)^n\big)_j$  as $j \to \infty$. We need to check that $\nu= (\ddc u_\infty)^n$. Observe that $\nu \ge (\ddc u_\infty)^n$ because $u_j \to u_\infty$ in capacity. It remains to verify the converse inequality.

Let $g \ge 0$ be a smooth function with compact support in $\Omega$. Put $u_{jk}:= \max\{u_j,-k\}$ for $1 \le j \le \infty$.  We get $u_{jk} \to u_{\infty k}$ in the capacity as $j \to \infty$. It follows that $(\ddc u_{jk})^n \to (\ddc u_{\infty k})^n$ as $j \to \infty$, and  
\begin{align*}
\limsup_{j \to \infty} \int_\Omega  g \bold{1}_{\{u_j \ge -k+1\}} (\ddc u_{jk})^n \le \int_\Omega  g  \bold{1}_{\{u_\infty \ge -k+1\}} (\ddc u_{\infty k})^n. 
\end{align*} 
By this and the equality $\bold{1}_{\{u_j > -k\}} (\ddc u_{jk})^n= (\ddc u_j)^n$, we get 
 \begin{align}\label{ine-chantrenlimsupthetauj}
\limsup_{j \to \infty} \int_\Omega  g \bold{1}_{\{u_j > -k+1\}} (\ddc u_{j})^n \le \int_\Omega  g  \bold{1}_{\{u_\infty > -k\}} (\ddc u_{\infty k})^n= \int_\Omega  g  \bold{1}_{\{u_\infty > -k\}} (\ddc u_{\infty})^n
\end{align} 
which converges to $0$ as $k \to \infty$.  On the other hand, by hypothesis, one gets
\begin{align*}
\bold{1}_{\{u_{j} < -k\}} (\ddc u_{j})^n &\le \bold{1}_{\{u_{j} < -k\}} \mu\le \bold{1}_{\{u < -k\}} \mu+ \bold{1}_{\{u_{j}- u \le 1\}} \mu.
\end{align*}
Combining this with \cite[Lemma 4.5]{GZ-weighted} (we use here the fact that $\mu$ is non-pluripolar) gives
$$\limsup_{j \to \infty}\int_\Omega  g  \bold{1}_{\{u_{j} < -k\}} (\ddc u_{j})^n  \le \int_\Omega  g  \bold{1}_{\{u < -k\}} \mu$$
 Letting $k \to \infty$, we obtain
$$\limsup_{j \to \infty}\int_\Omega  g \bold{1}_{\{u_{j} < -k\}} (\ddc u_{j})^n  \to 0$$
as $k \to \infty$. Combining the last inequality with (\ref{ine-chantrenlimsupthetauj}) yields
$$ \limsup_{j \to \infty}\int_\Omega g (\ddc u_{j})^n \le \int_\Omega g (\ddc u_\infty)^n.$$
Hence $(\ddc u_{j})^n \to \ddc u_\infty$ as $j \to \infty$. This finishes the proof.
\endproof

\subsection{The case of varied cohomology} \label{subsec-stability-estimates-varied}

We now present the proof of Theorem \ref{main1}.

\begin{proof}[End of the proof of Theorem \ref{main1}]
		Put $\epsilon=\|\theta-\eta\|_{\Cc^0}$. Then, there exists $C_1\geq 1$ depending only on $X$ and $\omega$
	such that
	\begin{equation}\label{eq1mainvaried}
		\theta\leq\eta+C_1\epsilon\omega\leq\theta+2C_1\epsilon\omega.
	\end{equation}
Note that, by Chern-Levine-Nirenberg inequality
and by the compactness of $\{w\in\PSH(X, \omega): \sup_Xw=0\}$ in $L^1(X)$,
 there exists $C_{\omega}>0$ depending only on $X$ and $\omega$ such that
$$d_{\capK}(u, v)^2\leq C_{\omega}A.$$
Therefore, if $0<\dfrac{\delta}{2C_1}\leq\epsilon$ then the desired inequality \eqref{eq0main1} holds.
	Hence, without loss of generality, we can assume that
	$$\epsilon<\dfrac{\delta}{2C_1},$$
	and, as a consequence, we have $\tilde{\theta}:=\theta+C_1\epsilon\omega\leq (A+1)\omega$.
	
	 It follows from
	 \cite[Theorem 4.7]{Lu-Darvas-DiNezza-logconcave} that there exists a unique
	  $\tilde{u}\in\mathcal{E}(X, \tilde{\theta}, P_{\tilde{\theta}}[u])$ such that
	  \begin{equation}
	  	\begin{cases}
	  		\tilde{\theta}_{\tilde{u}}^n=c\theta_u^n,\\
	  		\sup_X\tilde{u}=0,
	  	\end{cases}
	  \end{equation}
  where $c=\dfrac{\int_X\tilde{\theta}_u^n}{\int_X\theta_u^n}\geq 1$. Observe that 
  $\int_X\tilde{\theta}_{\tilde{u}}^n\geq c\delta$ and 
  $\tilde{u}\in\tilde{E}_{\tilde{\chi}, Bc\delta}(X, \tilde{\theta})$. It follows from Theorem
  \ref{cor-massnorm-fixcohomology} that
  \begin{equation}\label{eq2mainvaried}
  	d_{\capK}(\tilde{u}, u)^2\leq C_2(A+1)^2B^2 
  	\left(h^{\circ n}\left(\dfrac{\delta}{\|\tilde{\theta}_{\tilde{u}}^n-\tilde{\theta}_{u}^n\|
  		+d_{\tilde{\theta}}(\tilde{u}, u)}\right)\right)^{-\gamma},
  \end{equation} 
and
\begin{equation}\label{eq3mainvaried}
	d_{\capK}(\tilde{u}, v)^2\leq C_2(A+1)^2B^2 
	\left(h^{\circ n}\left(\dfrac{\delta}{\|\tilde{\theta}_{\tilde{u}}^n-\tilde{\theta}_{v}^n\|
		+d_{\tilde{\theta}}(\tilde{u}, v)}\right)\right)^{-\gamma},
\end{equation}
where $C_2>0$ depends only on $n, X, \omega$ and $\gamma$. Since 
$P_{\tilde{\theta}}[u]=P_{\tilde{\theta}}[\tilde{u}]$, we have
\begin{equation}\label{eq4mainvaried}
	d_{\tilde{\theta}}(\tilde{u}, u)=0\quad\mbox{and}\quad
	d_{\tilde{\theta}}(\tilde{u}, v)=d_{\tilde{\theta}}(u, v).
\end{equation}
Combining \eqref{eq2mainvaried}, \eqref{eq3mainvaried} and \eqref{eq4mainvaried}, we get
\begin{equation}\label{eq5mainvaried}
		d_{\capK}(u, v)^2\leq C_3(A\, B)^2
	\left(h^{\circ n}\left(\dfrac{\delta}{\|\tilde{\theta}_{\tilde{u}}^n-\tilde{\theta}_{u}^n\|
		+\|\tilde{\theta}_{\tilde{u}}^n-\tilde{\theta}_{v}^n\|
		+d_{\tilde{\theta}}(u, v)}\right)\right)^{-\gamma},
\end{equation}
where $C_3>0$ depends only on $n, X, \omega$ and $\gamma$.
By \eqref{eq1mainvaried}, we have
	$$\theta_u^n\leq \tilde{\theta}_{u}^n\leq \theta_u^n+C_4\epsilon(\theta_u+\omega)^n,$$
	and
$$\eta_v^n\leq \tilde{\theta}_{v}^n\leq (\eta_v+2C_1\epsilon\omega)^n\leq
\eta_v^n+C_4\epsilon(\eta_v+\omega)^n,$$
where $C_4>0$ depends only on $X$ and $\omega$.
	Therefore
	\begin{equation}\label{eq6mainvaried}
		\|\theta_u^n-\tilde{\theta}_{u}^n\|+\|\eta_v^n-\tilde{\theta}_{v}^n\|
		\leq C_5(A+1)^n\vol (X)\epsilon,
	\end{equation}
where $C_5>0$ depends only on $X$ and $\omega$. Moreover,
\begin{equation}\label{eq7mainvaried}
	\|\theta_u^n-\tilde{\theta}_{\tilde{u}}^n\|=(c-1)\int_X\theta_u^n=\int_X(\tilde{\theta}_{u}^n-\theta_u^n)
	\leq \|\theta_u^n-\tilde{\theta}_{u}^n\|.
\end{equation}
Combining \eqref{eq6mainvaried} and \eqref{eq7mainvaried}, we get
\begin{align*}
	\|\tilde{\theta}_{\tilde{u}}^n-\tilde{\theta}_{u}^n\|+\|\tilde{\theta}_{\tilde{u}}^n-\tilde{\theta}_{v}^n\|
	&\leq \|\theta_u^n-\tilde{\theta}_{u}^n\|+2\|\theta_u^n-\tilde{\theta}_{\tilde{u}}^n\|
+\|\theta_u^n-\eta_v^n\|+\|\eta_v^n-\tilde{\theta}_{v}^n\|\\
	&\leq 3\|\theta_u^n-\tilde{\theta}_{u}^n\|+\|\eta_v^n-\tilde{\theta}_{v}^n\|+\|\theta_u^n-\eta_v^n\|\\
	&\leq 3C_5(A+1)^n\vol (X)\epsilon+\|\theta_u^n-\eta_v^n\|.
\end{align*}
Hence, by \eqref{eq5mainvaried}, we obtain
\begin{align*}
		d_{\capK}(u, v)^2&\leq C_3(A\, B)^2
	\left(h^{\circ n}\left(\dfrac{\delta}{3C_5(A+1)^n\vol (X)\epsilon+\|\theta_u^n-\eta_v^n\|
		+d_{\tilde{\theta}}(u, v)}\right)\right)^{-\gamma}\\
	&\leq C_6(A\, B)^2
	\left(h^{\circ n}\left(\dfrac{\delta}{A^n\epsilon+\|\theta_u^n-\eta_v^n\|
		+d_{(A+1)\omega}(u, v)}\right)\right)^{-\gamma},
\end{align*}
where $C_6>0$ depends only on $n, X, \omega$ and $\gamma$. Here we use the facts 
$d_{\tilde{\theta}}(u, v)\leq d_{(A+1)\omega}(u, v)$ (see Lemma \ref{prop-dAomegadtheta})
and $h(t)\leq h(Mt)\leq M\, h(t)$ for every $M\geq 1$ and $t>0$.
The proof is completed.
\end{proof}

We give here a consequence of Theorem \ref{main1}.

\begin{theorem}\label{the-hoitucapacitymassnorm} Let $(\theta_j)_{j\in \N \cup \{\infty\}}$ be a sequence of closed smooth real $(1,1)$-forms in $X$ such that $\theta_j \to \theta_\infty$ in $\cali{C}^0$ topology as $j \to\infty$. Let $\phi_j$ be a model $\theta_j$-psh function for $j \in \N \cup\{\infty\}$ such that 
$$d_{\mathcal{S}}(\phi_j, \phi_\infty)\rightarrow 0$$
as $j\to\infty$. Let $\mu_j$  be a non-pluripolar measure on $X$ such that 
$$\mu_j(X)= \int_X (\ddc \phi_j+ \theta_j)^n$$
for every $j$ and  $\mu_j \to \mu_\infty$ in the mass norm.   Let $u_j$ be the $\theta$-psh function satisfying  
$$(\ddc u_j+ \theta_j)^n=\mu_j, \quad \sup_X (u_j-\phi_j) =0$$ for $j \in \N\cup \{\infty\}$.  Then  $u_j \to u_\infty$ in capacity as $j \to \infty$. 
\end{theorem}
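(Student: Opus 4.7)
The plan is to combine a compactness--uniqueness scheme with the quantitative stability in the fixed cohomology / fixed prescribed singularity regime that is identified as the main technical input of the paper. Choose $A$ so that $\theta_j \le A\omega$ for every $j$. Since $\phi_j$ is model and $u_j \le \phi_j \le 0$, the normalization gives $\sup_X u_j = \sup_X(u_j-\phi_j) = 0$, so $(u_j)$ is a normalized family of $A\omega$-psh functions and is relatively compact in $L^1(X)$. Any $L^1$-accumulation point $u_\star$ is $A\omega$-psh with $\sup_X u_\star = 0$; by $\mathcal{C}^0$-convergence of $\theta_j$ it is in fact $\theta_\infty$-psh.

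Next I would identify $u_\star$ with $u_\infty$. The pointwise inequality $u_j \le \phi_j$ should pass to the limit to give $u_\star \le \phi_\infty$, using $d_\mathcal{S}(\phi_j,\phi_\infty)\to 0$ and the fact that $\phi_\infty$ is model, so that $u_\star$ cannot strictly exceed $\phi_\infty$ on a non-pluripolar set without violating the envelope characterization $P_{\theta_\infty}[\phi_\infty]=\phi_\infty$. To see that $u_\star$ solves the limiting Monge--Amp\`ere equation, I would first verify the mass identity $\int_X (dd^c u_\star + \theta_\infty)^n = \mu_\infty(X) = \int_X (dd^c \phi_\infty + \theta_\infty)^n$, combining mass-norm convergence $\mu_j\to\mu_\infty$ with continuity of total non-pluripolar mass along $d_\mathcal{S}$-convergent model sequences (for which Proposition \ref{pro-equivdistance} makes it legitimate to compare singularity types in distinct cohomology classes). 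Then a semi-continuity argument for non-pluripolar products under $L^1$-convergence of potentials paired with $\mathcal{C}^0$-convergence of reference forms, combined with the mass bound just obtained, identifies $(dd^c u_\star + \theta_\infty)^n$ with $\mu_\infty$. Uniqueness in $\mathcal{E}(X,\theta_\infty,\phi_\infty)$ under the normalization $\sup_X u = 0$ then forces $u_\star=u_\infty$, and since every subsequential $L^1$-limit equals $u_\infty$ the full sequence converges to $u_\infty$ in $L^1$.

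To upgrade $L^1$-convergence to convergence in capacity I would exploit the paper's quantitative stability theorem in the fixed-data regime. A clean route is to introduce auxiliary potentials $v_j \in \mathcal{E}(X,\theta_\infty,\phi_\infty)$ solving $(dd^c v_j+\theta_\infty)^n = c_j\mu_j$ with $v_j\le\phi_\infty$, $\sup_X v_j = 0$, and with a constant $c_j\to 1$ chosen so the total masses match, and then to estimate the capacity of $\{|u_j - u_\infty|\ge\varepsilon\}$ via the triangle inequality. The fixed-data quantitative estimate controls the capacity of $\{|v_j-u_\infty|\ge\varepsilon/2\}$ in terms of the mass norm of $c_j\mu_j-\mu_\infty$, while a perturbation estimate using $\|\theta_j-\theta_\infty\|_{\mathcal{C}^0}\to 0$ and $d_\mathcal{S}(\phi_j,\phi_\infty)\to 0$ controls $\{|u_j-v_j|\ge\varepsilon/2\}$.

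The principal obstacle is the simultaneous variation of $\theta_j$ and $\phi_j$ inside the non-pluripolar Monge--Amp\`ere operator: establishing a usable (semi-)continuity statement along such drifting data is the technical heart of the argument, and it underlies both the identification of the limit $u_\star$ as a solution of the limiting equation and the perturbation estimate needed for the capacity upgrade. The second delicate point is ensuring that the authors' quantitative stability in the fixed-data regime can be applied with right-hand sides that are merely non-pluripolar measures of finite lower energy, which is precisely why they emphasize needing quantitative bounds even to prove the qualitative statement above.
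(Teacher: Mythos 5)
Your scheme has two genuine gaps, and they sit exactly at the points you flag as ``the technical heart'' without resolving them. First, the identification step of your compactness--uniqueness argument does not go through: there is no ``semi-continuity of non-pluripolar Monge--Amp\`ere products under $L^1$-convergence of potentials'' in this generality. The non-pluripolar operator is famously discontinuous along $L^1$-convergent sequences, and neither the mass-norm convergence $\|\mu_j-\mu_\infty\|\to 0$ nor the mass identity for the limit repairs this when the $\mu_j$ are merely non-pluripolar (no $L^p$ density, no domination by a fixed measure as in Theorem \ref{main2}); this is precisely the failure mode behind the Cegrell--Ko{\l}odziej type examples recalled in the introduction, and it is the reason the paper insists that the qualitative statement already requires the quantitative estimates. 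So your subsequential limit $u_\star$ cannot be identified with $u_\infty$ by soft means. Second, in your capacity upgrade the ``perturbation estimate'' controlling $\{|u_j-v_j|\ge\varepsilon/2\}$ is exactly a varied-cohomology, varied-singularity stability statement; to get it from the quantitative theorems you would have to apply them with $u:=v_j$ (or $u:=u_j$), whose Monge--Amp\`ere measure is $c_j\mu_j$ (resp.\ $\mu_j$), and this requires a \emph{single} weight $\tilde{\chi}$ with $\tilde{\chi}$-energies bounded uniformly in $j$ for the whole family of measures. Such a uniform bound is not automatic from $\|\mu_j-\mu_\infty\|\to 0$ (the weight is unbounded, so the error term is not controlled by total variation), and you do not supply it.

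The paper's actual proof avoids both issues by a direct, one-shot application of Theorem \ref{main1} with $u:=u_\infty$, $v:=u_j$, $\theta:=\theta_\infty$, $\eta:=\theta_j$: the crucial point is the asymmetry of that theorem, namely that the finite lower-energy hypothesis is imposed only on the measure of $u$, i.e.\ on the single measure $\mu_\infty$, where it holds automatically by \cite[Proposition 3.2]{BEGZ}, while $v$ is an arbitrary normalized $\eta$-psh function. Since $\|\theta_j^{}{}\!-\theta_\infty\|_{\Cc^0}\to 0$, $\|\mu_j-\mu_\infty\|\to 0$, and $d_{(A+1)\omega}(u_j,u_\infty)=d_{(A+1)\omega}(\phi_j,\phi_\infty)\to 0$ (the hypothesis on singularity types, via Proposition \ref{pro-equivdistance} and $P[u_j]=\phi_j$), the estimate \eqref{eq0main1} gives $d_{\capK}(u_j,u_\infty)\to 0$ outright, with no subsequence extraction, no auxiliary solutions $v_j$, and no limit identification. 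If you want to salvage your route, the viable repair is essentially to drop steps one and two and apply the varied-data theorem directly to the pair $(u_\infty,u_j)$, which is the paper's argument.
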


Here by $d_S(\phi_j, \phi_\infty)$ we mean the pseudodistance $d_{\mathcal{S}(A \omega)}$ between the $(A\omega)$-singularity types of $\phi_j$ and $\phi_\infty$, where $A>0$ is a big enough constant such that $\theta_j \le A\omega$ for every $j$.  The property $d_{\mathcal{S}}(\phi_j, \phi_\infty)\to 0$ is independent of the choice of $A$. Moreover, as mentioned above  when $\theta_j$ is equal to a fixed $\theta$, the pseudometric $d_{\mathcal{S}(A \omega)}$ is comparable with $d_{\mathcal{S}(\theta)}$. 

\begin{proof}
Since $\theta_j \to \theta_\infty$ in $\Cc^0$-norm, there exists a constant $A \ge 1$ so that $\theta_j \le A \omega$ for every $j \in \N \cup \{\infty\}$.   By \cite[Proposition 3.2]{BEGZ}, there exists $\tilde{\chi} \in \mathcal{W}^-$ such that 
$$\sup_{\psi \in \PSH(X, (A+1)\omega): \sup_X \psi =0}\int_X - \tilde{\chi}(\psi) d \mu_\infty < \infty.$$
By considering $\tilde{\chi}/|\tilde{\chi}(-1)|$ instead of $\tilde{\chi}$, we can assume that $\tilde{\chi}(-1)=-1$. 
This allows us to apply Theorem \ref{main1} to $u:= u_\infty$, $v:= u_j$, $\theta:= \theta_\infty$, and $\eta:= \theta_j$, and we note that 
$$d_{(A+1)\omega}(u,v)= d_{(A+1)\omega}(u_j, u_\infty)= d_{(A+1)\omega}(\phi_j, \phi_\infty) \to 0 $$
as $j \to \infty$ by the hypothesis. We thus obtain $d_{\capK}(u_j, u_\infty) \to 0$ as $j \to \infty$. The desired convergence hence follows. The proof is finished.
\end{proof}

Theorem \ref{the-hoitucapacitymassnorm} considerably extends \cite[Proposition A]{Guedj-Zeriahi-big-stability} (which treats the case  where the cohomology class is fixed,  $(\phi_j)_j$ is  constant and of minimal singularity types, and only the convergence in $L^1$ was obtained) and  \cite[Theorem 1.4]{Darvas-Lu-DiNezza-singularity-metric} which treats the case where  again the cohomology class is fixed, and $\mu_j$ has $L^p$ density with respect to $\omega^n$; see also  \cite[Theorem 2.14]{DiNezzLu-entropy} for a particular version of  Theorem \ref{the-hoitucapacitymassnorm}. The assumption that $\mu_j$ has $L^p$ density is crucial in the plurisubharmonic envelope approach in \cite[Theorem 1.4]{Darvas-Lu-DiNezza-singularity-metric}. We note furthermore that in  \cite{Trusiani2,Trusiani3} the convergence of solutions were also obtained under more restrictive conditions that the sequence of measures $(\mu_j)_j$ are of uniformly bounded $E^1$ energy (\emph{i.e,} the corresponding solutions $(u_j)_j$ are of uniformly bounded $\chi$-energy for $\chi(t)=t$, see below for the definition) and the sequence of prescribed singularities is totally ordered.

It is well-known that it is not possible to have $u_j \to u_\infty$ in $L^1$ if  $\mu_j$ only converges weakly to $\mu_\infty$ in general  (see \cite{Cegrell-Kolodziej,Guedj-Zeriahi-big-stability} and references therein for examples).

In the sequel, we will proceed to prove Theorems \ref{main3} and  \ref{main2}.

\begin{theorem}\label{main2.varied}
	Let $\theta_1, \theta_2\leq A\omega$ be closed smooth real $(1,1)$-forms ($A\geq 1$). Let $0<\delta \le 1$, $B\geq 1$, $\tilde{\chi} \in \mathcal{W}^-$ and
	$u_j\in\tilde{\mathcal{E}}_{\tilde{\chi}, (A+1)\omega, B\delta}(X, \theta_j)$ ($j=1, 2$) such that $\tilde{\chi}(-1)=-1$, $\sup_Xu_j=0$
	and $\int_X\theta_{u_j}^n\geq \delta$. Assume that there exists a
	concave increasing function $H:\R_{\geq 0}\rightarrow\R_{\geq 0}$ such that, for $j=1, 2$,
	\begin{equation}\label{eq0main2.varied}
		\int_X\min\{|\psi_1-\psi_2|, 1\}(\theta_j+dd^cu_j)^n\leq H(\|\psi_1-\psi_2\|_{L^1(X)}),
	\end{equation}
	for every $\psi_1, \psi_2\in\PSH(X, \omega)$.
	Then, for every $0<a, b, \gamma<1$ and $m>0$,
	there exists $C>0$ depending only on $n, X, \omega, H(1), a, b, \gamma$ and $m$  such that 
	$$d_{\capK}(u_1, u_2)^2\leq C (A\, B)^2 
	\left(h^{\circ n}\left(\dfrac{\delta}{A\, G(\tau)+A^n\|\theta_1-\theta_2\|_{\Cc^0}
		+d_{(A+1)\omega}(u_1, u_2)}\right)\right)^{-\gamma},$$
	where  $\tau=\dist_{-1}((\theta_1+dd^cu_1)^n, (\theta_2+dd^cu_2)^n)$ , 
	$G(\tau)=A^2H^a(A\tau^{a(1-b)})+H(H^m(\tau^{1-b}))+\tau^{ab}$
	and $h(s)=(-\tilde{\chi}(-s))^{1/2}$.
\end{theorem}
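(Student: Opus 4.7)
The approach mirrors the reduction from Theorem~\ref{cor-massnorm-fixcohomology} to Theorem~\ref{main1}: lift $(u_1,u_2)$ into a common ambient class $\tilde\theta$ dominating both $\theta_1$ and $\theta_2$, apply the fixed-cohomology Theorem~\ref{the MAtocap} there, and transfer the resulting estimate back. Set $\epsilon:=\|\theta_1-\theta_2\|_{\Cc^0}$. The Chern--Levine--Nirenberg inequality already bounds $d_\capK(u_1,u_2)^2$ by an absolute multiple of $A$, so the claim is trivial unless $\epsilon$ is much smaller than $\delta/A^{n+1}$. Assuming the latter, fix $C_1=C_1(X,\omega)\ge 1$ and put $\tilde\theta:=\theta_1+C_1\epsilon\,\omega$, so that $\theta_1,\theta_2\le\tilde\theta\le(A+1)\omega$.

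For each $j=1,2$, invoke \cite[Theorem~4.7]{Lu-Darvas-DiNezza-logconcave} to obtain the unique $\tilde u_j\in\mathcal{E}(X,\tilde\theta,P_{\tilde\theta}[u_j])$ satisfying
\begin{equation*}
\tilde\theta_{\tilde u_j}^n = c_j\,(\theta_j+\ddc u_j)^n,\qquad \sup_X\tilde u_j=0,
\end{equation*}
with $c_j:=\int_X\tilde\theta_{u_j}^n/\int_X(\theta_j+\ddc u_j)^n\ge 1$. Expanding $\tilde\theta_{u_j}^n-(\theta_j+\ddc u_j)^n$ as a telescoping sum in the $(1,1)$-currents $\tilde\theta,\theta_j$, combined with the monotonicity of non-pluripolar products, gives quantitative bounds of the shape $c_j-1\lesssim A^n\epsilon/\delta$ and $\|\tilde\theta_{u_j}^n-(\theta_j+\ddc u_j)^n\|\lesssim A^n\epsilon$; in particular $c_j\le 2$ once $\epsilon$ is small. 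It follows that $\tilde u_j\in\tilde{\mathcal{E}}_{\tilde\chi,(A+1)\omega,2B\delta}(X,\tilde\theta)$ and that the rescaled measure $\tilde\theta_{\tilde u_j}^n=c_j(\theta_j+\ddc u_j)^n$ inherits the H\"older-type hypothesis \eqref{eq0main2.varied} with $H$ replaced by $2H$. Moreover $P_{\tilde\theta}[u_j]=P_{\tilde\theta}[\tilde u_j]$ by construction, so $d_{\tilde\theta}(u_j,\tilde u_j)=0$.

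Apply Theorem~\ref{the MAtocap} in the class $\tilde\theta\le(A+1)\omega$ to the pair $(\tilde u_1,\tilde u_2)$ to control $d_\capK(\tilde u_1,\tilde u_2)^2$ in terms of $\tilde\tau:=\dist_{-1}(\tilde\theta_{\tilde u_1}^n,\tilde\theta_{\tilde u_2}^n)$ and $d_{(A+1)\omega}(\tilde u_1,\tilde u_2)$. By the triangle inequality for $\dist_{-1}$ and the mass bound above, $\tilde\tau\le\tau+C_2 A^n\epsilon$; combining with Lemma~\ref{prop-dAomegadtheta} and $d_{\tilde\theta}(u_j,\tilde u_j)=0$ we also get $d_{(A+1)\omega}(\tilde u_1,\tilde u_2)\le d_{(A+1)\omega}(u_1,u_2)+O(A^n\epsilon)$. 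To pass back to $u_1,u_2$, apply the fixed-cohomology Theorem~\ref{cor-massnorm-fixcohomology} in class $\tilde\theta$ to each pair $(u_j,\tilde u_j)$: here $d_{\tilde\theta}(u_j,\tilde u_j)=0$ and $\|\tilde\theta_{u_j}^n-\tilde\theta_{\tilde u_j}^n\|\le|c_j-1|\int_X(\theta_j+\ddc u_j)^n+\|\tilde\theta_{u_j}^n-(\theta_j+\ddc u_j)^n\|\lesssim A^n\epsilon$. The triangle inequality
\begin{equation*}
d_\capK(u_1,u_2)\le d_\capK(u_1,\tilde u_1)+d_\capK(\tilde u_1,\tilde u_2)+d_\capK(\tilde u_2,u_2)
\end{equation*}
then produces the claimed estimate, after collecting all $\epsilon$-contributions into the stated $A^n\|\theta_1-\theta_2\|_{\Cc^0}$ term.

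\textbf{Main obstacle.} The delicate part is the bookkeeping under the lift: one must check that the rescaled measure $c_j(\theta_j+\ddc u_j)^n$ inherits the concave-weight H\"older inequality with uniformly controlled constants, and that both $\tilde\tau$ and $d_{(A+1)\omega}(\tilde u_1,\tilde u_2)$ remain comparable to $\tau$ and $d_{(A+1)\omega}(u_1,u_2)$ up to an error of order $A^n\epsilon$. The monotonicity of non-pluripolar products from \cite{Lu-Darvas-DiNezza-mono} and the identity $P_{\tilde\theta}[u_j]=P_{\tilde\theta}[\tilde u_j]$ are the essential inputs; everything else is accounting.
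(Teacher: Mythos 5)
Your overall plan (lift to a common class, compare there, come back) is in the right family of ideas, but the \emph{symmetric} lifting defeats the precise form of the estimate, and this is a genuine gap rather than bookkeeping. In your application of Theorem~\ref{the MAtocap} to $(\tilde u_1,\tilde u_2)$, the measures being compared are $c_1\mu_1$ and $c_2\mu_2$, where the rescaling constants are forced by the change of class and satisfy only $c_j-1\lesssim A^n\epsilon/\delta$ with $\epsilon=\|\theta_1-\theta_2\|_{\Cc^0}$. Hence $\tilde\tau:=\dist_{-1}(c_1\mu_1,c_2\mu_2)$ can only be bounded by $\tau+CA^n\epsilon$, and the conclusion of Theorem~\ref{the MAtocap} then carries the term $A\big(\tilde\tau^{1/2}+H(\tilde\tau^{1/8})\big)$, i.e.\ an extra contribution of size $A\,H\big((CA^n\epsilon)^{1/8}\big)$. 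Since $H$ is an arbitrary concave increasing function, this is in general far larger than $A^n\epsilon$ and cannot be absorbed into the term $A^n\|\theta_1-\theta_2\|_{\Cc^0}$ of the statement; what your argument yields is a strictly weaker inequality in which the form-difference is fed through $H$, whereas the theorem requires it to enter only linearly. The paper's proof is structured precisely to avoid this: it introduces a single auxiliary potential $u_3\in\mathcal{E}(X,\theta_1,P_{\theta_1}[u_1])$ with $(\theta_1+\ddc u_3)^n=c\,\mu_2$, $c=\mu_1(X)/\mu_2(X)\le1$, so that the only comparison passing through the $H$-dependent Theorem~\ref{the MAtocap} is $(u_1,u_3)$ inside the fixed class $\theta_1$, where $\dist_{-1}(\mu_1,c\mu_2)\le2\tau$ because $1-c$ is controlled by $\tau$ (not by $\epsilon$); the pair $(u_2,u_3)$ is then handled by the mass-norm Theorem~\ref{main1}, in which $(1-c)\|\mu_2\|\le\tau$ and $\epsilon$ enter only linearly. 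Repairing your proof requires this asymmetric restructuring.

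A secondary but real problem is your transfer of the $d$-term. From Lemma~\ref{prop-dAomegadtheta} and $d_{\tilde\theta}(u_j,\tilde u_j)=0$ you cannot conclude $d_{(A+1)\omega}(\tilde u_1,\tilde u_2)\le d_{(A+1)\omega}(u_1,u_2)+O(A^n\epsilon)$: the lemma gives $d_{\tilde\theta}\le d_{(A+1)\omega}$, which is the wrong direction to convert the vanishing of $d_{\tilde\theta}(u_j,\tilde u_j)$ into control of $d_{(A+1)\omega}(u_j,\tilde u_j)$, and $d_{(A+1)\omega}$ is not known to satisfy a triangle inequality. What you need, and what is true, is the exact identity $d_{\tilde\theta}(\tilde u_1,\tilde u_2)=d_{\tilde\theta}(u_1,u_2)\le d_{(A+1)\omega}(u_1,u_2)$, which follows from $P_{\tilde\theta}[\tilde u_j]=P_{\tilde\theta}[u_j]$ together with mass preservation under the envelope and monotonicity — this is exactly the role of \eqref{eq4mainvaried} in the paper's proof of Theorem~\ref{main1}; the quantity fed into Theorem~\ref{the MAtocap} should be kept as $d_{\tilde\theta}$ of the lifted potentials rather than converted into $d_{(A+1)\omega}$ of them. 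This point is fixable, but as written the step is unjustified — and even after fixing it, the $H(\epsilon^{1/8})$ contamination above remains and blocks the stated conclusion.
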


\begin{proof}
Without loss of generality, we can assume that $\int_X(\theta_2+dd^cu_2)^n\geq\int_X(\theta_1+dd^cu_1)^n$.
Denote $\mu_1=(\theta_1+dd^cu_1)^n$, $\mu_2=(\theta_2+dd^cu_2)^n$ and $c=\dfrac{\mu_1(X)}{\mu_2(X)}\leq 1$.
It follows from
\cite[Theorem 4.7]{Lu-Darvas-DiNezza-logconcave} that there exists a unique
$u_3\in\mathcal{E}(X, \theta_1, P_{\theta_1}[u_1])$ such that
\begin{equation}
	\begin{cases}
		(\theta_1+dd^cu_3)^n=c\mu_2,\\
		\sup_Xu_3=0.
	\end{cases}
\end{equation}
By Theorem \ref{the MAtocap}, we have
\begin{equation}\label{eq1main2varied}
d_{\capK}(u_1, u_3)^2\leq C_1 (A\, B)^2 
	\left(h^{\circ n}\left(\dfrac{\delta}{A\, G(x)
		+d_{\theta_1}(u_1, u_3)}\right)\right)^{-\gamma},
\end{equation}
where  $x:=\dist_{-1}(\mu_1, c\mu_2)$ and $C_1>0$ depends only on $n, X, \omega, H(1), a, b, \gamma$ and $m$.

By Theorem \ref{main1}, we have
\begin{equation}\label{eq2main2varied}
d_{\capK}(u_2, u_3)^2\leq C_2 (A\, B)^2 
\left(h^{\circ n}\left(\dfrac{\delta}{(1-c)\|\mu_2\|+A^n\|\theta_1-\theta_2\|_{\Cc^0}
	+d_{(A+1)\omega}(u_2, u_3)}\right)\right)^{-\gamma}
\end{equation}
where  $C_2>0$ depends only on $n, X, \omega$ and $\gamma$.

Combining \eqref{eq1main2varied}, \eqref{eq2main2varied} and using the fact 
$d_{\theta_1}(u_1, u_3)=d_{(A+1)\omega}(u_1, u_3)=0$, we get
\begin{equation}\label{eq3main2varied}
	d_{\capK}(u_1, u_2)^2\leq C_3 (A\, B)^2 
	\left(h^{\circ n}\left(\dfrac{\delta}{A\, G(x)+(1-c)\|\mu_2\|
		+R}\right)\right)^{-\gamma},
\end{equation}
where  $R=A^n\|\theta_1-\theta_2\|_{\Cc^0}
+d_{(A+1)\omega}(u_1, u_2)$ and $C_2>0$ is a constant depending only on $n, X, \omega, H(1), a, b, \gamma$ and $m$.

Note that
\begin{equation}\label{eq4main2varied}
	(1-c)\|\mu_2\|=\int_Xd\mu_2-\int_Xd\mu_1\leq\dist_{-1}(\mu_1, \mu_2)=\tau.
\end{equation}
Then
\begin{equation}\label{eq5main2varied}
	x=\dist_{-1}(\mu_1, c\mu_2)\leq \dist_{-1}(\mu_1, \mu_2)+(1-c)\|\mu_2\|\leq 2\dist_{-1}(\mu_1, \mu_2)=2\tau.
\end{equation}
Combining \eqref{eq3main2varied}, \eqref{eq4main2varied} and \eqref{eq5main2varied}, we get
\begin{align*}
		d_{\capK}(u_1, u_2)^2&\leq C_3 (A\, B)^2 
	\left(h^{\circ n}\left(\dfrac{\delta}{A\, G(\tau)+A^n\|\theta_1-\theta_2\|_{\Cc^0}
		+d_{(A+1)\omega}(u_1, u_2)}\right)\right)^{-\gamma},
\end{align*}
where  $C_3>0$ depends only on $n, X, \omega, H(1), a, b, \gamma$ and $m$.

This finishes the proof.
\end{proof}

\begin{proof}[End of the proof of Theorem \ref{main3}]
By the assumption and by \cite[Lemma 3.3]{DinhVietanhMongeampere}, we have 
$\mu_j:=(\theta_j+dd^cu_j)^n$ satisfies \eqref{eq0main2.varied} for
 $H(t)=\tilde{M}\delta t^{\beta}$ and $j=1, 2$, where $\tilde{M}>0$ is a constant depending only on $X, \omega$
 and $M$.
  Moreover, it follows from \cite[Proposition 4.4]{DinhVietanhMongeampere} that,
for every $\psi\in\PSH (X, \omega)$ with $\sup_X\psi=0$, 
$$\int_X-\psi \mu_j\leq B\delta,$$
where $B>0$ depends on $X, \omega, M$ and $\beta$. Hence, by using Theorem \ref{main2.varied} (choose $a=\gamma^{1/2}$, $b=\dfrac{a\beta}{a\beta+1}$ and $m=\dfrac{ab}{(1-b)\beta^2}$), we have
$$d_{\capK}(u_1, u_2)^2\leq C
\left(\dfrac{\tau^{\gamma\beta/(\beta+1)}+\|\theta_1-\theta_2\|_{\Cc^0}
	+d_{(A+1)\omega}(u_1, u_2)}{\delta}\right)^{2^{-n}\gamma},$$
where  $\tau=\dist_{-1}(\mu_1, \mu_2)$ and $C>0$ is a constant depending only on $n, X, \omega, A, M, \gamma$
 and $\beta$.

The proof is completed.
\end{proof}

In order to prove Theorem \ref{main2}, we need again several auxiliary lemmas. 

\begin{lemma}\label{lem key for new1}
	Let $\mu$ be a Radon measure on $X$ vanishing on every pluripolar set. Assume that $u_j$, $j\in\N\cup\{\infty\}$, are negative $\theta$-psh functions satisfying $u_j\rightarrow u_{\infty}$
	in $L^1(X)$ as $j\rightarrow\infty$. Then
	$$\int_X\min\{|u_{j}-u_{\infty}|, 1\}d\mu\rightarrow 0,$$
	as $j\rightarrow\infty$.
\end{lemma}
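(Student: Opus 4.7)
The plan is to reduce the statement to a convergence-in-capacity estimate for uniformly bounded quasi-psh functions, for which the classical result of Xing applies, combined with Ko{\l}odziej's absolute continuity of the Monge--Amp\`ere measure of a bounded potential with respect to capacity. Since $\theta$ is a smooth form, I fix $A\geq 1$ with $\theta\leq A\omega$ and treat every $u_j$ as an $A\omega$-psh function; after rescaling, the arguments are the same as for $\omega$-psh functions, so I write the proof in that setting.

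First, I would represent $\mu$ as a Monge--Amp\`ere measure: by Guedj--Zeriahi (extended by BEGZ), any non-pluripolar Radon measure on $X$ admits a representation $\mu = c\,(\omega+\ddc\phi)^n$ for some $\phi \in \mathcal{E}(X,\omega)$ with $\sup_X\phi=0$ and a constant $c>0$. Setting $\phi_K := \max\{\phi,-K\}$ and $\mu_K := c\,(\omega+\ddc\phi_K)^n$, the plurifine locality of non-pluripolar products gives $\mu = \mu_K$ on $\{\phi>-K\}$, while $\mu(\{\phi\leq-K\})\to 0$ as $K\to\infty$ since $\mu$ does not charge the pluripolar set $\{\phi=-\infty\}$. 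Because $\phi_K$ is bounded, Ko{\l}odziej's theorem yields a continuous function $F_K\colon \mathbb{R}_{\geq 0}\to\mathbb{R}_{\geq 0}$ with $F_K(0)=0$ such that $\mu_K(E)\leq F_K(\capK(E))$ for every Borel $E\subset X$.

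Next, I would truncate the potentials from below: put $u_j^N := \max\{u_j,-N\}$ and similarly $u_\infty^N$. These are uniformly bounded $\omega$-psh and satisfy the contraction $|u_j^N-u_\infty^N|\leq|u_j-u_\infty|$, so $u_j^N\to u_\infty^N$ in $L^1(X)$ as $j\to\infty$ for each fixed $N$. By Xing's theorem ($L^1$-convergence of uniformly bounded quasi-psh functions implies convergence in Monge--Amp\`ere capacity), $\capK(\{|u_j^N-u_\infty^N|>\varepsilon\}) \to 0$ as $j\to\infty$, for any $\varepsilon>0$. A routine Chern--Levine--Nirenberg computation, together with the uniform $L^1$-bound on $(u_j)_j$ coming from its $L^1$-convergence, yields $\capK(\{u_j<-N\}) \leq C/N$ with $C$ depending only on $X,\omega$ and $\sup_j\|u_j\|_{L^1}$, and likewise $\capK(\{u_\infty<-N\}) \to 0$ as $N\to\infty$. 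Combined with the set-theoretic inclusion
\[
\{|u_j-u_\infty|>\varepsilon\} \subset \{|u_j^N-u_\infty^N|>\varepsilon\} \cup \{u_j<-N\} \cup \{u_\infty<-N\},
\]
this controls $\capK(\{|u_j-u_\infty|>\varepsilon\})$ by a quantity that tends to $0$ as $j\to\infty$ after letting $N\to\infty$.

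To finish, I would split
\[
\int_X\min\{|u_j-u_\infty|,1\}\,d\mu \leq \mu(\{\phi\leq -K\}) + \varepsilon\,\mu_K(X) + \mu_K(\{|u_j-u_\infty|>\varepsilon\}),
\]
and bound the last summand by $F_K\big(\capK(\{|u_j-u_\infty|>\varepsilon\})\big)$ via the capacity estimates above. Choosing first $K$ large enough to make $\mu(\{\phi\leq -K\})$ small, then $\varepsilon$ small, then $N$ large so that the sub-$N$-level capacity contributions are small uniformly in $j$, and finally $j$ large so that $\capK(\{|u_j^N-u_\infty^N|>\varepsilon\})$ is small, every piece becomes arbitrarily small. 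The main obstacle in this plan is promoting $L^1$-convergence to convergence in Monge--Amp\`ere capacity for the truncations $u_j^N$, which is precisely the content of Xing's theorem; the remaining steps are routine combinations of truncation, plurifine locality, and capacity--volume comparisons.
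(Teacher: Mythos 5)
Your handling of $\mu$ (writing $\mu=c\,(\omega+\ddc\phi)^n$ with $\phi\in\mathcal{E}(X,\omega)$, truncating $\phi$, using plurifine locality and a capacity bound for the bounded truncation $\mu_K$) is fine, and so is the truncation of the $u_j$'s together with the CLN bound on $\capK(\{u_j<-N\})$. The gap is the pivotal step: there is no theorem asserting that $L^1(X)$-convergence of uniformly bounded quasi-psh functions implies convergence in Monge--Amp\`ere capacity, and the implication is in fact false. Xing's results go in other directions (capacity convergence $\Rightarrow$ convergence of Monge--Amp\`ere measures; and weak convergence of Monge--Amp\`ere measures $\Rightarrow$ capacity convergence \emph{under a domination hypothesis} $\theta_{u_j}^n\le\mu$ -- which is essentially Theorem \ref{main2} of this paper, not a free tool). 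A concrete counterexample: on $\P^1$ with the Fubini--Study form, let $\varphi_j:=\frac1j\log\frac{|z^j-1|}{(1+|z|^2)^{j/2}}$ (shifted by a constant to be negative). These are $\omega$-psh, converge in $L^1$ to a bounded limit $\varphi_\infty$, and the bounded truncations $\max\{\varphi_j,-2\}$ still converge in $L^1$ to $\varphi_\infty$; yet for small $\epsilon>0$ the sets $\{\varphi_j<\varphi_\infty-\epsilon\}$ contain (near the unit circle) $\{|z^j-1|\le e^{-\epsilon j}\}$, whose logarithmic capacity equals $(e^{-\epsilon j})^{1/j}=e^{-\epsilon}$, so their Monge--Amp\`ere capacity is bounded below uniformly in $j$. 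Hence $\capK(\{|u_j^N-u_\infty^N|>\epsilon\})\not\to0$ in general, and your final estimate, which bounds $\mu_K(\{|u_j-u_\infty|>\epsilon\})$ through $F_K(\capK(\cdot))$, collapses. What is true is only the one-sided, Hartogs-type statement $\capK(\{u_j^N>u_\infty^N+\epsilon\})\to0$, i.e. $\max\{u_j^N,u_\infty^N\}\to u_\infty^N$ in capacity; the set $\{u_j^N<u_\infty^N-\epsilon\}$ can keep large capacity, and controlling its $\mu$-mass is precisely the real difficulty of the lemma.

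This is exactly why the paper's proof never claims two-sided capacity convergence: it introduces $v_{j,k}=\max\{u_{j,k},u_{\infty,k}\}$, which does converge in capacity by the one-sided statement, and then invokes \cite[Lemma 11.5]{GZbook} (convergence of integrals of bounded quasi-psh functions against a measure vanishing on pluripolar sets) to get both $\int_X\max\{u_{j,k}-u_{\infty,k},0\}\,d\mu\to0$ and $\int_X(u_{j,k}-u_{\infty,k})\,d\mu\to0$; combining the two yields $\int_X|u_{j,k}-u_{\infty,k}|\,d\mu\to0$, and the tail $\mu(\{u_j<-k\})$ is made small uniformly in $j$ via a CLN capacity bound after writing $\mu=f\,\omega_w^n$ with $w$ bounded (your $\mu_K$ device would serve the same purpose here). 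If you replace the false ``$L^1\Rightarrow$ capacity'' step by this one-sided max-trick plus a convergence-of-integrals lemma for measures vanishing on pluripolar sets, the rest of your plan goes through; as written, however, the proof does not establish the lemma.
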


\begin{proof}
	Denote $B=\sup_j \|u_j\|_{L^1}$. By Chern-Levine-Nirenberg inequality, there exists $C>0$ such that
	$$\capK\{u_j<-k\}\leq \dfrac{B\,C}{k},$$
	for every $j\in\N\cup\{\infty\}$ and $k>0$. Since $\mu$ vanishes on pluripolar sets,  by \cite[Lemma 4.5]{GZ-weighted}, there exists $w \in \PSH(X, \omega)\cap L^{\infty}(X)$  such that $\mu= f \omega_w^n$ for some nonnegative function $f \in L^1(\omega_w^n)$. Let $M>0$ be a big enough constant such that 
	$$\int_{\{f> M\}}d \mu < \epsilon/6.$$	We have 
	\begin{align*}
		\mu(\{u_j<-k\}) &=\int_{\{f> M\} \cap \{u_j< -k\}}d \mu+ \int_{\{f> M\} \cap \{u_j <-k\}}d \mu \\
		&\le \int_{\{f \le  M\} \cap \{u_j< -k\}}d \mu+ \int_{\{f> M\}}d \mu\\
		& \le  M (\sup_Xw-\inf_Xw)\capK\{u_j<-k\}+ \epsilon/6.
	\end{align*}
	It follows that
	for each $\epsilon>0$, there exists $k_0\geq 1$ such that 
	\begin{align}\label{eq1 proof lem key1}
		\mu(\{u_j<-k\}) \le \epsilon/3 
	\end{align}
	for every $j\in\N\cup\{\infty\}$ and $k\geq k_0$. Denote $u_{j, k}=\max\{u_j, -k\}$ and $v_{j,k}=\max\{u_{j,k}, u_{\infty,k}\}$. Thus for every $k$, we have $u_{j,k}\rightarrow u_{\infty, k}$ in $L^1(X)$ and 
	$v_{j,k}\rightarrow u_{\infty, k}$ in capacity as $j\to\infty$. It follows from
	\cite[Lemma 11.5]{GZbook} that
	\begin{equation*}
		\int_X\max\{u_{j,k}-u_{\infty,k}, 0\}d\mu=\int_X(v_{j, k}-u_{\infty, k})d\mu\rightarrow 0,
	\end{equation*}
	and
	\begin{equation*}
		\int_X(u_{j, k}-u_{\infty, k})d\mu\rightarrow 0,
	\end{equation*}
	as $j\to\infty$. Combining the last two convergences gives
	\begin{equation*}
		\int_X|u_{j, k}-u_{\infty, k}|d\mu\rightarrow 0,
	\end{equation*}
	as $j\to\infty$. Choose $j_0$ such that 
	\begin{equation*}
		\int_X|u_{j, k_0}-u_{\infty, k_0}|d\mu<\frac{\epsilon}{3},
	\end{equation*}
	for every $j>j_0$.
	Using the last inequality and \eqref{eq1 proof lem key1}, we have
	\begin{align*}
		\int_X\min\{|u_{j}-u_{\infty}|, 1\}d\mu&\leq\int_{\{u_{j}, u_{\infty}\geq -k_0\}}|u_{j}-u_{\infty}|d\mu
		+\mu(\{u_j<-k\})+\mu(\{u_{\infty}<-k\})\\
		&\leq \int_{\{u_{j}, u_{\infty}\geq -k_0\}}|u_{j, k_0}-u_{\infty, k_0}|d\mu+\dfrac{2\epsilon}{3} \leq\epsilon,
	\end{align*}
	for every $j>j_0$. Thus $\int_X\min\{|u_{a_j}-u_{\infty}|, 1\}d\mu\rightarrow 0$
	as $j\rightarrow\infty$.
\end{proof}

\begin{lemma} \label{le-mychanboiHcap}
	Let $\mu$ be a Radon measure on $X$ vanishing on every pluripolar set. Then, there exists
	a concave, non-decreasing function $H:\R_{\geq 0}\rightarrow\R_{\geq 0}$
	with $H(0)=0$ such that 
		$$\int_X\min\{|u-v|, 1\}d\mu\leq H\left(\|u-v\|_{L^1(X)}\right),$$
	for every $u, v\in\PSH(X, \omega)$.
\end{lemma}

\begin{proof}  For every $t>0$, we denote
	$$h(t)=\sup\left\{\int_X\min\{|u-v|, 1\}d\mu: u,v\in\PSH(X, \omega), \|u-v\|_{L^1(X)}\leq t\right\}.$$
	Then $h$ is non-decreasing. We will show that
	\begin{equation}\label{eq2 lemmucap}
		\lim\limits_{t\to 0^+}h(t)=0.
	\end{equation}
	Indeed, 
	if $\lim\limits_{t\to 0^+}h(t)=2\epsilon>0$ then there exist sequences $u_j, v_j\in\PSH(X, \omega)$
	such that	$\|u_j-v_j\|_{L^1(X)}\rightarrow 0$ as $j\rightarrow\infty$ and 
	\begin{equation}\label{eq3 lemmucap}
		\int_X\min\{|u_j-v_j|, 1\}d\mu\geq\epsilon,
	\end{equation}
	for every $j$. Without loss of generality, we can assume that $c_j:=\sup_Xv_j\leq\sup_X u_j=0$
	
	By the compactness of $\PSH_{\sup}(X, \omega):=\{\varphi\in\PSH(X, \omega): \sup_X\varphi=0\}$ in $L^1(X)$,
	 we can assume that $u_j, v_j-c_j\rightarrow w\in\PSH_{\sup}(X, \omega)$ as $j\rightarrow\infty$.
	 In particular,
	 \begin{equation}\label{eq4 lemmucap}
	 	c_j\int_X\omega^n\leq \|u_j-v_j\|_{L^1(X)}+\|u_j-w\|_{L^1(X)}+\|v_j-c_j-w\|_{L^1(X)}
	 	\stackrel{j\to\infty}{\longrightarrow}0.
	 \end{equation}
	  Moreover, it follows from Lemma \ref{lem key for new1}
	that
	$$\lim\limits_{j\to\infty}\int_X\min\{|u_j-w|, 1\}d\mu=
	\lim\limits_{j\to\infty}\int_X\min\{|v_j-c_j-w|, 1\}d\mu=0,$$
	and it follows that
	\begin{equation}\label{eq5 lemmucap}
		\lim\limits_{j\to\infty}\int_X\min\{|u_j-v_j-c_j|, 1\}d\mu=0.
	\end{equation}
Combining \eqref{eq4 lemmucap} and \eqref{eq5 lemmucap}, we get
$$\lim\limits_{j\to\infty}\int_X\min\{|u_j-v_j|, 1\}d\mu=0.$$
	This contradicts with \eqref{eq3 lemmucap}. Hence, \eqref{eq2 lemmucap} is true.
	
	Now, we put
	$$\tilde{h}(t)=\begin{cases}
		h(t)\quad\mbox{if}\quad 0<t<1,\\
		\int_X\mu\quad\mbox{if}\quad t\geq 1.
	\end{cases}$$
 For every $m>1$, we also define
	$$k_m=\sup\left\{\dfrac{\tilde{h}(s)}{s}: \dfrac{1}{m}\leq t\right\}\qquad\mbox{and}
	\qquad H_m(t)=k_mt+h(1/m).$$
	Then $H_m(t)\geq h(t)$ for every $t\geq 0$ and $\lim_{t\to 0^{+}}H_m(t)=h(1/m)$. Set
	$H(t)=\inf_{m>1}H_m(t).$ We have $H$ is
	a concave, non-decreasing function 
	satisfying $H(0)=0$ and $H\geq h$. In particular,
	$$\int_X\min\{|u-v|, 1\}d\mu\leq H\left(\|u-v\|_{L^1(X)}\right),$$
	for every $u, v\in\PSH(X, \omega)$.
	
	The proof is completed.
\end{proof}

\begin{proof}[End of the proof of Theorem \ref{main2}] 
	By Lemma \ref{le-mychanboiHcap}, there exists
	a concave, non-decreasing function $H:\R_{\geq 0}\rightarrow\R_{\geq 0}$ depending only on $\mu$, $X$ and
	 $\omega$ such that $H(0)=0$ and
	$$\int_X\min\{|\psi_1-\psi_2|, 1\}d\mu\leq H\left(\|\psi_1-\psi_2\|_{L^1(X)}\right),$$
	for every $\psi_1, \psi_2\in\PSH(X, \omega)$.
	
	Moreover, it follows from \cite[Proposition 3.2]{BEGZ} that there exist a constant $B>0$ and a function
	$\tilde{\chi}\in\mathcal{W}^-$ depending on $X, \omega$ and $\mu$ such that
	$$\int-\tilde{\chi}(\psi)d\mu\leq B,$$
	for every $\psi\in\PSH(X, \omega)$ with $\sup_X\psi=0$. In particular,
	 $u_j\in\tilde{\mathcal{E}}_{\tilde{\chi}, (A+1)\omega, (A+1)B}(X, \theta_j)$ for $j=1, 2$. 
	 Hence, by Theorem \ref{main2.varied}, 
	 there exists $C>0$ depending only on $n, X, \omega$ and $H(1)$  such that 
	 $$d_{\capK}(u_1, u_2)^2\leq \dfrac{C(A+1)^2B^2}{\delta^2} 
	 \left(h^{\circ n}\left(\dfrac{\delta}{A\, G(\tau)+A^n\|\theta_1-\theta_2\|_{\Cc^0}
	 	+d_{(A+1)\omega}(u_1, u_2)}\right)\right)^{-1/2},$$
	 where  $\tau=\dist_{-1}(\mu_1, \mu_2)$, $G(\tau)=A^2H^{1/2}(A\tau^{1/4})+H(H(\tau^{1/2}))+\tau^{1/4}$
	 and $h(s)=(-\tilde{\chi}(-s))^{1/2}$.
	 Denote 
	 $$f_{\mu}(t)=\dfrac{C(A+1)^2B^2}{\delta^2} 
	 \left(h^{\circ n}\left(\dfrac{\delta}{A\, G(t)+A^nt}\right)\right)^{-1/2}.$$
	 We obtain
	 $$d_{\capK}(u_1, u_2)^2\leq f_{\mu}\left(\dist_{-1}(\mu_1, \mu_2)+\|\theta_1-\theta_2\|_{\Cc^0}
	 +d_{(A+1)\omega}(u_1, u_2)\right).$$
	 The proof is completed.
\end{proof}

\begin{remark} \label{re-suyratinhday} We explain how to prove Proposition \ref{cor-DDLcomplete} using either Theorem \ref{main2} or \ref{main3} (in place of Proposition \ref{cormodel}). This is almost identical to the proof of Proposition \ref{cor-DDLcomplete} presented above: the new point is to show that there is a subsequence of  $(u_j)_j$ which is convergent in capacity. To this end, we can assume $u_j$'s are model as we did in the above proof of Proposition \ref{cor-DDLcomplete}. Next we extract a subsequence $(u_{j_s})_{s}$ of $(u_j)_j$ such that $u_{j_s}$ converges to some $u$ in $L^1$, and $\theta_{u_{j_s}}^n$ is convergent. Now applying either  Theorem \ref{main2} or \ref{main3} (thanks to (\ref{ine-chantrencuamodelmeasure})), one sees that the sequence $(u_{j_s})_s$ is convergent in capacity. Consequently $u_{j_s} \to u$ in capacity (see, e.g., \cite[Lemma 2.2]{DinhMarinescuVu}).
\end{remark}

\bibliography{biblio_family_MA,biblio_Viet_papers,bib-kahlerRicci-flow}

\begin{thebibliography}{10}

\bibitem{BT_fine_87}
{\sc E.~Bedford and B.~A. Taylor}, {\em Fine topology, \v{S}ilov boundary, and
  {$(dd^c)^n$}}, J. Funct. Anal., 72 (1987), pp.~225--251.

\bibitem{BBGZ-variational}
{\sc R.~J. Berman, S.~Boucksom, V.~Guedj, and A.~Zeriahi}, {\em A variational
  approach to complex {M}onge-{A}mp\`ere equations}, Publ. Math. Inst. Hautes
  \'{E}tudes Sci., 117 (2013), pp.~179--245.

\bibitem{Berman-Boucksom-Jonsson-KE}
{\sc R.~J. {Berman}, S.~{Boucksom}, and M.~{Jonsson}}, {\em {A variational
  approach to the Yau-Tian-Donaldson conjecture}}, {J. Am. Math. Soc.}, 34
  (2021), pp.~605--652.

\bibitem{Berman-Darvas-Lu}
{\sc R.~J. {Berman}, T.~{Darvas}, and C.~H. {Lu}}, {\em {Regularity of weak
  minimizers of the K-energy and applications to properness and K-stability}},
  {Ann. Sci. \'Ec. Norm. Sup\'er. (4)}, 53 (2020), pp.~267--289.

\bibitem{Berteloot_cours}
{\sc F.~Berteloot}, {\em Bifurcation currents in holomorphic families of
  rational maps}, in Pluripotential theory, vol.~2075 of Lecture Notes in
  Math., Springer, Heidelberg, 2013, pp.~1--93.

\bibitem{Biquard-Guenancia}
{\sc O.~Biquard and H.~Guenancia}, {\em Degenerating {K{\"a}hler}-{Einstein}
  cones, locally symmetric cusps, and the {Tian}-{Yau} metric}, Invent. Math.,
  230 (2022), pp.~1101--1163.

\bibitem{Blocki_stability}
{\sc Z.~B{\l}ocki}, {\em Uniqueness and stability for the complex
  {M}onge-{A}mp\`ere equation on compact {K}\"ahler manifolds}, Indiana Univ.
  Math. J., 52 (2003).

\bibitem{BEGZ}
{\sc S.~Boucksom, P.~Eyssidieux, V.~Guedj, and A.~Zeriahi}, {\em
  Monge-{A}mp\`ere equations in big cohomology classes}, Acta Math., 205
  (2010), pp.~199--262.

\bibitem{Cegrell}
{\sc U.~Cegrell}, {\em Pluricomplex energy}, Acta Math., 180 (1998),
  pp.~187--217.

\bibitem{Cegrell-Kolodziej}
{\sc U.~Cegrell and S.~Ko{\l}odziej}, {\em The equation of complex
  {M}onge-{A}mp\`ere type and stability of solutions}, Math. Ann., 334 (2006),
  pp.~713--729.

\bibitem{Darvas-finite-energy}
{\sc T.~Darvas}, {\em The {M}abuchi geometry of finite energy classes}, Adv.
  Math., 285 (2015), pp.~182--219.

\bibitem{Darvas-kahlerclass}
\leavevmode\vrule height 2pt depth -1.6pt width 23pt, {\em The {M}abuchi
  completion of the space of {K}\"{a}hler potentials}, Amer. J. Math., 139
  (2017), pp.~1275--1313.

\bibitem{Darvas_book}
\leavevmode\vrule height 2pt depth -1.6pt width 23pt, {\em Geometric
  pluripotential theory on {K}\"{a}hler manifolds}, in Advances in complex
  geometry, vol.~735 of Contemp. Math., Amer. Math. Soc., Providence, RI, 2019,
  pp.~1--104.

\bibitem{Darvas-lower-energy}
{\sc T.~Darvas}, {\em The {M}abuchi geometry of low energy classes}.
\newblock \url{ arXiv:2109.11581}, 2021.

\bibitem{DDL-L1metric}
{\sc T.~Darvas, E.~Di~Nezza, and C.~H. Lu}, {\em {$L^1$} metric geometry of big
  cohomology classes}, Ann. Inst. Fourier (Grenoble), 68 (2018),
  pp.~3053--3086.

\bibitem{Lu-Darvas-DiNezza-mono}
\leavevmode\vrule height 2pt depth -1.6pt width 23pt, {\em Monotonicity of
  nonpluripolar products and complex {M}onge-{A}mp\`ere equations with
  prescribed singularity}, Anal. PDE, 11 (2018), pp.~2049--2087.

\bibitem{Lu-Darvas-DiNezza-logconcave}
{\sc T.~{Darvas}, E.~{Di Nezza}, and C.~H. {Lu}}, {\em {Log-concavity of volume
  and complex Monge-Amp\`ere equations with prescribed singularity}}, {Math.
  Ann.}, 379 (2021), pp.~95--132.

\bibitem{Darvas-Lu-DiNezza-singularity-metric}
{\sc T.~Darvas, E.~Di~Nezza, and H.-C. Lu}, {\em The metric geometry of
  singularity types}, J. Reine Angew. Math., 771 (2021), pp.~137--170.

\bibitem{DemaillyHiep_etal}
{\sc J.-P. Demailly, S.~Dinew, V.~Guedj, H.~H. Pham, S.~Ko{\l}odziej, and
  A.~Zeriahi}, {\em H\"older continuous solutions to {M}onge-{A}mp\`ere
  equations}, J. Eur. Math. Soc. (JEMS), 16 (2014), pp.~619--647.

\bibitem{DDG-family}
{\sc E.~Di~Nezza, V.~Guedj, and H.~Guenancia}, {\em Families of singular
  {K}\"ahler-{E}instein metrics}.
\newblock \url{arXiv:2003.08178}, 2020.
\newblock to appear in J. Eur. Math. Soc.

\bibitem{Lu-DiNezza-Lpmetric}
{\sc E.~{Di Nezza} and C.~H. {Lu}}, {\em {\(L^p\) metric geometry of big and
  nef cohomology classes}}, {Acta Math. Vietnam.}, 45 (2020), pp.~53--69.

\bibitem{DiNezzLu-entropy}
{\sc E.~Di~Nezza and C.~H. Lu}, {\em Geodesic distance and {Monge}-{Amp{\'e}re}
  measures on contact sets}, Anal. Math., 48 (2022), pp.~451--488.

\bibitem{Dinew-uniqueness}
{\sc S.~Dinew}, {\em Uniqueness in {$\mathcal{E}(X,\omega)$}}, J. Funct. Anal.,
  256 (2009), pp.~2113--2122.

\bibitem{Dinew_Zhang_stability}
{\sc S.~Dinew and Z.~Zhang}, {\em On stability and continuity of bounded
  solutions of degenerate complex {M}onge-{A}mp\`ere equations over compact
  {K}\"ahler manifolds}, Adv. Math., 225 (2010).

\bibitem{DKC_Holder-Sobolev}
{\sc T.-C. Dinh, S.~Ko{\l}odziej, and N.~C. Nguyen}, {\em The complex {S}obolev
  space and {H}\"older continuous solutions to {M}onge-{A}mp\`ere equations}.
\newblock \url{arXiv:2008.00260}, 2020.

\bibitem{DinhMarinescuVu}
{\sc T.-C. Dinh, G.~Marinescu, and D.-V. Vu}, {\em Moser-{T}rudinger
  inequalities and {M}onge-{A}mp\`ere equations}.
\newblock arXiv:2006.07979, 2020.
\newblock to appear in Ann. Sc. Norm. Super. Pisa Cl. Sci. (5).

\bibitem{DinhVietanhMongeampere}
{\sc T.-C. Dinh and V.-A. Nguy{\^e}n}, {\em Characterization of
  {M}onge-{A}mp\`ere measures with {H}\"older continuous potentials}, J. Funct.
  Anal., 266 (2014), pp.~67--84.

\bibitem{DNT_equi}
{\sc T.-C. Dinh, V.-A. Nguy\^en, and T.~T. Truong}, {\em Equidistribution for
  meromorphic maps with dominant topological degree}, Indiana Univ. Math. J.,
  64 (2015), pp.~1805--1828.

\bibitem{DS_tm}
{\sc T.-C. Dinh and N.~Sibony}, {\em Distribution des valeurs de
  transformations m\'eromorphes et applications}, Comment. Math. Helv., 81
  (2006), pp.~221--258.

\bibitem{DS_acta}
\leavevmode\vrule height 2pt depth -1.6pt width 23pt, {\em Super-potentials of
  positive closed currents, intersection theory and dynamics}, Acta Math., 203
  (2009), pp.~1--82.

\bibitem{DS_book}
\leavevmode\vrule height 2pt depth -1.6pt width 23pt, {\em Dynamics in several
  complex variables: endomorphisms of projective spaces and polynomial-like
  mappings}, in Holomorphic dynamical systems, vol.~1998 of Lecture Notes in
  Math., Springer, Berlin, 2010, pp.~165--294.

\bibitem{Vu_Do-MA}
{\sc D.~T. Do and D.-V. Vu}, {\em Complex {M}onge-{A}mp\`ere equations with
  solutions in finite energy classes}.
\newblock \url{arXiv:2010.08619}, 2020.
\newblock to appear in Math. Res. Lett.

\bibitem{EGZ}
{\sc P.~Eyssidieux, V.~Guedj, and A.~Zeriahi}, {\em Singular
  {K}\"{a}hler-{E}instein metrics}, J. Amer. Math. Soc., 22 (2009),
  pp.~607--639.

\bibitem{GZ-weighted}
{\sc V.~Guedj and A.~Zeriahi}, {\em The weighted {M}onge-{A}mp\`ere energy of
  quasiplurisubharmonic functions}, J. Funct. Anal., 250 (2007), pp.~442--482.

\bibitem{Guedj-Zeriahi-big-stability}
\leavevmode\vrule height 2pt depth -1.6pt width 23pt, {\em Stability of
  solutions to complex {M}onge-{A}mp\`ere equations in big cohomology classes},
  Math. Res. Lett., 19 (2012), pp.~1025--1042.

\bibitem{GZbook}
\leavevmode\vrule height 2pt depth -1.6pt width 23pt, {\em Degenerate complex
  {M}onge-{A}mp\`ere equations}, vol.~26 of EMS Tracts in Mathematics, European
  Mathematical Society (EMS), Z\"{u}rich, 2017.

\bibitem{Gupta}
{\sc P.~Gupta}, {\em A complete metric topology on relative low energy spaces}.
\newblock \url{arXiv:2206.03999}, 2022.

\bibitem{Hein-Tosatti-higherorder}
{\sc H.-J. Hein and V.~Tosatti}, {\em Higher-order estimates for collapsing
  {C}alabi-{Y}au metrics}, Camb. J. Math., 8 (2020), pp.~683--773.

\bibitem{Hein-Tosatti}
{\sc H.-J. Hein and V.~Tosatti}, {\em Smooth asymptotics for collapsing
  {C}alabi-{Y}au metrics}.
\newblock \url{ arXiv:2102.03978}, 2021.

\bibitem{Hiep_holder}
{\sc P.~H. Hiep}, {\em H\"older continuity of solutions to the
  {M}onge-{A}mp\`ere equations on compact {K}\"ahler manifolds}, Ann. Inst.
  Fourier (Grenoble), 60 (2010), pp.~1857--1869.

\bibitem{Kolodziej_Acta}
{\sc S.~Ko{\l}odziej}, {\em The complex {M}onge-{A}mp\`ere equation}, Acta
  Math., 180 (1998), pp.~69--117.

\bibitem{Kolodziej_2003}
\leavevmode\vrule height 2pt depth -1.6pt width 23pt, {\em The
  {M}onge-{A}mp\`ere equation on compact {K}\"ahler manifolds}, Indiana Univ.
  Math. J., 52 (2003).

\bibitem{Kolodziej05}
\leavevmode\vrule height 2pt depth -1.6pt width 23pt, {\em The complex
  {M}onge-{A}mp\`ere equation and pluripotential theory}, Mem. Amer. Math.
  Soc., 178 (2005), pp.~x+64.

\bibitem{Kolodziej08holder}
\leavevmode\vrule height 2pt depth -1.6pt width 23pt, {\em H\"older continuity
  of solutions to the complex {M}onge-{A}mp\`ere equation with the right-hand
  side in {$L^p$}: the case of compact {K}\"ahler manifolds}, Math. Ann., 342
  (2008).

\bibitem{KC_hermitian}
{\sc S.~Ko{\l}odziej and N.~C. Nguyen}, {\em Weak solutions to the complex
  {M}onge-{A}mp\`ere equation on {H}ermitian manifolds}, in Analysis, complex
  geometry, and mathematical physics: in honor of {D}uong {H}. {P}hong,
  vol.~644 of Contemp. Math., Amer. Math. Soc., Providence, RI, 2015,
  pp.~141--158.

\bibitem{KC-holder2018}
\leavevmode\vrule height 2pt depth -1.6pt width 23pt, {\em H\"{o}lder
  continuous solutions of the {M}onge-{A}mp\`ere equation on compact
  {H}ermitian manifolds}, Ann. Inst. Fourier (Grenoble), 68 (2018),
  pp.~2951--2964.

\bibitem{Kolodziej-Nguyen-continuous}
\leavevmode\vrule height 2pt depth -1.6pt width 23pt, {\em Continuous solutions
  to {M}onge-{A}mp\`ere equations on {H}ermitian manifolds for measures
  dominated by capacity}, Calc. Var. Partial Differential Equations, 60 (2021).

\bibitem{Lu-comparison-capacity}
{\sc C.~H. Lu}, {\em Comparison of {M}onge-{A}mp\`ere capacities}, Ann. Polon.
  Math., 126 (2021), pp.~31--53.

\bibitem{Guedj-Lu-3}
{\sc H.-C. Lu and V.~Guedj}, {\em Quasi-plurisubharmonic envelopes 3: {S}olving
  {M}onge-{A}mp\`ere equations on {H}ermitian manifolds}.
\newblock \url{arXiv:2107.01938}, 2021.

\bibitem{Lu-To-Phung}
{\sc H.-C. Lu, T.-T. Phung, and T.-D. T\^o}, {\em Stability and {H}\"older
  regularity of solutions to complex {M}onge-{A}mp\`ere equations on compact
  {H}ermitian manifolds}.
\newblock \url{arXiv:2003.08417}, 2020.
\newblock to appear in Annales de l'Institut Fourier.

\bibitem{Lunardi-book-interpolation}
{\sc A.~Lunardi}, {\em Interpolation theory}, Appunti. Scuola Normale Superiore
  di Pisa (Nuova Serie). [Lecture Notes. Scuola Normale Superiore di Pisa (New
  Series)], Edizioni della Normale, Pisa, second~ed., 2009.

\bibitem{NgocCuong-Holder-calc.var}
{\sc N.~C. Nguyen}, {\em On the {H}\"{o}lder continuous subsolution problem for
  the complex {M}onge-{A}mp\`ere equation}, Calc. Var. Partial Differential
  Equations, 57 (2018), pp.~Paper No. 8, 15.

\bibitem{NgocCuong-Holder2020}
\leavevmode\vrule height 2pt depth -1.6pt width 23pt, {\em On the {H}\"{o}lder
  continuous subsolution problem for the complex {M}onge-{A}mp\`ere equation,
  {II}}, Anal. PDE, 13 (2020), pp.~435--453.

\bibitem{Ross-WittNystrom}
{\sc J.~Ross and D.~Witt~Nystr\"{o}m}, {\em Analytic test configurations and
  geodesic rays}, J. Symplectic Geom., 12 (2014), pp.~125--169.

\bibitem{Tosatti-noncollapse}
{\sc V.~Tosatti}, {\em Limits of {C}alabi-{Y}au metrics when the {K}\"{a}hler
  class degenerates}, J. Eur. Math. Soc. (JEMS), 11 (2009), pp.~755--776.

\bibitem{Tosatti-collapsing}
\leavevmode\vrule height 2pt depth -1.6pt width 23pt, {\em Adiabatic limits of
  {R}icci-flat {K}\"{a}hler metrics}, J. Differential Geom., 84 (2010),
  pp.~427--453.

\bibitem{Tosatti-survey-dynamics}
\leavevmode\vrule height 2pt depth -1.6pt width 23pt, {\em Ricci-flat metrics
  and dynamics on {K}3 surfaces}, Boll. Unione Mat. Ital., 14 (2021),
  pp.~191--209.

\bibitem{Tosatti-Weinkove}
{\sc V.~Tosatti and B.~Weinkove}, {\em The complex {M}onge-{A}mp\`ere equation
  on compact {H}ermitian manifolds}, J. Amer. Math. Soc., 23 (2010),
  pp.~1187--1195.

\bibitem{Tosatti-Weinkove-Yang}
{\sc V.~Tosatti, B.~Weinkove, and X.~Yang}, {\em The {K}\"{a}hler-{R}icci flow,
  {R}icci-flat metrics and collapsing limits}, Amer. J. Math., 140 (2018),
  pp.~653--698.

\bibitem{Triebel}
{\sc H.~Triebel}, {\em Interpolation theory, function spaces, differential
  operators}, Johann Ambrosius Barth, Heidelberg, second~ed., 1995.

\bibitem{Trusiani3}
{\sc A.~Trusiani}, {\em Continuity method with movable singularities for
  classical complex {M}onge-{A}mp\`{e}re equations}.
\newblock \url{arXiv:2006.09120}, 2022.
\newblock to appear in Indiana University Mathematics Journal.

\bibitem{Trusiani-energy}
\leavevmode\vrule height 2pt depth -1.6pt width 23pt, {\em {$L^1$} metric
  geometry of potentials with prescribed singularities on compact {K}\"{a}hler
  manifolds}, J. Geom. Anal., 32 (2022), pp.~Paper No. 37, 37.

\bibitem{Trusiani2}
\leavevmode\vrule height 2pt depth -1.6pt width 23pt, {\em The strong topology
  of $\omega$-plurisubharmonic functions}.
\newblock \url{arXiv:2002.00665}, 2022.
\newblock to appear in Anal. PDE.

\bibitem{Villani}
{\sc C.~Villani}, {\em Optimal transport}, vol.~338 of Grundlehren der
  mathematischen Wissenschaften [Fundamental Principles of Mathematical
  Sciences], Springer-Verlag, Berlin, 2009.
\newblock Old and new.

\bibitem{Vu_MA}
{\sc D.-V. Vu}, {\em Complex {M}onge--{A}mp\`ere equation for measures
  supported on real submanifolds}, Math. Ann., 372 (2018), pp.~321--367.

\bibitem{Vu_MA_holder}
\leavevmode\vrule height 2pt depth -1.6pt width 23pt, {\em Families of
  {M}onge-{A}mp\`ere measures with {H}\"{o}lder continuous potentials}, Proc.
  Amer. Math. Soc., 146 (2018), pp.~4275--4282.

\bibitem{Viet-generalized-nonpluri}
\leavevmode\vrule height 2pt depth -1.6pt width 23pt, {\em Relative
  non-pluripolar product of currents}, Ann. Global Anal. Geom., 60 (2021),
  pp.~269--311.

\bibitem{Viet-convexity-weightedclass}
\leavevmode\vrule height 2pt depth -1.6pt width 23pt, {\em Convexity of the
  class of currents with finite relative energy}, Ann. Pol. Math., 128 (2022),
  pp.~275--288.

\bibitem{WittNystrom-mono}
{\sc D.~Witt~Nystr\"{o}m}, {\em Monotonicity of non-pluripolar
  {M}onge-{A}mp\`ere masses}, Indiana Univ. Math. J., 68 (2019), pp.~579--591.

\bibitem{Xia-energy}
{\sc M.~Xia}, {\em Mabuchi geometry of big cohomology classes with prescribed
  singularities}.
\newblock \url{arXiv:1907.07234}, 2021.

\bibitem{Xing-stability}
{\sc Y.~Xing}, {\em Continuity of the complex {M}onge-{A}mp\`ere operator on
  compact {K}\"{a}hler manifolds}, Math. Z., 263 (2009), pp.~331--344.

\bibitem{Yau1978}
{\sc S.~T. Yau}, {\em On the {R}icci curvature of a compact {K}\"ahler manifold
  and the complex {M}onge-{A}mp\`ere equation. {I}}, Comm. Pure Appl. Math., 31
  (1978).

\end{thebibliography}

\bibliographystyle{siam}

\bigskip

\noindent
\Addresses
\end{document}